\numberwithin{equation}{section}
\declaretheorem[numberwithin=section,refname={Theorem,Theorems}]{theorem}
\declaretheorem[sibling=theorem,refname={Proposition,Propositions}]{proposition}
\declaretheorem[sibling=theorem,refname={Corollary,Corollaries}]{corollary}
\declaretheorem[sibling=theorem,style=definition,refname={Definition,Definitions}]{definition}
\declaretheorem[sibling=theorem,style=definition,refname={Lemma,Lemmas}]{lemma}
\declaretheorem[sibling=theorem,style=remark,refname={Remark,Remarks}]{remark}
\declaretheorem[sibling=theorem,style=remark]{notation}
\newcounter{claimCounter}[theorem]
\theoremstyle{remark}            
\newtheorem{claim}[claimCounter]{Claim}
\newcounter{LRClaim}
\newlist{equivalent}{enumerate}{1}
\setlist[equivalent,1]{label=\textup{(\arabic*)}}
\newlist{sublemma}{enumerate}{1}
\setlist[sublemma,1]{label=\textup{(\alph*)}}
\newlist{orderedlist}{enumerate}{1}
\setlist[orderedlist,1]{label=(\roman*)}
\newlist{sublemma*}{enumerate*}{1}
\setlist[sublemma*,1]{label=\textup{(\alph*)},afterlabel=\hspace{5pt}}
\newlist{orderedlist*}{enumerate*}{1}
\setlist[orderedlist*,1]{label=\textup{(\roman*)},afterlabel=\hspace{3pt}}
\newlist{TSPlist}{enumerate}{1}
\setlist[TSPlist,1]{labelindent=0em,labelwidth=4em,labelsep=0.5em,leftmargin=5em,itemindent=!,
label=\textup{TSP(\arabic*):},ref=\textup{TSP(\arabic*)}}
\newcommand{\seq}[1]{{\left\langle{#1}\right\rangle}}
\newcommand{\smallseq}[1]{\langle{#1}\rangle}
\newcommand{\conc}{\hat{\,\,}}
\newcommand{\rest}[1]{\! \upharpoonright\!{#1}} 
\newcommand{\cat}{\widehat{\phantom{\alpha}}}
\newcommand{\andd}{\,\,\,\&\,\,\,}
\newcommand{\Then}{\,\Longrightarrow\,}
\newcommand{\Iff}{\,\,\Longleftrightarrow\,\,}
\renewcommand{\iff}{\leftrightarrow}
\newcommand{\then}{\,\,\rightarrow\,\,}
\DeclareMathOperator{\dom}{dom}
\DeclareMathOperator{\range}{range}
\DeclareMathOperator{\otp}{otp}
\newcommand{\Tur}{\textup{\scriptsize T}}
\newcommand{\Nat}{\mathbb N}
\newcommand{\w}{\omega}
\newcommand{\s}{\sigma}
\renewcommand{\phi}{\varphi}
\renewcommand{\epsilon}{\varepsilon}
\renewcommand{\le}{\leqslant}
\renewcommand{\ge}{\geqslant}
\newcommand{\nle}{\nleqslant}
\renewcommand{\preceq}{\preccurlyeq}
\renewcommand{\succeq}{\succcurlyeq}
\renewcommand{\npreceq}{\npreccurlyeq}
\renewcommand{\nsucceq}{\nsucccurlyeq}
\newcommand{\tleq}{\trianglelefteq}
\newcommand{\ntleq}{\ntrianglelefteq}
\newcommand{\tle}{\triangleleft}
\newcommand{\treq}[2]{\trianglerighteq^{#1}_{#2}}
\renewcommand{\Diamond}{\diamondsuit}
\newcommand{\Club}{\clubsuit}
\newcommand{\bSigma}{\boldsymbol{\Sigma}}
\newcommand{\bDelta}{\boldsymbol{\Delta}}
\newcommand{\bGamma}{\boldsymbol{\Gamma}}
\newcommand{\bLambda}{\boldsymbol{\Lambda}}
\newcommand{\bTheta}{\boldsymbol{\Theta}}
\newcommand{\bUpsilon}{\boldsymbol{\Upsilon}}
\newcommand{\bPi}{\boldsymbol{\Pi}}
\newcommand{\RCA}{\mathsf{RCA}}       
\newcommand{\ATR}{\mathsf{ATR}}       
\newcommand{\Pind}{\Pi^1_1\text{-}\mathsf{IND}}    
\newcommand{\Baire}{\mathcal{N}}
\newcommand{\emptystring}{\seq{}}
\newcommand{\RefClub}{{\hyperref[TSP:Club]{($\Club$)}}}
\newcommand{\ClassName}[1]{\textup{#1}}
\newcommand{\bClassName}[1]{{\textbf{#1}}}
\newcommand{\dual}[1]{\check{#1}}
\newcommand{\cs}[2]{\cramped{#1^{#2}}}
\DeclareDocumentCommand{\SU}{ m m g}
{
\IfNoValueT{#3}{\ClassName{SU}_{#1}(#2)}
\IfNoValueF{#3}{\ClassName{SU}_{#1}^{#2}(#3)}
}
\DeclareDocumentCommand{\bSU}{ m m g}
{
\IfNoValueT{#3}{\bClassName{SU}_{#1}(#2)}
\IfNoValueF{#3}{\bClassName{SU}_{#1}^{#2}(#3)}
}
\newcommand{\KB}{\textup{\scriptsize{KB}}}
\begin{document}

\title[Effective Wadge determinacy]{An Effective classification of Borel Wadge classes}

\author[A.\:Day]{Adam Day}

\author[N.\:Greenberg]{Noam Greenberg}
\address{School of Mathematics and Statistics\\ Victoria University of Wellington\\ Wellington, New Zealand}
\email{greenberg@msor.vuw.ac.nz}

\author[M.\:Harrison-Trainor]{Matthew Harrison-Trainor}
\address{Department of Mathematics\\ University of Michigan}
\email{matthhar@umich.edu}

\author[D.\:Turetsky]{Dan Turetsky}
\address{School of Mathematics and Statistics\\ Victoria University of Wellington\\ Wellington, New Zealand}
\email{dan.turetsky@vuw.ac.nz}

\begin{abstract}
We give a new and effective classification of all Borel Wadge classes of subsets of Baire space~$\Baire$. This relies on the true stage machinery originally developed by Montalb\'an. We use this machinery to give a new proof of Louveau and Saint-Raymond's separation theorem for Borel Wadge classes. This gives a proof of Borel Wadge determinacy in the subsystem $\ATR_0+\Pind$ of second-order arithmetic.
\end{abstract}

\maketitle

\setcounter{tocdepth}{1}
\tableofcontents

\section{Introduction}

Let $\Baire$ denote Baire space, $\omega^\omega$.  A set $A\subseteq \Baire$ is \emph{Wadge reducible} to a set $B\subseteq \Baire$ if~$A$ is a continuous pre-image of~$B$; we write $A\le_W B$. The \emph{Wadge class} of~$B$ is the collection of all sets which are Wadge reducible to~$B$. What are all Wadge classes of Borel sets? This question was essentially answered by Wadge (\cite{Wadge:phd}), with a full description appearing in \cite{Louveau:83} and another one in \cite{LouveauSR:Strength}. This analysis was then extended by Duparc \cite{Duparc1} to spaces $\kappa^\w$, Selivanov \cite{Selivanov:2007,Selivanov:2017} to $k$-partitions rather than sets, and Kihara and Montalb\'an \cite{KiharaMontalban:BQO} to BQO-valued functions. 

Once a system of descriptions for Borel Wadge classes is suggested, how does one prove that indeed every such class is described? The standard route, as suggested by Wadge, relies on \emph{Borel Wadge determinacy}. For any two sets $A,B\subseteq \Baire$, players in a game $G_L(A,B)$ alternate choosing natural numbers, resulting in a real $x\in \Baire$ built by player~I and another $y\in \Baire$ built by player~II; player~II wins if and only if $x\in A \Iff y\in B$. A winning strategy for player~II gives $A\le_W B$; for player~I, it gives $B\le_W A^\complement$. Borel Wadge determinacy is the statement that every such game, for Borel sets~$A$ and~$B$, is determined. Borel Wadge determinacy, thus, implies Wadge's \emph{semi-linear ordering principle} for Borel Wadge degrees: for all Borel~$A$ and~$B$, either $A\le_W B$ or $B\le_W A^\complement$. 

Further, using Borel Wadge determinacy, Martin showed (see \cite{KechrisMartin:Pi13}) that the Wadge ordering of Borel sets is well-founded. Thus, theorems about Borel Wadge classes can be proved inductively. One such theorem is that every Borel Wadge class has a description. 

In \cite{LouveauSR:WH,LouveauSR:Strength}, Louveau and Saint-Raymond follow a different route. They give two systems of descriptions (one is from \cite{Louveau:83}, based on work of Wadge's). They then give a new proof of Borel Wadge determinacy, but restricted to described classes. Using this, and a detailed analysis from \cite{Louveau:83} of the ambiguous classes of described classes, they then show that all Borel Wadge degrees are described, and so their determinacy result gives full Borel Wadge determinacy. 

One motivation for taking this different route is that their argument gives a proof of Borel Wadge determinacy within second-order arithmetic. This was quite surprising, since the most straightforward proof relies on Borel determinacy, which is known to require strong axioms \cite{FriedmanH:BorelDeterminacy}; and since~$\bPi^1_1$ Wadge determinacy was known to be equivalent to full $\bPi^1_1$ determinacy (Harrington \cite{Harrington:AnalyticDeterminacy}). The argument also gives an extension of Louveau's separation theorem \cite{Louveau:80:Separation} to all non-self-dual Borel Wadge classes.

It is then natural to ask for the weakest subsystem of second-order arithmetic in which Borel Wadge determinacy, or the semi-linear ordering principle, are provable. Cord\'on-Franco, Lara-Mart\'in and Loureiro \cite{Loureiro:phd,LoureiroEtAl:1} established some lower and upper bounds for lower levels of the Borel hierarchy. 

In the current paper we give a new analysis of all Borel Wadge classes. The main feature of our descriptions of Wadge classes is their \emph{dynamic} nature. Rather than a ``static'' description of sets in these classes, as constructed from simpler sets by using a variety of Boolean operations on sets, we describe approximation procedures that determine whether a given real is in the set described, based on its ({finite}) initial segments. 

To do this, we rely on the \emph{true stage machinery} developed by Montalb\'an \cite{Montalban:TrueStages:paper} to organise iterated priority arguments in computable structure theory. Applications of this technique to descriptive set theory were discovered by Day, Downey and Westrick \cite{DayDowneyWestrick} and Day and Marks \cite{DayMarks}. Our previous paper \cite{BSL_paper} is an expository paper, in which we give a description of the true stage machinery (as re-developed in \cite{GreenbergTuretsky:Pi11}), and showed how to use it to prove effective versions of the Hausdorff-Kuratowski theorem, Wadge's theorem on the structure of~$\bDelta^0_\lambda$ for limit~$\lambda$, and Louveau's separation theorem~\cite{Louveau:80:Separation}. In the current paper, we use our descriptions of Borel Wadge classes to prove Louveau and Saint-Raymond's separation theorem mentioned above, extending our argument from \cite{BSL_paper}, where it was restricted to the classes~$\bSigma^0_\xi$. This argument is more direct compared to the original one from \cite{LouveauSR:WH,LouveauSR:Strength}, which relied on the ramification method of unravelling games. We also provide the full details of the development of the true stage machinery, which was delayed from \cite{BSL_paper}.

Our reliance on the true stage method means that our analysis of the ambiguous classes described classes is inherently effective: computability of class descriptions and of names for sets in these classes are a necessary feature in this analysis, rather than an afterthought. 

Finally, we will observe that our methods are sufficiently effective so that they can be proved within the system $\ATR_0$ of second-order arithmetic, which is, in a sense, the natural base system to use when analysing Borel sets. We can then complete the argument sketched in \cite{LouveauSR:Strength} to prove Borel Wadge determinacy within that system (with the added requirement of some induction).

\section{Preliminaries}
\label{sec:preliminaries}

\subsection{An informal description of class descriptions} 
\label{sub:an_informal_description_of_class_descriptions}

In some sense, our descriptions of Borel Wadge classes are a common generalisation of the two systems of descriptions presented in \cite{LouveauSR:Strength}. A \emph{class description}~$\Gamma$ will include a well-founded tree~$T_\Gamma$, whose leaves will be labelled by 0's and 1's. For such a class description~$\Gamma$, a \emph{$\Gamma$-name}~$N$ for a subset of~$\Baire$ will determine functions $f_s$ for \emph{non-leaf} $s\in T_\Gamma$. Such a function~$f_s$ will choose, for each real $x\in \Baire$,  a child~$t$ of~$s$ on $T_\Gamma$. The subset of~$\Baire$ named by such a name~$N$ is then defined as follows: for a real~$x$, we determine a path $s_0,s_1, \dots, s_k$ through the tree~$T_\Gamma$, starting at the root $s_0= \seq{}$, and recursively choosing the child $s_{i+1} = f_{s_i}(x)$. When we get to a leaf~$s_k$, we decide that~$x$ is in the set or not, depending on the label (0 or 1) of~$s_k$ given by~$\Gamma$. The class description~$\Gamma$ will provide information stating which functions~$f_s$ can be used by $\Gamma$-names. The class~$\bGamma$ will consist of all sets that are named by a $\Gamma$-name. 

More specifically, for a \emph{non-leaf} $s\in T_\Gamma$, the class description~$\Gamma$ will specify two ordinals~$\xi_s$ and~$\eta_s$. A function $f_s$ used at the node~$s$ by a $\Gamma$-name will be the limit of an approximation at level~$\xi_s$, with bound $\eta_s+1$, and with a ``default outcome'' being the leftmost child of~$s$ on~$T_\Gamma$. Set-theoretically, this means that for each non-default child~$t$ of~$s$, the collection of~$x$ mapped by~$f_s$ to~$t$ form a $D_{\eta_s}(\bSigma^0_{1+\xi_s})$ set. 

By induction on the rank of~$T_\Gamma$, we can view our class descriptions as being built recursively, starting with the very simplest class descriptions: those which consists only of the root, which name the classes $\{\emptyset\}$ and $\{\Baire\}$ (depending on the label 0 or 1 of the root). In the typical application, given class descriptions $\Gamma_0, \Gamma_1, \dots$, we construct a more complicated class description~$\Gamma$ by combining the trees $T_{\Gamma_i}$ to a single tree $T_\Gamma$ by adding a root below them all and specifying the ordinals~$\xi$ and~$\eta$ for the new root. A $\Gamma$-name~$N$ will consist of a list $N_0,N_1,\dots$ with $N_i$ a $\Gamma_i$-name, and a function $f =f_{\seq{}}$ which for each real~$x$ tells us which name~$N_i$ we should apply to~$x$, in order to determine whether~$x$ is in the set named by~$N$. That is, if $A_i$ is the set named by~$N_i$, then the set named by~$N$ is $\{x\in \Baire \,:\,  x\in A_{f(x)} \}$. 

There are a couple of comments we should make now. The first is that even though $\Gamma$-names can be described ``statically'', using $D_{\eta_s}(\bSigma^0_{1+\xi_s})$ sets rather than functions~$f_s$, the way we will reason about these classes will be \emph{effective}. Thus, it will be important to view the functions~$f_s$ as the results of approximation procedures using true ${\xi_s}$-initial segments of a real~$x$, as in our analysis of the classes $D_\eta(\bSigma^0_{1+\xi})$ in \cite{BSL_paper}. Combining these approximation procedures, a $\Gamma$-name provides a ``dynamic'' approximation procedure for membership in the set it names. 

Further, the effective nature of our analysis, and our reliance on the true stage machinery, mean that we need to say what it means for an oracle~$y$ to compute a description of a class, and we need to choose specific computable copies of computable ordinals. For this reason, by ``ordinals'' we will mean well-orderings of subsets of~$\Nat$, and we will specify what it means to compute such orderings. 

The second point is that the nature of our approximation procedures indicates that the objects being approximated are \emph{functions} $F\colon \Baire \to \{0,1\}$, the characteristic functions of Borel sets. Our class descriptions can therefore be generalised to discuss Wadge classes of functions with ranges other than~$\{0,1\}$, as was done by Selivanov and Kihara-Montalb\'an. Our classification of these classes, though, is restricted to $\{0,1\}$-valued functions. 

Throughout our classification, we will be decomposing functions into simpler pieces.  \Cref{prop:main_classification_lemma} is illustrative -- here we begin with an $F$ belonging to a described class $\Delta(\bGamma)$, and we find $Y \subseteq \Baire$ such that $F\rest{Y}$ and $F\rest{\Baire\setminus Y}$ are from the simpler classes $\Delta(\bGamma_0)$ and $\bGamma^*$, respectively (the particulars of what this means are not important now; the reader should just accept that these are simpler classes). In the past, authors usually extended partial functions such as $F\rest{Y}$ to total functions by using retraction maps. While these are not effective, there are other, effective methods of extending partial functions to total functions. Thus, \cref{prop:main_classification_lemma} could be stated as $F\rest{\Baire\setminus Y}$ being the restriction of some function from~$\bGamma^*$. This, however, masks the fact that we are really only interested in the partial functions $F\rest{Y}$ and $F\rest{\Baire\setminus Y}$. Our arguments indicate that the more natural approach is to say that each description describes a class of \emph{partial} functions from $\Baire$ to $\{0,1\}$.  Then we can simply write $F\rest{\Baire\setminus Y} \in \bGamma^*$.

In particular, at each level~$\xi$, we will need to consider the restrictions of functions to both $\bSigma^0_{1+\xi}$ and $\bPi^0_{1+\xi}$ subsets of their domains. To do this, we will work with functions defined on subsets of~$\Baire$ in the second level of the Hausdorff hierarchy starting with each~$\bSigma^0_{1+\xi}$: the intersections of~$\bSigma^0_{1+\xi}$ and~$\bPi^0_{1+\xi}$ sets. To be able to properly discuss (and compute) names for such functions, we will represent their domains using the true stage relations, via the notion of $(z,\alpha)$-forests.

In this section we introduce the tools required for our definitions. We discuss computable ordinals. We detail the properties of the true stage machinery that we will use; but we relegate the development and verification of these properties to an appendix-like \cref{sec:true_stage_machinery}. We then introduce $(z,\alpha)$-forests and approximations of functions on such forests, and state a corresponding ``limit lemma''. We define our class descriptions in \cref{sec:described_classes}.

\subsection{Computable ordinals} 
\label{sub:computable_ordinals}

Henceforth, by an \emph{ordinal} we mean a well-ordering of a subset of~$\Nat$. We will nonetheless use set-theoretic terminology, and for example, talk about limit ordinals or successor ordinals; we will write~0 to denote the empty ordinal. When we want to refer to the von-Neumann (set-theoretic) ordinal isomorphic to a well-ordering~$\alpha$, we write $\otp(\alpha)$. 

We say that an oracle~$z$ computes an ordinal~$\alpha$ if~$z$ computes the field of~$\alpha$, the ordering relation, the successor function on~$\alpha$, and the set of limit points of~$\alpha$. 

We also require~$z$ to convey the information of whether~$\alpha$ is 0, a successor, or a limit; if a successor, what its greatest element is; if nonzero, what its initial element is. This means that when we say that~$z$ uniformly computes a sequence $\seq{\alpha_n}$ of ordinals, that $\{n\in \Nat\,:\, \alpha_n\text{ is limit}  \}$ is $z$-computable, and similarly for the other properties. Note that this information allows us to tell whether~$\alpha$ is finite or not, and if so, what its size is. If~$\alpha$ is a limit ordinal, then it allows us to (uniformly) compute an increasing and cofinal~$\w$-sequence in~$\alpha$. 

For two ordinals~$\alpha$ and~$\beta$, we write $\alpha< \beta$ if~$\alpha$ is a proper initial segment of~$\beta$ (this is a much stricter requirement than $\otp(\alpha)< \otp(\beta)$). Continuing set-theoretic notation, when $\alpha<\beta$, we often use~$\alpha$ to denote~$n_\alpha$, the element of the field of~$\beta$ which bounds~$\alpha$. For example, If $f\colon \Nat \to \beta$ is a function, we will often write $f(k)=\alpha$ instead of $f(k) = n_\alpha$. 

If an oracle~$z$ computes~$\beta$ then it computes all $\alpha<\beta$, and uniformly so, in the sense that given~$n_\alpha$ in the field of~$\beta$, $z$ computes all the required information about~$\alpha$.

\subsection{True stages} 
\label{sub:prelime:true_stages}

For each computable ordinal~$\alpha$, the true stage machinery provides a binary relation $\preceq_\alpha$ on  $\w^{\le \w}$, the collection of all finite and infinite sequences of natural numbers (which we call \emph{strings} or \emph{sequences}). The relation $\s\preceq_\alpha \tau$ reads ``$\s$ appears $\alpha$-true to~$\tau$.'' Intuitively, each finite~$\s$ makes a guess about $\Sigma^0_{1+\alpha}$ facts about infinite strings extending~$\s$. For $x\in \Baire$, $\s\prec_\alpha x$ will mean that $\s$'s guesses are correct about~$x$. If~$\tau$ is finite, then $\s\preceq_\alpha \tau$ will mean that~$\tau$ agrees with all of~$\s$'s guesses. This will not be precisely true, but it serves as a useful intuition for this machinery. As mentioned, the details are given in \cref{sec:true_stage_machinery}. 

We list some properties of the true stage relations $\preceq_\alpha$ that we will use.\footnote{The numbering of the properties differs from that in \cite{BSL_paper}.}
Fix a computable ordinal~$\alpha$. 

\begin{TSPlist}
    \item \label{TSP:nested}
    The relations are nested: 
    If $\alpha\le \beta$ then $\s\preceq_\beta \tau$ implies $\s\preceq_\alpha \tau$. We have $\s\preceq_0 \tau\Iff \s\preceq\tau$. Hence, for all~$\alpha$, $\s\preceq_\alpha \tau$ implies $\s\preceq \tau$. 

    \item \label{TSP:trees}
    The relation $\preceq_\alpha$ is a partial ordering of $\w^{\le \w}$ which is a tree (for all $\tau\in \w^{\le \w}$, $\left\{ \s \,:\,  \s\preceq_\alpha \tau \right\}$ is linearly ordered). The root of the tree is $\emptystring$ (the empty string). 

     \item \label{TSP:unique_path}
    For every $x\in \Baire$, 
     \[
      \left\{ \s\in \w^{<\w} \,:\,  \s\prec_\alpha x  \right\}
     \]
     is the unique infinite path of the restriction of $\preceq_\alpha$ to $\left\{ \s\in \w^{<\w} \,:\,  \s\prec x \right\}$. 

     \item \label{TSP:computable}
     The restriction of the relation $\preceq_\alpha$ to pairs of finite strings is computable. 

     \item \label{TSP:limit}
    If $\lambda$ is a limit, then for all~$\s,\tau\in \w^{\le \w}$, 
    \[
        \s\preceq_\lambda \tau \,\,\Iff\,\, (\forall \alpha<\lambda)\, \s \preceq_\alpha \tau.
    \]
    Furthermore, there is a computable and increasing sequence $\seq{\lambda_k}$, cofinal in~$\lambda$, such that for all $\s\in \w^{<\w}$, if $k = |\s|_\lambda$ is the height of~$\s$ in the tree $(\w^{<\w},\preceq_\lambda)$, then $|\s|_{\lambda_k} = k$, and for all $\tau\in \w^{\le \w}$, 
    \[
        \s\preceq_\lambda \tau \,\Iff\,  \s\preceq_{\lambda_k} \tau. 
    \]
\end{TSPlist}

For \ref{TSP:limit}, note that in light of \ref{TSP:nested}, the condition $|\s|_\lambda = |\s|_{\lambda_k}$ says that for all $\rho \preceq \s$, $\rho \preceq_\lambda \s \,\Iff\, \rho\preceq_{\lambda_k} \s$. 

The next property deals with successor ordinals. For an ordinal~$\alpha$, we write $\alpha+1$ to denote any ordinal~$\beta$ such that $\alpha<\beta$ and $\otp(\beta)= \otp(\alpha)+1$. 

\begin{TSPlist}[resume]
    \item \label{TSP:successor}
    There is a computable function $p_{\alpha+1}\colon \w^{<\w}\to \Nat\cup \{-1\}$  such that for all $\s,\tau\in \w^{\le \w}$, $\s\preceq_{\alpha+1} \tau$ if and only if $\s\preceq_\alpha \tau$, and for all finite~$\rho$ with $\s\preceq_\alpha \rho\preceq_\alpha \tau$, $p_{\alpha+1}(\rho) \ge p_{\alpha+1}(\s)$.
\end{TSPlist}

We note that \ref{TSP:successor} implies the following:

\begin{TSPlist}
     \item[($\Club$)] \label{TSP:Club}
     For all $\s\preceq_\alpha \rho \preceq_\alpha \tau \in \w^{\le \omega}$, if $\s\preceq_{\alpha+1} \tau$ then $\s\preceq_{\alpha+1} \rho$. 
\end{TSPlist}

The connection with the (effective) Borel hierarchy is the following:

\begin{TSPlist}[start=7]
     \item \label{TSP:Sigma_alpha_sets}
        A set $A\subseteq \Baire$ is $\Sigma^0_{1+\alpha}$ if and only if there is a c.e.\ set $U\subseteq \w^{<\w}$ such that 
        \[
            A =  \left\{ x\in \Baire \,:\,  (\exists \s \prec_\alpha x)\,\,\s\in U \right\}.
        \]
\end{TSPlist}

\begin{remark} \label{rmk:regarding_uniform_computability}
    We will not repeatedly state this, but every computable procedure in this paper is uniform, in every conceivable way. This means that in each instance, there is a single computable operator which takes the input and produces the desired output. In particular, for example, if $\seq{\alpha_n}$ is a uniformly computable sequence of ordinals, then the relations $\preceq_{\alpha_n}$ are uniformly computable. Similarly, for each~$\alpha$ (uniformly in~$\alpha$) there are computable functions translating between c.e.\ indices of sets $U\subseteq \w^{<\w}$ and $\Sigma^0_{1+\alpha}$-indices of the related subsets of~$\Baire$ as in \ref{TSP:Sigma_alpha_sets}. 
\end{remark}

\subsubsection{Relativised true stages} 

The machinery above was stated for computable ordinals~$\alpha$, to keep notation simple. In fact, we can relativise this machinery to any oracle. For any ordinal~$\alpha$, and for any oracle~$z$ that computes~$\alpha$, we obtain a relation $\preceq^z_\alpha$, which satisfies all of the properties above, with the following changes: ``computable'' (in \ref{TSP:computable}, \ref{TSP:limit} and \ref{TSP:successor}) is replaced by ``$z$-computable''; and in \ref{TSP:Sigma_alpha_sets}, ``c.e.'' is replaced by ``$z$-c.e.'' and $\Sigma^0_{1+\alpha}$ is replaced by $\Sigma^0_{1+\alpha}(z)$.  Note that even when $\alpha$ is computable, the relation $\preceq_\alpha^z$ is not the same as $\preceq_\alpha$, though we can think of $\preceq_\alpha$ as $\preceq_\alpha^\emptyset$.

As expected, this relativisation process is uniform. Suppose that~$\alpha$ is a computable ordinal. Then the relations $\preceq^z_\alpha$ on pairs of finite strings are $z$-computable, uniformly in~$z$. In fact, they naturally turn out to be Lipschitz-computable (computable with constant use): there is a ternary computable relation $\s\preceq^{\rho}_\alpha\tau$, defined when $|\rho|\ge |\tau|$, such that for all~$z$, $\s\preceq^z_\alpha \tau \Iff  \s\preceq^{z\rest{|\tau|}}_\alpha \tau$.

\subsection{$\alpha$-forests} 
\label{sub:_alpha_forests}

Fix a computable ordinal~$\alpha$.

\begin{definition} \label{def:alpha_open_and_closed}
    Let $S\subseteq \w^{<\w}$. 
    \begin{sublemma}
        \item A set $R\subseteq S$ is \emph{$\alpha$-open in~$S$} if for all $\s\in R$ and $\tau\in S$, if $\s\preceq_\alpha \tau$ then $\tau \in R$.
        \item A set $R\subseteq S$ is \emph{$\alpha$-closed in~$S$} if for all $\s\in R$ and $\tau\in S$, if $\tau\preceq_\alpha \s$ then $\tau \in R$.
    \end{sublemma}
\end{definition}

\ref{TSP:Sigma_alpha_sets} can be modified as follows.

\begin{lemma} \label{lem:alpha_open_alpha_closed_sigma_and_pi}
    Let $A\subseteq \Baire$. 
    \begin{sublemma}
        \item $A$ is $\Sigma^0_{1+\alpha}$ if and only if there is a computable $W\subseteq \w^{<\w}$, $\alpha$-open in $\w^{<\w}$, such that 
        \[
                A =  \left\{ x\in \Baire \,:\,  (\exists \s\prec_\alpha x)\,\,\s \in W \right\} = 
                \left\{ x\in \Baire \,:\,  (\forall^\infty \s\prec_\alpha x)\,\,\s \in W \right\}.
            \]
        \item $A$ is $\Pi^0_{1+\alpha}$ if and only if there is a computable  $T\subseteq \w^{<\w}$, $\alpha$-closed in $\w^{<\w}$, such that 
        \[
                A =  
                \left\{ x\in \Baire \,:\,  (\forall \s\prec_\alpha x)\,\,\s \in T \right\}.
            \]
    \end{sublemma}
\end{lemma}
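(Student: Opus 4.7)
The plan is to derive (b) from (a) by complementation, and to prove (a) by upgrading the c.e.\ witness set of property \ref{TSP:Sigma_alpha_sets} to a computable $\alpha$-open one. The only nontrivial point is computability of $W$ (rather than just c.e.-ness), which will be handled by forming the $\alpha$-open hull of an enumeration of $U$ one stage at a time.

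For the ``if'' direction of (a): a computable $W$ is in particular c.e., so by \ref{TSP:Sigma_alpha_sets} the set $\{x:\exists\s\prec_\alpha x,\,\s\in W\}$ is $\Sigma^0_{1+\alpha}$. The equality of the $\exists$- and $\forall^\infty$-versions follows from $\alpha$-openness of $W$ together with \ref{TSP:unique_path}: the $\alpha$-true stages along $x$ form an infinite path, linearly ordered by $\preceq_\alpha$, so any single $\s_0\prec_\alpha x$ in $W$ forces all later $\tau\prec_\alpha x$ on this path into $W$; conversely, the $\forall^\infty$-condition implies the $\exists$-condition because the path is nonempty.

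For the ``only if'' direction of (a): fix via \ref{TSP:Sigma_alpha_sets} a c.e.\ $U\subseteq\w^{<\w}$ with $A=\{x:\exists\s\prec_\alpha x,\,\s\in U\}$, together with a computable enumeration $(u_s)_{s\in\w}$ of $U$. I define
\[
    W \,=\, \{\tau\in\w^{<\w} \,:\, \exists\, s\leq|\tau| \text{ with } u_s\preceq_\alpha\tau\}.
\]
This set is computable by \ref{TSP:computable} (only $u_0,\dots,u_{|\tau|}$ need to be inspected, and $\preceq_\alpha$ is decidable on pairs of finite strings), and it is $\alpha$-open: if $\tau\in W$ is witnessed by $s\leq|\tau|$ and $\tau\preceq_\alpha\tau'$, then $u_s\preceq_\alpha\tau'$ by transitivity (\ref{TSP:trees}) and $s\leq|\tau|\leq|\tau'|$ by \ref{TSP:nested}, so $\tau'\in W$. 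The check that $W$ defines the same set $A$ as $U$ uses \ref{TSP:unique_path} once more: the inclusion ``$\supseteq$'' is immediate because $u_s\preceq_\alpha \s\prec_\alpha x$ forces $u_s\prec_\alpha x$ (as $u_s$ is finite); conversely, if $x\in A$ via $u_s\prec_\alpha x$, then on the infinite $\preceq_\alpha$-path through $x$ any $\tau$ past $u_s$ of length $\geq s$ lies in $W$. The same argument shows the $\forall^\infty$-equality automatically.

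For (b), I apply (a) to $\Baire\setminus A$ and set $T=\w^{<\w}\setminus W$, so that $A=\{x:\forall\s\prec_\alpha x,\,\s\in T\}$; then $T$ is computable and $\alpha$-closed because the complement of an $\alpha$-open set in $\w^{<\w}$ is $\alpha$-closed (if $\s\in T$ and $\tau\preceq_\alpha\s$, then $\tau\in W$ would force $\s\in W$ by $\alpha$-openness), and the reverse direction is symmetric. The only real obstacle throughout is securing computability of $W$ (as against c.e.-ness); the staged ``$s\leq|\tau|$'' definition is the key trick, exploiting \ref{TSP:computable} and \ref{TSP:trees} to realise the $\alpha$-open closure decidably without disturbing the $\Sigma^0_{1+\alpha}$-content.
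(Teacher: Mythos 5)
Your proof is correct and follows essentially the same route as the paper's: the paper also forms $W$ by the "delayed enumeration" trick, putting $\s\in W$ when some $\rho\preceq_\alpha\s$ has appeared in the enumeration of $U$ by a stage bounded by a length-measure of $\s$ (the paper uses $|\s|_\alpha$ where you use $|\s|$, an immaterial difference), and likewise obtains (b) from (a) by complementation. Your verification that $W$ is $\alpha$-open and defines the same set is the intended one.
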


\begin{proof}
    (a): this is a standard trick. If $\seq{U_s}$ is a computable enumeration of the set~$U$ given by \ref{TSP:Sigma_alpha_sets}, then we let $\s\in W$ if there is some $\rho\preceq_\alpha \s$ in $U_{|\s|_\alpha}$, where recall that $|\s|_{\alpha}$ is the height of~$\s$ in the tree $(\w^{< \w},\preceq_\alpha)$. (b) is obtained from~(a) by taking the complement. 
\end{proof}

As mentioned above, we need to generalise these two notions. 

\begin{definition} \label{def:alpha-forest}
    An \emph{$\alpha$-forest} is a subset~$S$ of $\w^{<\w}$ which is $\preceq_\alpha$-convex: if $\s\preceq_\alpha \rho\preceq_\alpha \tau$ and $\s,\tau\in S$ then $\rho\in S$. 

    For an $\alpha$-forest~$S$ we let
    \[
        [S]_\alpha =   \left\{ x\in \Baire \,:\,  (\exists^\infty \s\prec_\alpha x)\,\,\s \in S \right\} = 
                \left\{ x\in \Baire \,:\,  (\forall^\infty \s\prec_\alpha x)\,\,\s \in S \right\}.
    \]
\end{definition}

We can visualise an $\alpha$-forest as the disjoint union of trees: the collection of $\preceq_\alpha$-minimal elements of~$S$ forms an antichain in $(\w^{<\w},\preceq_\alpha)$, and for each~$\s$ in that antichain, the restriction of $(S,\preceq_\alpha)$ to strings $\tau\succeq_\alpha \s$ is an $\alpha$-tree with root~$\s$. 

Every $\alpha$-open subset~$W$ of $\w^{<\w}$ is an $\alpha$-forest, as is every $\alpha$-closed set~$T$; and $[W]_\alpha$ and~$[T]_\alpha$ coincide with the sets described in \cref{lem:alpha_open_alpha_closed_sigma_and_pi}. Further:

\begin{lemma} \label{lem:open_and_closed_within_forest_is_a_forest}
    Let $S$ be an $\alpha$-forest. If $R\subseteq S$ is either $\alpha$-open in~$S$ or $\alpha$-closed in~$S$, then~$R$ is an $\alpha$-forest. 
\end{lemma}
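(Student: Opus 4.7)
The plan is to verify the $\preceq_\alpha$-convexity of $R$ directly from the definitions, using convexity of the ambient $\alpha$-forest $S$ as an intermediate step. Fix $\sigma, \tau \in R$ and $\rho \in \omega^{<\omega}$ with $\sigma \preceq_\alpha \rho \preceq_\alpha \tau$; we must show $\rho \in R$.

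First I would observe that since $R \subseteq S$ and $S$ itself is an $\alpha$-forest, the assumption $\sigma, \tau \in R$ yields $\sigma, \tau \in S$, and convexity of $S$ immediately gives $\rho \in S$. This reduces both cases to checking a single membership in $R$ using the defining property of $R$ inside $S$.

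In the $\alpha$-open case, I would apply $\alpha$-openness of $R$ in $S$ to the pair $\sigma \in R$, $\rho \in S$ with $\sigma \preceq_\alpha \rho$, to conclude $\rho \in R$. In the $\alpha$-closed case, I would apply $\alpha$-closedness of $R$ in $S$ to the pair $\tau \in R$, $\rho \in S$ with $\rho \preceq_\alpha \tau$, to conclude $\rho \in R$. Either way $\rho \in R$, proving convexity.

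There is no real obstacle here; the lemma is essentially an unpacking of definitions, and the only nontrivial input is that passage through $S$ is legitimate because $S$ is already convex. The statement is included to make later references cleaner, so the proof is a two-line case split and I would write it as such.
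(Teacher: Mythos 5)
Your proof is correct, and the paper in fact omits any proof of this lemma, treating it as an immediate consequence of the definitions — which your two-case verification (convexity of $S$ puts $\rho$ in $S$, then openness from $\sigma$ or closedness from $\tau$ puts $\rho$ in $R$) confirms. Nothing to add.
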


For the following lemma, let $\Sigma^0_{\beta}\wedge \Pi^0_{\beta}$ denote the class of sets which are intersections $A\cap B$ where~$A$ is $\Sigma^0_\beta$ and $B$ is $\Pi^0_\beta$ (this is the class $D_2(\Sigma^0_\beta)$ in the effective Hausdorff hierarchy). 

\begin{lemma} \label{lem:alpha_forests_define_which_sets}
    A set $A\subseteq \Baire$ is $\Sigma^0_{1+\alpha}\wedge \Pi^0_{1+\alpha}$ if and only if there is a computable $\alpha$-forest~$S$ such that $A = [S]_\alpha$. 
\end{lemma}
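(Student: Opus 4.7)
The plan is to connect the $\alpha$-forest representation directly to the $\alpha$-open and $\alpha$-closed characterisations of $\Sigma^0_{1+\alpha}$ and $\Pi^0_{1+\alpha}$ supplied by \cref{lem:alpha_open_alpha_closed_sigma_and_pi}. For the forward direction, given $A = A_\Sigma \cap A_\Pi$ with $A_\Sigma \in \Sigma^0_{1+\alpha}$ and $A_\Pi \in \Pi^0_{1+\alpha}$, I would extract from that lemma a computable $\alpha$-open $W \subseteq \w^{<\w}$ witnessing $A_\Sigma$ and a computable $\alpha$-closed $T \subseteq \w^{<\w}$ witnessing $A_\Pi$, and then set $S := W \cap T$. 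The $\alpha$-convexity of $S$ is then automatic: if $\s \preceq_\alpha \rho \preceq_\alpha \tau$ with $\s,\tau \in S$, then $\rho \in W$ by $\alpha$-openness applied to $\s$, and $\rho \in T$ by $\alpha$-closedness applied to $\tau$. For the equality $A = [S]_\alpha$, I split the defining condition $\forall^\infty \s \prec_\alpha x,\ \s \in S$ as $\forall^\infty \s \in W$ and $\forall^\infty \s \in T$ separately; the first is exactly $x \in A_\Sigma$ by \cref{lem:alpha_open_alpha_closed_sigma_and_pi}(a), and the second I would upgrade to $\forall \s \prec_\alpha x,\ \s \in T$ using that $T$ is $\alpha$-closed (a single excluded $\rho \prec_\alpha x$ would force all $\alpha$-successors of $\rho$ along $x$ to be excluded as well, contradicting $\forall^\infty$), which is $x \in A_\Pi$.

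For the converse, starting from a computable $\alpha$-forest $S$, the plan is to manufacture $W$ and $T$ from $S$ and apply \cref{lem:alpha_open_alpha_closed_sigma_and_pi} in reverse. I take $W := \{\tau \in \w^{<\w} : (\exists \s \preceq_\alpha \tau)\ \s \in S\}$, the $\alpha$-upward closure of $S$: this is visibly $\alpha$-open and, thanks to \ref{TSP:nested} (which bounds any $\alpha$-predecessor of $\tau$ by $\tau$ itself), computable. For the $\alpha$-closed partner I set $T := S \cup (\w^{<\w} \setminus W)$, i.e.\ $\tau \in T$ iff either $\tau \in S$ or $\tau$ has no $\alpha$-predecessor in $S$ at all. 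Checking $\alpha$-closedness breaks into two cases: if $\rho \preceq_\alpha \tau \in S$ and $\rho \notin S$, then $\rho$ cannot lie in $W$, for any witness $\s \preceq_\alpha \rho$ in $S$ would, by transitivity of $\preceq_\alpha$, combine with $\tau$ to violate $\alpha$-convexity of $S$ at $\rho$; if $\rho \preceq_\alpha \tau$ and $\tau \notin W$, transitivity forces $\rho \notin W$ as well.

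The final step, which I expect to be the most delicate, is showing that $[S]_\alpha$ coincides with $\{x : \exists \s \prec_\alpha x,\ \s \in W\} \cap \{x : \forall \s \prec_\alpha x,\ \s \in T\}$. The cleanest route is to fix $x$ and analyse its $\alpha$-true path $P(x) := \{\s : \s \prec_\alpha x\}$, which by \ref{TSP:unique_path} together with \ref{TSP:nested} is a chain in $(\w^{<\w}, \preceq_\alpha)$ of order type $\omega$ (each element has only finitely many $\alpha$-predecessors, bounded by its length). The $\alpha$-convexity of $S$ forces $P(x) \cap S$ to be an interval in $P(x)$, hence either empty, a bounded finite interval, or a cofinal tail. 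The $W$-condition makes this intersection non-empty, while the $T$-condition forbids re-exiting $S$ once entered along $P(x)$; together they isolate the cofinal-tail case, which is exactly $x \in [S]_\alpha$. The only non-obvious design choice is the definition of $T$: since $S$ itself need not be $\alpha$-closed, one has to pad it with the strings that have not yet met $S$ in their $\alpha$-past, which is precisely what $\w^{<\w} \setminus W$ supplies.
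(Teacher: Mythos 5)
Your proof is correct and follows essentially the same route as the paper: the forward direction is identical (intersect the $\alpha$-open and $\alpha$-closed witnesses from \cref{lem:alpha_open_alpha_closed_sigma_and_pi}), and in the converse your $\alpha$-closed set $T = S \cup (\w^{<\w}\setminus W)$ is exactly the complement of the paper's $\alpha$-open $V = W\setminus S$, so your identity $A=[W]_\alpha\cap[T]_\alpha$ is the paper's $A=[W]_\alpha\setminus[V]_\alpha$ restated. You simply supply more of the routine verifications (convexity, closedness, the true-path analysis) that the paper leaves implicit.
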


\begin{proof}
    Suppose that $A$ is $\Sigma^0_{1+\alpha}\wedge \Pi^0_{1+\alpha}$. By \cref{lem:alpha_open_alpha_closed_sigma_and_pi}, let $U,T\subseteq \w^{<\w}$ be computable with~$U$ $\alpha$-open and~$T$ $\alpha$-closed such that $A = [U]_\alpha\cap [T]_\alpha$. Let $S = U\cap T$; then~$S$ is an $\alpha$-forest and $A = [S]_\alpha$ (note that the intersection of any number of $\alpha$-forests is an $\alpha$-forest, though the class $\Sigma^0_{1+\alpha}\wedge \Pi^0_{1+\alpha}$ is not closed under taking infinite intersections, even if effective). 

    In the other direction, suppose that~$S$ is a computable $\alpha$-forest. Let~$W$ be the upwards closure of~$S$ in $\preceq_\alpha$; let $V = W\setminus S$. Then both~$W$ and~$V$ are computable and $\alpha$-open, and $A = [U]_\alpha\setminus [V]_\alpha$, so is $\Sigma^0_{1+\alpha}\wedge \Pi^0_{1+\alpha}$.
\end{proof}

Suppose that~$X$ is $\Sigma^0_{\beta}\wedge \Pi^0_{\beta}$. A set $Y\subseteq X$ is \emph{$\Sigma^0_{\beta}$ within~$X$} if $Y = X\cap A$ for some $\Sigma^0_{\beta}$ set; similarly define $\Pi^0_\beta$ within~$X$. Both sets which are $\Sigma^0_{\beta}$ within~$X$ or $\Pi^0_\beta$ within~$X$ are $\Sigma^0_{\beta}\wedge \Pi^0_{\beta}$. A set $Y\subseteq X$ is $\Sigma^0_{\beta}\wedge \Pi^0_{\beta}$ within~$X$ if and only if it is $\Sigma^0_{\beta}\wedge \Pi^0_{\beta}$. 

\begin{lemma} \label{lem:relatively_effectively_alpha_closed_and_open_by_forests}
    Let $S$ be a computable $\alpha$-forest. 
    \begin{sublemma}
        \item A set $Y\subseteq [S]_\alpha$ is $\Sigma^0_{1+\alpha}$ within~$[S]_\alpha$ if and only if $Y=[W]_\alpha$ for some computable $W\subseteq S$ which is $\alpha$-open in~$S$. 
        \item A set $Y\subseteq [S]_\alpha$ is $\Pi^0_{1+\alpha}$ within~$[S]_\alpha$ if and only if $Y=[T]_\alpha$ for some computable $T\subseteq S$ which is $\alpha$-closed in~$S$.
        \item A set $Y\subseteq [S]_\alpha$ is $\Sigma^0_{1+\alpha}\wedge \Pi^0_{1+\alpha}$ if and only if $Y=[R]_\alpha$ for some computable $\alpha$-forest $R\subseteq S$.
    \end{sublemma}
\end{lemma}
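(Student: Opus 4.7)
The plan is to reduce each part to the absolute statements in \cref{lem:alpha_open_alpha_closed_sigma_and_pi} and \cref{lem:alpha_forests_define_which_sets}, by intersecting with $S$ in one direction and extending via $\preceq_\alpha$-closures in $\w^{<\w}$ in the other. The forest/convexity property of $S$ is what allows ``$\exists^\infty$'' and ``$\forall^\infty$'' along an $\alpha$-path to be interchanged, which is the mechanism making these translations clean.

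I would begin with part~(a). For $(\Leftarrow)$, given a computable $W\subseteq S$ that is $\alpha$-open in $S$, I would form the $\preceq_\alpha$-upward closure $W^{+}=\{\tau\in\w^{<\w}:(\exists\s\preceq_\alpha\tau)\;\s\in W\}$. By property~\ref{TSP:trees}, the set of $\preceq_\alpha$-predecessors of any $\tau$ is finite and computable, so $W^{+}$ is a computable, $\alpha$-open subset of $\w^{<\w}$; then \cref{lem:alpha_open_alpha_closed_sigma_and_pi}(a) gives that $A:=[W^{+}]_\alpha$ is $\Sigma^0_{1+\alpha}$. The key calculation is $[W]_\alpha=[S]_\alpha\cap A$: the inclusion $\subseteq$ is immediate, while for $\supseteq$, if $x$ lies in the right side, then $\forall^\infty\s\prec_\alpha x$ we have $\s\in S\cap W^{+}$, and any such $\s$ has some $\rho\preceq_\alpha \s$ with $\rho\in W\subseteq S$, whence the $\alpha$-openness of $W$ in $S$ forces $\s\in W$. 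Thus $[W]_\alpha$ is $\Sigma^0_{1+\alpha}$ within $[S]_\alpha$. For $(\Rightarrow)$, writing $Y=[S]_\alpha\cap A$ with $A\in\Sigma^0_{1+\alpha}$, I would apply \cref{lem:alpha_open_alpha_closed_sigma_and_pi}(a) to obtain a computable $\alpha$-open $U\subseteq\w^{<\w}$ with $A=[U]_\alpha$, and set $W:=S\cap U$. Then $W$ is computable and $\alpha$-open in $S$ (immediate from the $\alpha$-openness of $U$), and $[W]_\alpha=[S]_\alpha\cap[U]_\alpha=Y$ follows again from the convexity/forest equivalence.

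For part~(b), I would argue by passing to complements within $[S]_\alpha$. The key auxiliary observation is that for any $W\subseteq S$ that is $\alpha$-open in $S$, its complement $T:=S\setminus W$ is $\alpha$-closed in $S$, and moreover $[S]_\alpha=[W]_\alpha\sqcup[T]_\alpha$. The partition arises because, along the linearly ordered $\alpha$-path through any $x\in[S]_\alpha$, a comparable pair $\s\preceq_\alpha\tau$ with $\s\in W$ and $\tau\in T$ would contradict $\alpha$-openness of $W$ in $S$, while the reverse pair would contradict $\alpha$-closedness of $T$ in $S$; hence one of the two sides is finite along the path. Using this partition, $Y\subseteq[S]_\alpha$ is $\Pi^0_{1+\alpha}$ within $[S]_\alpha$ iff $[S]_\alpha\setminus Y$ is $\Sigma^0_{1+\alpha}$ within $[S]_\alpha$, and then part~(a) supplies the required $W$, whose complement $T=S\setminus W$ provides the desired $\alpha$-closed forest. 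The converse direction is symmetric.

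Finally, part~(c) follows by combining (a) and~(b). Given $Y\subseteq[S]_\alpha$ with $Y=A\cap B$, $A\in\Sigma^0_{1+\alpha}$, $B\in\Pi^0_{1+\alpha}$, the sets $A\cap[S]_\alpha$ and $B\cap[S]_\alpha$ are $\Sigma$ and $\Pi$ within $[S]_\alpha$, so (a) and (b) yield computable $W,T\subseteq S$ with $W$ $\alpha$-open in $S$, $T$ $\alpha$-closed in $S$, and $A\cap[S]_\alpha=[W]_\alpha$, $B\cap[S]_\alpha=[T]_\alpha$. Set $R:=W\cap T$; the intersection of two $\preceq_\alpha$-convex sets is $\preceq_\alpha$-convex, so $R$ is a computable $\alpha$-forest contained in $S$, and $[R]_\alpha=[W]_\alpha\cap[T]_\alpha=Y$. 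Conversely, for any computable $\alpha$-forest $R\subseteq S$, \cref{lem:alpha_forests_define_which_sets} immediately gives $[R]_\alpha\in\Sigma^0_{1+\alpha}\wedge\Pi^0_{1+\alpha}$. The main obstacle, if there is one, is just keeping track of the forest/convexity arguments needed to swap ``infinitely often'' with ``eventually'' on each subforest; nothing delicate is at stake beyond careful bookkeeping.
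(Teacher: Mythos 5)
Your proof is correct. The paper states this lemma without proof (it is treated as a routine relativisation of \cref{lem:alpha_open_alpha_closed_sigma_and_pi} and \cref{lem:alpha_forests_define_which_sets}), and your argument is exactly the intended one: pass to the $\preceq_\alpha$-upward closure $W^{+}$ (computable by \ref{TSP:nested} and \ref{TSP:computable}) to export openness to all of $\w^{<\w}$, intersect with $S$ to import it, use convexity to identify $[W]_\alpha$ with $[S]_\alpha\cap[W^{+}]_\alpha$, and handle the closed case by complementation inside $[S]_\alpha$ via the partition $[S]_\alpha=[W]_\alpha\sqcup[S\setminus W]_\alpha$. All steps check out, including the identity $[W\cap T]_\alpha=[W]_\alpha\cap[T]_\alpha$, which indeed needs the $\forall^\infty$ form of $[\cdot]_\alpha$ available for $\alpha$-forests.
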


\begin{remark} \label{rmk:alpha_and_beta_forests}
    \ref{TSP:nested} implies that if $\alpha<\beta$ and~$S$ is an $\alpha$-forest, then $S$ is also a $\beta$-forest, and $[S]_\alpha = [S]_\beta$. Thus, when~$\alpha$ is clear, we will just write~$[S]$. 
\end{remark}

\begin{remark} \label{rmk:relativised_forests}
    As above, all notions can be relativised to an oracle~$z$, giving the notion of a $(z,\alpha)$-forest. We write $[S]^z_\alpha$ for the set of ``infinite paths'' through~$S$. 
\end{remark}

\subsubsection{Orthogonal forests} 

\begin{definition} \label{def:alpha-orthogonal}
    Two sets $A,B\subseteq \w^{<\w}$ are \emph{$\alpha$-orthogonal} if every $\s\in A$ and $\tau\in B$ are $\preceq_\alpha$-incomparable. 
\end{definition}

If $A$ and~$B$ are $\alpha$-orthogonal $\alpha$-forests then $[A]_\alpha$ and $[B]_\alpha$ are disjoint, indeed they are contained in disjoint $\Sigma^0_{1+\alpha}$ sets (take the $\preceq_\alpha$-upward closures of~$A$ and~$B$). On the other hand it is possible that~$U$ and~$W$ are $\alpha$-open and not orthogonal, and so not disjoint, while $[U]_\alpha$ and~$[W]_\alpha$ are disjoint. The reason is that when $\alpha > 0$, there are $\s\in \w^{<\w}$ such that~$[\s]_\alpha$ is empty, that is, the tree $(\w^{<\w},\preceq_\alpha)$ is well-founded above~$\s$. The set of such~$\s$ is $\Pi^1_1$-complete, in particular, we cannot identify such~$\s$ computably and exclude them from $\alpha$-forests. However, if we know that $[U]_\alpha\cap [W]_\alpha = \emptyset$ but $\s\in U\cap W$, this gives us a proof that $[\s]_\alpha = \emptyset$, and so we can ignore such~$\s$. 

\begin{lemma} \label{lem:producing_pairwise_orthogonal}
    Let~$S$ be an $\alpha$-forest. Suppose that $Z_0,Z_1,\dots$ are pairwise disjoint sets which are uniformly $\Sigma^0_{1+\alpha}$ in~$[S]_\alpha$. Then there are pairwise orthogonal, uniformly computable $\alpha$-forests $W_0,W_1,\dots$, each $\alpha$-open in~$S$, such that $Z_i = [W_i]_\alpha$ for all~$i$, and such that $\bigcup_i W_i$ also is computable. 
\end{lemma}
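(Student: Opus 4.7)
The plan is to first apply \cref{lem:relatively_effectively_alpha_closed_and_open_by_forests}(a) to obtain uniformly computable $\alpha$-forests $U_i\subseteq S$, each $\alpha$-open in~$S$, with $[U_i]_\alpha=Z_i$, and then to refine them. The difficulty is that although the $Z_i$ are pairwise disjoint, the $U_i$ need not be: whenever $\sigma\in U_i\cap U_j$ for $i\neq j$ we must have $[\sigma]_\alpha \subseteq Z_i\cap Z_j=\emptyset$, so $\sigma$ is a ``dead'' string, but as the paper already points out there is no computable way to identify dead strings. The trick is to prune these overlaps uniformly by a bounded ``unique index'' check at the witness level.

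Concretely, I would define
\[
  W_i \;:=\; \bigl\{\sigma\in S \,:\, \exists\,\rho\preceq_\alpha\sigma\text{ in }S\text{ with }|\rho|\ge i,\ \rho\in U_i,\ \rho\notin U_j\text{ for all }j\le|\rho|,\,j\neq i\bigr\}.
\]
Since $\sigma$ has only finitely many $\preceq_\alpha$-predecessors in~$S$ (each of length $\le|\sigma|$) and the conditions on each candidate witness $\rho$ only involve indices $j\le|\rho|$, the family $(W_i)_i$ is uniformly computable. The same witness $\rho$ works for every $\preceq_\alpha$-successor of $\sigma$ in~$S$, so each $W_i$ is $\alpha$-open in~$S$, and hence an $\alpha$-forest by \cref{lem:open_and_closed_within_forest_is_a_forest}. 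For $[W_i]_\alpha=Z_i$: the inclusion $\subseteq$ is immediate since any $\sigma\in W_i$ lies in~$U_i$ by the $\alpha$-openness of~$U_i$; for $\supseteq$, given $x\in Z_i$, cofinitely many $\sigma\prec_\alpha x$ lie in $U_i$ and have $|\sigma|\ge i$, and for any such $\sigma$, membership $\sigma\in U_j$ with $j\neq i$, $j\le|\sigma|$ would force $[\sigma]_\alpha\subseteq Z_i\cap Z_j=\emptyset$, contradicting $x\in[\sigma]_\alpha$; so $\sigma$ itself witnesses $\sigma\in W_i$.

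The main step is pairwise disjointness. Suppose $\sigma\in W_i\cap W_k$ for $i\neq k$, witnessed by $\rho$ (for $W_i$) and $\rho'$ (for $W_k$). As $\preceq_\alpha$-predecessors of $\sigma$ in the tree $(\w^{<\w},\preceq_\alpha)$, the strings $\rho$ and $\rho'$ are $\preceq_\alpha$-comparable; WLOG $\rho\preceq_\alpha\rho'$. Because $\preceq_\alpha$ refines string extension, $|\rho|\le|\rho'|$, so $i\le|\rho|\le|\rho'|$, and $\alpha$-openness of~$U_i$ in~$S$ forces $\rho'\in U_i$, contradicting the condition $\rho'\notin U_j$ for $j\neq k$, $j\le|\rho'|$, taken with $j=i$. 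Pairwise disjointness, together with each $W_i$ being $\alpha$-open in~$S$, immediately yields pairwise orthogonality: if $\sigma\in W_i$ and $\tau\in W_j$ with $i\neq j$ and $\sigma\preceq_\alpha\tau$, then $\tau\in W_i$ by $\alpha$-openness, so $\tau\in W_i\cap W_j=\emptyset$, a contradiction. Finally, $\sigma\in\bigcup_i W_i$ iff some $\rho\preceq_\alpha\sigma$ in~$S$ satisfies $|\{j\le|\rho|:\rho\in U_j\}|=1$, which is a bounded decidable check over the finitely many eligible $\rho$; hence $\bigcup_i W_i$ is computable.
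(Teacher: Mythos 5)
Your proof is correct and follows essentially the same strategy as the paper's: obtain uniformly computable $\alpha$-open $U_i\subseteq S$ with $[U_i]_\alpha=Z_i$ via \cref{lem:relatively_effectively_alpha_closed_and_open_by_forests}, then prune the (necessarily dead) overlaps by a comparison between the index $j$ and the size of the string, which makes the check finite and hence computable. The only difference is in the bookkeeping: the paper defines $W_i$ by recursion on the $\alpha$-height $|\s|_\alpha$, excluding $\s$ when some proper predecessor was already claimed by a $W_j$ with $j$ below that height, whereas you use the non-recursive ``unique small index'' witness condition; both exploit the same length-versus-index bound to get orthogonality.
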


\begin{proof}
    There are uniformly computable~$U_i$, $\alpha$-open in~$S$, with $Z_i = [U_i]_\alpha$. To make them pairwise orthogonal, we may first assume that $|\s|_\alpha > i$ for all $\s\in U_i$. Then define $W_i\subseteq U_i$ as follows. For each~$\s$ and~$i$, we determine if $\s\in W_i$ by induction on $|\s|_\alpha$; we declare that $\s\in W_i$ if $\s\in U_i$, and there is no $\rho\precneq_\alpha \s$ in~$W_j$ for any $j\ne i$ with $j<|\s|_\alpha$. 
\end{proof}

\subsection{Approximations and translations} 

\subsubsection{$\alpha$-approximations} 

Some of the following was discussed in \cite{BSL_paper}. We include it in this paper for completeness. The main difference is that we work with $\alpha$-forests rather than all of $\w^{<\w}$. 

\begin{definition} \label{def:alpha_approximation}
    Let $S$ be an $\alpha$-forest. An \emph{$\alpha$-approximation} of a function $F\colon [S]\to \Nat$
    is a function $f\colon S\to \Nat$ such that for all $x\in [S]$, for all but finitely many $\s\prec_\alpha x$, $f(\s) = F(x)$. 
\end{definition}

\begin{remark} \label{rmk:alpha_and_beta_approximations}
    Continuing \cref{rmk:alpha_and_beta_forests}, if $\alpha<\beta$, $S$ is an $\alpha$-forest and $f\colon S\to \Nat$ is an $\alpha$-approximation of some function on $[S]$, then~$f$ is also a $\beta$-approximation of the same function on~$[S]$. 
\end{remark}

Let~$X$ be $\Sigma^0_\beta \wedge \Pi^0_\beta$. A function $F\colon X\to \Nat$ is \emph{$\Sigma^0_\beta$-measurable} if the sets $F^{-1}\{n\}$ for $n\in \Nat$ are uniformly $\Sigma^0_\beta$ within~$X$. 

\begin{proposition} \label{prop:approximations_and_alpha_plus_one_computability}
    Let~$S$ be an $\alpha$-forest. A function $F\colon [S]\to \Nat$ is $\Sigma^0_{1+\alpha+1}$-measurable if and only if it has a computable $\alpha$-approximation $f\colon S\to \Nat$. 
\end{proposition}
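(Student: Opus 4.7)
My plan is to work at level $\alpha+1$ in both directions, leveraging \cref{rmk:alpha_and_beta_forests} which tells us that $S$ is also an $(\alpha+1)$-forest with $[S]_{\alpha+1} = [S]$.

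For the forward direction, I would apply \cref{lem:producing_pairwise_orthogonal} at level $\alpha+1$ to the uniformly $\Sigma^0_{1+\alpha+1}$-within-$[S]$ pairwise disjoint family $A_n = F^{-1}\{n\}$. This yields uniformly computable, pairwise $(\alpha+1)$-orthogonal sets $W_n \subseteq S$, each $(\alpha+1)$-open in~$S$, such that $A_n = [W_n]_{\alpha+1}$, with $\bigcup_n W_n$ also computable. I would then set $f(\s) = n$ when $\s \in W_n$ (unambiguous because the $W_n$ are pairwise disjoint) and $f(\s)=0$ otherwise, giving a computable $f\colon S \to \Nat$. To verify the $\alpha$-approximation property, fix $x \in [S]$ with $F(x)=n$. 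Since $x \in [W_n]_{\alpha+1}$, pick $\s_0 \prec_{\alpha+1} x$ with $\s_0 \in W_n$. For every $\s \prec_\alpha x$ with $\s \succeq_\alpha \s_0$, the Club property~\RefClub applied to the chain $\s_0 \preceq_\alpha \s \preceq_\alpha x$, together with $\s_0 \preceq_{\alpha+1} x$, yields $\s_0 \preceq_{\alpha+1} \s$; then $(\alpha+1)$-openness of $W_n$ in~$S$ forces $\s \in W_n$, whence $f(\s) = n$. Thus $f(\s) = F(x)$ on a final segment of $\prec_\alpha x$.

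For the backward direction, given a computable $\alpha$-approximation $f$ of~$F$, I would express
\[
F^{-1}\{n\} = \bigcup_{m \in \Nat} B_{n,m},\qquad B_{n,m} = \{x \in [S] : (\forall \rho \prec_\alpha x)\,\, |\rho|_\alpha \ge m \Then f(\rho) = n\},
\]
with the equality being immediate from the definition of an $\alpha$-approximation. The complement of $B_{n,m}$ within $[S]$ equals $\{x \in [S] : (\exists \rho \prec_\alpha x)\,\, \rho \in U_{n,m}\}$ where $U_{n,m} = \{\rho \in S : |\rho|_\alpha \ge m \andd f(\rho) \ne n\}$ is computable (using that $S$, $f$, and $|\cdot|_\alpha$ are all computable). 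By~\ref{TSP:Sigma_alpha_sets} this complement is $\Sigma^0_{1+\alpha}$ within $[S]$, so $B_{n,m}$ is uniformly $\Pi^0_{1+\alpha}$ within $[S]$. A countable union then places $F^{-1}\{n\}$ uniformly in $\Sigma^0_{1+\alpha+1}$ within $[S]$, so $F$ is $\Sigma^0_{1+\alpha+1}$-measurable.

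The main obstacle I anticipate is the forward direction: working purely at level $\alpha$ would yield only a $\Sigma^0_{1+\alpha}$-measurable analysis, one level too weak. The critical saving is the Club property, which converts $(\alpha+1)$-level eventual membership in~$W_n$ into $\alpha$-level eventual membership, bridging between $\prec_{\alpha+1}$- and $\prec_\alpha$-truth along a single real.
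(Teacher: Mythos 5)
Your proposal is correct and takes essentially the same route as the paper: the direction from $\Sigma^0_{1+\alpha+1}$-measurability to a computable approximation is the paper's argument almost verbatim (pairwise orthogonal, $(\alpha+1)$-open sets from \cref{lem:producing_pairwise_orthogonal}, then \RefClub{} to convert a single $\prec_{\alpha+1}$-true witness into eventual agreement along $\prec_\alpha$). The converse is the same limit-lemma computation, merely packaged as a countable union of $\Pi^0_{1+\alpha}$ sets $B_{n,m}$ rather than the paper's equivalent ``$x\in A_{n,k}$ for almost all $k$'' with the $A_{n,k}$ uniformly $\Delta^0_{1+\alpha}$.
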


We prove this proposition, and the rest of the results of this section, in \cref{sec:true_stage_machinery}. 

\begin{definition} \label{def:witness_for_convergence_of_approximation}
    Let $S$ be an $\alpha$-forest and let $f\colon S\to \Nat$ be an $\alpha$-approximation of a function~$F$. An \emph{ordinal witness for the convergence of~$f$} is a function $\s\mapsto \beta(\s)$ from~$S$ to some ordinal~$\gamma$ satisfying:
    \begin{orderedlist}
        \item For all $\s\preceq_\alpha \tau$ from~$S$, $\beta(\tau)\le \beta(\s)$; 
        \item If $\s\preceq_\alpha \tau$ are from~$S$ and $f(\s)\ne f(\tau)$ then $\beta(\tau)<\beta(\s)$. 
    \end{orderedlist}
\end{definition}

Thus, the well-foundedness of~$\gamma$ ``proves'' that for all $x\in [S]$, the sequence $\seq{f(\s)}_{\s\prec_\alpha x}$ stabilises to the value $F(x)$. 

\begin{lemma} \label{lem:existence_of_a_computable_ordinal_witness}
    For every computable $\alpha$-approximation $f\colon S\to \Nat$ there is a computable ordinal~$\gamma$ and a computable witness $\beta\colon S\to \gamma$ for the convergence of~$f$. 
\end{lemma}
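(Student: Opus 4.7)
The plan is to measure, for each $\s\in S$, the possible future changes of $f$ above $\s$ along $\preceq_\alpha$-chains, and realise this quantitatively via the Kleene--Brouwer ordering of a suitable computable tree. Let $T\subseteq S^{<\w}$ consist of the empty sequence together with all finite sequences $(\rho_0,\rho_1,\ldots,\rho_n)$ such that $\rho_0\prec_\alpha \rho_1\prec_\alpha \cdots \prec_\alpha \rho_n$ in $S$ and $f(\rho_{i-1})\neq f(\rho_i)$ for each $1\le i\le n$. Computability of $\preceq_\alpha$ on pairs of finite strings~\ref{TSP:computable}, together with computability of $S$ and $f$, yields that $T$ is computable.

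The main obstacle is verifying that $T$ is well-founded. Suppose for contradiction that $T$ had an infinite branch $\rho_0\prec_\alpha \rho_1\prec_\alpha \cdots$. By~\ref{TSP:nested} each $\rho_i\prec \rho_{i+1}$, so the lengths $|\rho_i|$ grow strictly, and $x:=\bigcup_i \rho_i$ lies in~$\Baire$. By~\ref{TSP:unique_path}, $\{\s\in\w^{<\w}\,:\,\s\prec_\alpha x\}$ is the unique infinite $\preceq_\alpha$-path below $x$, so $\rho_i\prec_\alpha x$ for every $i$. Since each $\rho_i\in S$, this gives $x\in [S]_\alpha = [S]$. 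But $f$ is an $\alpha$-approximation of $F$, so the values $f(\rho_i)$ are eventually constant (with value $F(x)$), contradicting the change condition used to define~$T$.

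Take $\gamma$ to be the Kleene--Brouwer ordering of $T$, a computable well-ordering. For $\s\in S$, the chain $P(\s):=\{\tau\in S\,:\,\tau\preceq_\alpha \s\}$ is finite by~\ref{TSP:trees} and~\ref{TSP:nested}; list it as $\tau_0\prec_\alpha\tau_1\prec_\alpha\cdots\prec_\alpha\tau_m=\s$. Let $h(\s)$ be the subsequence of ``change points'' $\tau_i$ (with $i\ge 1$) for which $f(\tau_i)\neq f(\tau_{i-1})$. Between consecutive change points $f$ is constant, so consecutive entries of $h(\s)$ have distinct $f$-values; thus $h(\s)\in T$, and $h$ is computable. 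Set $\beta(\s):=h(\s)$, viewed as an element of $\gamma$.

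Finally, for (i) and (ii): if $\s\preceq_\alpha \tau$, then $P(\s)$ is a $\preceq_\alpha$-initial segment of $P(\tau)$, and for any node in $P(\s)$ its immediate $\preceq_\alpha$-predecessor in $P(\s)$ coincides with its immediate $\preceq_\alpha$-predecessor in $P(\tau)$. Hence $h(\s)$ is a prefix of $h(\tau)$, so $\beta(\tau)\le_{\KB}\beta(\s)$, verifying (i). If moreover $f(\s)\neq f(\tau)$, then $f$ is non-constant on $P(\tau)\setminus P(\s)$, forcing at least one new change point strictly above $\s$; so $h(\tau)$ properly extends $h(\s)$ and $\beta(\tau)<_{\KB}\beta(\s)$, giving (ii).
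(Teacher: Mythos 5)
Your proof is correct and follows essentially the same route as the paper's: both identify the $\preceq_\alpha$-change points of $f$, observe that the resulting tree is well-founded because $f$ converges along the (unique, by \ref{TSP:unique_path}) $\alpha$-true path to any $x\in[S]$, and extract the ordinal witness via the Kleene--Brouwer ordering --- the paper merely packages the last step as \cref{lem:computable_rank_function} applied to the tree $(R,\preceq_\alpha)$ of change points, whereas you code the chains of change points explicitly. The one point to tidy is that the paper's notion of a \emph{computable ordinal} also requires the successor function and the set of limit points to be computable, which the raw Kleene--Brouwer order of $T$ need not satisfy; as in the paper's proof of \cref{lem:computable_rank_function}, this is repaired by multiplying the well-order on the left by $\w$.
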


The proof of \cref{lem:existence_of_a_computable_ordinal_witness} will use the following, which we will also need on its own. 

\begin{lemma} \label{lem:computable_rank_function}
    If~$T$ is a well-founded tree, then there is a $T$-computable ordinal~$\eta$ and a $T$-computable rank function $r\colon T\to \eta$. 
\end{lemma}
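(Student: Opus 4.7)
\emph{Plan.} The plan is to use the Kleene--Brouwer (KB) ordering of $T$ as the well-foundedness skeleton, and then construct the ordinal $\eta$ and the rank function $r$ together by effective transfinite recursion along it.

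For $\sigma, \tau \in T$, define $\sigma <_{KB} \tau$ to hold iff either $\sigma$ is a proper extension of $\tau$, or there exists $k$ with $\sigma\rest{k} = \tau\rest{k}$ and $\sigma(k) < \tau(k)$ (both sides defined). The relation $<_{KB}$ is $T$-computable by inspection. The classical argument---from a hypothetical descending sequence $\sigma_0 >_{KB} \sigma_1 >_{KB} \cdots$ extract an infinite branch of $T$ by fixing, for each coordinate $k$, the eventual value of $\sigma_n(k)$---shows that $(T, <_{KB})$ is a well-ordering, using the well-foundedness of $T$.

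Now build $\eta$ and $r$ by recursion along $(T, <_{KB})$: every child of $s$ is a proper extension of $s$ and hence KB-less than $s$, so by the time we process $s$ we have already defined $r(s\conc n)$ for each $n$ with $s\conc n \in T$. Set
\[
    r(s) \;=\; \sup_{n \,:\, s\conc n \in T} \bigl( r(s\conc n) + 1 \bigr),
\]
adding a fresh element to $\eta$---either a successor or a limit---with the required ordering, successor function, and limit-point information recorded explicitly as part of the construction. By construction $r(t) < r(s)$ whenever $t$ is a child of $s$, so $r$ is a rank function, and $\eta$ is a $T$-computable well-ordering satisfying all of the data required by our conventions in \cref{sub:computable_ordinals}.

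The main obstacle is bookkeeping: ensuring that the recursion is truly $T$-computable and uniform. In particular, when $s$ has infinitely many children in $T$ the value $r(s)$ is a genuine limit, and to represent this limit in the form demanded by \cref{sub:computable_ordinals} one must uniformly $T$-compute a cofinal $\omega$-sequence inside $\{r(s\conc n)+1 : s\conc n \in T\}$ out of the (merely enumerable) set of children. This is standard effective transfinite recursion along a $T$-computable well-ordering, available over the base system $\ATR_0$ used elsewhere in the paper.
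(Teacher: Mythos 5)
Your starting point (the Kleene--Brouwer ordering on $T$) is the same as the paper's, but the recursion you build on top of it has a genuine gap, and it sits exactly where the paper's one-line trick does its work. The problem is not, as you suggest, producing a cofinal $\omega$-sequence in the limit case; it is that your construction requires $T$-computably \emph{deciding} the successor/limit status of each new element of $\eta$, and producing the immediate predecessor in the successor case, and neither is $T$-decidable. Concretely: if $r(s)$ is meant to be the canonical rank $\sup_n(r(s\conc n)+1)$, then whether this sup is attained (successor) or not (limit) is roughly a $\Sigma^0_2(T)$ question, and even comparing $r(s)$ with $r(s')$ for two nodes neither of which extends the other is not $T$-computable, so the ordering of $\eta$ itself is not defined by your recursion. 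If instead each node contributes a genuinely fresh element and you linearise the fresh elements by $<_{\KB}$ (the only consistent choice you leave yourself), then $\eta$ is just $(T,<_{\KB})$ and $r$ is the identity --- but then whether $r(s)$ is a limit point of $\eta$ amounts to whether $s$ has an immediate $<_{\KB}$-predecessor in $T$, which again is not $T$-decidable (it requires, e.g., knowing whether $\{n : s\conc n\in T\}$ has a largest element). A side remark: the set of children of $s$ is $T$-computable, not merely enumerable, since $T$ is the oracle; the obstruction is its possible infinitude, not its enumerability.

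The paper's proof avoids the recursion entirely: it observes that the identity map into $(T,<_{\KB})$ is already a rank function (every proper extension of $s$ is $<_{\KB} s$), so the only remaining issue is that $(T,<_{\KB})$ may fail the computability conventions of \cref{sub:computable_ordinals} for being a $T$-computable \emph{ordinal}. That is repaired by multiplying on the left by $\omega$ --- replacing each point by a copy of $\omega$ --- after which the successor function and the set of limit points become trivially computable ($(t,n+1)$ succeeds $(t,n)$, and the limit points are exactly the elements $(t,0)$ above the least one). Your argument would be repaired by the same padding device; without it, the phrase ``with the required ordering, successor function, and limit-point information recorded explicitly'' is asserting precisely what needs to be proved.
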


\subsubsection{Translating between oracles, ordinals, and spaces} 

We will use four ``translation propositions''. For the first, let~$\alpha$ be an ordinal and suppose that an oracle~$z$ computes~$\alpha$. Every $w\ge_\Tur z$ also computes~$\alpha$, and every $\Sigma^0_{1+\alpha}(z)$ set is also $\Sigma^0_{1+\alpha}(w)$, uniformly in the indices by a $w$-computable function (as usual for functions on indices, it can be made computable rather than just $w$-computable, but that is not important). This can be translated to true stage relations.

\begin{proposition} \label{prop:translating_true_stages_between_oracles}
    Suppose that $w\ge_\Tur z$ and that $\alpha$ is a $z$-computable ordinal. Then there is a $w$-computable function $h\colon \w^{<\w}\to \w^{<\w}$ such that:
\begin{orderedlist}
    \item if $\s\preceq^w_\alpha \tau$ then $h(\s) \preceq^z_\alpha h(\tau)$; 
    \item for all $x\in \Baire$, $\left\{ h(\s) \,:\,  \s\prec^w_\alpha x \right\}=\{ \rho \,:\,  \rho\prec^z_\alpha x \}$.
\end{orderedlist}
\end{proposition}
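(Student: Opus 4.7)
The plan is to construct $h$ by transfinite recursion on~$\alpha$, using the recursive description of the true stage relations: the base case $\alpha=0$, the limit case handled by \ref{TSP:limit}, and the successor case handled by \ref{TSP:successor}. The guiding intuition is that, because $\Sigma^0_{1+\alpha}(z)\subseteq \Sigma^0_{1+\alpha}(w)$, a $w$-true guess entails the corresponding $z$-true guess; hence for each real~$x$, the $w$-true path $P^w_\alpha(x)=\{\s\,:\,\s\prec^w_\alpha x\}$ is (morally) a subset of the $z$-true path $P^z_\alpha(x)$. Since $w\ge_\Tur z$, $w$ computes both $\preceq^w_\alpha$ and $\preceq^z_\alpha$, so $w$ can recover $P^z_\alpha(x)$ from $P^w_\alpha(x)$; the function $h$ will implement this recovery uniformly in~$\s$.

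For $\alpha=0$ we set $h=\id$: both $\preceq^w_0$ and $\preceq^z_0$ coincide with~$\preceq$, and both paths equal $\{\s\in\w^{<\w}\,:\,\s\prec x\}$, so (i) and (ii) are immediate. For a limit $\lambda$, use \ref{TSP:limit} to fix a single $z$-computable (hence $w$-computable) increasing cofinal sequence $(\lambda_k)$. Given~$\s$, let $k=|\s|^w_\lambda$; then the $\preceq^w_\lambda$-chain below~$\s$ coincides with its $\preceq^w_{\lambda_k}$-chain. Define $h_\lambda(\s)$ in terms of the inductively given $h_{\lambda_k}$, and verify (i)–(ii) using the nestedness property \ref{TSP:nested} (to align $\preceq^z_{\lambda_{k'}}$-facts along a growing chain with $\preceq^z_\lambda$-facts) and the characterisation of $\preceq^z_\lambda$ as the intersection of the $\preceq^z_{\lambda_k}$.

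For a successor $\alpha+1$, $\preceq^w_{\alpha+1}$ refines $\preceq^w_\alpha$ by the $p^w_{\alpha+1}$-monotonicity condition of \ref{TSP:successor}, and $\preceq^z_{\alpha+1}$ refines $\preceq^z_\alpha$ by the analogous condition on $p^z_{\alpha+1}$. Given $h_\alpha$, define $h_{\alpha+1}(\s)$ by first following $h_\alpha$ and then, $w$-computably, stepping back along the $\preceq^z_\alpha$-chain of $h_\alpha(\s)$ to the latest ancestor at which $p^z_{\alpha+1}$ attains its maximum along that chain; this forces the $z$-monotonicity required for $\preceq^z_{\alpha+1}$. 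Condition~(i) then follows from the analogous $w$-monotonicity guaranteed by $\s\preceq^w_{\alpha+1}\tau$, combined with the inductive (i) for~$h_\alpha$.

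The main obstacle is the surjectivity half of~(ii) in the successor step: we must ensure that every $\rho\prec^z_{\alpha+1} x$ is the image of some $\s\prec^w_{\alpha+1} x$. The risk is that the step-back procedure above collapses many $w$-true stages to the same $z$-true stage, leaving genuine $z$-true stages out of the image. Overcoming this requires tying the definition of $h_{\alpha+1}$ to the stationary intervals of $p^z_{\alpha+1}$ along the $w$-true path, using the fact (which will require verification from the explicit construction of $\preceq^\cdot_\alpha$ in \cref{sec:true_stage_machinery}) that $w$-true stages occur cofinally between consecutive ``jumps'' of $p^z_{\alpha+1}$ along $x$. Once this density is established, a bookkeeping argument assigns to each $z$-true stage a unique $w$-true pre-image, yielding both inclusions in (ii).
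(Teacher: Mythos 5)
Your overall strategy — transfinite recursion on $\alpha$ mirroring the recursive definition of the true stage relations — is not the paper's route, and as sketched it does not go through. The paper instead exploits \ref{TSP:Sigma_alpha_sets} (in the form of \cref{lem:alpha_open_alpha_closed_sigma_and_pi}): since every $\Sigma^0_{1+\alpha}(z)$ set is uniformly $\Sigma^0_{1+\alpha}(w)$, one obtains uniformly $w$-computable, $\preceq^w_\alpha$-upwards closed sets $U_\rho$ with $[U_\rho]^w_\alpha = [\rho]^z_\alpha$, and then defines $h$ by a \emph{single} recursion along the tree $(\w^{<\w},\preceq^w_\alpha)$: $h(\s)$ advances from $h(\s^-)$ to a $\preceq^z_\alpha$-immediate successor $\rho$ exactly when $\s\in U_\rho$ (and otherwise stays put). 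Both halves of (ii) then follow by induction along the respective true paths. All the level-by-level work you are trying to redo is thereby absorbed into \ref{TSP:Sigma_alpha_sets}, which is already available.

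The concrete gap is your successor step. By \ref{TSP:successor}, $\rho\preceq^z_{\alpha+1}x$ requires $p^z_{\alpha+1}(\rho')\ge p^z_{\alpha+1}(\rho)$ for \emph{all} $\rho'$ with $\rho\preceq^z_\alpha\rho'\prec^z_\alpha x$. Stepping back to the latest ancestor where $p^z_{\alpha+1}$ attains its \emph{maximum} is exactly backwards: if the maximum is attained at $\rho$ and later chain elements have strictly smaller values, then $\rho$ is precisely \emph{not} $(\alpha+1)$-true to them. What you need are the non-deficiency stages (those $\rho$ whose $p$-value is $\le$ that of every later stage on the $z$-$\alpha$-true path), and whether a given $\rho$ is such a stage depends on the entire infinite future of that path. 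Hence no rule that inspects only the finite chain below $h_\alpha(\s)$ can compute $h_{\alpha+1}(\s)$ correctly; at best one can guess and be eventually correct along the true path, which is essentially re-running the true-stage construction itself. Your acknowledged "density plus bookkeeping" gap for surjectivity, and the unexamined coherence problem at limit levels (that $h_{\lambda_k}(\s)$ being $\lambda_k$-true for $z$ does not by itself make it $\lambda$-true for $z$, since the witnessing heights on the $w$- and $z$-sides need not align), are symptoms of the same difficulty. The fix is to abandon the induction on $\alpha$ and use the $\Sigma^0_{1+\alpha}$-characterisation as the paper does.
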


We can also pull back true stage relations by computable functions. 

\begin{proposition} \label{prop:translating_true_stages_across_computable_functions}
    Let~$\alpha$ be a computable ordinal; let $\Phi\colon \Baire\to \Baire$ be a computable function. 
    Then there is a computable function $h\colon \w^{<\w}\to \w^{<\w}$ such that:
\begin{orderedlist}
    \item if $\s\preceq_\alpha \tau$ then $h(\s) \preceq_\alpha h(\tau)$; 
    \item for all $x\in \Baire$, $\left\{ h(\s) \,:\,  \s\prec_\alpha x \right\}=\{ \rho \,:\,  \rho\prec_\alpha \Phi(x) \}$.
\end{orderedlist}
\end{proposition}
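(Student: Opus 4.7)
The plan is to give a direct construction, leveraging \ref{TSP:Sigma_alpha_sets} together with the computability of $\Phi$. For each finite $\rho$, the set $\{y \in \Baire : \rho \prec_\alpha y\}$ is $\Sigma^0_{1+\alpha}$ by \ref{TSP:Sigma_alpha_sets} applied to the c.e.\ set $U = \{\rho\}$. Pulling back along the computable $\Phi$, the set $\{x : \rho \prec_\alpha \Phi(x)\}$ is $\Sigma^0_{1+\alpha}$ uniformly in $\rho$; by \cref{lem:alpha_open_alpha_closed_sigma_and_pi}(a) I then obtain a uniformly computable $\alpha$-open family $(W_\rho)_{\rho \in \omega^{<\omega}}$ such that
\[
\rho \prec_\alpha \Phi(x) \,\Iff\, (\exists \sigma \prec_\alpha x)(\sigma \in W_\rho).
\]

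With these certificates in hand, I would build $h$ by recursion along the $\preceq_\alpha$-tree of finite strings. Set $h(\emptystring) := \emptystring$. For $\sigma$ with immediate $\preceq_\alpha$-predecessor $\sigma^{-}$, having already set $h(\sigma^{-})$, define $h(\sigma)$ to be the $\preceq_\alpha$-maximal $\rho$ satisfying $\rho \succeq_\alpha h(\sigma^{-})$, $\sigma \in W_\rho$, and $|\rho|_\alpha \le |h(\sigma^{-})|_\alpha + 1$; if no such $\rho$ exists, set $h(\sigma) := h(\sigma^{-})$. Restricting the candidates to those $\succeq_\alpha h(\sigma^{-})$ forces them, by \ref{TSP:trees}, to form a $\preceq_\alpha$-chain, so the maximum is well-defined and computable. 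The height-increment rule ensures $h$ advances by at most one $\preceq_\alpha$-step at a time, which is what delivers the ``every prefix hit'' half of property~(ii).

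The $\alpha$-openness of each $W_\rho$ means that any certification at $\sigma$ persists at all $\preceq_\alpha$-successors, which, together with the restriction to candidates $\succeq_\alpha h(\sigma^{-})$, immediately gives property~(i). For property~(ii): along an $\alpha$-true path of $x$, the certified $\rho$'s are all $\alpha$-true stages of $\Phi(x)$ (by the choice of $W_\rho$), so $h(\sigma) \prec_\alpha \Phi(x)$ for every $\sigma \prec_\alpha x$; conversely, each $\rho \prec_\alpha \Phi(x)$ has a witness $\sigma^* \prec_\alpha x$ in $W_\rho$, and once the path has stepped up through the intermediate $\preceq_\alpha$-heights (no skipping, by the increment rule), some $\sigma \prec_\alpha x$ satisfies $h(\sigma) = \rho$.

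The main obstacle will be handling $\sigma$'s off every $\alpha$-true path (those with $[\sigma]_\alpha = \emptyset$), where the certifications in the $W_\rho$'s need not form a $\preceq_\alpha$-chain and where no \emph{a priori} notion of ``correct'' value exists. The restriction-to-successors-of-$h(\sigma^{-})$ device handles this by brute force, ensuring that the candidate set is always a $\preceq_\alpha$-chain (via \ref{TSP:trees}) and that monotonicity holds globally; the price is that we ignore useful information at such $\sigma$, but this is harmless since the conclusion of the proposition is only tested along actual $\alpha$-true paths. A secondary subtlety is the uniformity and boundedness needed to make each step of the recursion computable (rather than merely $\Sigma^0_1$), which I expect to follow from the computability of $\preceq_\alpha$ on finite pairs (\ref{TSP:computable}) and of the $W_\rho$'s.
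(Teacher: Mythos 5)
Your construction is essentially the paper's: both pull back the $\Sigma^0_{1+\alpha}$ sets $\{x : \rho\prec_\alpha \Phi(x)\}$ along $\Phi$ to get uniformly computable $\alpha$-open certificate sets $W_\rho$ (the paper uses $\preceq_\alpha$-upward-closed sets $U_\rho$, which is the same thing), and both define $h$ by recursion on immediate $\preceq_\alpha$-predecessors, advancing $h$ by at most one $\preceq_\alpha$-step per node; the verifications of (i) and (ii) then run identically, with $\alpha$-openness of the certificates doing the work in both directions of (ii).

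One step of your justification is wrong, although the construction survives. You claim that the candidates $\rho$ with $h(\sigma^-)\preceq_\alpha\rho$, $\sigma\in W_\rho$ and $|\rho|_\alpha\le |h(\sigma^-)|_\alpha+1$ form a $\preceq_\alpha$-chain ``by \ref{TSP:trees}'', so that a maximal one exists. But \ref{TSP:trees} only says that the set of $\preceq_\alpha$-\emph{predecessors} of a fixed string is linearly ordered; the immediate $\preceq_\alpha$-\emph{successors} of $h(\sigma^-)$ can be pairwise incomparable, and two of them may well both be certified at the same $\sigma$, in which case ``the maximal such $\rho$'' does not exist and $h$ is not well-defined as written. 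The repair is the one the paper makes explicit: if incomparable $\rho_1,\rho_2$ are both certified at $\sigma$, then any $x\in[\sigma]_\alpha$ would satisfy $\rho_1\prec_\alpha\Phi(x)$ and $\rho_2\prec_\alpha\Phi(x)$ (by $\alpha$-openness of the $W_\rho$'s), contradicting \ref{TSP:trees} applied to $\Phi(x)$; hence $[\sigma]_\alpha=\emptyset$ and an arbitrary computable tie-break is harmless — exactly because, as you note yourself, the conclusion is only tested along genuine $\alpha$-true paths.
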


Computably isomorphic ordinals give rise to  equivalent true stage relations.

\begin{proposition} \label{prop:translating_true_stages_between_copies_of_the_ordinal}
    Suppose that $\alpha$ and~$\beta$ are two computably isomorphic ordinals. Then there is a computable function $h\colon \w^{<\w}\to \w^{<\w}$ such that:
\begin{orderedlist}
    \item if $\s\preceq_\beta \tau$ then $h(\s) \preceq_\alpha h(\tau)$; 
    \item for all $x\in \Baire$, $\left\{ h(\s) \,:\,  \s\prec_\beta x \right\}=\{ \rho \,:\,  \rho\prec_\alpha x \}$.
\end{orderedlist}
\end{proposition}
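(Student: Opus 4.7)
The plan is to treat this as a naturality statement for the true-stage construction: the relations $\preceq_\gamma$ are defined in \cref{sec:true_stage_machinery} by transfinite recursion on $\gamma$, with the recursive step determined by computable data attached to the presentation of $\gamma$. Fix a computable order-isomorphism $\pi:\beta\to\alpha$. Pushing the recursive data through $\pi$ yields a parallel construction which simulates the $\preceq_\alpha$-tree using the $\preceq_\beta$-tree, and the simulation map is the desired $h$.

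Concretely, I would build $h$ by induction on $|\sigma|_\beta$, starting with $h(\langle\rangle)=\langle\rangle$ and arranging that $|h(\sigma)|_\alpha=|\sigma|_\beta$. Given $\sigma$ with immediate $\preceq_\beta$-predecessor $\sigma^-$, I would extend $h(\sigma^-)$ along the $\preceq_\alpha$-tree to a string $h(\sigma)$ that exhibits the $\pi$-image of the ordinal information attached to $\sigma$ on the $\beta$-side --- that is, for each $\beta'<\beta$, the relative $p_{\beta'+1}$-value in passing from $\sigma^-$ to $\sigma$ (as in \ref{TSP:successor}) is matched by the $p_{\pi(\beta')+1}$-value going from $h(\sigma^-)$ to $h(\sigma)$. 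Existence of such an extension is a density property of the true-stage tree built into the construction; at limits, \ref{TSP:limit} reduces matters to the cofinal sequences $\langle\lambda_k\rangle$, which $\pi$ transports between $\alpha$ and $\beta$. Property~(i) then follows by unwinding the definition of $\preceq_\alpha$ in terms of the $p_{\gamma+1}$-data.

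For property~(ii), I would apply \ref{TSP:unique_path}. Since $h$ preserves $\preceq$ (via \ref{TSP:nested} applied at level $0$) and respects $\preceq_\beta\to\preceq_\alpha$, the $h$-image of the unique infinite $\preceq_\beta$-chain below $x$ is an infinite $\preceq_\alpha$-chain of finite strings $\preceq$-below $x$, so by uniqueness it coincides with $\{\rho : \rho\prec_\alpha x\}$; equality (rather than mere inclusion) follows by running the symmetric construction with $\pi^{-1}$ to obtain a partial inverse whose composition with $h$ agrees with the identity on the relevant path. The main obstacle is the successor-step density argument --- verifying that every desired pattern of $p_{\gamma+1}$-values can be realized along the $\preceq_\alpha$-tree above a prescribed node, computably and uniformly --- which is essentially a lemma about the appendix-level construction of $\preceq_\alpha$ in \cref{sec:true_stage_machinery}. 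Once that is in hand, the proof is bookkeeping tied tightly to the details of that construction, and it parallels the arguments sketched for the two preceding translation propositions.
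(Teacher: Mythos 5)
There is a genuine gap at the heart of your successor step. For property~(ii) you need that whenever $\s\prec_\beta x$, the string $h(\s)$ is an actual initial segment of~$x$; since $\s$ is an initial segment of many different reals and $h$ must be computed from $\s$ alone, this forces $h(\s)\preceq\s$. That immediately destroys the freedom your construction relies on: you cannot ``extend $h(\s^-)$ along the $\preceq_\alpha$-tree to a string exhibiting the $\pi$-image of the ordinal information,'' because the only admissible candidates are the finitely many $\rho\preceq\s$, and none of them need realise the prescribed pattern of $p_{\gamma+1}$-values. Indeed no such ``density'' lemma is available even without the constraint $h(\s)\preceq\s$: the values $p(\s^{(\gamma)})$ record which computation of a universal machine halted last, and are data you observe, not data you can prescribe. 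Relatedly, your normalisation $|h(\s)|_\alpha=|\s|_\beta$ is incompatible with $h(\s)\preceq\s$, since the heights of the initial segments of a given string in the two trees $(\w^{<\w},\preceq_\alpha)$ and $(\w^{<\w},\preceq_\beta)$ can differ arbitrarily. The deeper point is that the two presentations $\alpha$ and $\beta$ agree only at the level of the subsets of $\Baire$ they define, not at the level of jumps of strings, so a string-by-string simulation of the recursive construction cannot be made to match up.

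The paper's proof goes through the set level. Since $\alpha$ and $\beta$ are computably isomorphic, $\Sigma^0_{1+\alpha}=\Sigma^0_{1+\beta}$ uniformly in indices; hence by \ref{TSP:Sigma_alpha_sets} one obtains, uniformly computably in $\rho$, a $\preceq_\beta$-upwards-closed set $U_\rho$ with $[U_\rho]_\beta=[\rho]_\alpha$ (and $U_{\emptystring}=\w^{<\w}$). Then $h$ is defined by recursion on the tree $(\w^{<\w},\preceq_\beta)$: set $h(\emptystring)=\emptystring$, and given $\s$ with immediate $\preceq_\beta$-predecessor $\s^-$, put $h(\s)=\rho$ if some $\rho\preceq\s$ is a $\preceq_\alpha$-immediate successor of $h(\s^-)$ with $\s\in U_\rho$, and $h(\s)=h(\s^-)$ otherwise. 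This gives $h(\s)\preceq\s$ and $[\s]_\beta\subseteq[h(\s)]_\alpha$ by construction, and surjectivity onto $\{\rho:\rho\prec_\alpha x\}$ is proved by induction along the $\alpha$-true path rather than by running a second construction with $\pi^{-1}$. If you want to salvage your write-up, replace the jump-value matching by this translation through \ref{TSP:Sigma_alpha_sets}; your appeal to \ref{TSP:unique_path} for (ii) is fine once $h(\s)\preceq\s$ is guaranteed.
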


We can ``unpair'' true stage relations. Suppose that $(z,x)\mapsto \seq{z,x}$ is a computable bijection between~$\Baire^2$ and~$\Baire$ (a computable ``pairing function'' for Baire space). Let $\s\mapsto ((\s)_0,(\s)_1)$ be a corresponding computable ``unpairing'' of finite strings (so for all~$(x_0,x_1)\in \Baire^2$, $x_i = \bigcup_{\s\prec\seq{x_0,x_1}}(\s)_i$). Recall that for a computable ordinal~$\alpha$, the relation $\s\preceq^\rho_\alpha \tau$ is defined when $|\rho|\ge |\tau|$. 

\begin{proposition} \label{prop:translating_true_stage_relations_by_unpairing}
    For any computable ordinal~$\alpha$, there is a computable function $h\colon \w^{<\w}\to \w^{<\w}$ such that:
    \begin{orderedlist}
    \item For all~$\s$, $|h(\s)|\le |(\s)_0|$;
    \item if $\s\preceq_\alpha \tau$ then $h(\s)  \preceq_\alpha^{(\tau)_0} h(\tau) \preceq_0 (\tau)_1$; 
    \item for all $(z,x)\in \Baire^2$, $\left\{ h(\s) \,:\,  \s\prec_\alpha \seq{z,x} \right\}=\{ \rho \,:\,  \rho\prec_\alpha^z x \}$.
\end{orderedlist}
\end{proposition}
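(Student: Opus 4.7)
I would prove this by induction on the computable ordinal $\alpha$, parallel to the layered construction of the true stage relations themselves. Intuitively, the map $h$ must, given a finite $\preceq_\alpha$-true stage $\s$ of the pair $\seq{z,x}$, extract a finite initial segment of $x$ that is $\preceq_\alpha$-true relative to $z$; the ``relative'' is legitimate even without an infinite oracle because condition~(i) will guarantee that $h(\s)$ is short enough that only the finite prefix $(\s)_0$ of $z$ is ever consulted.

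\textbf{Base case.} For $\alpha = 0$, both $\preceq_0$ and $\preceq_0^z$ are the prefix order. So I would take $h(\s) = (\s)_1$; the conventions on the pairing give $|(\s)_1| \le |(\s)_0|$, and the unpairing yields $\{(\s)_1 \,:\, \s\prec \seq{z,x}\} = \{\rho \,:\, \rho \prec x\}$.

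\textbf{Limit case.} For $\alpha = \lambda$ limit, use \ref{TSP:limit} to fix a computable cofinal sequence $\lambda_k$ and let $h_k$ be the map provided by induction at level $\lambda_k$. I would define $h(\s) = h_{|\s|_\lambda}(\s)$. The level-matching clause of \ref{TSP:limit} says that $|\s|_\lambda = k$ iff $|\s|_{\lambda_k} = k$ and that $\preceq_\lambda$ and $\preceq_{\lambda_k}$ agree on strings of height $\le k$ in both trees; the same holds relativised to $z$, uniformly in $z$. From this, if $\s \preceq_\lambda \tau$ with $k = |\s|_\lambda$, then $\s \preceq_{\lambda_k} \tau$ and the required chain $h(\s) \preceq_\lambda^{(\tau)_0} h(\tau) \preceq_0 (\tau)_1$ follows from the corresponding fact for $h_k$, after noting that any $\preceq_\lambda^z$-initial segment of $x$ can be realised at some fixed level $\lambda_k$ (using \ref{TSP:nested} on the $z$-side).

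\textbf{Successor case, and main obstacle.} For $\alpha = \beta+1$, use \ref{TSP:successor}: there is a computable priority function $p_\alpha$ such that $\s\preceq_\alpha \tau$ iff $\s\preceq_\beta \tau$ and $p_\alpha$ is non-decreasing along the $\preceq_\beta$-chain from $\s$ to $\tau$. Let $h_\beta$ be given by induction. The construction is to set $h(\s) = h_\beta(\s)$ and show that $p_\alpha$-stability at the paired level forces $p_\alpha^{(\s)_0}$-stability at the unpaired relativised level. This is where the real work lies: one must know the explicit definition of $p_\alpha$ from \cref{sec:true_stage_machinery} (roughly, the least unrealised $\Sigma^0_{1+\beta}$ witness) and verify that the $\Sigma^0_{1+\beta}$ facts of $\seq{z,x}$ project, via $h_\beta$ and the finite prefix $(\s)_0$, onto the $\Sigma^0_{1+\beta}(z)$ facts of $x$. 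Property~(i) is maintained along the induction by trimming $h(\s)$ to length $\le |(\s)_0|$ at each step (no $\preceq_\alpha^z$ information about $x$ beyond length $|(\s)_0|$ can be recovered from $\s$ alone, so nothing is lost). The bookkeeping is routine given the machinery, but matching the priority functions on the two sides of the pairing is the principal technical obstacle and is the only step that genuinely requires the internals of \cref{sec:true_stage_machinery}.
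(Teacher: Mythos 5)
Your overall strategy (induction on $\alpha$ with base/limit/successor cases, mirroring the construction of the relations themselves) is not the paper's, and as written it has genuine gaps in both non-trivial cases. In the limit case you set $h(\s) = h_{|\s|_\lambda}(\s)$, but the maps $h_k$ produced by the induction at the different levels $\lambda_k$ need not cohere: for $\s \preceq_\lambda \tau$ with $|\s|_\lambda = k < m = |\tau|_\lambda$ you need $h_k(\s) \preceq^{(\tau)_0}_\lambda h_m(\tau)$, whereas the inductive hypothesis only relates $h_k(\s)$ to $h_k(\tau)$; nothing forces $h_m$ to extend the choices made by $h_k$, and likewise clause (iii) --- that the images assemble into exactly $\left\{ \rho : \rho \prec^z_\lambda x \right\} = \bigcap_k \left\{ \rho : \rho \prec^z_{\lambda_k} x \right\}$ --- does not follow. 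More seriously, the successor case, which you correctly identify as the crux, is not actually carried out: you propose to keep $h = h_\beta$ and to show that stability of $p_{\beta+1}$ along the $\preceq_\beta$-chain of $\seq{z,x}$ forces stability of the relativised priority function along the $h_\beta$-image. That would require the enumeration of the jump of $\seq{z,x}^{(\beta)}$ to line up, non-deficiency stage by non-deficiency stage, with the enumeration of the relativised jump on the $x$-side; mere Turing equivalence of the two jumps is not enough, and there is no reason the inductively built $h_\beta$ respects this alignment. Declaring this ``routine bookkeeping'' leaves exactly the content of the proposition unproved.

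The paper avoids all of this with a single, non-inductive construction (the induction on $\alpha$ is entirely absorbed into \ref{TSP:Sigma_alpha_sets} and \cref{lem:Sigma_alpha_subsets_of_product_space:one_way}): for each $\tau$ the set $\left\{ \seq{z,x} : \tau \prec^z_\alpha x \right\}$ is $\Sigma^0_{1+\alpha}$, hence of the form $[U_\tau]_\alpha$ for uniformly computable, $\preceq_\alpha$-upwards closed sets $U_\tau \subseteq \w^{<\w}$; one then defines $h$ by recursion on the tree $(\w^{<\w},\preceq_\alpha)$, at each step either advancing $h(\s)$ to an immediate $\preceq^{(\s)_0}_\alpha$-successor $\rho$ of $h(\s^-)$ with $\s \in U_\rho$ and $|\rho| \le |(\s)_0|$, or keeping $h(\s) = h(\s^-)$, exactly as in the proof of \cref{prop:translating_true_stages_between_oracles}. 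If you want to complete your argument, I recommend abandoning the case analysis on $\alpha$ and adopting that route.
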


\Cref{prop:translating_true_stages_across_computable_functions,prop:translating_true_stage_relations_by_unpairing,prop:translating_true_stages_between_copies_of_the_ordinal} relativise to any oracle.

\section{Described classes} 
\label{sec:described_classes}

We introduce our class descriptions. We will start with very general descriptions. These are sufficiently general to include all classes described by Wadge \cite{Wadge:phd}, Louveau \cite{Louveau:83} and by Louveau and Saint-Raymond \cite{LouveauSR:Strength}. We will then consider a restricted collection of descriptions, the \emph{acceptable} ones. In \cref{sec:classification_of_acceptable_classes} we will be able to classify all acceptable classes in terms of an analysis of their ambiguous class.

\subsection{General class descriptions} 
\label{sub:general_class_descriptions}

As in \cite{Louveau:83} and \cite{LouveauSR:Strength}, we think of classes as defined recursively. The simplest classes contain the constant partial functions. Suppose that $\{\Upsilon_i\,:\, i\in I\}$ is a finite or countable collection of class descriptions. Among these, we choose a special ``initial'' class~$\Upsilon_{i^*}$. We then combine these classes to a more complicated class~$\Gamma$ by specifying two ordinals~$\xi$ and~$\eta$. The ordinal~$\xi$ (which we will also denote by $o(\Gamma)$) denotes the \emph{level} of~$\Gamma$: $\Gamma$-functions will be defined on $\bSigma^0_{1+\xi}\wedge \bPi^0_{1+\xi}$ sets. A $\Gamma$-function~$F$ on such a set $X = [S]_\xi$ will be defined by choosing a function $f: X \to I$ and a function~$F_i$ on~$X$ in the class~$\Upsilon_i$ for each $i \in I$.  We will then set $F(x) = F_{f(x)}(x)$.  Thus $f$'s role is to decide, for each $x \in X$, which $F_i$ to apply to $x$. The function~$f$ will be approximated on~$S$ in the sense of \cref{def:alpha_approximation}. The ordinal~$\eta+1$ bounds the ``mind-changes'' permitted along this approximation, in the sense of \cref{def:witness_for_convergence_of_approximation}. An approximation of the function~$f$ is supplied with a witness $\beta\colon S\to \eta+1$. We require that the chosen class~$\Upsilon_{i^*}$ is the initial value, meaning: if $\beta(\s)= \eta$ then $f(\s) = i^*$. This requirement to have an initial choice is what makes the class~$\Gamma$ non-self-dual.

\begin{definition} \label{def:description_of_a_class}
    A \emph{class description} consists of an oracle~$y$, and a labelled tree~$\Gamma$ satisfying:
    \begin{orderedlist}
        \item  The domain $T_\Gamma$ of~$\Gamma$ is a well-founded subtree of $\w^{<\w}$;
        \item If $s\in T_\Gamma$ is not a leaf, then $\Gamma(s)$ is a pair of ordinals $(\xi,\eta)$, with~$\eta$ nonzero, which we denote by $(\xi_s, \eta_s) = (\cs{\xi_s}{\Gamma},\cs{\eta_s}{\Gamma})$;
        \item If $s$ is a leaf of $T_\Gamma$ then $\Gamma(s)\in \{0,1\}$; 
        \item If $s,t\in T_\Gamma$ are not leaves and $s\preceq t$ then $\xi_s\le \xi_t$;
        \item $y$ computes~$\Gamma$.
    \end{orderedlist}
    We use~$\Gamma$ to denote the pair $(\Gamma,y)$. If~$T_\Gamma$ consists only of the root $\emptystring$, we declare $o(\Gamma) = \w_1$. Otherwise, we set $o(\Gamma)= \xi_{\emptystring}$. We write~$\cs{\eta}{\Gamma}$ for~$\cs{\eta_{\emptystring}}{\Gamma}$. We write~$\cs{y}{\Gamma}$ for~$y$. 
\end{definition}

Here computing~$\Gamma$ means computing~$T_\Gamma$, the set of leaves of~$T_\Gamma$, the function $s\mapsto \Gamma(s)$ for leaves~$s$ of~$T_\Gamma$, and the ordinals $\xi_s$ and~$\eta_s$, uniformly in non-leaf $s\in T_\Gamma$. There is a Turing-least oracle computing~$\Gamma$, but at times we will want to choose $\cs{y}{\Gamma}$ which is strictly Turing above that real. This will restrict the class of oracles that are allowed to compute a name of a $\Gamma$-function.

As our ordinals are well-orderings of~$\Nat$ (and thus countable), the definition $o(\Gamma) = \omega_1$ when the tree is trivial requires some discussion.  We consider~$\w_1$ as a symbol and declare that $\alpha <\w_1$ for every ordinal~$\alpha$.  Since $\omega_1$ is a limit, if, hypothetically, \ref{TSP:limit} were to extend to $\omega_1$, it would imply (via a regularity argument) that any $\omega_1$-forest is an $\alpha$-forest for some countable $\alpha$ (and the converse would hold by \ref{TSP:nested}).  So we will take this as our definition: for any oracle~$z$, $S$ is a $(z,\w_1)$-forest if it is a $(z,\alpha)$-forest for some $z$-computable ordinal~$\alpha$.

\begin{definition} \label{def:code_of_a_Gamma_function}
    Suppose that~$\Gamma$ is a class description. A \emph{$\Gamma$-name} consists of an oracle $z\ge_\Tur \cs{y}{\Gamma}$, a $(z,o(\Gamma))$-forest~$S$, and a $z$-computable labelled tree~$N$, whose domain $T_N$ is the collection of $s\in  T_\Gamma$ which are not leaves of~$T_\Gamma$, and such that for any $s\in T_N$, $N(s)$ is a pair $(f_s,\beta_s) = (\cs{f_s}{N},\cs{\beta_s}{N})$, satisfying:
    \begin{orderedlist}
        \item $f_s$ is a $(z,\xi_s)$-approximation on~$S$ (where $\xi_s= \cs{\xi_s}{\Gamma}$) of a function on~$[S]$, which we also denote by~$f_s$;
        \item $\beta_s\colon S\to \eta_s+1$ is a witness for the convergence of~$f_s$ (where $\eta_s = \cs{\eta_s}{\Gamma}$); 
        \item For all $\s\in S$, $f_s(\s)$ is a child of~$s$ on $T_\Gamma$;
        \item For all $\s\in S$, if $\beta_s(\s) = \eta_s$ then $f_s(\s)$ is the leftmost child of~$s$ on $T_\Gamma$. 
    \end{orderedlist}
    We use~$N$ to denote the name. We write $\cs{z}{N}$ for~$z$ and $S^N$ for~$S$; we write~$f^N$ for $f^N_{\emptystring}$ and $\beta^N$ for $\beta^N_{\emptystring}$. 
\end{definition}


As with class description, $N$ being $z$-computable requires that~$S$ be $z$-computable, and that $f_s\rest{S}$ and~$\beta_s$ are  $z$-computable functions, uniformly in $s\in T_N$. Note that since $z\ge_\Tur \cs{y}{\Gamma}$, the ordinals $\xi_s$ for $s\in T_N$ are uniformly $z$-computable, so \ref{TSP:computable} implies that the relations $\preceq^z_{\xi_s}$ are uniformly $z$-computable. The fact that $\xi_s\ge o(\Gamma)$ for all $s\in T_N$ implies that $S = S^N$ is a $(z,\xi_s)$-forest (\cref{rmk:alpha_and_beta_forests}), so the notion of a $(z,\xi_s)$-approximation on~$S$ makes sense. 

As we don't use the ordinals~$\eta_s$ for true stage relations, only as bounds, we do not require~$\Gamma$ to choose a particular version of $\eta_s+1$. Rather, we can think of a function into~$\eta_s+1$ as a function into~$\eta_s$ which also takes an extra special value ``$\infty$'', which we also denote by~$\eta_s$. 

\begin{definition} \label{def:function_defined_by_name}
    Suppose that~$\Gamma$ is a class description, $N$ is a $\Gamma$-name, and $w\in S^N\cup [S^N]$. We define a leaf $\ell(w) = \ell^N(w)$ of $T_\Gamma$ by recursion as follows: $\emptystring$ is a predecessor of $\ell(w)$; if $s\in T_N$ has been decided to be a predecessor of $\ell(w)$, then we declare that $f_s(w)$ is also a predecessor of $\ell(w)$.  (Recall that for $w \in S^N$, $f_s(w)$ refers to the approximation function, and for $w \in [S^N]$, $f_s(w)$ refers to the function being approximated.)

    We define $F^N(w) = \Gamma(\ell(w))$. (Recall that for a leaf~$s$, $\Gamma(s)\in \{0,1\}$). 
\end{definition}

When we refer to the function~$F^N$ we mean $F^N\rest{[S^N]}$ (that is, the function on infinite sequences, excluding the finite ones), unless we mention otherwise.

Note that if $T_\Gamma$ consists of only the root ($o(\Gamma) = \omega_1$), then $T_N$ is empty.  So a $\Gamma$-name is determined by only a $z$ and an $S$; in this case $F^N$ is the constant $\Gamma(\emptystring)$-value function on $[S]$.
\begin{definition} \label{def:Gamma_s}
    Let~$\Gamma$ be a class description. For any $z\ge_\Tur \cs{y}{\Gamma}$, we let
    \[
    \Gamma(z) = \left\{ F^N \,:\,  N\text{ is a $\Gamma$-name}\andd \cs{z}{N} = z \right\}. 
    \]
    We let 
    \[
    \bGamma = \bigcup \left\{ \Gamma(z) \,:\,  z \ge_\Tur \cs{y}{\Gamma}  \right\}. 
\]
\end{definition}

Informally, a \emph{$\Gamma(z)$-name} is a $\Gamma$-name~$N$ with $\cs{z}{N} = z$. \Cref{prop:translating_true_stages_between_oracles} implies:

\begin{lemma} \label{lem:name_translation}
    Let $\Gamma$ be a class description. If $w\ge_\Tur z\ge_\Tur \cs{y}{\Gamma}$ then $\Gamma(z)\subseteq \Gamma(w)$. Indeed, we can $w$-effectively translate a $\Gamma(z)$-name $N$ to a $\Gamma(w)$-name $N'$ such that $F^N = F^{N'}$, uniformly in~$w$ and~$z$.
\end{lemma}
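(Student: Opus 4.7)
The plan is to invoke \cref{prop:translating_true_stages_between_oracles} to translate the true stage structure from~$z$ up to~$w$. Let $\xi^* = o(\Gamma)$. I will apply the proposition with $\alpha = \xi^*$ to obtain a $w$-computable $h_{\xi^*}\colon \w^{<\w}\to \w^{<\w}$, and, uniformly in each non-leaf $s\in T_\Gamma$, apply it again with $\alpha = \xi_s$ to obtain $h_{\xi_s}$. The new forest will be
\[
S' \,:=\, h_{\xi^*}^{-1}(S) \,=\, \{\sigma\in \w^{<\w} : h_{\xi^*}(\sigma) \in S\}.
\]
Property~(i) of $h_{\xi^*}$ together with the $\preceq^z_{\xi^*}$-convexity of~$S$ shows that $S'$ is $\preceq^w_{\xi^*}$-convex, so $S'$ is a $w$-computable $(w,\xi^*)$-forest; property~(ii) yields $[S']^w = [S]^z$; and by \cref{rmk:alpha_and_beta_forests}, $S'$ is also a $(w,\xi_s)$-forest for every non-leaf~$s$, with $[S']^w$ independent of the level used.

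For each non-leaf $s$, I will define $(f'_s,\beta'_s)$ on $S'$ by pulling back along $h_{\xi_s}$, with a small modification to keep the ordinal bound valid when $h_{\xi_s}(\sigma)\notin S$. For $\sigma\in S'$, let $\tau^*(\sigma)$ be the $\preceq^w_{\xi_s}$-longest $\tau$ with $\tau\preceq^w_{\xi_s}\sigma$ and $h_{\xi_s}(\tau)\in S$; when it exists, set
\[
f'_s(\sigma) \,:=\, f_s(h_{\xi_s}(\tau^*(\sigma))), \qquad \beta'_s(\sigma) \,:=\, \beta_s(h_{\xi_s}(\tau^*(\sigma))),
\]
and otherwise default $f'_s(\sigma)$ to the leftmost child of~$s$ in $T_\Gamma$ and $\beta'_s(\sigma)$ to~$\eta_s$. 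Since $\{\tau : \tau\preceq^w_{\xi_s} \sigma\}$ is a $w$-computable finite set, the whole recipe is $w$-computable uniformly in~$s$ and~$\sigma$, producing a $w$-computable candidate $\Gamma(w)$-name $N' = (w, S', (f'_s,\beta'_s)_s)$ uniformly in the indices of~$w$, $z$, and~$N$.

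Verifying the name axioms and $F^{N'} = F^N$ then reduces to three checks. For the $(w,\xi_s)$-approximation property: for $x\in [S]^z$, property~(ii) of $h_{\xi_s}$ identifies $\{h_{\xi_s}(\sigma) : \sigma\prec^w_{\xi_s} x\}$ with $\{\rho : \rho\prec^z_{\xi_s} x\}$; since $x\in [S]^z$ and $f_s$ is a $(z,\xi_s)$-approximation of its limit, cofinitely many $\sigma\prec^w_{\xi_s} x$ satisfy $h_{\xi_s}(\sigma)\in S$ and $f_s(h_{\xi_s}(\sigma)) = f_s(x)$; for such $\sigma$, $\tau^*(\sigma) = \sigma$ and $f'_s(\sigma) = f_s(x)$. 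For the ordinal witness: whenever $\sigma\preceq^w_{\xi_s}\sigma'$ in $S'$ and both $\tau^*$-values are defined, maximality forces $\tau^*(\sigma)\preceq^w_{\xi_s} \tau^*(\sigma')$, their $h_{\xi_s}$-images lie in $S$ and are $\preceq^z_{\xi_s}$-comparable, so monotonicity and the mind-change property transfer directly from $\beta_s$; the boundary cases (one or both $\tau^*$ undefined) are absorbed because the default value $\eta_s$ is maximal and, when a mind-change occurs against the default, the original leftmost-child clause on~$N$ forces $\beta_s(h_{\xi_s}(\tau^*(\sigma')))<\eta_s$. The leftmost-default condition for~$N'$ holds by the same clause, and $F^{N'}=F^N$ follows since $f'_s = f_s$ as limit functions on $[S]^z$, so the recursive leaf-traversals coincide.

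The main obstacle is the mismatch between the single $(w,\xi^*)$-forest structure required of~$S'$ and the $(w,\xi_s)$-approximation structure required at each non-leaf~$s$: a single translation map from \cref{prop:translating_true_stages_between_oracles} cannot be guaranteed to respect $\preceq^w_\alpha$ at every level~$\alpha$ simultaneously. Using separate translations $h_{\xi_s}$ together with the ``last-good-predecessor'' device $\tau^*(\sigma)$ bridges the gap while preserving both the ordinal bound~$\eta_s$ and the leftmost-default clause.
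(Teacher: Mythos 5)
Your proof is correct and follows exactly the route the paper intends: the paper states \cref{lem:name_translation} as an immediate consequence of \cref{prop:translating_true_stages_between_oracles}, and its proof of \cref{prop:described_are_pointclass} uses the same device of a separate translation map $h_s$ at each level $\xi_s$ composed with the original $(f_s,\beta_s)$. Your additional ``last good predecessor'' bookkeeping to handle partial names (where $h_{\xi_s}(\sigma)$ may fall outside $S$) is a sound and appropriate completion of the details the paper omits.
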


\begin{remark} \label{rmk:class_depends_on_order_types}
    The class $\bGamma$ depends only on the order-types of the ordinals mentioned by~$\Gamma$, not their realisation as well-orderings of a subset of~$\Nat$. This follows from \cref{prop:translating_true_stages_between_copies_of_the_ordinal}
\end{remark}

\subsubsection{Sub-descriptions} 

Suppose that $P$ is a labelled tree and $s\in P$. The subtree $P_s$ issuing from~$s$ is defined to be the labelled tree defined on 
    $T_{P,s} = \left\{ t \,:\,  s\conc t\in T_P \right\}$
by setting $P_s(t) = P(s\conc t)$. 

\begin{lemma} \label{lem:sub-descriptions}
Suppose that $\Gamma$ is a class description and $s\in T_\Gamma$. Then:
\begin{sublemma}
    \item $\Gamma_s$ (equipped with $\cs{y}{\Gamma_s} = \cs{y}{\Gamma}$) is a class description; 
    \item if $N$ is a $\Gamma$-name, then $N_s$ is a $\Gamma_s$-name (we set $S^{N_s} = S^{N}$ and $\cs{z}{N_s} = \cs{z}{N}$). 
\end{sublemma}    
\end{lemma}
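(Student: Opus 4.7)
The plan is to verify the defining clauses of \cref{def:description_of_a_class} for $\Gamma_s$ and of \cref{def:code_of_a_Gamma_function} for $N_s$, all of which follow directly from the corresponding clauses holding for $\Gamma$ and $N$; the whole lemma is essentially bookkeeping, with one minor re-indexing subtlety.

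For part (a), I would note that $T_{\Gamma_s}$ is a subtree of $\w^{<\w}$ by construction and inherits well-foundedness from $T_\Gamma$. The labeling clauses transfer verbatim: if $t\in T_{\Gamma_s}$ is a leaf then $s\conc t$ is a leaf of $T_\Gamma$ and $\Gamma_s(t)=\Gamma(s\conc t)\in\{0,1\}$; for non-leaves, $\Gamma_s(t)=(\xi_{s\conc t},\eta_{s\conc t})$ is a pair of ordinals with $\eta_{s\conc t}$ nonzero. The non-decreasing property of the $\xi$-coordinate along paths in $T_\Gamma$ immediately restricts to paths in $T_{\Gamma_s}$. Finally, $\cs{y}{\Gamma}$ computes $\Gamma_s$ uniformly because membership in $T_{\Gamma_s}$, the leaf predicate, and the label at $t$ are all obtained by evaluating the corresponding features of $\Gamma$ at $s\conc t$.

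For part (b), I would define $N_s$ by taking $S^{N_s}=S^N$, $\cs{z}{N_s}=\cs{z}{N}=:z$, and, for each non-leaf $t\in T_{\Gamma_s}$, setting $\cs{\beta_t}{N_s}=\cs{\beta_{s\conc t}}{N}$ and $\cs{f_t}{N_s}=\bar f^N_{s\conc t}$, where $\bar f^N_{s\conc t}$ arises from $\cs{f_{s\conc t}}{N}$ by the evident re-indexing of children $s\conc t\conc\seq{k}\mapsto t\conc\seq{k}$. This is $z$-computable because $N$ is. To check that $S^N$ is a $(z,o(\Gamma_s))$-forest: if $s$ is a leaf of $T_\Gamma$ then $T_{\Gamma_s}$ is just the root, $o(\Gamma_s)=\w_1$, and $S^N$ qualifies trivially since it is a $(z,o(\Gamma))$-forest for a $z$-computable $o(\Gamma)$; otherwise $o(\Gamma_s)=\xi_s\geq o(\Gamma)$ by the non-decreasing $\xi$-property, and \cref{rmk:alpha_and_beta_forests} upgrades a $(z,o(\Gamma))$-forest to a $(z,\xi_s)$-forest. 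Clauses (i)--(iv) of \cref{def:code_of_a_Gamma_function} for $N_s$ at node $t$ then reduce to those clauses for $N$ at node $s\conc t$: $\bar f^N_{s\conc t}$ is a $(z,\xi_{s\conc t})$-approximation on $S^N$ because $\cs{f_{s\conc t}}{N}$ is, $\cs{\beta_{s\conc t}}{N}$ still witnesses convergence into $\eta_{s\conc t}+1$, the child-selection clause is exactly what the re-indexing guarantees, and leftmost children of $s\conc t$ in $T_\Gamma$ correspond under re-indexing to leftmost children of $t$ in $T_{\Gamma_s}$, so the default-outcome clause is preserved.

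I do not anticipate a main obstacle: the only place requiring any thought is the child-relabeling $s\conc t\conc\seq{k}\mapsto t\conc\seq{k}$ and the small case-split on whether $s$ is a leaf of $T_\Gamma$ (so that $o(\Gamma_s)=\w_1$); both are handled by inspection of the definitions and a single appeal to \cref{rmk:alpha_and_beta_forests}.
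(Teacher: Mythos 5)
Your verification is correct and is exactly the routine unwinding of \cref{def:description_of_a_class} and \cref{def:code_of_a_Gamma_function} that the paper intends (it states the lemma without proof). You correctly isolate the only two points needing care — the child re-indexing $s\conc t\conc\seq{k}\mapsto t\conc\seq{k}$ and the use of $\xi_{\emptystring}\le\xi_s$ together with \cref{rmk:alpha_and_beta_forests} (or the $\w_1$ convention when $s$ is a leaf) to see that $S^N$ is a $(z,o(\Gamma_s))$-forest.
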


The classes $\Gamma_s$ are those that are used in the inductive construction of~$\Gamma$ from simpler classes. Note that consideration of sub-descriptions is why in \cref{def:description_of_a_class} we asked for $\xi_s\le \xi_t$ whenever $s\preceq t$, rather than just $\xi_{\emptystring} \le \xi_{t}$, which would have sufficed up until now. 

The subclasses $\Gamma_{(n)}$ (for~$n$ such that $(n)\in T_\Gamma$) are the classes that were used in the last step of the construction of~$\Gamma$; so $I = \{ n : (n) \in T_\Gamma\}$, and $i^* = \min I$, to use the notation of the discussion at the beginning of this section. If~$N$ is a $\Gamma$-name, then for all $x\in [S] = [S^N]$, $F^N(x) = F^{N_{(n)}}(x)$, where $(n) = f^N(x)$. That is, the function $f^N = f^N_{\emptystring}$ is the function that chooses, for each $x\in [S]$, which of the functions $F^{N_{(n)}}$ to apply to~$x$ in order to calculate $F^N(x)$. For finite $\s\prec_{o(\Gamma)} x$ in~$S$, $f^N(\s)$ is the ``stage $|\s|_{o(\Gamma)}$-guess'' of this choice~$f^N(x)$.

\subsubsection{Duals} 

\begin{definition} \label{def:dual_class}
     Let~$\Gamma$ be a class description. We let $\dual{\Gamma}$ be the result of switching all the labels of the leaves of~$\Gamma$ (i.e., 0s become 1s and 1s become 0s).
 \end{definition} 

 If $N$ is a~$\Gamma$-name, then in the technical sense, it is also a $\dual{\Gamma}$-name, but when we interpret it as such, we write $\dual{N}$. Then $\dom F^N = \dom F^{\dual{N}} = [S^N]$, and 
 \[
 F^{\dual{N}} = 1-F^N. 
 \]

\subsection{Some examples} 
\label{sub:some_examples}

We show how our class descriptions include a variety of classes from the literature. We use the following terminology. 

\begin{definition} \label{def:total_names}
    Let~$\Gamma$ be a class description. A $\Gamma$-name~$N$ is \emph{total} if $S^N = \w^{<\w}$. A function $F\in \bGamma$ is total if $F = F^N$ for some total $\Gamma$-name~$N$. 
\end{definition}

For the following examples, if~$\bLambda$ is a collection of subsets of~$\Baire$, we say that $\bGamma = \bLambda$ if the total functions in~$\bGamma$ are the characteristic functions of the sets in~$\bLambda$.

\medskip
\noindent{\emph{a. Constants}}: Let~$T_\Gamma$ consist only of a root. Let $i \in \{0,1\}$ be the label of the root. Then~$\bGamma$ is the collection of all constant functions with value~$i$, defined on any Borel set. Using the notation just introduced,  $\bGamma = \{\emptyset\}$ when $i=0$ and $\bGamma = \{\Baire\}$ when $i=1$. 

\medskip
\noindent{\emph{b. $\bSigma^0_\alpha$}}: let~$T_\Gamma$ consist of the root, and two children $(0)$ and~$(1)$. Let the labels of the leaves be~0 and~1, respectively. Set~$o(\Gamma)$ to be some ordinal~$\xi$, and let $\cs{\eta}{\Gamma} = 1$. Then \ref{TSP:Sigma_alpha_sets} says that~$\bGamma$ consists of all functions $1_A\colon X\to \{0,1\}$ where~$X$ is $\bSigma^0_{1+\xi}\wedge \bPi^0_{1+\xi}$, and $A\subseteq X$ is $\bSigma^0_{1+\xi}$ within~$X$. In terms of sets, $\bGamma = \bSigma^0_{1+\xi}$.


\medskip
\noindent{\emph{c. Hausdorff difference hierarchy}}: More generally, let~$T_\Gamma$ be as in the previous example, but allow~$\eta = \cs{\eta}{\Gamma}$ to be larger than~1. Then $\bGamma = D_\eta(\bSigma^0_{1+\xi})$ of the Hausdorff difference hierarchy. This is proved in \cite[Prop.3.8]{BSL_paper}. The main takeaway is the dynamic characterisation of a set~$C$ in the class $D_\eta(\bSigma^0_{1+\xi})$ as one for which membership $x\in C$ is the result of a guessing process which starts at level~$\eta$ with ``no'', and then allows mind-changes as the ordinal decreases. 

\medskip
\noindent{\emph{d. One-sided separated unions}}: Generalising the previous example, let~$\Upsilon$ be a class description. Let~$\Gamma$ be the class description consisting of a root, and two children~$(0)$ and~$(1)$, such that the immediate sub-descriptions are $\Gamma_{(0)} = \dual{\Upsilon}$ and $\Gamma_{(1)} = \Upsilon$. Choose $\xi = o(\Gamma) \le o(\Upsilon)$ and any $\eta = \cs{\eta}{\Gamma}$. Then 
    $\bGamma = \ClassName{Sep}(D_\eta(\bSigma^0_{1+\xi}), \bUpsilon)$
is the collection of all sets of the form $(C\cap A_0)\cup (C^\complement \cap A_1)$, where $C\in D_\eta(\bSigma^0_{1+\xi})$, $A_0\in {\bUpsilon}$ and $A_1\in \dual{\bUpsilon}$.

\medskip
\noindent{\emph{e. Two-sided separated unions}}: Let~$\Upsilon$ and~$\Lambda$ be two class descriptions. Let~$\Gamma$ be the class description consisting of a root, and three children~$(0)$, $(1)$, and~$(2)$, such that the immediate sub-descriptions are $\Gamma_{(0)} = \Lambda$, $\Gamma_{(1)} = {\Upsilon}$ and $\Gamma_{(2)} = \dual{\Upsilon}$. Choose $\xi = o(\Gamma)\le o(\Lambda), o(\Upsilon)$, and $\cs{\eta}{\Gamma}=1$. Then 
    $\bGamma = \ClassName{BiSep}(\bSigma^0_{1+\xi}, \bUpsilon,\bLambda)$
is the collection of all sets of the form $(C_1\cap A_1)\cup (C_2\cap A_2)\cup ((C_1\cup C_2)^\complement \cap B)$, where~$C_1$ and~$C_2$ are disjoint $\bSigma^0_{1+\xi}$ sets, $A_1\in \bUpsilon$, $A_2\in \dual{\bUpsilon}$, and $B\in \bLambda$. For a $\Gamma$-name~$N$, the corresponding sets are $C_i = \left\{ x \,:\,  f^N(x)=i \right\}$ for $i=1,2$, and the characteristic functions of the sets~$A_i$  are $F^{N_{(i)}}$. 

\medskip

Below we extend this example to separated unions. More general two-sided separated unions, of the form $\ClassName{BiSep}(D_\eta(\bSigma^0_{1+\xi}), \bUpsilon,\bLambda)$, as well as more complicated separated unions and separated differences, can also be described, but their descriptions are a little more complicated; a direct construction is given in \cite{sequel}.

\subsection{Pointclasses} 
\label{sub:pointclasses}

Each described class is a pointclass.

\begin{proposition} \label{prop:described_are_pointclass}
   Let~$\Gamma$ be a class description. Then $\bGamma$ is closed under taking continuous pre-images. 
\end{proposition}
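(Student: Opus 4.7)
The plan is to reduce to the case of a computable $\Phi$ by absorbing its continuity modulus into the oracle, and then to pull back the name along $\Phi$ using the machinery of \cref{prop:translating_true_stages_across_computable_functions}. Let $F = F^N \in \bGamma$ with $N$ a $\Gamma(z)$-name, and let $\Phi \colon \Baire \to \Baire$ be continuous. Fix any oracle $w \ge_\Tur z$ for which $\Phi$ is $w$-computable; by \cref{lem:name_translation} we may regard $N$ as a $\Gamma(w)$-name, so that both $N$ and $\Phi$ are $w$-computable. The goal is to produce a $\Gamma(w)$-name $N'$ satisfying $F^{N'}(x) = F^N(\Phi(x))$ on $[S^{N'}]^w = \Phi^{-1}([S^N]^w)$.

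Apply the relativization to $w$ of \cref{prop:translating_true_stages_across_computable_functions} to obtain a $w$-computable pullback function $h \colon \w^{<\w} \to \w^{<\w}$ (used uniformly at each level $\xi_s$ occurring in $T_\Gamma$, as permitted by \cref{rmk:regarding_uniform_computability}). Set $S^{N'} = h^{-1}(S^N)$; by $\preceq^w$-convexity preservation this is a $(w, o(\Gamma))$-forest, and by the covering property $[S^{N'}]^w = \Phi^{-1}([S^N]^w)$. For each non-leaf $s \in T_\Gamma$, define on $S^{N'}$ the pulled-back data
\[
  f_s^{N'} = f_s^N \circ h, \qquad \beta_s^{N'} = \beta_s^N \circ h.
\]
The four clauses of \cref{def:code_of_a_Gamma_function} transfer: preservation of $\preceq^w_{\xi_s}$ by $h$ makes $f_s^{N'}$ a $(w, \xi_s)$-approximation on $S^{N'}$; the covering property $\{h(\s) : \s \prec^w_{\xi_s} x\} = \{\rho : \rho \prec^w_{\xi_s} \Phi(x)\}$ forces $f_s^{N'}(\s) \to f_s^N(\Phi(x))$ along $\s \prec^w_{\xi_s} x$ in $S^{N'}$; and the bound $\beta_s^{N'} \le \eta_s$, the range of $f_s^{N'}$ inside children of $s$, and the leftmost-child/initial-value clause (iv) are inherited pointwise from $N$.

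A short induction along $T_\Gamma$ now shows $\ell^{N'}(x) = \ell^N(\Phi(x))$ for every $x \in [S^{N'}]^w$, and hence $F^{N'}(x) = \Gamma(\ell^N(\Phi(x))) = F^N(\Phi(x))$, as required. The one subtle point, which I expect to be the main obstacle, is that the name $N$ involves approximations living at different levels $\xi_s \ge o(\Gamma)$ on the \emph{same} forest, so one needs the pullback $h$ (or, equivalently, a coherent family of pullbacks) to respect true-stage relations at every such $\xi_s$ simultaneously, not merely at the base level $o(\Gamma)$. This is handled by the uniformity of the true-stage machinery of \cref{sub:prelime:true_stages}: the relativized construction of \cref{prop:translating_true_stages_across_computable_functions} can be carried out uniformly in $\alpha$, so the $h$ produced at level $o(\Gamma)$ in fact satisfies the pullback conditions at every larger computable level, and in particular at each $\xi_s$ appearing in $T_\Gamma$. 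If one prefers to avoid invoking this uniformity, the same result follows by induction on the rank of $T_\Gamma$, recursively pulling back each subname $N_{(n)}$ using the $h$ provided at level $\xi_{(n)}$ and then checking that the resulting forests coincide with $S^{N'}$ via \cref{rmk:alpha_and_beta_forests}.
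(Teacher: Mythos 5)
Your overall strategy is the same as the paper's: translate the name to an oracle computing $\Phi$ via \cref{lem:name_translation}, then pull the approximation data back along $\Phi$ using \cref{prop:translating_true_stages_across_computable_functions}. But your preferred resolution of the "one subtle point" you flag is exactly where the gap lies. You assert that the pullback $h$ produced at level $o(\Gamma)$ "in fact satisfies the pullback conditions at every larger computable level." Nothing in the paper establishes this, and it does not follow from the construction: $h$ is built by recursion along $\preceq^w_{o(\Gamma)}$ using sets $U_\rho$ with $[U_\rho]^w_{o(\Gamma)}=[\rho]^z_{o(\Gamma)}$, and there is no reason why this $h$ should satisfy either clause at a level $\xi_s > o(\Gamma)$ — in particular the covering property $\{h(\s): \s\prec^w_{\xi_s}x\}=\{\rho:\rho\prec^z_{\xi_s}\Phi(x)\}$ is a genuinely stronger statement about which strings are $\xi_s$-true, not an instance of "uniformity in $\alpha$." (Uniformity gives you a possibly different $h$ for each $\alpha$, computed by the same procedure; it does not give you one $h$ good for all $\alpha$ at once.)

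The repair is easy and is what the paper actually does: take a \emph{separate} $h_s$ for each non-leaf $s\in T_\Gamma$, obtained from \cref{prop:translating_true_stages_across_computable_functions} at level $\xi_s$, and set $f^{N'}_s=f^N_s\circ h_s$, $\beta^{N'}_s=\beta^N_s\circ h_s$. No coherence among the $h_s$ is needed, because in a general class description each $f^N_s$ is defined on all of $S^N$, and $S^N$ is a $(z,\xi_s)$-forest for every $s$ by \cref{rmk:alpha_and_beta_forests}; so each node's data can be pulled back independently. Your fallback (induction on the rank of $T_\Gamma$, pulling back each subname at its own level) is in this spirit and would also work, though it is more elaborate than necessary. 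One further small point: with several distinct $h_s$ your definition $S^{N'}=h^{-1}(S^N)$ for a partial name becomes ambiguous; the paper avoids this by working with total names (which suffices, by \cref{prop:closure:totalisation}, for the pointclass statement).
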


\begin{proof}
    Follows from \cref{prop:translating_true_stages_across_computable_functions} (and \cref{prop:translating_true_stages_between_oracles}). Let~$N$ be a total $\Gamma$-name; let $\Phi\colon \Baire\to\Baire$  be continuous. By \cref{lem:name_translation}, we may assume that~$\Phi$ is $\cs{z}{N}$-computable. For each $s\in T_N$ let~$h_s$ be given by \cref{prop:translating_true_stages_across_computable_functions} for $\preceq^{\cs{z}{N}}_{\xi_s}$; define a $\Gamma$-name~$M$ by letting $f_s^M(\s) = f_s^N(h_s(\s))$ and $\beta_s^M(\s) = \beta_s^N(h_s(\s))$. Then $F^M = F^N\circ \Phi$. 
\end{proof}

Next, we build a universal set. The following lemma implies that we can effectively list all $(z,\xi)$-approximations with convergence witnesses of the appropriate form. The key is the existence of the default value at the beginning of the approximation. 

\begin{lemma} \label{lem:preparation_for_universal_set}
    Uniformly, given an oracle~$z$, $z$-computable ordinals~$\xi$ and~$\eta$, and an index for  partial $z$-computable functions $f\colon \w^{<\w}\to \Nat$ and $\beta\colon \w^{<\w}\to \eta+1$, and a ``default value'' $n^*\in \Nat$, we can compute an index for total $z$-computable functions $g\colon \w^{<\w}\to \Nat$ and $\gamma\colon \w^{<\w}\to \eta+1$ satisfying:
    \begin{orderedlist}
        \item $g$ is a $(z,\xi)$-approximation of a function $G\colon \Baire\to \Nat$; 
        \item $\gamma$ is a witness for the convergence of~$g$;
        \item For all~$\s$, if $\gamma(\s) = \eta$ then $g(\s) = n^*$; 
        \item If $f$ and~$\beta$ are total, $f$ is a $(z,\xi)$-approximation of a function $F\colon \Baire \to \Nat$, $\beta$ is a witness for the convergence of~$f$, and for all~$\s$, if $\beta(\s) = \eta$ then $f(\s) = n^*$, then $G=F$. 
    \end{orderedlist}
\end{lemma}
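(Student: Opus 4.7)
The plan is to define $(g, \gamma)$ by recursion on $|\s|$, at each string \emph{attempting} to use the values supplied by the given partial computations $(f, \beta)$, and otherwise overriding them with a safe ``copy from $\preceq_\xi$-parent'' whenever those values fail a finite consistency check. The copy-from-parent override preserves both the monotonicity of $\gamma$ and the strict-decrease condition at points where we must abandon the supplied data, and it automatically propagates the invariant ``$\gamma(\s) = \eta \Then g(\s) = n^*$''. The main obstacle will be arranging that an override never spoils the convergence-witness properties; copying both coordinates of $(g, \gamma)$ from the parent trivialises the obligation at $\s$ (equality on both coordinates) and lets induction do the rest.

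Concretely, for $\s \ne \emptystring$ let $\pi(\s)$ denote the immediate $\preceq_\xi$-predecessor of $\s$: the $\preceq_\xi$-largest $\tau \preceq_\xi \s$ with $\tau\neq \s$. Since $\{\tau \,:\, \tau\preceq_\xi \s\}$ is finite and linearly ordered, and since $\preceq_\xi$ is $z$-computable by \ref{TSP:computable}, $\pi$ is $z$-computable. Process strings in order of $|\s|$. At $\s = \emptystring$, run the given computations of $f(\emptystring)$ and $\beta(\emptystring)$ for $|\emptystring|$ stages and \emph{accept} their values when they converge, $\beta(\emptystring) \le \eta$, and $\beta(\emptystring) = \eta \Then f(\emptystring) = n^*$; otherwise set $(g, \gamma)(\emptystring) := (n^*, \eta)$. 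At $\s \ne \emptystring$, run $f(\s)$ and $\beta(\s)$ for $|\s|$ stages and accept if they converge, $\beta(\s) \le \gamma(\pi(\s))$, the strict-decrease clause $f(\s) \ne g(\pi(\s)) \Then \beta(\s) < \gamma(\pi(\s))$ holds, and $\beta(\s) = \eta \Then f(\s) = n^*$; otherwise set $(g, \gamma)(\s) := (g(\pi(\s)), \gamma(\pi(\s)))$. All of this is uniformly $z$-computable in the hypothesised data.

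The verification of (i)--(iv) is then routine. Monotonicity of $\gamma$ along $\preceq_\xi$ and strict decrease when $g$ changes both hold across immediate $\preceq_\xi$-edges by construction, and extend to all pairs $\s \preceq_\xi \tau$ by chaining successive $\preceq_\xi$-predecessors from $\s$ up to $\tau$. Hence $\gamma$ is a convergence witness for $g$ bounded by $\eta+1$, and the well-foundedness of $\eta+1$ forces $g$ to stabilise along every $\prec_\xi$-path $\{\s \prec_\xi x\}$; this gives (i) and (ii). Invariant (iii) holds at the default $(n^*, \eta)$, is preserved by the accept-branch via the default clause, and is inherited along the copy-branch by induction. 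For (iv), assuming $f, \beta$ are total with the stated properties, a short induction on $|\s|$ shows that every acceptance clause holds at every $\s$: inductively $(g, \gamma)(\pi(\s)) = (f, \beta)(\pi(\s))$, so the conditions at $\s$ reduce to the already-assumed witness properties of $(f, \beta)$ at the pair $\pi(\s) \preceq_\xi \s$. Thus $(g, \gamma) = (f, \beta)$ throughout, and in particular $G = F$.
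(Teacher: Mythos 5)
Your construction secures (i)--(iii): the copy-from-predecessor fallback makes $\gamma$ a convergence witness across immediate $\preceq_\xi$-edges, this chains to arbitrary pairs $\s\preceq_\xi\tau$, and well-foundedness of $\eta+1$ then forces $g$ to stabilise along every path. The gap is in (iv), which is the essential clause (it is what makes the resulting effective list of names exhaustive). You only ever consult the computations of $f$ and $\beta$ \emph{at $\s$ itself}, for $|\s|$ many steps. A total $z$-computable $f$ may converge on input $\s$ only after, say, $2^{|\s|}$ steps, for every $\s$; then your acceptance clause fails at every string, $(g,\gamma)$ is constantly $(n^*,\eta)$, and $G\equiv n^*$ while $F$ may be arbitrary. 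In particular the inductive claim ``$(g,\gamma)(\pi(\s))=(f,\beta)(\pi(\s))$, hence the acceptance conditions at $\s$ reduce to the witness properties of $(f,\beta)$'' is false: acceptance can fail purely because the computation is slow, and no later string ever revisits the value $f(\s)$.

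The repair --- and this is what the paper's proof does --- is to let $\s$ look \emph{back} along its $\preceq_\xi$-predecessors: at $\s$ with $k=|\s|_\xi$ and immediate predecessor $\tau$, take the longest $\rho\preceq_\xi\s$ such that $f(\rho)$ and $\beta(\rho)$ both converge within $k$ steps and $\beta(\rho)<\gamma(\tau)$, and adopt those values; if there is no such $\rho$, copy $(g,\gamma)(\tau)$. Since the time allowance grows along any infinite $\prec_\xi$-path, a convergent value of $f$ at a fixed predecessor is eventually seen and adopted at some later string on the path, which is all that is needed for the \emph{limits} $G$ and $F$ to agree (one does not need $g=f$ pointwise). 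Your verification of (i)--(iii) carries over essentially unchanged to this modified construction.
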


\begin{proof}
    This is fairly standard.  We proceed recursively.  Define $f(\emptystring) = n^*$ and $\gamma(\emptystring) = \eta$.
    
    Given $\s\in \w^{<\w}$ with $\s \neq \emptystring$, let $k = |\s|^z_\xi$, and let $\tau$ be $\s$'s $\preceq_\xi^z$-predecessor (i.e., $\tau \preceq_\xi^z \s$ and $|\tau|^z_\xi = k-1$.)  Let $\rho \preceq_\xi^z \s$ be longest with $f(\rho)$ and $\beta(\rho)$ both converging within $k$ steps, and $\beta(\rho) < \gamma(\tau)$.  If there is no such $\rho$, define $g(\s) = g(\tau)$ and $\gamma(\s) = \gamma(\tau)$.  Otherwise, define $g(\s) = f(\s)$ and $\gamma(\s) = \beta(\s)$.
\end{proof}

As a result, for a class description~$\Gamma$, we can list all total~$\Gamma$-names. For simplicity, suppose that~$\Gamma$ is computable. Then there are, for each oracle~$z$, a list $N_0^z,N_1^z,\dots$ of total $\Gamma(z)$-names such that $N_e^z$ is $z$-computable, uniformly in~$e$ and~$z$, and such that $\Gamma(z) = \left\{ F^{N_e^z} \,:\,  e<\w \right\}$. Using \cref{prop:translating_true_stages_between_oracles} for $w = e\conc z$ we get:

\begin{lemma} \label{lem:the_universal_Gamma_set}
    Let~$\Gamma$ be a computable class description. There is, for each oracle~$z$, a total $\Gamma(z)$-name $N^z$, uniformly $z$-computable, such that $\left\{ F^{N^z} \,:\,  z\in \Baire \right\}$ lists all the total functions in~$\bGamma$. 
\end{lemma}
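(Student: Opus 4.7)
The plan is a two-stage construction. First, for each oracle $z$, I build a uniformly $z$-computable enumeration $(N_e^z)_{e<\w}$ of total $\Gamma(z)$-names that captures every total function in $\Gamma(z)$. Second, given an arbitrary oracle $z$, I decode $z = e\conc z'$ (taking $e = z(0)$ and $z'$ the one-step shift of $z$) and define $N^z$ to be the $\Gamma(z)$-translate of $N_e^{z'}$ supplied by \cref{lem:name_translation}. Since $\Gamma$ is computable we may take $\cs{y}{\Gamma}=\emptyset$, so $z\geq_\Tur z'\geq_\Tur \cs{y}{\Gamma}$ holds automatically for every $z$.

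For the first stage, let $e$ code a single $z$-partial-computable function whose output at $(s,\s)$ -- where $s$ is a non-leaf of $T_\Gamma$ and $\s\in\w^{<\w}$ -- is interpreted as a candidate pair $(\hat f_s(\s),\hat\beta_s(\s))$. Computability of $\Gamma$ makes the nodes, the ordinals $\cs{\xi_s}{\Gamma},\cs{\eta_s}{\Gamma}$, and the leftmost child $n^*_s$ of each non-leaf $s$ uniformly $z$-computable. Preprocess by replacing $\hat f_s(\s)$ with $n^*_s$ whenever $\hat f_s(\s)$ is not a child of $s$ in $T_\Gamma$. Then apply \cref{lem:preparation_for_universal_set} at each node with parameters $\cs{\xi_s}{\Gamma},\cs{\eta_s}{\Gamma},n^*_s$ to obtain total $z$-computable $(f_s,\beta_s)$; parts (i)--(iii) of that lemma yield clauses (i), (ii), (iv) of \cref{def:code_of_a_Gamma_function}, and clause (iii) of that definition follows because the construction in the preparation lemma defines $g(\s)$ only from the preprocessed $f$-values and the default $n^*_s$, all of which are children of $s$. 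Setting $S^{N_e^z} = \w^{<\w}$ thus defines a total $\Gamma(z)$-name, uniformly in $e$ and $z$. Any total $\Gamma(z)$-name $M$ already satisfies clauses (iii)--(iv) of \cref{def:code_of_a_Gamma_function}, so the preprocessing acts as the identity on its data $(f_s^M,\beta_s^M)$, and clause (iv) of \cref{lem:preparation_for_universal_set} applied nodewise yields $F^{N_e^z} = F^M$ for an appropriate $e$.

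For the second stage, the decoding $z\mapsto(e,z')$ is $z$-computable, so \cref{lem:name_translation} provides a $z$-uniform translation of $N_e^{z'}$ to a $\Gamma(z)$-name $N^z$ with $F^{N^z}=F^{N_e^{z'}}$. Surjectivity then follows: given any total $F\in\bGamma$, fix a $\Gamma$-name $M$ with $F^M=F$ and let $z'=\cs{z}{M}$; by the first stage $F=F^{N_e^{z'}}$ for some $e$, so $F=F^{N^z}$ for $z=e\conc z'$. The main technical obstacle, though routine, lies in the bookkeeping of the first stage: packaging the candidate pairs $(\hat f_s,\hat\beta_s)$ across all (possibly infinitely many) non-leaf nodes of the well-founded tree $T_\Gamma$ into a single natural-number index $e$ via a fixed computable pairing, and verifying that the truncation-to-children preprocessing does not destroy the surjectivity guarantee of \cref{lem:preparation_for_universal_set}(iv) for inputs that were already valid $\Gamma(z)$-name data.
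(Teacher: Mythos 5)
Your proposal is correct and follows essentially the same route as the paper: first enumerate all total $\Gamma(z)$-names uniformly in $e$ and $z$ via \cref{lem:preparation_for_universal_set} applied nodewise (the paper asserts this in one sentence; you supply the preprocessing and pairing details), then obtain the single-parameter family by decoding $z = e\conc z'$ and translating $N_e^{z'}$ up to the oracle $z$ via \cref{lem:name_translation}/\cref{prop:translating_true_stages_between_oracles}. No gaps.
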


The join of these is a universal set for~$\Gamma$. Fix a pairing function $\seq{z,x}$ as discussed before \cref{prop:translating_true_stage_relations_by_unpairing}. 

\begin{lemma} \label{lem:universal_Gamma_set}
    Let $\Gamma$ be a computable class description and let $\seq{N^z}$ be as in \cref{lem:the_universal_Gamma_set}. Then $F(\seq{z,x}) = F^{N^z}(x)$ is in $\Gamma$.
\end{lemma}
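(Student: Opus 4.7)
The plan is to build an explicit total $\Gamma$-name $M$, with $S^M = \w^{<\w}$ and $\cs{z}{M} = \emptyset$, such that $F^M(\seq{z,x}) = F^{N^z}(x)$ for every $z,x\in\Baire$. The tree $T_M$ will equal $T_\Gamma$ with the leaf labels of $\Gamma$; only the approximations $(f_s^M,\beta_s^M)$ need to be constructed. Throughout I will use the fact, noted in the ``Relativised true stages'' subsection, that the relativised computations involved in $N^z$ are Lipschitz in $z$: the value $f_s^{N^z}(\rho)$ depends only on $z\rest{|\rho|}$, and likewise for $\beta_s^{N^z}(\rho)$.

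The central tool is \cref{prop:translating_true_stage_relations_by_unpairing}. For each non-leaf $s\in T_\Gamma$, apply that proposition with $\alpha = \xi_s$ to obtain a computable $h_s\colon \w^{<\w}\to\w^{<\w}$ with $|h_s(\s)|\le |(\s)_0|$, such that $\s\preceq_{\xi_s}\tau$ implies $h_s(\s)\preceq_{\xi_s}^{(\tau)_0} h_s(\tau)$, and such that for each pair $(z,x)$ the set $\{h_s(\s) : \s\prec_{\xi_s}\seq{z,x}\}$ equals $\{\rho : \rho\prec_{\xi_s}^z x\}$. Then define
\[
    f_s^M(\s) \;=\; f_s^{N^{(\s)_0}}(h_s(\s)), \qquad \beta_s^M(\s) \;=\; \beta_s^{N^{(\s)_0}}(h_s(\s)).
\]
These are computable: by $|h_s(\s)|\le |(\s)_0|$ and the Lipschitz property, the right-hand side depends only on the finite string $(\s)_0$.

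Now I would verify that $M$ is a legitimate $\Gamma$-name. First, for any $\s\preceq_{\xi_s}\tau$, the standard interleaving pairing forces $(\s)_0\preceq(\tau)_0$, so $f_s^{N^{(\s)_0}}(h_s(\s))=f_s^{N^{(\tau)_0}}(h_s(\s))$ by Lipschitz-ness, and similarly for $\beta$. Combined with $h_s(\s)\preceq_{\xi_s}^{(\tau)_0} h_s(\tau)$, the two required clauses of \cref{def:witness_for_convergence_of_approximation} for $(f_s^M,\beta_s^M)$ reduce to the corresponding clauses for the single name $N^{(\tau)_0}$ applied at $h_s(\s)\preceq_{\xi_s}^{(\tau)_0}h_s(\tau)$, which hold because $N^{(\tau)_0}$ is a $\Gamma$-name. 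The leftmost-child condition on the default value of $\beta_s^M$ transfers the same way. The fact that $f_s^M$ is a $\xi_s$-approximation on the path through $\seq{z,x}$ is immediate from property (iii) of $h_s$: along $\s\prec_{\xi_s}\seq{z,x}$, the values $f_s^M(\s)$ are precisely the values $f_s^{N^z}(\rho)$ for $\rho\prec_{\xi_s}^z x$, and by Lipschitz-ness the oracle $(\s)_0$ may be replaced by $z$ for large enough $\s$.

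Having constructed $M$, the evaluation $F^M(\seq{z,x}) = F^{N^z}(x)$ follows by induction on $T_\Gamma$ applied to \cref{def:function_defined_by_name}: at the root the next node chosen by $f^M$ on input $\seq{z,x}$ is $f^{N^z}(x)$ by property (iii), and the same argument applied to each subtree gives $\ell^M(\seq{z,x})=\ell^{N^z}(x)$, so the two functions share the same leaf label. Consequently $F=F^M\in\bGamma$. The main obstacle is the bookkeeping in the preceding paragraph: one must make sure that when verifying the witness inequalities for pairs $\s\preceq_{\xi_s}\tau$ that do \emph{not} lie on a common infinite path, the oracle mismatch between $(\s)_0$ and $(\tau)_0$ is neutralised by Lipschitz-ness of $N^{(\cdot)}$, which is why the explicit bound $|h_s(\s)|\le|(\s)_0|$ supplied by \cref{prop:translating_true_stage_relations_by_unpairing} is essential.
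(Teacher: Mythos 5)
Your construction is the same as the paper's: apply \cref{prop:translating_true_stage_relations_by_unpairing} at each non-leaf $s$ with $\alpha=\xi_s$ to get $h_s$, and set $f^M_s(\s)=f^{N^{(\s)_0}}_s(h_s(\s))$ and $\beta^M_s(\s)=\beta^{N^{(\s)_0}}_s(h_s(\s))$; the verification that $M$ is a $\Gamma$-name and that $F^M(\seq{z,x})=F^{N^z}(x)$ then runs exactly as you describe.

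There is, however, one step where your justification does not go through as written. You claim that the Lipschitz property ``$f^{N^z}_s(\rho)$ depends only on $z\rest{|\rho|}$'' is already ``noted in the Relativised true stages subsection.'' What that subsection establishes is Lipschitz-computability of the \emph{true stage relations}: $\s\preceq^z_\alpha\tau$ depends only on $z\rest{|\tau|}$. It says nothing about the name functions $f^{N^z}_s$ and $\beta^{N^z}_s$, which are merely $z$-computable and may a priori consult arbitrarily long initial segments of~$z$ before converging. Without this property your defining expression $f^{N^{(\s)_0}}_s(h_s(\s))$ is not even well-defined, since $(\s)_0$ is a finite string, and the oracle-mismatch argument you use for pairs $\s\preceq_{\xi_s}\tau$ collapses. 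The paper closes exactly this gap by first \emph{slowing} the approximations, as in the proof of \cref{lem:preparation_for_universal_set}, after which one may legitimately assume that $f^{N^z}_s(\s)$ and $\beta^{N^z}_s(\s)$ are determined by $z\rest{|\s|}$. With that one modification inserted at the start, the rest of your argument is correct.
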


\begin{proof}
    For each non-leaf $s\in T_\Gamma$, let~$h_s$ be given by \cref{prop:translating_true_stage_relations_by_unpairing} for the ordinal~$\xi_s$. By slowing approximations (as in the proof of \cref{lem:preparation_for_universal_set}), we may assume that for all~$s$, $\s$ and~$z$, the values $f^z_s(\s) = f^{N^z}_s(\s)$ and $\beta^z_s(\s) = \beta^{N^z}_s(\s)$ can be determined by consulting only $z\rest{|\s|}$; we write $f^\rho_s(\s)$ and $\beta^\rho_s(\s)$ when $|\rho|\ge |\s|$. Now define a computable $\Gamma$-name~$M$ by letting $f^M_s(\s) = f^{(\s)_0}_s(h_s(\s))$ and $\beta^M_s(\s) = \beta^{(\s)_0}_s(h_s(\s))$; $F^M$ is as required. 
\end{proof}

Relativising to~$\cs{y}{\Gamma}$ we get:

\begin{proposition} \label{prop:universal_and_so_nonselfdual}
    For any class description~$\Gamma$, there is a function in $\Gamma(\cs{y}{\Gamma})$ which is universal for total~$\bGamma$ functions. Hence, $\bGamma$ is a non-self-dual Wadge class. 
\end{proposition}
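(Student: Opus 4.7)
The plan is to relativize \cref{lem:universal_Gamma_set} to the oracle $\cs{y}{\Gamma}$, and then rule out self-duality by a standard diagonal argument; the conclusion that $\bGamma$ is a Wadge class will drop out of the universal property along the way.

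First I would observe that the proof of \cref{lem:universal_Gamma_set} is fully uniform in the oracle computing~$\Gamma$: it invokes \cref{lem:preparation_for_universal_set} to list the total $\Gamma(z)$-names uniformly in~$z$, and then splices them into a single $\Gamma$-name using the pairing function and \cref{prop:translating_true_stage_relations_by_unpairing}. Each of these ingredients relativizes to any oracle computing~$\Gamma$. Running the construction with~$\cs{y}{\Gamma}$ as the base oracle therefore produces a total function $F \in \Gamma(\cs{y}{\Gamma})$ such that every total function in~$\bGamma$ equals $x \mapsto F(\seq{z,x})$ for some $z \ge_\Tur \cs{y}{\Gamma}$.

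Given such an~$F$, the claim that $\bGamma$ is a Wadge class is immediate: by \cref{prop:described_are_pointclass}, continuous preimages of~$F$ lie in~$\bGamma$, and the universal property is exactly the converse inclusion. For non-self-duality I would run the usual diagonal argument. Let $\Delta\colon \Baire\to \Baire$ be the computable map $\Delta(x) = \seq{x,x}$, and set $D = 1 - F\circ \Delta$. Then $F\circ \Delta \in \bGamma$ by \cref{prop:described_are_pointclass}, so $D \in \dual{\bGamma}$. If $\bGamma$ were self-dual, $D$ would be a total member of~$\bGamma$, and universality would yield some $z_0$ with $D(x) = F(\seq{z_0,x})$ for every~$x$. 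Substituting $x = z_0$ forces $1 - F(\seq{z_0,z_0}) = F(\seq{z_0,z_0})$, the desired contradiction.

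I do not anticipate any serious obstacle. The only delicate point is confirming that the universal function really lands in $\Gamma(\cs{y}{\Gamma})$ rather than merely in $\Gamma(z)$ for some properly larger~$z$; this is exactly what the uniformity of the proof of \cref{lem:universal_Gamma_set} in its base oracle guarantees, via \cref{lem:preparation_for_universal_set} and the Lipschitz-computable character of the relativized true stage relations noted at the end of \cref{sub:prelime:true_stages}.
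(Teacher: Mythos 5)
Your proposal is correct and follows essentially the same route as the paper: relativise \cref{lem:the_universal_Gamma_set,lem:universal_Gamma_set} to the oracle $\cs{y}{\Gamma}$ (the paper does this by pairing, using oracles $\smallseq{\cs{y}{\Gamma},z}$) to obtain the universal function in $\Gamma(\cs{y}{\Gamma})$, then apply \cref{prop:described_are_pointclass} and the standard diagonal substitution $x = z_0$ to refute self-duality.
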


\begin{proof}
    The relativisation of \Cref{lem:the_universal_Gamma_set,lem:universal_Gamma_set} is that there is a total $\Gamma(\smallseq{\cs{y}{\Gamma}, z})$-name $N^{\smallseq{\cs{y}{\Gamma}, z}}$, uniformly $\smallseq{\cs{y}{\Gamma}, z}$-computable, such that $\{F^{N^{\smallseq{\cs{y}{\Gamma}, z}}} : z \in \Baire\}$ lists all the total functions in $\bGamma$, and further that $F(\seq{z,x}) = F^{N^{\smallseq{\cs{y}{\Gamma}, z}}}(x)$ is in $\bGamma$.  This is the universal function.
    
    Now suppose towards a contradiction that $1-F \in \bGamma$.  By \Cref{prop:described_are_pointclass}, $x \mapsto 1-F(\seq{x,x}) \in \bGamma$, so fix $z$ with $F^{N^{\smallseq{\cs{y}{\Gamma}, z}}}(x) = 1-F(\seq{x,x})$.  Then
    \[
    F(\seq{z,z}) = F^{N^{\smallseq{\cs{y}{\Gamma}, z}}}(z) = 1-F(\seq{z,z}),
    \]
    a contradiction.
\end{proof}

\subsection{Some closure properties} 
\label{sub:some_closure_properties}

Let~$\Gamma$ be a class description, and let $z\ge_\Tur \cs{y}{\Gamma}$. 

\begin{proposition} \label{prop:closure:subdomains}
    If $F$ is in $\Gamma(z)$ and $X\subseteq \dom F$ is $\Sigma^0_{1+o(\Gamma)}(z)\wedge \Pi^0_{1+o(\Gamma)}(x)$, then $F\rest{X}\in \Gamma(z)$. 
\end{proposition}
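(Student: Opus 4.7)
The plan is straightforward: take a $\Gamma(z)$-name $N$ for $F$, represent $X$ as $[R]$ for a suitable $z$-computable sub-$o(\Gamma)$-forest $R$ of $S^N$, and then simply restrict all the approximation data of $N$ to $R$.

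First I would invoke \cref{lem:relatively_effectively_alpha_closed_and_open_by_forests}(c), relativised to $z$: since $X \subseteq [S^N]$ is $\Sigma^0_{1+o(\Gamma)}(z) \wedge \Pi^0_{1+o(\Gamma)}(z)$ (which is the same as being so within $[S^N]$), there is a $z$-computable $(z,o(\Gamma))$-forest $R \subseteq S^N$ with $[R] = X$. In the degenerate case $o(\Gamma) = \w_1$, the hypothesis on $X$ means that $X$ is $\Sigma^0_{1+\alpha}(z) \wedge \Pi^0_{1+\alpha}(z)$ for some $z$-computable $\alpha$, so $R$ is a $(z,\alpha)$-forest, hence a $(z,\w_1)$-forest as defined after \cref{def:description_of_a_class}; and $T_N$ is empty, so there is nothing else to do.

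In the main case, I define a $\Gamma(z)$-name $M$ by setting $S^M = R$, $\cs{z}{M} = z$, and, for each non-leaf $s \in T_\Gamma$, $f^M_s = f^N_s \rest R$ and $\beta^M_s = \beta^N_s \rest R$. Since $\xi_s \geq o(\Gamma)$, \cref{rmk:alpha_and_beta_forests} (relativised) tells us that $R$ is also a $(z,\xi_s)$-forest, so $f^M_s$ is the correct sort of object. The conditions (iii) and (iv) of \cref{def:code_of_a_Gamma_function} are inherited verbatim from $N$, and $\beta^M_s$ inherits the defining properties of an ordinal witness (\cref{def:witness_for_convergence_of_approximation}) as a function on the subset $R$.

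The only substantive point to verify is that $f^M_s$, viewed as an approximation on $R$, converges on every $x \in [R]$ to the same limit $f^N_s(x)$ used by $N$; this is the key to showing $F^M = F^N \rest X$. For $x \in [R] \subseteq [S^N]$, the set $\{\s \prec^z_{\xi_s} x : \s \in R\}$ is cofinal among $\{\s \prec^z_{\xi_s} x : \s \in S^N\}$ (because $x \in [R]$ means infinitely many $\preceq^z_{\xi_s}$-predecessors of $x$ lie in $R$, by \cref{def:alpha-forest} together with \cref{rmk:alpha_and_beta_forests}), so the eventual value of $f^N_s$ along $x$ in $S^N$ coincides with that in $R$. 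Hence the recursive definition of $\ell^M(x)$ tracks the same path through $T_\Gamma$ as $\ell^N(x)$ for $x \in X$, giving $F^M(x) = \Gamma(\ell^M(x)) = \Gamma(\ell^N(x)) = F^N(x)$. I don't foresee a real obstacle; the only thing to be mildly careful about is handling the $o(\Gamma) = \w_1$ case and confirming that restricting a forest preserves all the structural properties demanded by \cref{def:code_of_a_Gamma_function}.
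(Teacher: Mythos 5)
Your proposal is correct and follows exactly the paper's argument: invoke \cref{lem:relatively_effectively_alpha_closed_and_open_by_forests}(c) to get a $(z,o(\Gamma))$-forest $R\subseteq S^N$ with $[R]=X$, then restrict each $(f_s^N,\beta_s^N)$ to $R$. The extra care you take with the $o(\Gamma)=\w_1$ case and with cofinality of $R$ along $x$ is sound but is left implicit in the paper's one-line proof.
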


\begin{proof}
    Let~$N$ be a $\Gamma(z)$-name of~$F$. By \cref{lem:relatively_effectively_alpha_closed_and_open_by_forests}(c), there is a $(z,o(\Gamma))$-forest $T\subseteq S^N$ with $[T]=X$. Define the required $\Gamma(z)$-name~$M$ by restricting each approximation $(f_s^N,\beta_s^N)$ to $\s\in T$. 
\end{proof}

\begin{proposition} \label{prop:closure:totalisation}
    Every $F\in \Gamma(z)$ can be extended to a total function in~$\Gamma(z)$.
\end{proposition}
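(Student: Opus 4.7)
Given a $\Gamma(z)$-name $N$ of $F$, the plan is to extend $N$ to a total $\Gamma(z)$-name $M$ with $S^M = \w^{<\w}$ such that $F^M$ agrees with $F^N$ on $[S^N] = \dom F$. The main idea is to mimic $N$'s approximations on $S^N$, and to ``freeze'' the approximations once they exit $S^N$, falling back to default leftmost-child values before any element of $S^N$ is encountered along a given $\preceq^z_{\xi_s}$-branch.

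More concretely, for each non-leaf $s \in T_\Gamma$ and each $\s\in \w^{<\w}$, I would let $\rho_s(\s)$ denote the $\preceq^z_{\xi_s}$-maximal element of $S^N$ that is $\preceq^z_{\xi_s} \s$, if such an element exists. By \ref{TSP:nested} we have $\s\preceq^z_{\xi_s}\tau\Rightarrow \s\preceq\tau$, so the $\preceq^z_{\xi_s}$-predecessors of a finite $\s$ are among its (finitely many) initial segments, and $\rho_s(\s)$ can be $z$-computed. If $\rho_s(\s)$ is defined, set $f^M_s(\s) = f^N_s(\rho_s(\s))$ and $\beta^M_s(\s) = \beta^N_s(\rho_s(\s))$; otherwise set $f^M_s(\s)$ to the leftmost child of $s$ in $T_\Gamma$ and $\beta^M_s(\s) = \eta_s$. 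The requirement that $f^M_s(\s)$ be the leftmost child whenever $\beta^M_s(\s) = \eta_s$ is then inherited from $N$.

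The main thing to check is that $\beta^M_s$ remains a witness for the convergence of $f^M_s$ along $\preceq^z_{\xi_s}$ over all of $\w^{<\w}$. The key point is that $S^N$, being $\preceq^z_{o(\Gamma)}$-convex, is also $\preceq^z_{\xi_s}$-convex by \ref{TSP:nested} (since $o(\Gamma)\le \xi_s$), so the intersection of any $\preceq^z_{\xi_s}$-chain with $S^N$ forms a contiguous segment. This is precisely what makes $\rho_s(\cdot)$ monotone and smooths out the transitions across the ``boundary'' of $S^N$. The only potentially delicate case is $\s\in S^N$ with $\tau \succ^z_{\xi_s}\s$ lying outside $S^N$; this is exactly why we chose $\rho_s(\tau)$ instead of resetting to $\eta_s$, since here $\rho_s(\tau)\succeq^z_{\xi_s}\s$, and $N$'s own witness yields $\beta^N_s(\rho_s(\tau))\le \beta^N_s(\s)$, preventing $\beta^M_s$ from spuriously increasing.

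Finally, I would verify that $F^M$ extends $F^N$: for $x\in [S^N]$, the $\preceq^z_{\xi_s}$-chain through $x$ is eventually contained in $S^N$ (using that $[S^N]_{o(\Gamma)}=[S^N]_{\xi_s}$ by \cref{rmk:alpha_and_beta_forests} together with $\preceq^z_{\xi_s}$-convexity of $S^N$), so from some point on $\rho_s(\s) = \s$ and $f^M_s(\s) = f^N_s(\s)$. The path through $T_\Gamma$ selected in computing $F^M(x)$ therefore coincides with $\ell^N(x)$, giving $F^M(x) = F(x)$ as required. The hardest part of the argument is really the bookkeeping for the witness condition around the boundary of $S^N$; everything else is routine once the definition of $\rho_s$ is in place.
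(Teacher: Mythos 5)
Your proposal is correct and is essentially the paper's own proof: the paper likewise extends each $(f^N_s,\beta^N_s)$ by taking the default (leftmost child, $\eta_s$) when $\s$ has no $\prec^z_{\xi_s}$-predecessor in $S^N$, and otherwise copying the values at the longest such predecessor. Your extra verification of the witness condition and of $F^M\rest{[S^N]}=F^N$ is sound (the paper leaves these checks implicit).
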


\begin{proof}
    Let $N$ be a $\Gamma(z)$-name; let $S = S^N$. For $s\in T_\Gamma$ a non-leaf, extend $(f_s,\beta_s) = (f_s^N,\beta_s^N)$ to functions $(f_s,\beta_s)$ on all of $\w^{<\w}$ as follows: for $\s\notin S$,
    \begin{itemize}
        \item If $\s$ has no $\prec^{z}_{\xi_s}$-predecessor in~$S$, let $f_s(\s)$ be the leftmost child of~$s$ on~$T_\Gamma$, and let $\beta_s(\s)= \eta_s$. 
        \item If $\s$ has a $\prec^{z}_{\xi_s}$-predecessor in~$S$, let~$\tau$ be the longest such; set $f_s(\s) = f_s(\tau)$ and $\beta_s(\s) = \beta_s(\tau)$. \qedhere
    \end{itemize}
\end{proof}

\begin{proposition} \label{prop:closure:mergeing_paritioned_functions}
    Let $X$ be $\Sigma^0_{1+o(\Gamma)}(z)\wedge \Pi^0_{1+o(\Gamma)}(x)$. Suppose that $(X_n)$ is a partition of~$X$ into sets which are $\Sigma^0_{1+o(\Gamma)}(z)$ within~$X$, uniformly in~$n$. Let $F\colon X\to \{0,1\}$ and suppose that for all~$n$, $F\rest{X_n}\in \Gamma(z)$, uniformly in~$n$. Then $F\in \Gamma(z)$. 
\end{proposition}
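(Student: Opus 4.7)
The plan is to build a single $\Gamma(z)$-name $M$ for $F$ by splicing together names of the individual restrictions $F\rest{X_n}$, using the partition to decide, at each $\s \in S^M$, which piece's name to copy.  Let $\xi = o(\Gamma)$.  First, use \cref{lem:alpha_forests_define_which_sets} (relativised to $z$) to fix a $z$-computable $(z,\xi)$-forest $S$ with $[S] = X$; this will serve as $S^M$.  Next, apply \cref{lem:producing_pairwise_orthogonal} to the partition $(X_n)$ to obtain uniformly $z$-computable, pairwise $(z,\xi)$-orthogonal forests $W_n \subseteq S$ with $[W_n] = X_n$ and with $\bigcup_n W_n$ also $z$-computable; orthogonality implies the $W_n$ are pairwise disjoint.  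Let $(N^n)$ be a uniformly $z$-computable sequence of $\Gamma(z)$-names for the $F \rest{X_n}$, and use \cref{prop:closure:totalisation} to pass to total names $\tilde N^n$ (so $S^{\tilde N^n} = \w^{<\w}$) extending $F \rest{X_n}$.

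For the construction of $M$, let $A \subseteq S$ be the $z$-computable set of $\s$ having a $\preceq^z_\xi$-predecessor in $\bigcup_m W_m$.  For $\s \in A$ define $n(\s)$ to be the unique $m$ with $W_m$ containing the longest such predecessor.  Then for each non-leaf $s \in T_\Gamma$ and $\s \in S$ I would put
\[
    (f^M_s(\s),\,\beta^M_s(\s)) =
    \begin{cases}
        (i^*_s,\ \eta_s) & \text{if } \s \notin A, \\
        (f^{\tilde N^{n(\s)}}_s(\s),\ \beta^{\tilde N^{n(\s)}}_s(\s)) & \text{if } \s \in A,
    \end{cases}
\]
where $i^*_s$ is the leftmost child of $s$ on $T_\Gamma$.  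Everything here is $z$-computable and clauses (iii), (iv) of \cref{def:code_of_a_Gamma_function} are immediate.  To see that $F^M = F$: for $x \in X_n$, by \cref{rmk:alpha_and_beta_forests} $W_n$ is also a $(z,\xi_s)$-forest with $[W_n]_{\xi_s} = X_n$, so all but finitely many $\s \prec^z_{\xi_s} x$ lie in $W_n$; at such $\s$ one has $n(\s) = n$ and $f^M_s(\s) = f^{\tilde N^n}_s(\s)$, which stabilises to $f^{\tilde N^n}_s(x)$.  Hence $\ell^M(x) = \ell^{\tilde N^n}(x)$ and $F^M(x) = F(x)$.

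The hard part is checking that $\beta^M_s$ really witnesses the convergence of $f^M_s$, since the recipe switches between different approximation systems $\tilde N^n$ and a default value.  Fix $\s \preceq^z_{\xi_s} \tau$ in $S$, so $\s \preceq^z_\xi \tau$ by \ref{TSP:nested}.  If $\s \notin A$, then $\beta^M_s(\s) = \eta_s$ is maximal, making monotonicity automatic; moreover, if $f^M_s(\s) \ne f^M_s(\tau)$ then $\tau \in A$ and $f^{\tilde N^{n(\tau)}}_s(\tau) \ne i^*_s$, so \cref{def:code_of_a_Gamma_function}(iv) applied to $\tilde N^{n(\tau)}$ forces $\beta^{\tilde N^{n(\tau)}}_s(\tau) < \eta_s$, as required.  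If $\s \in A$, then $\tau \in A$ as well, and letting $\rho_\s, \rho_\tau$ denote their longest predecessors in $\bigcup_m W_m$, the maximality of $\rho_\tau$ gives $\rho_\s \preceq^z_\xi \rho_\tau$; $(z,\xi)$-orthogonality of the $W_m$ then forces $n(\s) = n(\tau)$, so both values of $f^M_s$ and $\beta^M_s$ are read off from the single witness system $\tilde N^{n(\s)}$, and the witness conditions are inherited directly from $\beta^{\tilde N^{n(\s)}}_s$.
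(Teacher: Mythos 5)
Your proof is correct and follows essentially the same route as the paper's: both pass to pairwise $(z,o(\Gamma))$-orthogonal forests via \cref{lem:producing_pairwise_orthogonal} and splice the names together, taking the default outcome with $\beta_s = \eta_s$ until a forest is entered. The only cosmetic difference is that you totalise the names $N^n$ first and evaluate $f^{\tilde N^{n(\s)}}_s$ at $\s$ itself, whereas the paper keeps the partial names and evaluates at the longest predecessor lying in $S_n\cap S^{N_n}$; your verification of the convergence-witness conditions is a welcome addition that the paper omits.
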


\begin{proof}
    Let $S$ be a $z$-computable $(z,o(\Gamma))$-forest such that $[S]^z_\alpha = X$. By \cref{lem:producing_pairwise_orthogonal}, let $(S_n)$ be a uniformly $z$-computable sequence of sets, $o(\Gamma)$-open in~$S$, pairwise $(z,o(\Gamma))$-orthogonal, such that $X_n = [S_n]^z_\alpha$, and such that $\bigcup_n S_n$ is also $z$-computable. 

    Let $N_n$ be uniformly $z$-computable $\Gamma(z)$-names such that $F\rest{X_n} = F^{N_n}$. Define a $\Gamma(z)$-name~$N$ by taking the ``disjoint union'' of the $N_n$'s: set $S^N = S$. For non-leaf $s\in T_\Gamma$ we define $(f_s,\beta_s) = (f^N_s,\beta^N_s)$ as follows: for $\s\in S$,
    \begin{itemize}
        \item If there is a $\rho \preceq_{o(\Gamma)}^z \s$ and an $n$ with $\s \in S_n \cap S^{N_n}$, fix the longest such $\rho$ and define $f_s(\s) = f^{N_n}_s(\rho)$ and $\beta_s(\s) = \beta^{N_n}_s(\rho)$.
        \item If there is no such $\rho$, define $f_s(\s)$ to be the leftmost child of~$s$ in~$T_\Gamma$, and $\beta_s(\s) = \eta_s$.\qedhere
    \end{itemize}
\end{proof}

We state a corollary, that will be used in conjunction with \cref{prop:approximations_and_alpha_plus_one_computability}. It implies that $\Gamma(z)$ is closed under taking approximations at levels lower than $o(\Gamma)$. 

\begin{corollary} \label{cor:closure:lower-level_approximations}
    Let $\xi<o(\Gamma)$. Suppose that~$X$ is $\Sigma^0_{1+\xi}(z)\wedge \Pi^0_{1+\xi}(z)$, and that $g\colon X\to \Nat$ is $\Delta^0_{1+\xi+1}(z)$-measurable. Suppose that $F_n\colon X\to \{0,1\}$ are uniformly in $\Gamma(z)$. Define $G\colon X\to \{0,1\}$ by $G(x) = F_{g(x)} (x)$. Then $G\in \Gamma(z)$. 
\end{corollary}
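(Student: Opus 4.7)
The plan is to derive the corollary directly from the two preceding closure propositions applied to the fibre partition $X_n := g^{-1}\{n\}$. The key observation is that on each $X_n$ the function $g$ is constantly $n$, so $G\rest{X_n} = F_n\rest{X_n}$; it therefore suffices to check that the partition $(X_n)$ meets the hypotheses of \cref{prop:closure:mergeing_paritioned_functions} after each piece $F_n\rest{X_n}$ has been placed in $\Gamma(z)$ via \cref{prop:closure:subdomains}.

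Most of the work is level-bookkeeping. Because $g$ is $\Delta^0_{1+\xi+1}(z)$-measurable, the sets $X_n$ are uniformly $\Delta^0_{1+\xi+1}(z)$ within $X$. The hypothesis $\xi < o(\Gamma)$ yields $1+\xi+1 \le 1+o(\Gamma)$, so each $X_n$ is in particular $\Sigma^0_{1+o(\Gamma)}(z)$ within $X$, which is exactly the condition on the pieces demanded by \cref{prop:closure:mergeing_paritioned_functions}. Combining the $\Delta^0_{1+\xi+1}(z)$-within-$X$ status of $X_n$ with the fact that $X$ itself is $\Sigma^0_{1+\xi}(z)\wedge\Pi^0_{1+\xi}(z)$, and using closure of $\Sigma^0$ and $\Pi^0$ classes under finite intersection at each level, each $X_n$ is $\Sigma^0_{1+\xi+1}(z)\wedge\Pi^0_{1+\xi+1}(z)$ as a subset of $\Baire$, hence $\Sigma^0_{1+o(\Gamma)}(z)\wedge\Pi^0_{1+o(\Gamma)}(z)$; this is precisely the hypothesis needed to apply \cref{prop:closure:subdomains} to each $F_n$.

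Having verified all the hypotheses, I would then invoke \cref{prop:closure:subdomains} uniformly in $n$ to place $G\rest{X_n} = F_n\rest{X_n}$ in $\Gamma(z)$, and conclude by applying \cref{prop:closure:mergeing_paritioned_functions} to $G$ and the partition $(X_n)$. I do not anticipate any genuine obstacle: the argument is essentially arithmetic on the levels $\xi$, $\xi+1$, and $o(\Gamma)$, together with routine tracking of uniformity in $n$ through two closure propositions whose proofs are already explicitly effective and uniform.
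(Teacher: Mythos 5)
Your proposal is correct and follows the same route as the paper's own proof: partition $X$ into the fibres $X_n=g^{-1}\{n\}$, use $\xi+1\le o(\Gamma)$ to see these are uniformly $\Sigma^0_{1+o(\Gamma)}(z)$ within $X$, place each $G\rest{X_n}=F_n\rest{X_n}$ in $\Gamma(z)$ via \cref{prop:closure:subdomains}, and merge with \cref{prop:closure:mergeing_paritioned_functions}. The extra level-bookkeeping you spell out is exactly what the paper leaves implicit.
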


\begin{proof}
    For $n\in \Nat$ let $X_n = g^{-1}\{n\}$. Since $\xi+1 \le o(\Gamma)$, the sets~$X_n$ are uniformly $\Sigma^0_{1+o(\Gamma)}$ within~$X$. By \cref{prop:closure:subdomains}, the functions $G\rest{X_n} = F_n\rest{X_n}$ are uniformly in $\Gamma(z)$. By \cref{prop:closure:mergeing_paritioned_functions}, $G$ is in~$\Gamma(z)$. 
\end{proof}

The propositions above were stated for functions on reals. As usual, their proofs reveal uniformity in terms of manipulating names, and this uniformity will be used. For example, in \cref{prop:closure:mergeing_paritioned_functions}, observe that from the sequence $\seq{N_n}$ of names for $F\rest{X_n}$, we can $z$-computably construct the name~$N$ for~$F$.

\subsection{Effective containment} 
\label{sub:effective_containment}

\begin{definition} \label{def:equivalence_of_names}
    Let~$\Gamma$ and~$\Lambda$ be two class descriptions; let~$N$ be a $\Gamma$-name and let~$M$ be a $\Lambda$-name. We say that~$N$ and~$M$ are \emph{equivalent} (and write $N\equiv M$) if $\cs{z}{N} = \cs{z}{M}$, $S^N = S^M$, and $F^N= F^M$. 
\end{definition}

In that definition, we implicitly assume that $S = S^N = S^M$ is a $(z,\xi)$-forest for some $\xi \le o(\Gamma), o(\Lambda)$.

\begin{definition} \label{def:effective_containment}
    Let $\Lambda$ and~$\Gamma$ be class descriptions. 
    \begin{sublemma}
     \item   We write
        \[
            \Lambda \subseteq \Gamma 
        \]  
        if $\cs{y}{\Lambda}\ge_\Tur \cs{y}{\Gamma}$, and $\bLambda\subseteq \bGamma$, uniformly above~$y^\Lambda$: there is a $\cs{y}{\Lambda}$-computable function which, given a total $\Lambda$-name~$N$, outputs a total $\Gamma$-name~$M$ equivalent to~$N$.
    
    \item We write 
     \[
     \Lambda \equiv \Gamma
     \]
     if $\Lambda \subseteq \Gamma$ and $\Gamma\subseteq \Lambda$. 

    \item  We write
        \[
            \Lambda < \Gamma 
        \]
        if $\Lambda \subseteq \Gamma$ and $\check\Lambda \subseteq \Gamma$. 
    \end{sublemma}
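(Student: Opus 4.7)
The final statement is not a theorem, lemma, proposition, or claim: it is \cref{def:effective_containment}, a stipulative definition introducing three relations $\Lambda \subseteq \Gamma$, $\Lambda \equiv \Gamma$, and $\Lambda < \Gamma$ between class descriptions. A definition does not admit a proof in the usual sense, so there is nothing to derive. My ``proposal'' therefore consists only of confirming that the definition is well-formed and flagging what genuine lemmas one would naturally attach to it later.

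Well-formedness check: each clause on the right-hand side refers only to objects introduced earlier in the excerpt. The oracles $\cs{y}{\Lambda}$ and $\cs{y}{\Gamma}$ come from \cref{def:description_of_a_class}; Turing reducibility between them is standard. The pointclasses $\bLambda$ and $\bGamma$ come from \cref{def:Gamma_s}, so containment $\bLambda \subseteq \bGamma$ is meaningful. Totality of a $\Lambda$-name is \cref{def:total_names}, and equivalence $N \equiv M$ of names was just introduced in \cref{def:equivalence_of_names}. The parenthetical gloss on ``uniformly above $\cs{y}{\Lambda}$'' makes precise what ``uniformly'' means in this context, namely that there is a single $\cs{y}{\Lambda}$-computable procedure that on input a total $\Lambda$-name $N$ returns a total $\Gamma$-name $M$ with $N \equiv M$. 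Hence $\subseteq$, $\equiv$, and $<$ are unambiguously specified relations on class descriptions; no proof is required.

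There is consequently no ``main obstacle'' to identify — the step that would have been hardest in a proper theorem here does not exist. If one wished to record basic algebraic properties of these relations, they would be separate lemmas: reflexivity of $\subseteq$ follows from \cref{lem:name_translation} with $w = z$; transitivity follows by composing the two uniform translation procedures (taking care that $\cs{y}{\Gamma'}\ge_\Tur \cs{y}{\Gamma}\ge_\Tur \cs{y}{\Lambda}$ so the composite is $\cs{y}{\Lambda}$-computable); and the fact that $\Lambda < \Gamma$ forces $\bLambda \subsetneq \bGamma$ would combine $\dual{\bLambda} \subseteq \bGamma$ with the non-self-duality of $\bGamma$ supplied by \cref{prop:universal_and_so_nonselfdual}. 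But none of these assertions appears in the excerpt, and so I do not prove them here.
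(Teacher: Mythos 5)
You are right that this is \cref{def:effective_containment}, a definition rather than a statement requiring proof, and the paper accordingly offers no argument for it; your well-formedness check matches how the paper treats it. The algebraic facts you mention are indeed deferred in the paper to the (unproved) \cref{lem:<_is_transitive} and the remark following the definition, so nothing is missing.
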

\end{definition}

Note that $\Lambda < \Gamma$ iff $\check \Lambda < \Gamma$ iff $\Lambda < \check \Gamma$. 

\begin{lemma} \label{lem:<_is_transitive}
    The relations $\subseteq$ and $<$ on class descriptions are transitive. The relation~$\equiv$ is an equivalence relation. 
\end{lemma}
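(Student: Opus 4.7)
The proof is essentially a bookkeeping exercise: each relation is defined in terms of uniform translation of names, so transitivity will follow by composing translations and verifying that the composition satisfies the oracle requirement.

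For transitivity of $\subseteq$, suppose $\Lambda\subseteq\Gamma$ and $\Gamma\subseteq\Theta$. The oracle condition is immediate, since $\cs{y}{\Lambda}\ge_\Tur \cs{y}{\Gamma}\ge_\Tur \cs{y}{\Theta}$ gives $\cs{y}{\Lambda}\ge_\Tur \cs{y}{\Theta}$. Let $\Phi$ be the $\cs{y}{\Lambda}$-computable function translating total $\Lambda$-names to equivalent total $\Gamma$-names, and let $\Psi$ be the $\cs{y}{\Gamma}$-computable function translating total $\Gamma$-names to equivalent total $\Theta$-names. Since $\cs{y}{\Lambda}\ge_\Tur \cs{y}{\Gamma}$, $\Psi$ is also $\cs{y}{\Lambda}$-computable, so $\Psi\circ\Phi$ is $\cs{y}{\Lambda}$-computable. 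For any total $\Lambda$-name $N$, the output $\Phi(N)$ is a total $\Gamma$-name with $\cs{z}{\Phi(N)}=\cs{z}{N}$, $S^{\Phi(N)}=S^N$, and $F^{\Phi(N)}=F^N$; hence $\Phi(N)$ is a valid input to $\Psi$, and the composition produces a total $\Theta$-name equivalent to $N$ (using the trivial transitivity of name-equivalence, as equivalence asks only that three attributes match).

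The other clauses now follow cheaply. Transitivity of $<$: if $\Lambda<\Gamma$ and $\Gamma<\Theta$, then $\Lambda\subseteq\Gamma\subseteq\Theta$ and $\dual\Lambda\subseteq\Gamma\subseteq\Theta$, so transitivity of $\subseteq$ gives $\Lambda\subseteq\Theta$ and $\dual\Lambda\subseteq\Theta$, whence $\Lambda<\Theta$. For $\equiv$: reflexivity is witnessed by the identity map on names (which is trivially $\cs{y}{\Lambda}$-computable and preserves all data); symmetry is immediate from the symmetric definition $\Lambda\subseteq\Gamma\andd\Gamma\subseteq\Lambda$; and transitivity follows from transitivity of $\subseteq$ applied in both directions.

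There is no real obstacle here, only a mild subtlety worth flagging: the composition argument requires that a $\cs{y}{\Gamma}$-computable procedure can be treated as $\cs{y}{\Lambda}$-computable, which relies on $\Lambda\subseteq\Gamma$ forcing $\cs{y}{\Lambda}\ge_\Tur \cs{y}{\Gamma}$, and that the intermediate object $\Phi(N)$ is a bona fide total $\Gamma$-name (not merely some name equivalent to one) so that it may be fed to $\Psi$. Both hold directly from the definitions, so the argument is complete.
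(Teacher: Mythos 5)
Your proof is correct, and it is exactly the routine composition argument the paper has in mind (the paper in fact omits the proof entirely, treating the lemma as immediate from \cref{def:effective_containment}). The two points you flag — that $\cs{y}{\Lambda}\ge_\Tur\cs{y}{\Gamma}$ lets the second translation run with the first's oracle, and that the intermediate output is a genuine total $\Gamma$-name — are precisely the only things to check, and you check them.
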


The requirement in \cref{def:effective_containment} that the names be total comes from the fact that if $o(\Lambda)\ne o(\Gamma)$ then the partial $\Lambda$-functions and the $\Gamma$-functions are not defined on the same domains. However, this is not a serious restriction. If $\Lambda \subseteq \Gamma$ and $\xi \le o(\Lambda),o(\Gamma)$ then every $F\in \bLambda$ whose domain is $\bSigma^0_{1+\xi}\wedge \bPi^0_{1+\xi}$ is also in~$\bGamma$, effectively; this follows from  \cref{prop:closure:subdomains,prop:closure:totalisation}. 

In practice, we will always have $o(\Gamma)$ and~$o(\Lambda)$ comparable (but both $o(\Lambda)>o(\Gamma)$ and $o(\Gamma)< o(\Lambda)$ will occur in nature). So we can take $\xi = \min \{o(\Lambda),o(\Gamma)\}$. However, incorporating this requirement into the definition would make the relation~$\subseteq$ not transitive.

\begin{definition} \label{def:oracle_constant_sequence}
    A sequence $\bar \Theta = \Theta_0,\Theta_1,\dots$ of class descriptions is \emph{uniform} if $\cs{y}{\Theta_0}, \cs{y}{\Theta_1},\dots$ is constant~$y$, and the class descriptions are uniformly $y$-computable. We write $\cs{y}{\bar\Theta}$ for~$y$. 
\end{definition}

Note that if~$\Gamma$ is a class description and for all~$n$, $(n)\in T_\Gamma$, then the sequence $\Gamma_{(0)},\Gamma_{(1)},\dots$ is uniform.

\begin{definition} \label{def:effective_containment:sequences}
    Let $\Gamma$ be a class description and let $\bar \Theta = \Theta_0,\Theta_1,\dots$ be a uniform sequence of class descriptions. We write
    \[
        \bar \Theta \subseteq \Gamma
    \]
    if $\Theta_n\subseteq \Gamma$ for all~$n$, uniformly in~$n$. Similarly, we write $\bar \Theta < \Gamma$ if $\Theta_n< \Gamma$, uniformly. 
\end{definition}

\subsection{Notation for constructed classes} 
\label{sub:notation_for_constructed_classes}

\begin{notation} \label{not:SU_defined}
    If $\bar\Theta$ is a uniform sequence of class descriptions and~$\xi$ is an ordinal, then we write $\xi \le o(\bar \Theta)$ if for all~$n$, $\xi \le o(\Theta_n)$. 
\end{notation}

Note that we are not requiring that $o(\Theta_n)$ is the same for all~$n$. Also note that if $\xi \le o(\bar\Theta)$ then as $o(\Theta_n)$ is $\cs{y}{\bar\Theta}$-computable, so is~$\xi$.

\begin{definition} \label{def:SU_operator}
    Suppose that:
    \begin{orderedlist}
        \item $\bar \Theta = \Theta_0,\Theta_1,\dots$ is a uniform sequence of class descriptions; 
        \item $\xi$ is an ordinal and $\xi \le o(\bar\Theta)$; and
        \item $\eta$ is a nonzero $\cs{y}{\bar\Theta}$-computable ordinal. 
    \end{orderedlist}
    Then we let
    \[
        \SU{\xi}{\eta}{\Theta_0,\Theta_1,\dots}
    \]
    be the class description~$\Gamma$ determined by setting:
    \begin{itemize}
        \item $\cs{y}{\Gamma} = \cs{y}{\bar\Theta}$;
        \item $o(\Gamma) = \xi$; 
        \item $\cs{\eta}{\Gamma} = \eta$; and
        \item for all~$n$, $\Gamma_{(n)}= \Theta_n$. 
    \end{itemize}
    If $\eta = 1$ then we write $\SU{\xi}{\Theta_0,\Theta_1,\dots}$. 
\end{definition}

The notation is derived from the notion of \emph{separated unions}. If $\bTheta_0 = \{\emptyset\}$ and $\eta = 1$, then the sets in $\bGamma$ are the ones of the form $\bigcup_{n\ge 1} (C_n\cap A_n)$, where $A_n\in \bTheta_n$, and $C_1,C_2,\dots$ are pairwise disjoint $\bSigma^0_{1+\xi}$ sets. In the more general case, we add $B\setminus \bigcup_n C_n$, where $B\in \bTheta_0$. When $\eta>1$, under some restrictions, $\bGamma$ is the class of separated unions based on $D_\eta(\bSigma^0_{1+\xi})$ sets; see \cite{sequel}.  We observe that if~$\Gamma$ is a class description and for all~$n$, $(n)\in T_\Gamma$, then $\Gamma = \SU{\xi}{\eta}{\Gamma_{(0)}, \Gamma_{(1)},\dots}$, where $\xi = o(\Gamma)$ and $\eta = \cs{\eta}{\Gamma}$. 

For the following lemma and below, we write $\lnot\SU{\xi}{\eta}{\bar\Theta}$ for the dual of the class.

\begin{lemma} \label{lem:SU:very_basic_stuff}
Suppose that $\bar\Theta$ is a uniform sequence, $\xi \le o(\bar\Theta)$, and $\eta>0$ is $\cs{y}{\bar\Theta}$-computable. 
   \begin{sublemma}
    \item \label{item:SUsimple:dual}
    $\lnot \SU{\xi}{\eta}{\bar \Theta} \equiv \SU{\xi}{\eta}{\check\Theta_0,\check\Theta_1,\dots}$.

    \item \label{item:SUsimple:containment}
     $\bar \Theta \subseteq \SU{\xi}{\eta}{\bar \Theta}$. 
                
    \item \label{item:SUsimple:sequence_containment} 
    If $\Lambda_n\subseteq \Theta_n$, uniformly, and $\xi \le o(\bar\Lambda)$, then $\SU{\xi}{\eta}{\bar \Lambda}\subseteq \SU{\xi}{\eta}{\bar \Theta}$. 
    \end{sublemma}     
\end{lemma}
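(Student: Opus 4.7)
My plan is to handle the three parts independently, each by directly constructing the required name or verifying that two class descriptions coincide; the argument is in each case essentially bookkeeping, so I will emphasise where the only small conceptual subtlety lies (in part (b)).

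\textbf{Part (a).} This is essentially tautological from the definitions. By \cref{def:SU_operator}, the labelled tree $\SU{\xi}{\eta}{\bar\Theta}$ has root children $(n)$ with sub-descriptions $\Theta_n$, so its leaves are the leaves of the $\Theta_n$ (shifted beneath $(n)$) with their original labels. Flipping every leaf label per \cref{def:dual_class} gives precisely the labelling of $\SU{\xi}{\eta}{\check\Theta_0,\check\Theta_1,\dots}$, while the ordinals $\xi_s, \eta_s$ and the oracle $\cs{y}{\bar\Theta}$ are unchanged. So $\dual{\SU{\xi}{\eta}{\bar\Theta}}$ and $\SU{\xi}{\eta}{\dual{\bar\Theta}}$ are the same class description, and in particular $\equiv$-equivalent.

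\textbf{Part (b).} Fix $n$ and a total $\Theta_n$-name $N'$; I build an equivalent total $\Gamma$-name $M$ where $\Gamma = \SU{\xi}{\eta}{\bar\Theta}$. Set $\cs{z}{M} = \cs{z}{N'}$ (which indeed extends $\cs{y}{\bar\Theta}$) and $S^M = \w^{<\w}$. The only mildly delicate point is the root: I want the root approximation to always select child $(n)$, but the definition forces $\beta^M_\emptystring(\s) = \eta$ to imply $f^M_\emptystring(\s)$ is the \emph{leftmost} child. I bypass this by setting $f^M_\emptystring \equiv (n)$ and $\beta^M_\emptystring \equiv 0$, so the default clause is vacuous; the two witness conditions of \cref{def:witness_for_convergence_of_approximation} hold trivially, and $f^M_\emptystring$ is a $(\cs{z}{N'},\xi)$-approximation of a constant function. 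For non-leaves $s = (n)\conc s'$ of $T_\Gamma$, transcribe $(f^{N'}_{s'},\beta^{N'}_{s'})$ with the obvious relabelling of children. For non-leaves $s$ below some $(m)$ with $m\ne n$, set $f^M_s$ constant at the leftmost child of $s$ and $\beta^M_s \equiv \eta_s^\Gamma$. Then $M$ satisfies \cref{def:code_of_a_Gamma_function}, and because for every $x$ the path $\ell^M(x)$ first descends to $(n)$ and thereafter agrees with $\ell^{N'}(x)$, we have $F^M = F^{N'}$. The construction is $\cs{y}{\bar\Theta}$-computable in both $n$ and (the index for) $N'$.

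\textbf{Part (c).} Let $\Gamma = \SU{\xi}{\eta}{\bar\Theta}$ and $\Gamma' = \SU{\xi}{\eta}{\bar\Lambda}$. Uniform containment $\Lambda_n \subseteq \Theta_n$ gives $\cs{y}{\bar\Lambda} \ge_\Tur \cs{y}{\bar\Theta}$ as required, and a $\cs{y}{\bar\Lambda}$-computable procedure that, given $n$ and a total $\Lambda_n$-name, outputs an equivalent total $\Theta_n$-name. Given a total $\Gamma'$-name $N$, \cref{lem:sub-descriptions} says each $N_{(n)}$ is a total $\Lambda_n$-name; uniformly in $n$ I convert it to a total $\Theta_n$-name $M_n$ with $F^{M_n} = F^{N_{(n)}}$. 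Assemble $M$ by taking $\cs{z}{M} = \cs{z}{N}$, $S^M = \w^{<\w}$, copying $(f^N_\emptystring,\beta^N_\emptystring)$ at the root, and placing $(f^{M_n}_{s'},\beta^{M_n}_{s'})$ at nodes $(n)\conc s'$. The children at the root of $T_\Gamma$ and $T_{\Gamma'}$ are indexed identically and share the leftmost $(0)$, so the default-child condition at the root is inherited from $N$; below each $(n)$ it is inherited from $M_n$. The ordinals $\xi_s,\eta_s$ are the same in $\Gamma$ and $\Gamma'$ at corresponding nodes, so all approximation-and-witness conditions transfer. A case split on $f^N_\emptystring(x) = (n)$ shows $F^M(x) = F^{M_n}(x) = F^{N_{(n)}}(x) = F^N(x)$. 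The whole conversion is $\cs{y}{\bar\Lambda}$-computable in $N$, giving $\Gamma' \subseteq \Gamma$.

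The only obstacle worth naming is the default-child issue in (b); all three parts are otherwise just unwinding \cref{def:description_of_a_class,def:code_of_a_Gamma_function,def:SU_operator}.
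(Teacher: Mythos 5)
Your proof is correct, and it is the routine unwinding of the definitions that the paper omits (the lemma is stated without proof and later called ``easy''); your handling of the default-child condition in (b) by setting $\beta^M_{\emptystring}\equiv 0$ is exactly the device the paper uses elsewhere, e.g.\ in the proofs of \cref{lem:Gamma_star_is_in_Delta} and \cref{prop:main_SU_prop}.
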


\begin{lemma} \label{lem:SU_remains_bounded}
    Suppose that $o(\Gamma)>\xi$, $\bar \Theta<\Gamma$, $\xi \le o(\bar\Theta)$, and $\eta$ is $\cs{y}{\bar\Theta}$-computable. Then $\SU{\xi}{\eta}{\bar \Theta}< \Gamma$. 
\end{lemma}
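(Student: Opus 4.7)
The plan is to reduce to the one-sided statement $\SU{\xi}{\eta}{\bar\Theta}\subseteq \Gamma$ and then directly apply \cref{cor:closure:lower-level_approximations}. For the reduction, by \cref{lem:SU:very_basic_stuff}\ref{item:SUsimple:dual} we have $\lnot\SU{\xi}{\eta}{\bar\Theta}\equiv\SU{\xi}{\eta}{\check\Theta_0,\check\Theta_1,\dots}$, and the hypothesis $\bar\Theta<\Gamma$ unpacks to $\Theta_n\subseteq\Gamma$ and $\check\Theta_n\subseteq\Gamma$ uniformly, so the sequence $\bar{\check\Theta}$ satisfies the same hypotheses as $\bar\Theta$. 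Hence it suffices to show $\SU{\xi}{\eta}{\bar\Theta}\subseteq\Gamma$ (for arbitrary $\bar\Theta$ satisfying the hypotheses), and apply this twice.

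Let $\Lambda=\SU{\xi}{\eta}{\bar\Theta}$, so that $\cs{y}{\Lambda}=\cs{y}{\bar\Theta}\ge_\Tur\cs{y}{\Gamma}$. I would start with a total $\Lambda$-name $N$ with $\cs{z}{N}=z$, and extract from it the $(z,\xi)$-approximation $f=\cs{f}{N}_{\emptystring}$ (with its witness $\beta^N_{\emptystring}\colon \w^{<\w}\to\eta+1$) and the total $\Theta_n$-names $N_{(n)}$, so that $F^N(x)=F^{N_{(n)}}(x)$ for $n=f(x)$. Since $f$ has a computable $(z,\xi)$-approximation, \cref{prop:approximations_and_alpha_plus_one_computability} gives that $f\colon\Baire\to\Nat$ is $\Sigma^0_{1+\xi+1}(z)$-measurable, and because its fibers $f^{-1}\{n\}$ partition $\Baire$ into uniformly $\Sigma^0_{1+\xi+1}(z)$ sets, $f$ is in fact $\Delta^0_{1+\xi+1}(z)$-measurable.

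By the hypothesis $\bar\Theta\subseteq\Gamma$ and \cref{lem:name_translation}, I can $z$-effectively convert each $N_{(n)}$ into a $\Gamma(z)$-name for $F^{N_{(n)}}$, uniformly in $n$, so the family $F_n:=F^{N_{(n)}}$ is uniformly in $\Gamma(z)$. Since $\xi<o(\Gamma)$ and $X=\Baire$ is trivially $\Sigma^0_{1+\xi}(z)\wedge\Pi^0_{1+\xi}(z)$, \cref{cor:closure:lower-level_approximations} applied to $G(x)=F_{f(x)}(x)=F^N(x)$ yields $F^N\in\Gamma(z)$, and a close reading shows the produced $\Gamma(z)$-name is obtained from $N$ by a $\cs{y}{\bar\Theta}$-computable procedure. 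This establishes $\Lambda\subseteq\Gamma$.

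The only point requiring care is effectiveness/uniformity: verifying that the bound $\Theta_n\subseteq\Gamma$ holds with indices uniform in $n$ (which is the content of \cref{def:effective_containment:sequences}), that the name translations of \cref{lem:name_translation} compose uniformly with the passage from $\cs{y}{\bar\Theta}$ up to the oracle $z$ of $N$, and that the construction inside \cref{cor:closure:lower-level_approximations} (built on \cref{prop:closure:mergeing_paritioned_functions} and the orthogonalisation \cref{lem:producing_pairwise_orthogonal}) is itself uniform in its inputs. None of these is difficult individually, but the bookkeeping to chain them together into a single computable reduction $N\mapsto M$ is the main thing that needs to be checked.
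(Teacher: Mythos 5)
Your proposal is correct and follows essentially the same route as the paper: decompose $F^N$ into the root approximation $f$ (which is $\Delta^0_{1+\xi+1}(z)$-measurable by \cref{prop:approximations_and_alpha_plus_one_computability}) and the components $F^{N_{(n)}}$, which lie uniformly in $\Gamma(z)$ by $\bar\Theta<\Gamma$, and then invoke \cref{cor:closure:lower-level_approximations} using $\xi<o(\Gamma)$. The paper simply runs the $\Gamma$ and $\check\Gamma$ sides in parallel (noting $F_n\in\check\Gamma(z)$ as well) rather than via your dualization of $\bar\Theta$, but this is a cosmetic difference.
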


\begin{proof}
    Let~$N$ be a total $\SU{\xi}{\eta}{\bar \Theta}$-name; let $z = \cs{z}{N}$. Let $F = F^N$ and $F_n = F^{N_{(n)}}$. So $F_n\in \Theta_n(z)$, uniformly, and so, as $\bar\Theta<\Gamma$, $F_n\in \Gamma(z)$ and ${F}_n\in \check\Gamma(z)$, uniformly. By \cref{prop:approximations_and_alpha_plus_one_computability,cor:closure:lower-level_approximations}, as $\xi < o(\Gamma)$, $F\in \Gamma(z)$ and $F\in \check\Gamma(z)$. This is uniform in everything.
\end{proof}

\subsection{The case $\eta = 1$} 
\label{sub:the_case_eta_1_}

Let~$\Gamma$ be a class description and let~$N$ be a $\Gamma$-name. For $s\in T_\Gamma$ let~$X_s^N$ be the collection of $x\in [S^N]$ such that $s\preceq \ell(x)$, where we use the notation from \cref{def:function_defined_by_name}. That is, $X_s^N$ consists of those~$x$ for which the procedure computing $F^N(x)$ ``passes through'' the node~$s$.

In general,~$X_s^N$ may be complicated. Suppose, however, that for all non-leaf~$s\in T_\Gamma$, $\eta_s = 1$. In this case, if~$t$ is a child of~$s$ on~$T_\Gamma$ then~$X_t^N$ is $\Sigma^0_{1+\xi_s}(z)\wedge \Pi^0_{1+\xi_s}(z)$ (where $z = \cs{z}{N}$). In fact, $X_t^N$ is either $\Sigma^0_{1+\xi_s}(z)$ or $\Pi^0_{1+\xi_s}(z)$ within $X_s^N$; the latter holds for the default (leftmost) child.

\begin{definition} \label{def:sub-forests_when_eta_is_1}
    Let~$\Gamma$ be a class description satisfying $\eta_s = 1$ for all non-leaf $s\in T_\Gamma$; let $N$ be a $\Gamma$-name. By induction on~$|s|$, for $s\in T_\Gamma$ we define $S_s^N\subseteq S^N$ as follows:
    \begin{itemize}
        \item $S_{\emptystring}^N = S^N$; 
        \item If~$t$ is a child of~$s$ on $T_\Gamma$, then $S_t^N = \left\{ \s\in S_s^N \,:\,  f^N_s(\s)=t \right\}$. 
    \end{itemize}
\end{definition}

If~$t$ is the default child of~$s$ then $S_t^N$ is $(z,\xi_s)$-closed in~$S_s^N$; otherwise, it is $(z,\xi_s)$-open in~$S_s^N$. Since $\xi_s\le\xi_t$, by induction on~$|s|$ we see that for non-leaf~$s$, $S_s^N$ is a $(z,\xi_s)$-forest; and $X_s^N = [S_s^N]^z_{\xi_s}$. 

Thus, in the case that each $\eta_s$ is~1, for each~$s$, we do not need to define the approximation $f^N_s$ on all of~$S^N$, but rather, only on~$S_s^N$. In the general case, since~$X_s^N$ may fail to be $\Sigma^0_{1+\xi_s}(z)\wedge \Pi^0_{1+\xi_s}(z)$, we need to define our approximations on all of~$S^N$ at each node~$s$.


\subsection{Montone sequences} 
\label{sub:montone_sequences}

\begin{definition} \label{def:monotone}
    A uniform sequence $\Theta_0,\Theta_1,\dots$ is \emph{monotone} if $\Theta_n\subseteq \dual{\Theta}_{n+1}$, uniformly in~$n$.   
\end{definition}

There are two main cases: either $\Theta_{n+1} = \dual{\Theta}_n$ for all~$n$, or $\Theta_{n}< \Theta_{n+1}$ for all~$n$. It will be a consequence of our analysis that by passing to infinite subsequences, these are the only two possibilities.

\begin{proposition} \label{prop:main_SU_prop}
    Suppose that $\bar \Theta = \seq{\Theta_0,\Theta_1,\dots}$ is monotone, $\xi \le o(\bar\Theta)$, and $\eta$ is $\cs{y}{\bar\Theta}$-computable. 
    \begin{sublemma}
        \item \label{item:SU:extra_containment}
        $\bar\Theta < \SU{\xi}{\eta}{\bar \Theta}$.

        \item \label{item:SU:behead}
        $\SU{\xi}{\eta}{\Theta_0,\Theta_1,\dots} \subseteq \lnot \SU{\xi}{\eta}{\Theta_1,\Theta_2,\dots}$. 

        \item \label{item:SU:even_worse_behead}
        If $\Theta_0<\Theta_1$ then $\SU{\xi}{\eta}{\Theta_0,\Theta_1,\dots} < \SU{\xi}{\eta}{\Theta_1,\Theta_2,\dots}$.

        \item \label{item:SU:increase_eta}
        If $\eta' < \eta $ then $\SU{\xi}{\eta'}{\bar \Theta} < \SU{\xi}{\eta}{\bar\Theta}$. 

        \item \label{item:SU:increase_xi}
        If $\xi' < \xi $ then $\SU{\xi'}{\bar \Theta} < \SU{\xi}{\bar\Theta}$. 
    \end{sublemma}
\end{proposition}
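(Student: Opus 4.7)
The plan is to handle each part by constructing the target name from a given source name.  Throughout, write $\Gamma = \SU{\xi}{\eta}{\bar\Theta}$, and for parts (b) and (c), $\Gamma' = \SU{\xi}{\eta}{\Theta_1, \Theta_2, \dots}$.  Part (a) is immediate: \ref{item:SUsimple:containment} gives $\Theta_n \subseteq \Gamma$ uniformly, while monotonicity $\Theta_n \subseteq \dual{\Theta_{n+1}}$ combined with \ref{item:SUsimple:containment} yields $\dual{\Theta_n} \subseteq \Theta_{n+1} \subseteq \Gamma$.  For (b), first use \ref{item:SUsimple:dual} to rewrite $\dual{\Gamma'} \equiv \SU{\xi}{\eta}{\dual{\Theta_1}, \dual{\Theta_2}, \dots}$; from a $\Gamma$-name $N$ I would produce a $\dual{\Gamma'}$-name $M$ on the same forest with $f^M = f^N$ and $\beta^M = \beta^N$, and each sub-name $M_{(n)}$ obtained by translating $N_{(n)}$ through the monotonicity inclusion $\Theta_n \subseteq \dual{\Theta_{n+1}}$.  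The default and witness conditions are inherited from $N$.

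For (d), the easy direction $\SU{\xi}{\eta'}{\bar\Theta} \subseteq \SU{\xi}{\eta}{\bar\Theta}$ is simply reinterpreting the same data with witness now landing in $\eta+1$, the default check being vacuously satisfied since $\beta^N$ never reaches $\eta$.  For the dual half $\SU{\xi}{\eta'}{\dual{\Theta_0}, \dual{\Theta_1}, \dots} \subseteq \SU{\xi}{\eta}{\bar\Theta}$, the extra mind-change afforded by $\eta > \eta'$ lets me shift indices: set $f^M(\s) = f^N(\s)+1$ (so $f^M$ never takes the default value $0$), keep $\beta^M = \beta^N$ (never reaching $\eta$), and translate $M_{(k+1)}$ from $N_{(k)}$ via $\dual{\Theta_k} \subseteq \Theta_{k+1}$; the unused slot $M_{(0)}$ can be filled with any dummy $\Theta_0$-name.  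For (e), $\SU{\xi'}{\bar\Theta} \subseteq \SU{\xi}{\bar\Theta}$ follows from \cref{rmk:alpha_and_beta_forests,rmk:alpha_and_beta_approximations}: a $\xi'$-forest is a $\xi$-forest and a $\xi'$-approximation is a $\xi$-approximation.  For $\SU{\xi'}{\dual{\Theta_0}, \dual{\Theta_1}, \dots} \subseteq \SU{\xi}{\bar\Theta}$, I would partition the domain of $F^N$ by the stable value $f^N(x)=k$; since $\xi'+1 \le \xi$, each piece is $\Sigma^0_{1+\xi'+1}(z) \subseteq \Sigma^0_{1+\xi}(z)$, and \cref{prop:closure:mergeing_paritioned_functions} applies, with each restriction $F^N\rest{\{f^N=k\}} \in \dual{\Theta_k} \subseteq \Theta_{k+1}$ landing in $\SU{\xi}{\bar\Theta}$ by monotonicity and \ref{item:SUsimple:containment}.

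The main obstacle is part (c).  Applying (b) to $\bar\Theta$ itself already gives $\Gamma \subseteq \dual{\Gamma'}$ and hence $\dual{\Gamma} \subseteq \Gamma'$, so only $\Gamma \subseteq \Gamma'$ remains.  Given a $\Gamma$-name $N$, the intended $\Gamma'$-name $M$ has $S^M = S^N$, $f^M(\s) = \max(f^N(\s)-1, 0)$, $\beta^M = \beta^N$, and $M_{(k)} = N_{(k+1)}$ for $k \ge 1$.  The delicate piece is $M_{(0)}$, which must be a $\Theta_1$-name for a function $H$ on $[S^N]$ satisfying $H(x) = F^{N_{(0)}}(x)$ when $f^N(x) = 0$ and $H(x) = F^{N_{(1)}}(x)$ when $f^N(x) \ne 0$.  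The extra hypothesis $\Theta_0 \subseteq \Theta_1$ (from $\Theta_0 < \Theta_1$) yields a $\Theta_1$-name for $F^{N_{(0)}}$; when $\xi < o(\Theta_1)$, the partition $\{f^N = 0\} \sqcup \{f^N \ne 0\}$ of $[S^N]$ lives at Borel level $\xi+1 \le o(\Theta_1)$, so both pieces are $\Sigma^0_{1+o(\Theta_1)}(z)$ within $[S^N]$ and \cref{prop:closure:mergeing_paritioned_functions} immediately produces $H \in \Theta_1(z)$.  The hard case is $\xi = o(\Theta_1)$: then $\Theta_1 = \SU{\xi}{\eta_1}{\bar\Lambda}$, the partition pieces need not be $\Sigma^0_{1+o(\Theta_1)}(z)$ within $[S^N]$, and \cref{prop:closure:mergeing_paritioned_functions} does not directly apply.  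Here I expect to build $M_{(0)}$'s root approximation by interleaving the root approximations of $N_{(1)}$ and of a $\Theta_1$-promotion of $N_{(0)}$, switching according to $f^N$'s current value; the real technical obstacle will be combining the two convergence witnesses so that the total remains bounded by $\eta_1+1$, exploiting that for $\eta=1$ the switch happens at most once along each path and using the internal $\SU$-structure of $\Theta_1$ more carefully for larger $\eta$.
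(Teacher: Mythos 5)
Parts \ref{item:SU:extra_containment}, \ref{item:SU:behead}, \ref{item:SU:increase_eta} and \ref{item:SU:increase_xi} of your argument are correct and essentially coincide with the paper's proofs (for \ref{item:SU:increase_xi} the paper routes through \cref{lem:SU_remains_bounded}, but that lemma's proof is exactly your partition-and-merge argument via \cref{prop:approximations_and_alpha_plus_one_computability} and \cref{cor:closure:lower-level_approximations}). The problem is part \ref{item:SU:even_worse_behead}. Your map $f^M(\s)=\max(f^N(\s)-1,0)$ merges outcomes $0$ and $1$ of $N$ into the single default outcome of $M$, and this forces you to produce one $\Theta_1$-name for a function $H$ that agrees with a $\Theta_0$-function on $\{f^N=0\}$ and with a $\Theta_1$-function on its complement. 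In the case $\xi=o(\Theta_1)$ this is not merely technically delicate: such an $H$ need not lie in $\bTheta_1$ at all. For instance, take $\xi=0$, $\eta=1$, $\Theta_0$ the constant-$0$ description and $\Theta_1$ a description of $\bPi^0_1$ (so $\Theta_0<\Theta_1$). Then $\{f^N=0\}$ is a relatively closed set $C$, $H$ vanishes on $C$ and is relatively closed in $C^\complement$, so as a set $H$ is (closed)$\,\cap\,$(open), i.e.\ a general $D_2(\bSigma^0_1)$ set, which is not $\bPi^0_1$. This is exactly the phenomenon that makes \cref{prop:main_classification_lemma} and the classification of $\Delta(\bGamma)$ nontrivial, so no amount of interleaving of approximations will rescue the construction as you have set it up.

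The paper avoids the merge entirely by shifting indices \emph{upward} instead of downward: set $f^M(\s)=0$ if $f^N(\s)=0$ and $f^M(\s)=f^N(\s)+1$ otherwise, with $\beta^M=\beta^N$. Then $M_{(0)}$ is a $\Theta_1$-name equivalent to $N_{(0)}$ (using only $\Theta_0\subseteq\Theta_1$, which is where the hypothesis $\Theta_0<\Theta_1$ enters), $M_{(1)}$ is arbitrary, and for $n\ge 2$, $M_{(n)}$ is a $\Theta_{n+1}$-name equivalent to $N_{(n-1)}$, using $\Theta_{n-1}\subseteq\dual{\Theta}_n\subseteq\Theta_{n+1}$ from monotonicity. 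Each outcome of $N$ lands in a single outcome of $M$, the default condition is preserved because $f^M(\s)=0$ exactly when $f^N(\s)=0$, and no function ever needs to be spliced across the level-$\xi+1$ partition. You should replace your construction for \ref{item:SU:even_worse_behead} with this one; your reduction of \ref{item:SU:even_worse_behead} to the single containment $\SU{\xi}{\eta}{\Theta_0,\Theta_1,\dots}\subseteq\SU{\xi}{\eta}{\Theta_1,\Theta_2,\dots}$ via \ref{item:SU:behead} is the same first step the paper takes and is fine.
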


\begin{proof}
    \ref{item:SU:extra_containment} follows from \cref{lem:SU:very_basic_stuff}\ref{item:SUsimple:containment} and $\bar\Theta$ being monotone. 
    
    \ref{item:SU:behead} follows from \cref{lem:SU:very_basic_stuff}\ref{item:SUsimple:dual},\ref{item:SUsimple:sequence_containment}. 

    For \ref{item:SU:even_worse_behead}, in light of \ref{item:SU:behead}, it suffices to show that $\SU{\xi}{\eta}{\Theta_0,\Theta_1,\dots} \subseteq \SU{\xi}{\eta}{\Theta_1,\Theta_2,\dots}$. Let~$N$ be a $\SU{\xi}{\eta}{\Theta_0,\Theta_1,\dots}$-name. Define a $\SU{\xi}{\eta}{\Theta_1,\Theta_2,\dots}$-name~$M$ equivalent to~$N$ by letting $M_{(0)}$ be a $\Theta_1$-name equivalent to~$N_{(0)}$, and for $n\ge 2$, let $M_{(n)}$ be a $\Theta_{n+1}$-name equivalent to $N_{(n-1)}$. It doesn't matter how we define $M_{(1)}$. We define $\beta^M = \beta^N$ and $f^M(\s)=0$ if $f^N(\s)=0$, otherwise $f^M(\s) = f^N(\s)+1$.

    For \ref{item:SU:increase_eta}, $\SU{\xi}{\eta'}{\bar \Theta} \subseteq \SU{\xi}{\eta}{\bar\Theta}$ is clear, as every $\SU{\xi}{\eta'}{\bar \Theta}$-name is also a $\SU{\xi}{\eta}{\bar \Theta}$-name, and has the same interpretation. To see that $\SU{\xi}{\eta'}{\bar \Theta} \subseteq \lnot\SU{\xi}{\eta}{\bar\Theta}$, let~$N$ be a $\SU{\xi}{\eta'}{\bar \Theta}$-name. We define a $\SU{\xi}{\eta}{\Theta_0,\Theta_1,\dots}$ name~$M$ equivalent to~$N$ by letting $M_{(n+1)}\equiv \dual{N}_{(n)}$ and setting $\beta^M = \beta^N$ and $f^M = f^N+1$. The point is that $\eta>\eta'$ so even if $\beta^N(\s)=\eta'$, in~$M$, we are allowed to take a non-default outcome. 

    \ref{item:SU:increase_xi} follows from \ref{item:SU:extra_containment} and \cref{lem:SU_remains_bounded}. 
\end{proof}
Notice that for any class description $\Gamma$ and any $\xi < o(\Gamma)$, $\SU{\xi}{\eta}{\Gamma, \Gamma, \dots} \equiv \Gamma$ (one direction relies on \Cref{cor:closure:lower-level_approximations}).  So the ordinal $o(\Gamma)$ may vary between descriptions~$\Gamma$ of the same class.  However, we can identify a largest ordinal among all such descriptions. 

\begin{lemma}\label{lem:max_ordinal_for_class}
    If $\bar\Theta$ is monotone, $\xi \le o(\bar\Theta)$, and $\Gamma$ is a description with $\bGamma = \bSU{\xi}{\eta}{\bar\Theta}$, then $\otp(o(\Gamma)) \le \otp(\xi)$.
\end{lemma}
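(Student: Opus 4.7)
The plan is to derive a contradiction from the assumption $\otp(o(\Gamma)) > \otp(\xi)$, by exhibiting a function in $\bGamma$ whose domain is too complicated to be the domain of any function in $\bLambda = \bSU{\xi}{\eta}{\bar\Theta}$. The key observation is that every $\Lambda$-name $M$ has $S^M$ a $(\cs{z}{M}, \xi)$-forest, so the relativised \cref{lem:alpha_forests_define_which_sets} implies that every function in $\bLambda$ has domain in $\bSigma^0_{1+\xi}\wedge \bPi^0_{1+\xi}$. Thus it suffices to produce $F\in \bGamma$ with $\dom F$ outside this class.

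Since $\otp(\xi) + 1 \le \otp(o(\Gamma))$, I would fix a $\cs{y}{\Gamma}$-computable initial segment $\zeta$ of $o(\Gamma)$ with $\otp(\zeta) = \otp(\xi)+1$ (in the edge case $o(\Gamma) = \omega_1$, any $\cs{y}{\Gamma}$-computable $\zeta$ with $\otp(\zeta) = \otp(\xi)+1$ suffices). By classical strictness of the Borel hierarchy, there is a set $X \subseteq \Baire$ that is $\bSigma^0_{1+\zeta}$ but not $\bPi^0_{1+\zeta}$. Observe that $\bSigma^0_{1+\xi}\wedge \bPi^0_{1+\xi} \subseteq \bDelta^0_{1+\zeta}$ (using \cref{rmk:class_depends_on_order_types} so that the classes depend only on order type), so $X \notin \bSigma^0_{1+\xi}\wedge \bPi^0_{1+\xi}$. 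I then fix $z \ge_\Tur \cs{y}{\Gamma}\oplus \cs{y}{\bar\Theta}$ computing $X$ as a $\Sigma^0_{1+\zeta}(z)$ set. Then $X$ is $\Sigma^0_{1+o(\Gamma)}(z)\wedge \Pi^0_{1+o(\Gamma)}(z)$, so the relativised \cref{lem:alpha_forests_define_which_sets} yields a $z$-computable $(z, o(\Gamma))$-forest $S$ with $[S]^z = X$ (in the edge case, $S$ is a $(z,\zeta)$-forest, which qualifies as a $(z,\omega_1)$-forest by convention).

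Next, define a $\Gamma(z)$-name $N$ with $S^N = S$. When $T_\Gamma$ has non-leaf nodes, set, for each such $s$ and each $\s \in S$, $f_s^N(\s)$ to be the leftmost child of $s$ on $T_\Gamma$ and $\beta_s^N(\s) = \eta_s$; in the edge case, $T_N$ is empty and nothing further is needed. All conditions of \cref{def:code_of_a_Gamma_function} are trivially met since $f_s^N$ and $\beta_s^N$ are constants (the witness condition is vacuous because $f_s^N$ never changes value). Unfolding \cref{def:function_defined_by_name}, $\ell^N(x)$ is the leftmost leaf of $T_\Gamma$ for every $x$ (namely the root itself in the edge case), so $F^N$ is the constant function on $X$ whose value is the label of this leftmost leaf. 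Thus $F^N \in \bGamma$, but $\dom F^N = X \notin \bSigma^0_{1+\xi}\wedge \bPi^0_{1+\xi}$, contradicting $\bGamma = \bLambda$.

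The main ingredient the plan relies on is the classical strictness of the Borel hierarchy at the countable level $1+\zeta$, which relativises to any oracle; the only subtle step is choosing $z$ large enough to absorb $X$ and also compute $\zeta$. Beyond that, both the construction of $N$ and the verification that its conditions hold are routine, as is the treatment of the degenerate case $o(\Gamma) = \omega_1$.
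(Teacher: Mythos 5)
There is a genuine gap, and it sits at the very foundation of your strategy. The hypothesis $\bGamma = \bSU{\xi}{\eta}{\bar\Theta}$ is an equality of \emph{boldface} classes, which the paper explicitly glosses immediately after the statement of \cref{lem:max_ordinal_for_class}: ``when we write equality of boldface classes, we mean that they contain the same total functions.'' Your argument produces a \emph{partial} function $F^N$ (a constant function on a $\bSigma^0_{1+\zeta}$ domain $X$) that lies in $\bGamma$ but cannot lie in $\bSU{\xi}{\eta}{\bar\Theta}$ because its domain is not $\bSigma^0_{1+\xi}\wedge\bPi^0_{1+\xi}$. That is all correct as far as it goes, but it does not contradict the hypothesis, which says nothing about partial functions. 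Indeed, such discrepancies among partial functions occur generically and harmlessly: the sentence immediately preceding the lemma observes that $\SU{\xi}{\eta}{\Gamma,\Gamma,\dots}\equiv\Gamma$ whenever $\xi<o(\Gamma)$, giving two descriptions of the same class of total functions whose levels differ and which therefore admit partial functions on domains of different complexity. The entire point of the lemma is to bound the level among such legitimately differing descriptions, so an argument that only detects a difference in the partial functions cannot succeed.

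The paper's proof instead works with a total function where the hypothesis has actual content: it fixes a \emph{universal} total $F\in\bSU{\xi}{\eta}{\bar\Theta}$ (\cref{prop:universal_and_so_nonselfdual}) and shows that if $\otp(o(\Gamma))>\otp(\xi)$ then $1-F\in\bGamma$, contradicting non-self-duality. The mechanism is that an $\SU{\xi}{\eta}{\bar\Theta}$-name of $F$ partitions $\Baire$ into $\Delta^0_{1+\xi+1}(z)$ pieces $X_n$ on which $1-F$ agrees with $1-F^{N_{(n)}}$; monotonicity together with \cref{prop:main_SU_prop}\ref{item:SU:extra_containment} puts each $1-F^{N_{(n)}}$ into $\bGamma$, and \cref{cor:closure:lower-level_approximations} (applicable precisely because $\xi+1\le o(\Gamma)$ when the order type is strictly larger) reassembles them into $1-F\in\bGamma$. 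If you want to salvage your approach, you would need to convert your ``too-complicated domain'' into a statement about total functions, which essentially forces you back to a universality/self-duality argument of this kind.
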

Recall that when we write equality of boldface classes, we mean that they contain the same total functions.

\begin{proof}
    For a contradiction, suppose $\otp(o(\Gamma)) > \otp(\xi)$, and fix $F \in \bSU{\xi}{\eta}{\bar\Theta}$ universal, by \Cref{prop:universal_and_so_nonselfdual}.  It suffices to show that $1-F \in \bGamma$. 
    
    Fix $z = y^\Gamma \oplus y^{\bar{\Theta}}$ and $N$ an $\SU{\xi}{\eta}{\bar\Theta}$-name of $F$.  Then $X_n = \{x \in \Baire : f_{\emptystring}^N(x) = n\}$ is $\Delta^0_{1+\xi+1}(z)$.  By \Cref{prop:main_SU_prop}\ref{item:SU:extra_containment} and monotonicity, $1 - F^{N_{(n)}} \in \bSU{\xi}{\eta}{\bar\Theta} = \bGamma$, and so by \Cref{cor:closure:lower-level_approximations}, $1-F \in \bGamma$.
\end{proof}

\subsection{Acceptable descriptions} 
\label{sub:acceptable_descriptions}

\begin{definition} \label{def:acceptable} \
    \begin{sublemma}
        \item A class description~$\Gamma$ is \emph{acceptable} if for all non-leaf $s\in T_\Gamma$, 
        \begin{orderedlist}
              \item $\cs{\eta_s}{\Gamma} = 1$; and
              \item for all~$n$, $s\conc n\in T_\Gamma$, and the sequence $\Gamma_{s\conc 0}, \Gamma_{s\conc 1},\dots$ is monotone, uniformly in~$s$. 
          \end{orderedlist}  
          \item A uniform sequence $\Theta_0,\Theta_1,\dots$ of class descriptions is acceptable if it is monotone, and the class descriptions $\Theta_n$ are uniformly acceptable. 
    \end{sublemma}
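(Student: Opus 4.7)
The final item above is a definition, not a theorem/lemma/proposition/claim, so strictly speaking there is nothing to prove. Nonetheless, I can describe what the definition accomplishes and the well-posedness observations one would make to see that parts~(a) and~(b) fit together coherently.

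Part~(a) imposes two conditions recursively along the tree $T_\Gamma$: at every non-leaf~$s$, the mind-change bound $\cs{\eta_s}{\Gamma}$ must equal~$1$, and the children~$s\conc n$ must form a full $\Nat$-indexed list whose associated immediate sub-descriptions $\Gamma_{s\conc 0}, \Gamma_{s\conc 1}, \dots$ form a monotone sequence in the sense of \Cref{def:monotone}, witnessed uniformly in~$s$. Since $T_\Gamma$ is well-founded by \Cref{def:description_of_a_class}, and each condition at~$s$ refers only to the locally-specified ordinals at~$s$ and to the immediate sub-descriptions $\Gamma_{s\conc n}$ supplied by \Cref{lem:sub-descriptions}, the requirement is well-defined by a straightforward recursion from leaves upward. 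The ``uniformly in~$s$'' clause amounts to requiring that the $\cs{y}{\Gamma}$-computable function certifying the monotonicity of the child sequence at~$s$ is itself uniform in the parameter~$s$, in the sense of \Cref{rmk:regarding_uniform_computability}. Note also that the clause forcing $s\conc n \in T_\Gamma$ for \emph{every}~$n\in \Nat$ means acceptable descriptions are fully $\w$-branching at every internal node.

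Part~(b) layers a further uniformity on top of~(a). A uniform sequence $\bar\Theta = \seq{\Theta_n}$ (in the sense of \Cref{def:oracle_constant_sequence}) is acceptable when (i)~$\bar\Theta$ itself is monotone, and (ii)~each $\Theta_n$ is acceptable in the sense of~(a), with all the acceptability witnesses (the listings of children and the monotonicity witnesses at every non-leaf of every $T_{\Theta_n}$) uniform in~$n$ with respect to the common oracle $\cs{y}{\bar\Theta}$. Since~(a) is defined by a local well-founded recursion, no new circularity is introduced in~(b); clause~(ii) is simply a uniformisation demand on the procedures already produced by~(a), consistent with the convention of \Cref{rmk:regarding_uniform_computability}.

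The principal substantive point I would verify — not a proof, but a sanity check — is that the definition is non-vacuous and natural. By an induction on the rank of $T_\Gamma$, if $\bar\Theta$ is an acceptable uniform sequence and $\xi \le o(\bar\Theta)$, then $\SU{\xi}{\Theta_0,\Theta_1,\dots}$ (with $\eta = 1$) is an acceptable description: the root satisfies $\cs{\eta}{\Gamma} = 1$ by construction, the children $\Gamma_{(n)} = \Theta_n$ form a monotone acceptable sequence by hypothesis, and uniformity is inherited. Starting from the constant descriptions (example~\emph{a} of Section~2.6), this operation generates a rich supply of acceptable descriptions, and examples~\emph{b}, \emph{d}, \emph{e} are all acceptable. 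Example~\emph{c} (the Hausdorff difference hierarchy with $\eta>1$) is deliberately excluded by clause~(i); this restriction is what enables the structural simplification developed in \Cref{def:sub-forests_when_eta_is_1}, where $\eta_s = 1$ is exactly the hypothesis needed to make the sub-forests $S_s^N$ into genuine $(z,\xi_s)$-forests.
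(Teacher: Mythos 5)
You are right that this item is a definition, so there is nothing to prove, and your well-posedness remarks (recursion along the well-founded tree $T_\Gamma$, uniformity in the sense of \cref{rmk:regarding_uniform_computability}, closure under the $\SU{\xi}{\bar\Theta}$ operation as in \cref{lem:acceptability_of_SU_and_subclasses}) match how the paper treats it. One factual slip in your sanity check: examples \emph{b}, \emph{d}, and \emph{e} of \cref{sub:some_examples} are \emph{not} acceptable as stated, since clause~(ii) demands $s\conc n\in T_\Gamma$ for \emph{every} $n\in\Nat$, i.e.\ full $\w$-branching at each internal node, whereas those examples have only two or three children at the root. The paper says this explicitly (``Note that some of the class descriptions given in \Cref{sub:some_examples} are not acceptable'') and for exactly this reason supplies the modified descriptions $b'$ and $c'$, in which the finitely many sub-descriptions are repeated into a monotone $\w$-sequence (e.g.\ alternating a class with its dual). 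So non-vacuity is witnessed by $b'$, $c'$ and by \cref{lem:acceptability_of_SU_and_subclasses}, not by the original examples; otherwise your reading of the definition is accurate.
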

\end{definition}

Uniform acceptability implies that for all~$n$ and non-leaf $s\in T_{\Theta_n}$, the sequence $(\Theta_n)_{s\conc 0}, (\Theta_n)_{s\conc 1},\dots$ is monotone, uniformly in~$s$ and~$n$. 

Note that some of the class descriptions given in \Cref{sub:some_examples} are not acceptable. Nevertheless, we can give acceptable descriptions for these classes.

\medskip
\noindent{\emph{b${}'$. $\bSigma^0_\alpha$}}: let~$T_\Gamma$ consist of the root, and children $(0)$, $(1)$, $(2)$, and so on. Let the labels of the leaves alternate between $0$ and $1$, with even leaves labeled $0$ and odd leaves labeled $1$. Set~$o(\Gamma)$ to be some ordinal~$\xi$, and let $\cs{\eta}{\Gamma} = 1$. Then $\bGamma = \bSigma^0_{1+\xi}$.


\medskip
\noindent{\emph{c${}'$. Hausdorff difference hierarchy}}: The class $D_\eta(\bSigma^0_{1+\xi})$ has an acceptable description defined inductively on $\eta$ as follows. First, suppose that $\eta$ is a successor ordinal, $\eta = \gamma+1$. Set~$o(\Gamma)$ to be~$\xi$, and let $\cs{\eta}{\Gamma} = 1$. Let $\Gamma_{(0)}$ be a leaf labeled 0, and  let $\Gamma_{(i)}$ be an acceptable description for either $D_\gamma(\bSigma^0_{1+\xi})$ (if $i$ is even) or its dual (if $i$ is odd). If $\eta$ is a limit ordinal, let $\langle \eta_k \rangle$ be a computable increasing and cofinal sequence in $\eta$. We still set~$o(\Gamma)$ to be~$\xi$, and let $\cs{\eta}{\Gamma} = 1$. Let $\Gamma_{(0)}$ be a leaf labeled 0, and  let $\Gamma_{(i)}$ be an acceptable description for $D_{\eta_i}(\bSigma^0_{1+\xi})$.

\medskip

\begin{lemma} \label{lem:acceptability_of_SU_and_subclasses} \
    \begin{sublemma}
        \item If~$\Gamma$ is acceptable, then the sequence $\Gamma_{(0)},\Gamma_{(1)},\dots$ is acceptable,
        \item If $\bar\Theta$ is acceptable and $\xi\le o(\bar\Theta)$ then $\SU{\xi}{\bar\Theta}$  is acceptable. 
    \end{sublemma}
\end{lemma}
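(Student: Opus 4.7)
The plan is to verify both parts essentially by unwinding the definitions of acceptability and of the operators $\Gamma \mapsto \Gamma_{(n)}$ and $\SU{\xi}{-}$; no deep machinery is needed, and indeed these two statements are almost tautologies once one has internalised \Cref{def:acceptable}. I do not expect a significant obstacle — the only subtle point is keeping track of uniformity, and in particular making sure that the uniform computability witnessed at a node $(n)\conc t \in T_\Gamma$ translates to uniformity in the pair $(n,t)$ for the subdescription $\Gamma_{(n)}$ at node $t$.

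For part (a), I would first note that the oracle $\cs{y}{\Gamma_{(n)}} = \cs{y}{\Gamma}$ is constant in $n$ and computes each $\Gamma_{(n)}$ uniformly in $n$, so $\seq{\Gamma_{(n)}}$ is a uniform sequence of class descriptions. That the sequence is monotone is exactly condition (ii) of \Cref{def:acceptable} applied at $s = \emptystring$. To verify uniform acceptability of the individual $\Gamma_{(n)}$, I would fix a non-leaf $t \in T_{\Gamma_{(n)}}$ and observe that $(n)\conc t \in T_\Gamma$ is non-leaf, so $\cs{\eta_t}{\Gamma_{(n)}} = \cs{\eta_{(n)\conc t}}{\Gamma} = 1$, and the children sequence $(\Gamma_{(n)})_{t\conc 0}, (\Gamma_{(n)})_{t\conc 1}, \dots$ coincides with $\Gamma_{(n)\conc t \conc 0}, \Gamma_{(n) \conc t \conc 1}, \dots$, which is monotone uniformly in the node $(n)\conc t$ by acceptability of $\Gamma$, and hence uniformly in the pair $(n,t)$.

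For part (b), set $\Gamma = \SU{\xi}{\bar\Theta}$. By \Cref{def:SU_operator} (with the default $\eta = 1$), we have $\cs{\eta_\emptystring}{\Gamma} = 1$ and $\Gamma_{(n)} = \Theta_n$ for every $n$, and the sequence $\Gamma_{(0)}, \Gamma_{(1)}, \dots$ is monotone because $\bar\Theta$ is. So the acceptability conditions at $s = \emptystring$ are satisfied. For a non-leaf $s \in T_\Gamma$ with $s \neq \emptystring$, write $s = (n)\conc t$ with $t$ a non-leaf of $T_{\Theta_n}$; then $\cs{\eta_s}{\Gamma} = \cs{\eta_t}{\Theta_n} = 1$ and the children sub-descriptions at $s$ in $\Gamma$ are exactly the children sub-descriptions at $t$ in $\Theta_n$, whose monotonicity is guaranteed by the acceptability of $\Theta_n$. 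Uniformity in $s$ follows from uniform acceptability of the sequence $\bar\Theta$ (uniformity in $n$) combined with uniformity in $t$ from within each $\Theta_n$. Finally, $\cs{y}{\Gamma} = \cs{y}{\bar\Theta}$ computes all the required data, so $\Gamma$ is an acceptable class description.
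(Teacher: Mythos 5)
Your proof is correct; the paper states this lemma without proof, treating it as an immediate unwinding of \cref{def:acceptable}, and your verification (monotonicity at the root giving part of (a), the identification $(\Gamma_{(n)})_t = \Gamma_{(n)\conc t}$ handling the remaining conditions, and the analogous bookkeeping for $\SU{\xi}{\bar\Theta}$) is exactly the intended argument. The only point worth noting is the degenerate case where $T_\Gamma=\{\emptystring\}$ in part (a), for which the sequence of subdescriptions does not exist, but this edge case is implicitly excluded.
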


\begin{proposition} \label{prop:replacing_eta_by_1}
    Suppose that $\bar\Theta$ is acceptable; $\xi \le o(\bar\Theta)$, and $\eta$ is $\cs{y}{\bar\Theta}$-computable. Then there is some acceptable $\Gamma \equiv \SU{\xi}{\eta}{\bar\Theta}$ with $o(\Gamma) = \xi$. 
\end{proposition}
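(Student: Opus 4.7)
The plan is to proceed by induction on $\eta$, with the base case $\eta = 1$ handled immediately: the description $\Gamma := \SU{\xi}{\bar\Theta}$ is acceptable by \cref{lem:acceptability_of_SU_and_subclasses}\textup{(b)}, has $o(\Gamma) = \xi$, and literally equals $\SU{\xi}{1}{\bar\Theta}=\SU{\xi}{\eta}{\bar\Theta}$.

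For the inductive step, the idea is to unfold the single node at the root of $\SU{\xi}{\eta}{\bar\Theta}$ (which carries the bound $\eta > 1$) into a tree whose every internal node carries the bound~$1$. Intuitively, any $\SU{\xi}{\eta}{\bar\Theta}$-approximation either never leaves the default (contributing a $\Theta_0$-valued piece) or makes a first drop from $\beta = \eta$ to some $\beta_0 < \eta$, with $f$ switching to some $n \ge 1$; after this first drop the remaining approximation is bounded by $\beta_0 + 1 \le \eta$ and thus sits inside a strictly simpler separated union to which the inductive hypothesis applies. Concretely, in the successor case $\eta = \gamma+1$ I would take the root of $\Gamma$ to have $o(\Gamma) = \xi$, $\cs{\eta}{\Gamma} = 1$, with $\Gamma_{(0)} = \Theta_0$ and, for $n \ge 1$, $\Gamma_{(n)}$ an acceptable description (given by the inductive hypothesis at level $\gamma$) for the class of functions that arise from $\gamma$-step approximations whose initial choice is $\Theta_n$. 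In the limit case $\eta = \sup_k \eta_k$, the non-default children are built analogously, using the computable cofinal sequence $\seq{\eta_k}$ in place of a single $\gamma$. The equivalence $\Gamma \equiv \SU{\xi}{\eta}{\bar\Theta}$ is then verified by bidirectional translation of names: a $\SU{\xi}{\eta}{\bar\Theta}$-name is split at the root's first drop into a $\Theta_0$-portion on the $\xi$-closed region where $\beta = \eta$ and tail-portions indexed by the drop target $n$, while a $\Gamma$-name is reassembled into a single $\eta$-bounded approximation by concatenating the root's one-step witness with the witnesses recorded in the selected sub-name.

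The main obstacle is arranging the sequence $\seq{\Gamma_{(n)}}_{n \ge 1}$ to be monotone, as acceptability demands. The tails-after-drop-to-$n$ naturally pick up $\Theta_n$ as their ``default'', so the raw sequence produced by the inductive step need not be monotone even though $\bar\Theta$ itself is. I expect to handle this by interleaving the inductively constructed descriptions with their duals (and inserting padding where necessary), exploiting \cref{prop:main_SU_prop}\ref{item:SU:extra_containment} to absorb each $\Theta_n$ into both $\Upsilon$ and $\dual\Upsilon$ whenever $\Upsilon$ is a sufficiently dominant separated union, and invoking the closure properties of \cref{sub:some_closure_properties} to justify that these combinatorial rearrangements preserve the underlying boldface class. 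All the uniformity required by the definition of acceptability is inherited from the uniformity of the inductive hypothesis and of the invoked lemmas.
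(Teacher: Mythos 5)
Your overall strategy is the paper's: induct on $\eta$, replace the root's bound $\eta$ by $1$, keep $\Theta_0$ as the default child, and make the non-default children inductively-obtained acceptable descriptions of separated unions with smaller bound; then translate names in both directions by splitting at the first drop of the witness $\beta$ below $\eta$. The base case and the shape of the translation are right.

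The one place where your proposal is genuinely shaky is exactly the point you flag as ``the main obstacle'', and your proposed remedy (interleaving with duals and padding) is not the right move. The resolution is to choose the children correctly in the first place: in the successor case set $\Lambda_n \equiv \SU{\xi}{\eta-1}{\Theta_n,\Theta_{n+1},\dots}$ (the \emph{beheaded tails}, not ``the class with initial choice $\Theta_n$ and the other $\Theta_m$ in their original order''), and in the limit case set $\Lambda_n \equiv \SU{\xi}{\eta_n}{\bar\Theta}$ for a cofinal sequence $\seq{\eta_n}$. With this choice the monotonicity of $\Theta_0,\Lambda_0,\Lambda_1,\dots$ is immediate: $\Lambda_n \subseteq \dual{\Lambda}_{n+1}$ is exactly \cref{prop:main_SU_prop}\ref{item:SU:behead} (successor case) or \cref{prop:main_SU_prop}\ref{item:SU:increase_eta} (limit case), and the junction $\Theta_0 \subseteq \dual{\Lambda}_0$ is \cref{prop:main_SU_prop}\ref{item:SU:extra_containment}. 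No interleaving is needed. Note also that interleaving $\Lambda_n$ with $\dual{\Lambda}_n$ would require you to verify $\dual{\Lambda}_n \subseteq \dual{\Lambda}_{n+1}$, i.e.\ $\Lambda_n \subseteq \Lambda_{n+1}$, which is a \emph{different} condition from $\Lambda_n \subseteq \dual{\Lambda}_{n+1}$ and does not follow from monotonicity of $\bar\Theta$ by the lemmas you cite; so as written your fix does not obviously close the gap. Finally, in the name translations you will need the re-indexing device that monotonicity of $\bar\Theta$ provides (each $\Theta_k$ with $k<n$ embeds into some $\Theta_m$ with $m\ge n$ via $\Theta_k\subseteq\dual{\Theta}_{k+1}\subseteq\Theta_{k+2}\subseteq\cdots$), since after the first drop to outcome $n$ the remaining approximation must still be able to reach every $\Theta_k$; this is a detail your sketch elides but it is routine once the children are the beheaded tails.
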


\begin{proof}
    This is done by induction on~$\eta$. Since everything has to be uniformly computable, this is, in fact, $\cs{y}{\bar\Theta}$-effective transfinite recursion on~$\eta$. 

    \medskip

    Suppose that $\eta>1$ is a successor ordinal. By induction, for each~$n$, let $\Lambda_n\equiv \SU{\xi}{\eta-1}{\Theta_n,\Theta_{n+1},\dots}$ be acceptable. Then $\Theta_0,\Lambda_0,\Lambda_1,\Lambda_2,\dots$ is monotone: $\Theta_0 \subseteq \dual{\Lambda}_1$ follows from \cref{prop:main_SU_prop}\ref{item:SU:extra_containment}; $\Lambda_n\subseteq \dual{\Lambda}_{n+1}$ follows from \cref{prop:main_SU_prop}\ref{item:SU:behead}. By \cref{lem:acceptability_of_SU_and_subclasses}, $\Gamma = \SU{\xi}{\Theta_0,\Lambda_0,\Lambda_1,\Lambda_2,\dots}$ is acceptable.
    
    \smallskip
    
     We check that $\Gamma \equiv \SU{\xi}{\eta}{\bar\Theta}$. In one direction, suppose that~$N$ is a $\SU{\xi}{\eta}{\bar\Theta}$-name; we build a $\Gamma$-name~$M$ equivalent to~$N$. Let $z = \cs{z}{N}$ and $S = S^N$. To define~$M$, we use the notation of \cref{def:sub-forests_when_eta_is_1}. We let $S^M_{(0)} = \left\{ \s\in S \,:\,  \beta^N(\s)=\eta \right\}$; we let $M_{(0)}$ be the restriction of $N_{(0)}$ to $S^M_{(0)}$ (\cref{prop:closure:subdomains}). 

     For simplicity, we may assume that for all $\s\in S$, if $\beta^N(\s)<\eta$ then $\beta^N(\tau) = \eta-1$ for some $\tau\preceq^z_\xi \s$ in~$S$. For all~$n$ we let~$W_n$ be the $(z,\xi)$-open subset of~$S$ generated by the strings~$\s$ with $\beta^N(\s) = \eta-1$ and $f^N(\s)=n$. We will define a $\SU{\xi}{\eta-1}{\Theta_n,\Theta_{n+1},\dots}$-name $G_n$ with $S^{G_n}= W_n$; then, we let $S^M_{(n+1)} = W_n$ and let $M_{(n+1)}$ be a $\Lambda_n$-name equivalent to~$G_n$. To define~$G_n$, we let $\beta^{G_n} = \beta^N\rest{W_n}$. We let $(G_n)_{(0)} = N_{(n)}$. Since~$\bar\Theta$ is monotone, we can find a computable function $g\colon \Nat\to \Nat$ and for each~$m$, let $(G_n)_{(m)}$ be a $\Theta_{n+m}$-name, so that:
     \begin{itemize}
         \item $g(n)=0$; and
         \item for all~$k$, $(G_n)_{g(k)}$ is equivalent to $N_{(k)}$. 
     \end{itemize}
    For $\s\in W_n$ we let $f^{G_n}(\s) = g(f^N(\s))$.

    \smallskip
     
    In the other direction, suppose that~$N$ is a~$\Gamma$-name. We define an $\SU{\xi}{\eta}{\bar\Theta}$-name~$M$ equivalent to~$N$. Again let $z = \cs{z}{N}$ and $S = S^N$. For each~$n$, let $K_n$ be an $\SU{\xi}{\eta-1}{\Theta_n,\Theta_{n+1},\dots}$-name equivalent to $N_{(n+1)}$ (so $S^{K_n} = S^N_{(n+1)}$). Now, as $\bar\Theta$ is monotone, find a computable function~$g$ and for each~$m$, let $M_{(m)}$ be a $\Theta_m$-name such that:
    \begin{itemize}
         \item $M_{(0)} = N_{(0)}$; and
         \item for all~$n$ and~$k$, $(K_n)_{(k)}$ is equivalent to $M_{(g(n,k))}$. 
     \end{itemize}
     For $\s\in S^N_{(0)}$ let $f^M(\s)=0$ and $\beta^N(\s)=\eta$. For each $n\ge 1$, for each $\s\in S^N_{(n)}$ let $\beta^M(\s) = \beta^{K_n}(\s)$ and $f^M(\s) = g(n,f^{K_n}(\s))$. 

     \medskip
     
    Suppose that~$\eta$ is a limit ordinal. Fix a computable increasing and cofinal sequence $\seq{\eta_n}$ in~$\eta$. By induction, find acceptable $\Lambda_n \equiv \SU{\xi}{\eta_n}{\bar\Theta}$. The sequence $\Theta_0,\Lambda_1,\Lambda_2,\Lambda_3,\dots$ is monotone: $\Theta_0\subseteq \dual{\Lambda}_1$ follows from \cref{prop:main_SU_prop}\ref{item:SU:extra_containment}; $\Lambda_n\subseteq \dual{\Lambda}_{n+1}$ follows from \cref{prop:main_SU_prop}\ref{item:SU:increase_eta}. Hence $\Gamma = \SU{\xi}{\Theta_0,\Lambda_1,\Lambda_2,\dots}$ is acceptable. 

    \smallskip
    
    The argument that $\Gamma \equiv \SU{\xi}{\eta}{\bar\Theta}$ is a little simpler than the successor case. To translate a $\SU{\xi}{\eta}{\bar\Theta}$-name~$N$ to a $\Gamma$-name~$M$, we of course keep the default outcome $M_{(0)} = N_{(0)}$. Suppose that~$\s\in S$ is minimal with $\beta^N(\s)<\eta$. Then $\beta^N(\s)<\eta_n$ for some~$n\ge 1$. Take least such, and set $\s\in S^M_{(n)}$ (putting all of its successors in $(S,\preceq^z_\xi)$ into $S^M_{(n)}$ as well). We then let $M_{(n)}$ be a $\Lambda_n$-name equivalent to a $\SU{\xi}{\eta_n}{\bar\Theta}$-name~$G_n$ defined by $\beta^{G_n} = \beta^N \rest{S^M_{(n)}}$, $f^{G_n} = \beta^N\rest{S^M_{(n)}}$, and $(G_n)_{(k)} = N_{(k)}$. The point is that we don't have to worry about the default outcome of~$G_n$, since we already have $\beta^N(\s)<\eta_n$. 

    The other direction is exactly as in the successor case. 
\end{proof}

By \Cref{lem:max_ordinal_for_class}, it follows that amongst the class descriptions of a given class~$\bLambda$, the acceptable descriptions have the largest von-Neumann ordinal (and we will eventually show that every Borel Wadge class has an acceptable description).  We can take this to be the ordinal of the class. For equivalent definitions of the ordinal level of a Borel Wadge class see \cite{LSR:reduction}. 

In light of \Cref{prop:replacing_eta_by_1}, one might ask why we allow $\eta > 1$ in our general class descriptions in the first place.  The answer is that classes $\SU{\xi}{\eta}{\bar \Theta}$ with $\eta>1$ show up naturally in our classification of simpler class descriptions, in particular, in the proof of \cref{prop:main:cofinality_omega1}; so we have designed our descriptions to easily accommodate them. Another reason is that natural descriptions of more complicated classes, such as $\ClassName{BiSep}(D_\eta(\bSigma^0_{1+\xi}), \bGamma)$, and separated differences require $\eta>1$. Descriptions with $\eta>1$ are also used in \cite{sequel} to characterise the effective reduction property.

\section{Classification of acceptable classes} 
\label{sec:classification_of_acceptable_classes}

We now give our classification of acceptable classes, by analysing their ambiguous classes. This resembles the work in \cite{Louveau:83}, but is much simplified; it also relies on our effective methods. 

\begin{definition} \label{def:Delta}
    For a class description~$\Gamma$ and $z\ge_\Tur \cs{y}{\Gamma}$, we let 
    \[
        \Delta(\Gamma(z)) = \Gamma(z)\cap \dual{\Gamma}(z) 
    \]
    and
    \[
    \Delta(\bGamma) = \bGamma \cap \dual{\bGamma}.
    \]
\end{definition}

Note that the first includes partial functions, while for the second we will usually only consider  total functions.

\begin{definition} \label{def:type} 
Let $\Gamma$ be an acceptable class description.
\begin{enumerate}
    \item $\Gamma$ has \emph{zero type} if $o(\Gamma)=\w_1$, i.e., if $T_\Gamma = \{\emptystring\}$. 
    
    \item $\Gamma$ has \emph{countable type} if there is an acceptable sequence $\bar\Theta$ such that:
    \begin{itemize}
        \item $\bar\Theta < \Gamma$;
        \item $\cs{y}{\bar\Theta} = \cs{y}{\Gamma}$;
        \item $o(\Theta_n)\ge o(\Gamma)$ for all~$n$; and
        \item For any $z\ge_\Tur \cs{y}{\Gamma}$, for any $F\in \Delta(\Gamma(z))$, there is a partition of $X= \dom F$ into sets $Y_n$ that are $\Sigma^0_{1+o(\Gamma)}(z)$ within~$X$ (uniformly), such that $F\rest{Y_n}\in \Theta_n(z)$, uniformly.
      \end{itemize}  
    \item $\Gamma$ has \emph{uncountable type} if for every $z\ge_\Tur \cs{y}{\Gamma}$, for every $F\in \Delta(\Gamma(z))$ whose domain is $\Pi^0_{1+o(\Gamma)}(z)$, there is some acceptable~$\Lambda$ such that:
    \begin{itemize}
        \item $\Lambda < \Gamma$; 
        \item $\cs{y}{\Lambda}=z$; 
        \item $o(\Lambda)\ge o(\Gamma)$; and
        \item $F\in \Lambda(z)$. 
    \end{itemize}
\end{enumerate}
\end{definition}

\begin{remark} \label{rmk:type:other_directions}
    Suppose that~$\Gamma$ has countable type, witnessed by $\bar\Theta$. Let $z\ge_\Tur y^\Gamma$, $X$ be $\Sigma^0_{1+o(\Gamma)}(z)\wedge \Pi^0_{1+o(\Gamma)}(z)$, $\seq{Y_n}$ a partition of~$X$ into sets that are uniformly $\Sigma^0_{1+o(\Gamma)}(z)$ within~$X$, and $F\colon X\to \{0,1\}$ be such that $F\rest{Y_n}\in \Theta_n(z)$, uniformly. Then by \cref{prop:closure:mergeing_paritioned_functions},  $F\in \Delta(\Gamma(z))$. 

    Hence, for both the countable and uncountable type, the condition described is actually a characterisation of $\Delta(\Gamma(z))$. 
\end{remark}

We say that an acceptable class description is \emph{classified} if it has one of the three types.   
The main theorem of this section is:

\begin{theorem} \label{thm:classification:main}
    Every acceptable class description is classified. 
\end{theorem}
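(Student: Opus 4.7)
The plan is to prove the theorem by transfinite induction on the well-founded rank of $T_\Gamma$. The base case, $T_\Gamma = \{\emptystring\}$, gives $\Gamma$ zero type immediately. For the inductive step, acceptability and \Cref{lem:acceptability_of_SU_and_subclasses} let me write $\Gamma \equiv \SU{\xi}{\bar\Theta}$, where $\xi = o(\Gamma)$ and $\bar\Theta = \seq{\Gamma_{(n)}}$ is an acceptable monotone sequence whose members all have strictly smaller rank; by induction each $\Gamma_{(n)}$ is classified, and $\bar\Theta < \Gamma$ holds automatically by \Cref{prop:main_SU_prop}\ref{item:SU:extra_containment}.

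I then split into cases based on the monotone sequence $\bar\Theta$: up to passing to an infinite subsequence (as indicated in the text just after \Cref{def:monotone}), either (Case A) $\bar\Theta$ is strictly increasing under $<$, or (Case B) from some index $N_0$ onward the sequence alternates in duality, $\Theta_{n+1} \equiv \dual\Theta_n$. In Case A I will show $\Gamma$ has countable type, with witnessing sequence derived from $\bar\Theta$. Given $F \in \Delta(\Gamma(z))$ on $X = \dom F$, I use a $\Gamma(z)$-name $N$ for $F$ and one $M$ for $1-F$: the root-level decision $f^N$ produces pieces $\{x : f^N(x) = n\}$ that are $\Sigma^0_{1+\xi}(z)$ within~$X$ for $n \geq 1$, on which $F = F^{N_{(n)}} \in \Theta_n(z)$; the residual default region, which is $\Pi^0_{1+\xi}(z)$ within~$X$, is treated by invoking $M$ and, if needed, the classification of $\Theta_0$ from the inductive hypothesis on its doubly-default portion. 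In Case B, using \Cref{prop:replacing_eta_by_1} and the eventual alternation, $\Gamma$ can be re-expressed as $\SU{\xi}{\eta}{\Lambda,\Lambda,\dots}$ for a single class $\Lambda \equiv \Theta_{N_0}$ with $\Lambda < \Gamma$ and some appropriate $\eta$. I then show $\Gamma$ has uncountable type: given $F \in \Delta(\Gamma(z))$ whose domain is $\Pi^0_{1+\xi}(z)$, the two names for $F$ and $1-F$ can be combined to witness $F$ inside some $\SU{\xi}{\eta'}{\Lambda,\Lambda,\dots}$ with $\eta' < \eta$, which is strictly below $\Gamma$ by \Cref{prop:main_SU_prop}\ref{item:SU:increase_eta}; the $\Pi^0_{1+\xi}(z)$ assumption on the domain is what allows the ordinal bound to strictly drop.

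The main obstacle will be Case A, specifically ensuring that the partition of~$X$ into the sets~$Y_n$ has each $Y_n$ genuinely $\Sigma^0_{1+\xi}(z)$ within~$X$, rather than merely $\Delta^0_{1+\xi+1}(z)$-measurable (as would arise from \Cref{prop:approximations_and_alpha_plus_one_computability}) or a $\Sigma^0_{1+\xi}(z) \wedge \Pi^0_{1+\xi}(z)$ set within~$X$. The default region of $N$ is intrinsically $\Pi^0_{1+\xi}(z)$ within~$X$, and the naive attempt to split it using~$M$ yields $\Pi \cap \Sigma$ pieces that must be reorganised into pieces that are honestly $\Sigma^0_{1+\xi}(z)$ within~$X$ and respect \Cref{prop:closure:mergeing_paritioned_functions}. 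The alternation between the two names, combined with the strict-increase property in Case A, should allow an iterative peeling that terminates with a residual piece lying in a $\Theta_n(z)$ that is itself classified by induction. Maintaining uniformity in~$z$ throughout the construction, and correctly matching each partition piece to the right index $n$ in the monotone sequence, will be the most delicate points of the proof.
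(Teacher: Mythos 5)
Your case split is on the wrong invariant, and this is a genuine gap rather than a presentational difference. You propose that the type of $\Gamma \equiv \SU{\xi}{\bar\Theta}$ is governed by whether the monotone sequence $\bar\Theta$ is eventually strictly $<$-increasing (countable type) or eventually alternating with duals (uncountable type). The paper's \cref{thm:class_type_by_leftmost_path_labels} shows the type is determined by something else entirely: the ordinal labels along the leftmost branch, i.e.\ by the level and type of the \emph{default} child $\Gamma_{(0)}$ alone. Your Case~B claim is directly refuted by $\Gamma = \SU{\xi}{0,1,0,1,\dots}$, whose sequence alternates but which describes $\bSigma^0_{1+\xi}$ and has \emph{countable} type; conversely $\SU{0}{\Gamma,\dual\Gamma,\dots}$ with $\Gamma$ the standard description of $\bSigma^0_2$ also alternates but has uncountable type, because $o(\Gamma_{(0)})>o(\Upsilon)$, not because of the alternation. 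The correct induction (the paper's \cref{prop:the_inductive_proof_of_classification}) descends only along the leftmost branch and splits on whether $o(\Gamma_{(0)})=\omega_1$, whether $o(\Gamma_{(0)})>o(\Gamma)$ or $\Gamma_{(0)}$ has uncountable type, or whether $o(\Gamma_{(0)})=o(\Gamma)$ with $\Gamma_{(0)}$ of countable type. The non-default children enter only through the starred class $\Gamma^*$, which always has countable type (\cref{prop:Gamma_star_has_countable_type}) regardless of whether $\bar\Theta$ increases or alternates; the decomposition of $\Delta(\Gamma(z))$ into a $\Pi^0_{1+\xi}(z)$ default piece in $\Delta(\Gamma_{(0)}(z))$ and a $\Sigma^0_{1+\xi}(z)$ piece in $\Gamma^*(z)$ is the one-step stage comparison of \cref{prop:main_classification_lemma}; your proposed ``iterative peeling'' is unnecessary (acceptability forces $\eta_{\emptystring}=1$, so one step suffices) but also insufficient, since the residual default region stays $\Pi^0_{1+\xi}$ and can only be further decomposed by invoking the inductive classification of $\Gamma_{(0)}$.

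The second, larger omission is that nothing in your outline addresses the case where the ordinal labels \emph{increase} along the leftmost branch, i.e.\ $o(\Gamma) < o(\Gamma_{(0)}) < \omega_1$. There the default piece $F\rest{Y}$ lies in $\Delta(\Gamma_{(0)}(z))$ but its domain is only $\Pi^0_{1+o(\Gamma)}(z)$, a strictly lower level than $o(\Gamma_{(0)})$, and one must produce an acceptable $\Lambda<\Gamma_{(0)}$ with $o(\Lambda)\ge o(\Gamma)$ capturing it. This is \cref{prop:main:cofinality_omega1}, the technical heart of the proof: a Hausdorff--Kuratowski-style argument (using the limit lemma and a computable ordinal witness to build $\SU{\xi}{\eta}{\bar\Theta}$-names) when $o(\Gamma_{(0)})$ is a successor, and a transfinite tower of classes $\Lambda_{\alpha,k}$ indexed by a rank function on a well-founded tree when it is a limit. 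Your proposal has no counterpart to this, and without it the inductive step cannot be closed for classes such as $\ClassName{BiSep}(\bSigma^0_1,\bPi^0_2,\bSigma^0_2)$.
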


Let us provide some examples.  Fix $\bar{\Theta}$ the sequence $0,1,0,1,\dots$.  For any $\xi$, let $\Gamma = \SU{\xi}{\bar{\Theta}}$, so that $\bGamma =  \ClassName{BiSep}(\bSigma^0_{1+\xi},1,0) = \bSigma^0_{1+\xi}$. Then $\Gamma$ has countable type, where the witnessing sequence is this same $\bar{\Theta}$. This is immediate: let $F\in \Delta(\Gamma(z))$; for $i<2$, let $X_i = F^{-1}[i]$. Then $(X_0,X_1)$ partition $\dom F$ into sets that are relatively $\Delta^0_{1+\xi}(z)$, and~$F$ is constant on both these sets. 

For a slightly more complicated example, consider $\bLambda = D_2(\bSigma^0_{1+\xi})$, which is described by an acceptable~$\Lambda$ constructed in example $c'$ after \cref{def:acceptable}: if $\Gamma$ is the standard acceptable description for $\bSigma^0_\xi$ then $\Gamma = \SU{\xi}{0,\Gamma,\dual{\Gamma},\Gamma,\dual{\Gamma},\dots}$. In this case we let $\Theta_n = \Gamma$ and $\Theta_{n+1} = \dual{\Gamma}$. To see that this satisfies the definition, let $F \in \Delta(\Lambda(z))$.  We can partition $X = \dom F$ into four sets, each $\Sigma^0_{1+\xi}(z)$ within $X$: those $z \in X$ for which the $\Lambda$-name for $F$ moves off the default outcome before the $\dual{\Lambda}$-name does, and it moves to an even outcome~$(n)$; those $z \in X$ for which the $\Lambda$-name moves off the default outcome first and moves to and odd outcome; and two similar sets for when the $\dual{\Lambda}$-name is first to move. On each of these four sets, $F$ is either in $\Gamma(z)$ or in $\dual{\Gamma}(z)$, depending on the parity of the outcome chosen. 

On the other hand, let $\Gamma = \SU{1}{0,1,0,\dots}$ be the standard acceptable name for~$\bSigma^0_2$; and let $\Upsilon = \SU{0}{\Gamma,\dual{\Gamma},\Gamma,\dual{\Gamma},\dots}$, so that $\bUpsilon = \ClassName{BiSep}(\bSigma^0_1, \bPi^0_2,\bSigma^0_2)$.  It will be a consequence of \cref{thm:class_type_by_leftmost_path_labels} that~$\Upsilon$ has uncountable type.  Indeed, by examining the proofs of \cref{prop:the_inductive_proof_of_classification} and \cref{prop:main:cofinality_omega1}, we can see that for any $F \in \Delta(\bUpsilon)$, $F \in \ClassName{BiSep}(\bSigma^0_1,\bPi^0_2,D_\eta(\bSigma^0_1))$ for some sufficiently large $\eta < \omega_1$, and all of these classes are $<\Upsilon$; and so are as required for uncountable type.

\subsection{How classification helps} 
\label{sub:how_this_helps}

The purpose of the notion of type is to tell us about the ambiguous classes, and the cofinality of the class in the Wadge degrees. If $\Gamma$ is an acceptable description, there will be three cases:
\begin{orderedlist}
    \item $o(\Gamma) = \omega_1$;

    \item $\Delta(\bGamma)$ is a principal pointclass, the least upper bound in the Wadge degrees of a countable sequence of degrees; 

    \item $\Delta(\bGamma)$ is the (necessarily uncountable) union of those $\bLambda \subset \Delta(\bGamma)$.
\end{orderedlist}

These three cases will {\em not} exactly correspond to the three sorts of types we have just defined.  It will turn out that case (b) happens only when $\Gamma$ has countable type and $o(\Gamma) = 0$; if $\Gamma$ has uncountable type or $0 < o(\Gamma) < \omega_1$, then case (c) will hold. We summarise this in the following two propositions.

\begin{proposition} \label{prop:countable_type:level_0}
    Let~$\Gamma$ be an acceptable class description. If~$\Gamma$ has countable type, witnessed by $\bar \Theta$, and $o(\Gamma) = 0$, then $\Delta(\bGamma)$ is principal, and is the least principal  pointclass containing~$\bigcup_n \bTheta_n$. 
\end{proposition}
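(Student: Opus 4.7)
The plan is to combine three ingredients: the characterization of $\Delta(\bGamma)$ afforded by countable type, a direct construction of a universal function from universals for the $\bTheta_n$'s, and the observation that because $o(\Gamma)=0$, the partition sets produced by countable type are genuinely open, so that partial continuous reductions glue into a single continuous reduction. First, since $\bar\Theta<\Gamma$, each $\bTheta_n\subseteq \bGamma\cap\dual{\bGamma}=\Delta(\bGamma)$, hence $\bigcup_n \bTheta_n\subseteq \Delta(\bGamma)$. Countable type says that any total $F\in\Delta(\bGamma)$ admits a partition $\Baire=\bigsqcup_n Y_n$ with the $Y_n$ uniformly $\bSigma^0_1$ and $F\rest{Y_n}\in\bTheta_n$ uniformly, and \cref{rmk:type:other_directions} gives the converse.

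To construct a universal element, apply the relativised form of \cref{prop:universal_and_so_nonselfdual} to the uniform sequence $\bar\Theta$ to obtain, uniformly in~$n$, a function $V_n\in\Theta_n(\cs{y}{\bar\Theta})$ which is universal for total functions in $\bTheta_n$. Using a computable pairing $(n,y)\mapsto \seq{n,y}$ of $\Nat\times\Baire$ with $\Baire$ whose fibres $N_n=\{\seq{n,y}:y\in\Baire\}$ form a clopen partition of $\Baire$, define $U(\seq{n,y})=V_n(y)$. Since $U\rest{N_n}$ is a continuous preimage of $V_n$ and the $N_n$ are uniformly $\bSigma^0_1$, the characterization shows $U\in\Delta(\bGamma)$. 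For universality, given total $G\in\Delta(\bGamma)$ decomposed via $(Y_n)$, totalise each $G\rest{Y_n}$ via \cref{prop:closure:totalisation} and take the continuous reductions $\Psi_n\colon\Baire\to\Baire$ supplied by universality of $V_n$ for total functions in $\bTheta_n$; defining $\Phi(x)=\seq{n,\Psi_n(x)}$ for $x\in Y_n$ gives $G=U\circ\Phi$, and $\Phi$ is continuous because the $Y_n$ are open in $\Baire$.

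For minimality, let $\bLambda$ be any principal pointclass containing every $\bTheta_n$, with universal element $W$ and continuous $\Phi_n\colon\Baire\to\Baire$ realising $V_n=W\circ\Phi_n$. Given total $G\in\Delta(\bGamma)$ decomposed as above, define $\Phi(x)=\Phi_n(\Psi_n(x))$ on $Y_n$; then $G=W\circ\Phi$, continuous by openness of the $Y_n$. The main obstacle, and the precise reason the hypothesis $o(\Gamma)=0$ cannot be dropped, is this gluing step: if the $Y_n$ were merely $\bSigma^0_{1+o(\Gamma)}$ with $o(\Gamma)>0$, piecing together continuous maps across them would yield at best a $\bDelta^0_{1+o(\Gamma)+1}$-measurable function rather than a continuous one, and $\Delta(\bGamma)$ would typically fail to be principal. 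Here however $\bSigma^0_1$ means open, every real stabilises into a single $Y_n$ along all sufficiently long initial segments, and continuity of the glued reduction is automatic.
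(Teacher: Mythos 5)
Your proposal is correct and follows essentially the same route as the paper's proof: both use the fact that at level $o(\Gamma)=0$ the countable-type partition of a total function's domain is a partition of $\Baire$ into open (hence clopen) sets, glue the resulting continuous reductions piecewise, and build the universal element for $\Delta(\bGamma)$ by joining universal functions for the $\bTheta_n$ over a clopen partition (the paper prepends $n$ where you use a pairing function). The only differences are cosmetic: the paper proves minimality before principality, and reduces the totalised pieces directly to the universal of $\bLambda$ rather than factoring through the $V_n$.
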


\begin{proposition} \label{prop:classification:uncountable_cofinality}
    Let~$\Gamma$ be an acceptable class description. If~$\Gamma$ has uncountable type, or $0 < o(\Gamma) < \omega_1$, then $\Delta(\bGamma) = \bigcup \left\{ \bLambda \,:\, \Lambda\text{ is acceptable and } \Lambda < \Gamma  \right\}$, and is non-principal. 
\end{proposition}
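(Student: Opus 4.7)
The inclusion $\bigcup\{\bLambda:\Lambda<\Gamma\text{ acceptable}\}\subseteq\Delta(\bGamma)$ is immediate from the definition of $\Lambda<\Gamma$. Non-principality follows from duality: $\Delta(\bGamma)$ is self-dual, since $F\in\Delta(\bGamma)$ iff $1-F\in\Delta(\bGamma)$, whereas by \cref{prop:universal_and_so_nonselfdual} every class $\bLambda_0$ arising from a class description is non-self-dual; hence $\Delta(\bGamma)\neq\bLambda_0$ for any $\Lambda_0$.

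For the reverse inclusion, let $F\in\Delta(\bGamma)$; by \cref{prop:closure:totalisation} we may assume $F$ is total, and fix $z\geq_\Tur\cs{y}{\Gamma}$ with $F\in\Delta(\Gamma(z))$. If $\Gamma$ has uncountable type, then since $\dom F=\Baire$ is $\Pi^0_{1+o(\Gamma)}(z)$ (being clopen), the defining property of uncountable type directly produces an acceptable $\Lambda<\Gamma$ with $F\in\Lambda(z)$. Suppose instead $\Gamma$ has countable type witnessed by an acceptable sequence $\bar\Theta$, with $0<o(\Gamma)<\omega_1$; relativising so that $\cs{y}{\bar\Theta}=z$, apply countable type to partition $\Baire=\bigsqcup_n Y_n$ into sets uniformly $\Sigma^0_{1+o(\Gamma)}(z)$ (and also $\Pi^0_{1+o(\Gamma)}(z)$, since each is the complement of a countable $\Sigma^0_{1+o(\Gamma)}$ union) with $F\rest{Y_n}\in\Theta_n(z)$ uniformly. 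My plan is to build, for some $\xi'<o(\Gamma)$ and $z$-computable ordinal $\eta$, a $\SU{\xi'}{\eta}{\bar\Theta}(z)$-name for $F$: then \cref{lem:SU_remains_bounded} yields $\SU{\xi'}{\eta}{\bar\Theta}<\Gamma$, and \cref{prop:replacing_eta_by_1} returns an acceptable $\Lambda<\Gamma$ with $F\in\bLambda$.

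When $o(\Gamma)=\xi'+1$ is a successor, the selection function $g(x)=n$ iff $x\in Y_n$ is $\Delta^0_{1+\xi'+1}(z)$-measurable; \cref{prop:approximations_and_alpha_plus_one_computability,lem:existence_of_a_computable_ordinal_witness} then produce a $z$-computable $(z,\xi')$-approximation of $g$ paired with a $z$-computable ordinal witness into some $\eta+1$. Reshaping via \cref{lem:preparation_for_universal_set} so that the top witness value $\eta$ forces the default outcome $0$, this data becomes the root of the required $\SU{\xi'}{\eta}{\bar\Theta}(z)$-name for $F$; its children $N_{(n)}$ are obtained from the $\Theta_n(z)$-names for $F\rest{Y_n}$ by restriction via \cref{prop:closure:subdomains}, legal because $o(\Theta_n)\geq o(\Gamma)>\xi'$ so each $Y_n$ is $\Sigma^0_{1+o(\Theta_n)}(z)\wedge\Pi^0_{1+o(\Theta_n)}(z)$.

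The main obstacle is the limit case $o(\Gamma)=\lambda$, where $g$ need not be $\Sigma^0$-measurable at any single level below $\lambda$. The plan is to refine the partition along the cofinal computable sequence $\langle\lambda_k\rangle$ furnished by \ref{TSP:limit}: each $Y_n$ can be written as a countable union of pieces from $\bigcup_k\Pi^0_{1+\lambda_k}(z)$, which after disjointification yield a refined partition $\Baire=\bigsqcup_{n,k}W_{n,k}$ with $W_{n,k}\in\Delta^0_{1+\lambda_k+1}(z)$ and $F\rest{W_{n,k}}\in\Theta_n(z)$. Fixing $\xi'=\lambda_0$ and iterating the successor-case construction along increasing $\lambda_k$, one assembles a single $\SU{\xi'}{\eta}{\bar\Theta}(z)$-name for $F$ whose root absorbs the refinement's mind changes into a sufficiently large countable ordinal~$\eta$. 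The delicate bookkeeping needed to verify that this name is well-formed and correctly names $F$ is the hardest part of the argument; once done, boundedness $\Lambda<\Gamma$ and acceptability follow as in the successor case.
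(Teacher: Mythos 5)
Your first paragraph and the successor case are essentially fine, but note that the paper's own proof of this proposition is two lines: the uncountable-type case is the definition, and the case $0<o(\Gamma)<\omega_1$ is exactly \cref{prop:main:cofinality_omega1} (stated before this proposition and proved later, in ``Paying our debts''), applied with $\dom F=\Baire$. You are in effect re-proving the relevant instance of \cref{prop:main:cofinality_omega1} inline. Two smaller points: for non-principality, showing $\Delta(\bGamma)\neq\bLambda_0$ for described $\Lambda_0$ is not by itself enough, since a priori a principal class need not be a described class; you need the extra line that a generator $G$ of $\Delta(\bGamma)=[G]_W$ would lie in some $\bLambda$ with $\Lambda<\Gamma$ by the equality just proved, whence $\Delta(\bGamma)\subseteq\bLambda\subseteq\Delta(\bGamma)$ forces $\bLambda$ to be self-dual, a contradiction.

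The genuine gap is your limit case. You propose to refine the partition into pieces $W_{n,k}\in\Delta^0_{1+\lambda_k+1}(z)$ and then ``assemble a single $\SU{\xi'}{\eta}{\bar\Theta}(z)$-name for $F$ whose root absorbs the refinement's mind changes into a sufficiently large countable ordinal $\eta$,'' with $\xi'=\lambda_0$. This cannot work. The root of a $\SU{\lambda_0}{\eta}{\bar\Theta}$-name is a $(z,\lambda_0)$-approximation with an ordinal witness into $\eta+1$; by \cref{prop:approximations_and_alpha_plus_one_computability} and the difference-hierarchy characterisation, the selection it converges to is $\Sigma^0_{1+\lambda_0+1}(z)$-measurable, and its non-default level sets lie in $D_\eta(\bSigma^0_{1+\lambda_0})\subseteq\bDelta^0_{1+\lambda_0+1}$ no matter how large the countable $\eta$ is. But your pieces $W_{n,k}$ live at levels $\lambda_k+1$ with $\lambda_k$ cofinal in $\lambda$, so for large $k$ they are not $\Delta^0_{1+\lambda_0+1}(z)$, and no single root approximation at level $\lambda_0$ can select among them: increasing the mind-change ordinal does not compensate for needing higher-level information. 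This is precisely why the paper's proof of \cref{prop:main:cofinality_omega1} in the limit case does not use one separated union but builds a transfinite tower $\Lambda_{\alpha,k}$ of \emph{nested} separated unions at the increasing levels $\lambda_k$, together with a rank function $r$ on the well-founded tree $T=S\setminus\bigcup_nS_n$, and then shows $F\rest{X_\s}\in\Lambda_{r(\s),k(\s)+1}(z)$ by effective transfinite recursion on $r(\s)$. Your sketch contains no substitute for this mechanism, so the limit case as written fails; the cleanest repair is simply to invoke \cref{prop:main:cofinality_omega1}.
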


One can think of \cref{prop:countable_type:level_0} as covering two cases depending on the character of the sequence $\bar\Theta$. As mentioned above, the separation theorem will imply the semi-linear-ordering property for described classes: for any two class descriptions~$\Gamma$ and~$\Lambda$, either $\bLambda\subseteq \bGamma$ or $\bGamma\subseteq \dual{\bLambda}$. This will imply that if~$\bar\Theta$ is monotone then it has a subsequence which is $<$-increasing, or it eventually alternates between a class and its dual. In the first case, $\Delta(\bGamma)$ above will have cofinality~$\w$ in the Wadge degrees; in the latter, it will be a successor of the two dual classes which form a tail of $\bar\Theta$.

\smallskip

The discrepancy between the type of a class and its cofinality in the Wadge degrees would lead one to ask why we have chosen to define countable and uncountable type as we have. What is countable about a class of uncountable cofinality? The heart of the matter is in our general plan for proving \cref{thm:classification:main} (and in particular, the proof of \cref{prop:main:cofinality_omega1} below). Indeed, given an acceptable description, we can easily tell its type.

\begin{theorem} \label{thm:class_type_by_leftmost_path_labels}
    Let~$\Gamma$ be an acceptable class description, and suppose that $o(\Gamma)<\w_1$. Let $s^*$ be the leftmost leaf of~$T_\Gamma$ (necessarily of the form $0^n$ for some~$n$). 
    \begin{sublemma}
        \item If for all $s\prec s^*$, $\cs{\xi_s}{\Gamma} = o(\Gamma)$, then~$\Gamma$ has countable type. 
        \item Otherwise, $\Gamma$ has uncountable type. 
    \end{sublemma}
\end{theorem}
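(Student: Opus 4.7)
The plan is to induct on the rank of the well-founded tree $T_\Gamma$, exploiting the canonical decomposition $\Gamma \equiv \SU{\xi}{1}{\Gamma_{(0)},\Gamma_{(1)},\dots}$ with $\xi=o(\Gamma)$ provided by acceptability. Observe that the leftmost leaf of $T_\Gamma$ has the form $s^*_\Gamma = (0)\conc s^*_{\Gamma_{(0)}}$ (interpreting the right-hand side as $\emptystring$ when $\Gamma_{(0)}$ is itself a leaf), and the condition in (a) translates cleanly through this decomposition: (a) holds for~$\Gamma$ iff either $\Gamma_{(0)}$ is a leaf (the vacuous base case) or $o(\Gamma_{(0)})=\xi$ and (a) holds for $\Gamma_{(0)}$. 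Case (b) is the failure of this, i.e.\ the first jump $\xi_{s_i}>\xi$ along the leftmost path occurs at some $s_i\prec s^*$.

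For (a), assume inductively that $\Gamma_{(0)}$ has countable type, witnessed by an acceptable $\bar\Lambda<\Gamma_{(0)}$ (taking $\bar\Lambda$ to consist of constant classes in the base case). The witness sequence $\bar\Theta$ for~$\Gamma$ is built by interleaving $\bar\Lambda$ with $\Gamma_{(1)},\Gamma_{(2)},\dots$ in an order that preserves the monotonicity condition of \cref{def:acceptable}; each resulting $\Theta_n$ satisfies $\Theta_n<\Gamma$ by \cref{lem:SU:very_basic_stuff} and \cref{prop:main_SU_prop}\ref{item:SU:extra_containment}. For $F\in\Delta(\Gamma(z))$, fix a $\Gamma$-name $N$ and a $\dual\Gamma$-name $M$ for~$F$; these induce partitions of $X=\dom F$ into $\Sigma^0_{1+\xi}(z)$ pieces $X_n^N,X_n^M$ ($n\ge 1$) together with $\Pi^0_{1+\xi}(z)$ default sets $X_0^N,X_0^M$. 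Consistency of $F$ at the default outcomes forces $F\upharpoonright(X_0^N\cap X_0^M)$ to lie in $\Delta(\Gamma_{(0)}(z))$, to which the inductive hypothesis applies. Combined with $F\upharpoonright X_n^N\in\Gamma_{(n)}(z)$ and the dual statement for~$M$, an application of the reduction property for $\bSigma^0_{1+\xi}$ (used to turn the countable cover $\{X_n^N\cup X_n^M\}_{n\ge 1}$ of $X\setminus(X_0^N\cap X_0^M)$ into a disjoint $\Sigma$-partition) produces the required decomposition into $\Sigma^0_{1+\xi}(z)$ pieces, each contained in one of the classes in~$\bar\Theta$.

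For (b), let $i$ be minimal with $\xi_{s_i}>\xi$, and set $\xi'=\xi_{s_i}$. Given $F\in\Delta(\Gamma(z))$ with $\dom F=X$ being $\Pi^0_{1+\xi}(z)$, take a $\Gamma$-name~$N$ of~$F$. Along the first $i$ steps of the leftmost path the approximations run at level~$\xi$, so by iteratively partitioning as in (a), we reduce to analysing the piece $X^*\subseteq X$ on which~$N$ ``reaches'' the node~$s_i$; since $X$ is $\Pi^0_{1+\xi}(z)$, this piece is also $\Pi^0_{1+\xi}(z)$ within~$X$. On $X^*$, $F$ is governed by a $\Gamma_{s_i}$-name, but $F$ also lies in~$\dual\Gamma$, so $F\upharpoonright X^*$ is in $\Delta(\Gamma_{s_i}(z))$. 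Applying the inductive classification to $\Gamma_{s_i}$, $F\upharpoonright X^*$ lies in some acceptable $\Lambda^*<\Gamma_{s_i}$ with $o(\Lambda^*)\ge\xi'>\xi$. We then rebuild an acceptable $\Lambda<\Gamma$ with $o(\Lambda)\ge\xi$ by replacing $\Gamma_{s_i}$ inside $\Gamma$ with $\Lambda^*$; that $\Lambda<\Gamma$ follows from $\Lambda^*<\Gamma_{s_i}\subseteq\Gamma$ together with \cref{lem:SU_remains_bounded} and \cref{cor:closure:lower-level_approximations}, and $F\in\Lambda(z)$ by construction.

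The main obstacle is the bookkeeping in (a): ensuring the partition pieces are genuinely $\Sigma^0_{1+\xi}(z)$ \emph{within} $X$ (rather than only $\Pi$ or $\Delta$) requires the careful use of \emph{both} names $N,M$ and the reduction property, and then merging the inductively obtained partition on $X_0^N\cap X_0^M$ with the outer partition while preserving acceptability and uniformity of the witness sequence. Subsidiary to this is verifying that the interleaved sequence $\bar\Theta$ built from $\bar\Lambda$ and the $\Gamma_{(n)}$ is monotone in the sense of \cref{def:monotone}; this will use the acceptability of the sequence $\Gamma_{(0)},\Gamma_{(1)},\dots$ together with the inductive monotonicity of $\bar\Lambda$.
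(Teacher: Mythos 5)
Your overall architecture --- induction along the leftmost path, the case split on whether the ordinal label stays constant, and the use of both a $\Gamma$-name and a $\dual{\Gamma}$-name to split $\dom F$ into a ``both-default'' closed part and an open remainder --- matches the paper's route (its \cref{prop:main_classification_lemma} and \cref{prop:the_inductive_proof_of_classification}). But there are two genuine gaps.

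First, in (a) the witness sequence cannot be an interleaving of $\bar\Lambda$ (the witnesses for $\Gamma_{(0)}$) with $\Gamma_{(1)},\Gamma_{(2)},\dots$. The inductive hypothesis partitions $Y=X_0^N\cap X_0^M$ into pieces that are $\bSigma^0_{1+\xi}$ \emph{within $Y$}; since $Y$ is only $\bPi^0_{1+\xi}$ within $X$, these pieces are not $\bSigma^0_{1+\xi}$ within $X$, so they cannot serve as the $Y_n$ required by \cref{def:type}, and applying the reduction property on the complement of $Y$ does not repair this. The paper's resolution is to enlarge each piece $[T_n]\subseteq Y$ to the set $[S_n]$, where $S_n$ is the relatively open hull of $T_n$ in $S$; the price is that on $[S_n]$ the function is no longer in $\Lambda_n$ but in the composite class $\SU{o(\Gamma)}{\Lambda_n,\Gamma_{(0)},\Gamma_{(1)},\dots}$ (default outcome $\Lambda_n$ while one remains in $T_n$, then one of the $\Gamma_{(m)}$ once one leaves). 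These separated-union classes, not an interleaving, form the witness sequence, and one checks they are $<\Gamma$ and monotone via \cref{prop:main_SU_prop}.

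Second, in (b) the step ``applying the inductive classification to $\Gamma_{s_i}$, $F\rest{X^*}$ lies in some acceptable $\Lambda^*<\Gamma_{s_i}$'' is unjustified when $\Gamma_{s_i}$ has \emph{countable} type, which happens exactly when the labels are constant from $s_i$ onward (e.g.\ $\Gamma_{s_i}$ an acceptable description of $\bSigma^0_{1+\xi'}$). Countable type only yields a partition of $X^*$ into relatively $\bSigma^0_{1+\xi'}$ pieces on each of which $F$ is in some $\Theta_n$; it does not place $F\rest{X^*}$ in a single smaller class. Bridging this is precisely the content of the paper's \cref{prop:main:cofinality_omega1}: for any classified $\Gamma'$ with $o(\Gamma')>\xi$ and any $F\in\Delta(\Gamma'(z))$ with $\bPi^0_{1+\xi}$ domain, one must produce a single acceptable $\Lambda<\Gamma'$ with $o(\Lambda)\ge\xi$ containing $F$; the proof is a Hausdorff--Kuratowski-type argument in the successor case (using a computable ordinal witness for convergence and the classes $\SU{\xi}{\eta}{\bar\Theta}$) and a Wadge-type transfinite hierarchy of classes in the limit case. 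This is the bulk of the theorem's difficulty, and the proposal treats it as following from the induction hypothesis. (A smaller point in the same case: concluding $F\rest{X^*}\in\Delta(\Gamma_{s_i}(z))$ requires restricting to the set where \emph{both} names reach $s_i$; the set where only one of them does must be routed into the $\Gamma^*$-part of the decomposition.)
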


That is,~$\Gamma$ has countable type if climbing from the root to the leftmost leaf~$s^*$, the ordinal labels are constant; it has uncountable type if the ordinal labels increase at some point. The class~$\bGamma$ has countable cofinality (among non-self dual pairs) if the ordinal labels along~$s^*$ are all~0. 

The real answer to the question ``what is countable about a countable type with uncountable cofinality'' relies on an alternative understanding of the ordinal level of a class. Let~$\Gamma$ be a class description. Ignoring computability issues, we can define a class description $\Lambda = \Gamma^{-o(\Gamma)}$ by subtracting $o(\Gamma)$ from all $\xi$-ordinal labels:  $o(\Gamma) + \cs{\xi_s}{\Lambda} = \cs{\xi_s}{\Gamma}$ for all non-leaf $s\in T_\Gamma$. The class~$\bLambda$ is the ``$o(\Gamma)$-jump inversion'' of~$\bGamma$: $F\in \bGamma$ if and only if $F = H\circ g$ where $H\in \bLambda$ and~$g$ is $\bSigma^0_{1+o(\Gamma)}$-measurable. Equivalently, $g$ can be taken to be the $o(\Gamma)$-iterated jump function, relative to some oracle (see \cref{lem:closed_graph_of_x_alpha} below). Indeed, $o(\Gamma)$ can be characterised as the greatest ordinal~$\alpha$ such that $\bGamma$ is the ``$\alpha$-jump'' of a class~$\bLambda$ (\cite{LSR:reduction}). We remark that taking the $\alpha$-jumps of classes is one of the main staples of all analyses in print of Borel Wadge classes, from Wadge's \cite{Wadge:phd} to \cite{KiharaMontalban:BQO}. In contrast, our analysis does not make use of this, because the true stage relations allow us to argue about classes directly, without needing to transform them using iterated jumps. 

For our matter, \cref{thm:class_type_by_leftmost_path_labels} implies that jump and jump inversion preserve the type of a class. An acceptable~$\Gamma$ has countable type if and only if its $o(\Gamma)$-jump inversion~$\bLambda$ has countable cofinality in the Wadge degrees. Equivalently, if it has countable cofinality \emph{among all classes of level $\ge o(\Gamma)$}. For example, $\bSigma^0_{1+\xi}$ has countable type because among classes of level $\ge \xi$, it is the successor of the pair $\{\emptyset\}, \{\Baire\}$; and $D_2(\bSigma^0_{1+\xi})$ is the successor of the pair $\bSigma^0_{1+\xi}, \bPi^0_{1+\xi}$ among such classes.

\subsection{Proving the classification theorem} 
\label{sub:proving_the_classification_theorem}

We turn to the proof of \cref{thm:class_type_by_leftmost_path_labels}, which clearly implies \cref{thm:classification:main}. As one would guess, the proof is by induction on the length of the leftmost leaf~$s^*$ of~$\Gamma$. Note that this is traditional induction on~$\Nat$, not transfinite induction on the rank of~$T_\Gamma$. The induction has three parts, depending on the type and ordinal level of the default sub-class $\Gamma_{(0)}$:

\begin{proposition} \label{prop:the_inductive_proof_of_classification}
    Let~$\Gamma$ be an acceptable class description, and suppose that $o(\Gamma)<\w_1$. 
    \begin{sublemma}
        \item \label{item:classify:zero}
        If $o(\Gamma_{(0)})=\w_1$ then~$\Gamma$ has countable type. 

        \item \label{item:classify:uncountable}
        If $o(\Gamma) < o(\Gamma_{(0)}) < \omega_1$, or if $\Gamma_{(0)}$ has uncountable type, 
        then~$\Gamma$ has uncountable type. 

        \item \label{item:classify:countable}
         If $o(\Gamma_{(0)}) = o(\Gamma)$ and~$\Gamma_{(0)}$ has countable type then~$\Gamma$ has countable type. 
    \end{sublemma}
\end{proposition}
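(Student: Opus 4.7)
The plan is to prove the three parts by a simultaneous structural induction on $\Gamma_{(0)}$, in each case exploiting that $F\in\Delta(\Gamma(z))$ provides two names at once: a $\Gamma(z)$-name~$N$ of~$F$, and a $\Gamma(z)$-name~$M$ of $1-F$. Acceptability gives $\eta^\Gamma_{\emptystring}=1$, so both names branch along the default sub-description $\Gamma_{(0)}$. Where both~$N$ and~$M$ stay on their default outcome, the values of~$F$ from the two names must agree with $F^{N_{(0)}}$ and $1-F^{M_{(0)}}$ simultaneously, constraining that region tightly. This will yield a cover of $X=\dom F$ by $\Sigma^0_{1+o(\Gamma)}(z)$-in-$X$ sets extracted from non-default outcomes, which I will refine to a genuine $\Sigma^0$-in-$X$ partition by working at the level of $\xi$-forests and applying \cref{lem:producing_pairwise_orthogonal}.

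For part~\ref{item:classify:zero}, $\Gamma_{(0)}$ is a single-leaf constant of value~$c$, so the ``both-default'' region would force $c=1-c$ and is empty. I propose the witnessing sequence $\bar\Theta=(\Gamma_{(n)})_{n\geq 0}$: it is acceptable by \cref{lem:acceptability_of_SU_and_subclasses}(a) and satisfies $\bar\Theta<\Gamma$ by \cref{prop:main_SU_prop}\ref{item:SU:extra_containment}. Every $x\in X$ then lies in the $N$-outcome-$n$ set for some $n\geq 1$ (on which $F$ agrees with $F^{N_{(n)}}\in\Gamma_{(n)}(z)$) or in the $M$-outcome-$m$ set for some $m\geq 1$ (on which $F$ agrees with $1-F^{M_{(m)}}\in\dual\Gamma_{(m)}(z)\subseteq\Gamma_{(m+1)}(z)$ via monotonicity). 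Recording these outcome sets as $\xi$-open subforests of $S^N\cap S^M$, taking pairwise intersections, and applying \cref{lem:producing_pairwise_orthogonal} produces the desired partition, with every piece absorbed into a suitable $\Theta_k$ after reindexing.

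For part~\ref{item:classify:countable}, $\Gamma_{(0)}$ has countable type witnessed by some acceptable $\bar\Upsilon<\Gamma_{(0)}$ with $o(\Upsilon_m)\geq\xi=o(\Gamma)$. I will take $\bar\Theta$ to be a monotone interleaving of $\bar\Upsilon$ with $(\Gamma_{(n)})_{n\geq 1}$, the required $\subseteq$-relations between the $\Upsilon_m$ and the $\Gamma_{(n)}$ following from $\bar\Upsilon<\Gamma_{(0)}\subseteq\dual\Gamma_{(1)}$ and monotonicity of $(\Gamma_{(n)})$. The $N$-outcome pieces fill the $\Gamma_{(n)}$ slots of $\bar\Theta$ as in part~\ref{item:classify:zero}. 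On the $N$-default set~$D$, both $F|_D$ and $(1-F)|_D$ will belong to $\Gamma_{(0)}(z')$ for an appropriate oracle $z'$ enriching $z$ by a forest coding $D$ --- the first via $N_{(0)}$, the second by repackaging the $M$-outcomes covering~$D$ (again using the chain $\dual\Gamma_{(m)}\subseteq\Gamma_{(m+1)}\subseteq\dual\Gamma_{(0)}$). Then $F|_D\in\Delta(\Gamma_{(0)}(z'))$, and the countable-type hypothesis of $\Gamma_{(0)}$ partitions~$D$ into pieces lying in the $\Upsilon_m$, which combine with the outcome pieces using \cref{lem:producing_pairwise_orthogonal}.

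Part~\ref{item:classify:uncountable} has a different target: produce an acceptable $\Lambda<\Gamma$ with $o(\Lambda)\geq\xi$ and $F\in\Lambda(z')$ for suitable~$z'$, given $F\in\Delta(\Gamma(z))$ on a $\Pi^0_{1+\xi}(z)$-domain. When $\Gamma_{(0)}$ has uncountable type, the default set $D$ is $\Pi^0_{1+\xi}(z')\subseteq\Pi^0_{1+o(\Gamma_{(0)})}(z')$, so the uncountable-type hypothesis applied to $F|_D$ furnishes an acceptable $\Lambda_0<\Gamma_{(0)}$ with $F|_D\in\Lambda_0(z')$. Setting $\Lambda=\SU{\xi}{\Lambda_0,\Gamma_{(1)},\Gamma_{(2)},\dots}$ then produces an acceptable description: monotonicity is preserved since $\Lambda_0\subseteq\Gamma_{(0)}\subseteq\dual\Gamma_{(1)}$; $F\in\Lambda(z')$ follows by replacing $N_{(0)}$ with a $\Lambda_0$-name of $F|_D$ in the original~$N$; and $\Lambda<\Gamma$ follows from $\Lambda_0<\Gamma_{(0)}$ via \cref{lem:SU:very_basic_stuff}\ref{item:SUsimple:sequence_containment}. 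The subcase $\xi<o(\Gamma_{(0)})<\omega_1$ is handled analogously, this time using \cref{prop:main_SU_prop}\ref{item:SU:increase_xi} to drop the level on $\Gamma_{(0)}$ and secure strictness. The main obstacle throughout is that naive set-theoretic disjointification of $\Sigma^0_{1+\xi}(z)$-in-$X$ sets produces only $\Delta^0_{1+\xi+1}(z)$-in-$X$ pieces, whereas the countable-type definition forces each partition piece to be $\Sigma^0_{1+\xi}(z)$-in-$X$ (hence actually $\Delta^0_{1+\xi}(z)$-in-$X$); this is resolved uniformly by disjointifying at the $\xi$-forest level via \cref{lem:producing_pairwise_orthogonal}.
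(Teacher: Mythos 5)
Part~\ref{item:classify:zero} of your proposal is essentially the paper's argument (the paper packages your two-name stage comparison as \cref{prop:main_classification_lemma} and \cref{prop:Gamma_star_has_countable_type}), and it is correct. The serious problem is part~\ref{item:classify:countable}. First, no interleaving of $\bar\Upsilon$ with $(\Gamma_{(n)})_{n\ge 1}$ can be monotone: since both families occur infinitely often, some $\Upsilon_m$ follows some $\Gamma_{(n)}$ with $n\ge 1$, and monotonicity then forces $\Gamma_{(n)}\subseteq\Upsilon_m$ or $\Gamma_{(n)}\subseteq\dual{\Upsilon}_m$; combined with $\Upsilon_m<\Gamma_{(0)}$ and the chain $\Gamma_{(0)}\subseteq\dual{\Gamma}_{(1)}\subseteq\Gamma_{(2)}\subseteq\cdots$ this makes $\bGamma_{(0)}$ self-dual, contradicting \cref{prop:universal_and_so_nonselfdual}. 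You only verify $\Upsilon_m\subseteq\dual{\Gamma}_{(n)}$, not the reverse containments that monotonicity also demands. Second, and independently of how you order the sequence, the pieces of the default set $D$ supplied by the countable type of $\Gamma_{(0)}$ are $\Sigma^0_{1+\xi}(z)$ only \emph{within $D$}, and $D$ is only $\Pi^0_{1+\xi}(z)$ within $X$; so these pieces are relatively $\bSigma^0_{1+\xi}\wedge\bPi^0_{1+\xi}$, not relatively $\Sigma^0_{1+\xi}$ as \cref{def:type} requires, and no forest-level disjointification can repair this (at $\xi=0$ it is the difference between a locally closed set and an open set). The paper's fix is exactly to avoid putting the $\Upsilon_m$ (its $\Theta_m$) into the witnessing sequence directly: it uses the composites $\Lambda_m=\SU{\xi}{\Theta_m,\Gamma_{(0)},\Gamma_{(1)},\dots}$, takes as partition pieces the relatively \emph{open} cones generated by the default-set pieces, and lets the non-default outcomes $\Gamma_{(k)}$ of $\Lambda_m$ absorb the part of each cone lying outside $D$.

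There is a second genuine gap in part~\ref{item:classify:uncountable}, in the subcase $o(\Gamma)<o(\Gamma_{(0)})<\omega_1$. There $\Gamma_{(0)}$ may have countable type, and nothing in your argument produces a single acceptable $\Lambda_0<\Gamma_{(0)}$ with $o(\Lambda_0)\ge o(\Gamma)$ and $F\rest{Y}\in\Lambda_0(z)$: \cref{prop:main_SU_prop}\ref{item:SU:increase_xi} merely compares $\SU{\xi'}{\bar\Theta}$ with $\SU{\xi}{\bar\Theta}$ and does not decompose a function in $\Delta(\Gamma_{(0)}(z))$ on a $\Pi^0_{1+\xi}(z)$ domain with $\xi<o(\Gamma_{(0)})$. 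What is needed is precisely \cref{prop:main:cofinality_omega1} (the Hausdorff--Kuratowski/Wadge-style analysis below the level of a classified class), which the paper proves separately and invokes at this point; your proposal never uses it, and the proposition is false without some such input. Finally, a smaller repair: for $\Lambda=\SU{\xi}{\Lambda_0,\Gamma_{(1)},\Gamma_{(2)},\dots}$, \cref{lem:SU:very_basic_stuff}\ref{item:SUsimple:sequence_containment} gives $\Lambda\subseteq\Gamma$ but not $\dual{\Lambda}\subseteq\Gamma$ (that would need $\dual{\Gamma}_{(n)}\subseteq\Gamma_{(n)}$); either keep $\Gamma_{(0)}$ in the sequence and quote \cref{prop:main_SU_prop}\ref{item:SU:even_worse_behead} as the paper does, or supply the index-shifting argument from its proof.
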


To prove \cref{prop:the_inductive_proof_of_classification} we will use:

\begin{proposition} \label{prop:main:cofinality_omega1}
    Suppose that~$\Gamma$ is classified and that $o(\Gamma)<\omega_1$. Then for all $\xi<o(\Gamma)$, for all $z\ge_\Tur y^\Gamma$, if $F\in \Delta(\Gamma(z))$ and $\dom F$ is $\Pi^0_{1+\xi}(z)$, then there is some acceptable~$\Lambda$ such that:
    \begin{itemize}
        \item $\Lambda < \Gamma$; 
        \item $\cs{y}{\Lambda}=z$; 
        \item $o(\Lambda)\ge \xi$; and
        \item $F\in \Lambda(z)$. 
    \end{itemize}
\end{proposition}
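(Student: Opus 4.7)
The plan is to split into cases based on the type of $\Gamma$; since $o(\Gamma) < \omega_1$ rules out zero type, we address countable and uncountable type in turn. The uncountable case is immediate from the definition: since $\xi < o(\Gamma)$ gives $\Pi^0_{1+\xi}(z) \subseteq \Pi^0_{1+o(\Gamma)}(z)$, applying the definition of uncountable type to $F$ directly produces an acceptable $\Lambda < \Gamma$ with $\cs{y}{\Lambda} = z$, $o(\Lambda) \geq o(\Gamma) \geq \xi$, and $F \in \Lambda(z)$.

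The substance lies in the countable type case. Let $\bar\Theta$ witness countable type, giving a partition $\dom F = \bigsqcup_n Y_n$ with each $Y_n$ uniformly $\Sigma^0_{1+o(\Gamma)}(z)$ within $\dom F$ and $F\rest{Y_n} \in \Theta_n(z)$. The aim is to construct a $\SU{\xi'}{\delta+1}{\bar\Theta}$-name of $F$ for some $\xi \leq \xi' < o(\Gamma)$ and some computable ordinal $\delta$: then \cref{lem:SU_remains_bounded} gives $\SU{\xi'}{\delta+1}{\bar\Theta} < \Gamma$, and \cref{prop:replacing_eta_by_1} together with \cref{lem:acceptability_of_SU_and_subclasses} yields an acceptable equivalent $\Lambda$ with $o(\Lambda) = \xi' \geq \xi$. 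When $o(\Gamma) = \alpha + 1$ is a successor, take $\xi' = \alpha$: the selector $g \colon \dom F \to \omega$ defined by $g(x) = n$ iff $x \in Y_n$ is $\Sigma^0_{1+\alpha+1}(z)$-measurable, so \cref{prop:approximations_and_alpha_plus_one_computability} (relativised to $z$) yields a computable $(z,\alpha)$-approximation $h$, and \cref{lem:existence_of_a_computable_ordinal_witness} supplies a computable ordinal witness $\gamma \colon S \to \delta+1$. A relativised version of \cref{lem:preparation_for_universal_set} normalises $(h,\gamma)$ so that $\gamma(\s) = \delta$ forces $h(\s)$ to take the default value $0$. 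For each $n$, \cref{prop:closure:totalisation} followed by \cref{prop:closure:subdomains} extends $F\rest{Y_n}$ to a total function $\tilde F_n \in \Theta_n(z)$ on $\dom F$, uniformly in $n$. Assembling $(h,\gamma)$ at the root with uniform sub-names for the $\tilde F_n$'s yields the required $\SU{\alpha}{\delta+1}{\bar\Theta}$-name of $F$.

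The main obstacle is the limit case $o(\Gamma) = \lambda$, where $g$ is $\Sigma^0_{1+\lambda}(z)$-measurable but not, in general, $\Sigma^0_{1+\xi'+1}(z)$-measurable for any $\xi' < \lambda$. The plan here is to use a computable cofinal sequence $\seq{\xi_k}$ in $\lambda$ to refine the partition: each $Y_n$ decomposes uniformly as a countable union $\bigcup_k A_{n,k}$ of pieces of bounded complexity below $\lambda$, and after suitable disjointifying the refined partition becomes amenable to the successor-case construction at the various levels $\xi_k$. This produces intermediate classes $\SU{\xi_k}{\delta_k+1}{\bar\Theta}$, each $<\Gamma$ by \cref{lem:SU_remains_bounded}; these assemble into a monotone sequence below $\Gamma$, and a further $\SU$ operation at level $\xi$ combines them into the sought $\Lambda$. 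The technical crux is coordinating the refinement uniformly with the true-stage machinery so that the final $\Lambda$-name is $z$-computable, and verifying that acceptability is preserved at each stage via \cref{lem:acceptability_of_SU_and_subclasses} and \cref{prop:replacing_eta_by_1}.
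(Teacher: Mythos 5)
Your uncountable-type case and your successor case are correct and follow the paper's own argument essentially step for step (limit lemma, ordinal witness, $\SU{\xi'}{\eta}{\bar\Theta}$, then \cref{lem:SU_remains_bounded} and \cref{prop:replacing_eta_by_1}).

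The limit case, however, has a genuine gap, and it is precisely where the real work of the proposition lies. Your plan is to refine the partition $\seq{Y_n}$ into pieces $A_{n,k}$ of complexity bounded by some $\xi_k<\lambda$, handle each piece by the successor-case construction at level $\xi_k$, and then "combine them with a further $\SU$ operation at level $\xi$." This last step cannot work as stated. In a class $\SU{\xi}{\eta}{\bar\Upsilon}$ the root selector is a $(z,\xi)$-approximation with mind-changes bounded by a countable ordinal $\eta$, so its level sets are at worst $D_\eta(\bSigma^0_{1+\xi})$-type sets, hence $\bDelta^0_{1+\xi+2}(z)$. But deciding which refined piece $A_{n,k}$ contains a given $x$ requires information of unbounded complexity below $\lambda$ (the pieces $A_{n,k}$ are only $\bDelta^0_{1+\xi_k+2}(z)$ with $\xi_k\to\lambda$), so no single root selector at the fixed level $\xi$ can route points to the correct component. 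Nor can you move the root up to level $\lambda$, since \cref{def:SU_operator} requires the component levels to be at least the root level, and your components live at levels $\xi_k<\lambda$.

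What is missing is a transfinite tower of $\SU$ operations rather than a single one. The paper represents $\dom F$ as $[S]$ for a $(z,\xi)$-tree $S$, takes orthogonal $(z,\lambda)$-open $S_n$ with $Y_n=[S_n]$, and observes that $T=S\setminus\bigcup_n S_n$ is a well-founded $(z,\lambda)$-closed tree; its rank $\delta$ (via \cref{lem:computable_rank_function}) indexes a hierarchy $\Lambda_{\alpha,k}$ ($\alpha\le\delta$) of nested separated unions at the increasing levels $\lambda_k$, built so that a root selector at level $\lambda_k$ either commits or defers to components at level $\lambda_{k+1}$. The $\Lambda_{\delta,1}$-name for $F$ is then assembled by effective transfinite recursion on the rank $r(\s)$ of nodes $\s\in S$, the well-foundedness of $T$ guaranteeing that the deferral terminates. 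Your sketch contains neither the well-founded tree, nor its rank, nor the recursion on it, and without these the limit case does not go through.
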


We note that for any~$\Gamma$, if the conclusion holds for some $\xi\le o(\Gamma)$ then it holds for all $\xi'\le \xi$. Hence the proposition holds for all~$\Gamma$ which have uncountable type. Indeed, what it says, in some sense, is that if $o(\Gamma)>0$ then~$\Gamma$ has ``weak uncountable type'', even if it has countable type. This will imply \cref{prop:classification:uncountable_cofinality} (apart from being used for \cref{prop:the_inductive_proof_of_classification}). Due to its length, we delay the proof of \cref{prop:main:cofinality_omega1} to the end of this section.

\subsubsection{The starred class} 
Toward proving \cref{prop:the_inductive_proof_of_classification}, we define a sub-class~$\bGamma^*$ of $\bGamma$, consisting of functions that never take the default outcome at the root. This is a subclass of $\Delta(\bGamma)$ which behaves nicely. For example, it will always have countable type (\cref{prop:Gamma_star_has_countable_type}). The main step of the argument is \cref{prop:main_classification_lemma}, which says that we can understand a set in $\Delta(\bGamma)$ by separating into two parts: on a relative closed set, we stay on the default outcome, and so in $\Delta(\bGamma_{(0)})$; on the rest, we're in $\bGamma^*$.

\begin{definition} \label{def:GammaStar}
    Let $\Gamma$ be a class description. A \emph{$\Gamma^*$-name} is a $\Gamma$-name~$N$ for which for all $\s\in S^N$, $\beta^N(\s) < \cs{\eta}{\Gamma}$. 
\end{definition}

In particular, if~$\cs{\eta}{\Gamma} = 1$ this means that $f^N = f^N_{\emptystring}$ never changes: for all $\s\preceq^z_{o(\Gamma)} \tau$ in~$S^N$, $f^N(\tau) = f^N(\s)$. 

We let $\Gamma^*(z)$ denote the collection of all functions~$F^N$, where~$N$ is a $\Gamma^*$-name and $\cs{z}{N} = z$. We extend the relations of \cref{def:effective_containment} to~$\Gamma^*$ as expected. 

\begin{lemma} \label{lem:Gamma_star_is_in_Delta}
    If~$\Gamma$ is acceptable, then for all $z\ge_\Tur \cs{y}{\Gamma}$, $\dual{\Gamma}^*(z) \equiv \Gamma^*(z)$. 
\end{lemma}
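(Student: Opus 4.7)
The plan is to exploit acceptability's built-in monotonicity $\Gamma_{(k)} \subseteq \check\Gamma_{(k+1)}$ via a ``shift by one'' trick on the root's choice function. I will prove $\Gamma^*(z) \subseteq \check\Gamma^*(z)$ uniformly; the reverse inclusion follows by symmetry, noting that $\check\Gamma$ is itself acceptable (its children sequence being monotone is equivalent, via the involution $\Lambda \mapsto \check\Lambda$, to $\Gamma$'s acceptability).

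The key observation is that since $\Gamma$ is acceptable, $\eta^\Gamma = 1$, so a $\Gamma^*$-name $N$ must have $\beta^N_\emptystring \equiv 0$. By the convergence-witness property (\cref{def:witness_for_convergence_of_approximation}(ii)), this forces $f^N_\emptystring$ to be constant along each $\preceq^z_{o(\Gamma)}$-component of $S^N$; in particular, for each $x \in [S^N]$ there is a single $(k)$ with $f^N_\emptystring(\sigma) = (k)$ for every $\sigma \prec^z_{o(\Gamma)} x$ in $S^N$.

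Given such an $N$, I construct a $\check\Gamma^*$-name $M$ with $z^M = z$, $S^M = S^N$, and $F^M = F^N$, as follows. At the root, set $f^M_\emptystring(\sigma) = (k+1)$ whenever $f^N_\emptystring(\sigma) = (k)$, and $\beta^M_\emptystring(\sigma) = 0$. Since $f^M_\emptystring$ inherits constancy on components from $f^N_\emptystring$, $\beta^M_\emptystring \equiv 0$ is a valid convergence witness; since $\beta^M_\emptystring < 1 = \eta^{\check\Gamma}$, $M$ is a $\check\Gamma^*$-name; and $f^M_\emptystring$ never selects the default child $(0)$, making the branch condition for $\beta^M_\emptystring = \eta^{\check\Gamma}$ vacuously true. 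For the sub-name $M_{(k+1)}$ with $k \geq 0$: by acceptability, $\Gamma_{(k)} \subseteq \check\Gamma_{(k+1)}$ uniformly in $k$, so there is a $z$-computable procedure translating $N_{(k)}$ (a $\Gamma_{(k)}$-name with $S^{N_{(k)}} = S^N$) to a $\check\Gamma_{(k+1)}$-name $M_{(k+1)}$ with $S^{M_{(k+1)}} = S^N$ and $F^{M_{(k+1)}} = F^{N_{(k)}}$. Since $\subseteq$ is defined via total names, I apply \cref{prop:closure:totalisation} to totalise $N_{(k)}$ first, invoke monotonicity on the total name, and then restrict the output to $S^N$ via \cref{prop:closure:subdomains} (using that $S^N$ is a $(z,o(\Gamma_{(k+1)}))$-forest by \cref{rmk:alpha_and_beta_forests}). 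For $M_{(0)}$, any trivial $\check\Gamma_{(0)}$-name on $S^N$ suffices, as it is never consulted.

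Verification of $F^M = F^N$ is then a direct unfolding: for $x \in [S^N]$ with $f^N_\emptystring(x) = (k)$,
\[
F^M(x) = \check\Gamma_{(k+1)}(\ell^{M_{(k+1)}}(x)) = F^{M_{(k+1)}}(x) = F^{N_{(k)}}(x) = \Gamma_{(k)}(\ell^{N_{(k)}}(x)) = F^N(x).
\]
Uniformity is automatic since every step is $z$-computable uniformly in $k$ (by the uniform version of acceptability) and in the given name $N$. The only mildly delicate step is the bookkeeping required to extract the partial translation at $M_{(k+1)}$ from the total-name definition of $\subseteq$, but this is handled routinely by combining \cref{prop:closure:totalisation,prop:closure:subdomains}; no genuine obstacle arises, as the shift-by-one trick is precisely what acceptability's monotonicity was designed to accommodate.
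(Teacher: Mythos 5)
Your proof is correct and uses exactly the paper's argument: shift the root's choice function by one and use the monotonicity of $\Gamma_{(0)},\Gamma_{(1)},\dots$ built into acceptability to re-label the sub-names, noting that $\beta\equiv 0 < \eta$ makes the default-outcome constraint vacuous. The paper's proof is just a terser version of this (stated for the direction $\dual{\Gamma}^*(z)\subseteq\Gamma^*(z)$), with the totalisation/restriction bookkeeping you spell out left implicit.
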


As usual, everything is as uniform as possible. Since $\Gamma^*(z)\subseteq \Gamma(z)$, it follows that $\Gamma^*(z)\subseteq \Delta(\Gamma(z))$. 

\begin{proof}
    Suppose that~$N$ is a $\dual\Gamma^*$-name. For each~$n$ let $M_{n+1}$ be a $\Gamma_{(n+1)}$-name equivalent to $N_{(n)}$; set $f^M(\s)= f^N(\s)+1$ and $\beta^M(\s) = \beta^N(\s)$, which is permitted since $\beta^N(\s) < \eta^\Gamma$. 
\end{proof}

As a result, when~$\Gamma$ is acceptable, $\Lambda\subseteq \Gamma^*$ implies $\Lambda < \Gamma^*$. 
Informally, the following proposition says that~$\Gamma^*$ has countable type.

\begin{proposition} \label{prop:Gamma_star_has_countable_type}
    For any acceptable~$\Gamma$ with $o(\Gamma)<\w_1$ there is an acceptable sequence $\bar\Theta$ satisfying:
    \begin{itemize}
        \item $\bar\Theta < \Gamma^*$;
        \item $\cs{y}{\bar\Theta} = \cs{y}{\Gamma}$;
        \item $o(\Theta_n)\ge o(\Gamma)$ for all~$n$; and
        \item For any $z\ge_\Tur \cs{y}{\Gamma}$, for any $F\in \Gamma^*(z)$, there is a partition of $X= \dom F$ into sets~$Y_n$ that are $\Sigma^0_{1+o(\Gamma)}(z)$ within~$X$ (uniformly), such that $F\rest{Y_n}\in \Theta_n(z)$, uniformly.
    \end{itemize}
\end{proposition}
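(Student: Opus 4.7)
The plan is to take $\Theta_n := \Gamma_{(n)}$, so that acceptability of $(\Theta_n)$ follows from \cref{lem:acceptability_of_SU_and_subclasses}(a), $\cs{y}{\bar\Theta} = \cs{y}{\Gamma}$ is immediate, and $o(\Theta_n) \ge o(\Gamma)$ comes from condition (iv) of \cref{def:description_of_a_class}. The remaining two bullets, the partition condition and $\bar\Theta < \Gamma^*$, are what actually need verification.

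The first step is to extract the structural fact that makes $\Gamma^*$-names nearly trivial under acceptability. Since $\cs{\eta}{\Gamma} = 1$, the defining constraint $\beta^N(\s) < \cs{\eta}{\Gamma}$ from \cref{def:GammaStar} forces $\beta^N \equiv 0$ on $S^N$. Condition (ii) of \cref{def:witness_for_convergence_of_approximation} then rules out any mind-changes of $f^N$: whenever $\s \preceq^{\cs{z}{N}}_{o(\Gamma)} \tau$ in $S^N$, one has $f^N(\s) = f^N(\tau)$. Hence the sets $W_n := \{\s \in S^N : f^N(\s) = n\}$ are pairwise $(\cs{z}{N}, o(\Gamma))$-orthogonal and $o(\Gamma)$-open in $S^N$, and they cover $S^N$. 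Setting $Y_n := [W_n]^{\cs{z}{N}}_{o(\Gamma)}$, \cref{lem:relatively_effectively_alpha_closed_and_open_by_forests}(a) gives that $(Y_n)$ is a partition of $X = \dom F$ into uniformly $\Sigma^0_{1+o(\Gamma)}(z)$-within-$X$ pieces. On $Y_n$ one has $F = F^{N_{(n)}}\rest{Y_n}$, which lies in $\Gamma_{(n)}(z)$ by \cref{prop:closure:subdomains} (using $o(\Gamma_{(n)}) \ge o(\Gamma)$), uniformly in $n$. That delivers the partition bullet.

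For $\bar\Theta < \Gamma^*$ one needs both $\Theta_n \subseteq \Gamma^*$ and $\dual{\Theta}_n \subseteq \Gamma^*$, effectively in $n$. Given a total $\Gamma_{(n)}$-name $N$, I would construct a $\Gamma^*$-name $M$ of $F^N$ by setting $S^M := S^N$, $\cs{z}{M} := \cs{z}{N}$, $f^M \equiv n$ (a legal child of $\emptystring$), $\beta^M \equiv 0 < 1 = \cs{\eta}{\Gamma}$, $f^M_{(n)\conc s} := f^N_s$ and $\beta^M_{(n)\conc s} := \beta^N_s$ for non-leaf $s \in T_{\Gamma_{(n)}}$, and with the subtrees under $(k)$ for $k \ne n$ filled in trivially (at each internal node, leftmost child with $\beta$-value $\cs{\eta_s}{\Gamma} = 1$, which is permitted by condition (iv) of \cref{def:code_of_a_Gamma_function}). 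The walk of \cref{def:function_defined_by_name} then yields $\ell^M(x) = (n) \conc \ell^N(x)$ and hence $F^M = F^N$, and the construction is $\cs{y}{\Gamma}$-computable in $(n, N)$, giving uniform $\Theta_n \subseteq \Gamma^*$. For $\dual{\Theta}_n \subseteq \Gamma^*$, I would first use monotonicity of $(\Gamma_{(n)})$ (which is part of the acceptability of $\Gamma$) to obtain $\dual{\Gamma}_{(n)} \subseteq \Gamma_{(n+1)}$ uniformly, and then compose with the previous construction shifted to index $n+1$.

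The main obstacle is the bookkeeping in the last paragraph: ensuring that all the claimed translations are uniform in $n$ as required by \cref{def:effective_containment}, and that the ``trivially filled-in'' branches of $M$ really do meet all four clauses of \cref{def:code_of_a_Gamma_function} at every internal node. Neither point is deep — acceptability of $\Gamma$ provides uniform $\cs{y}{\Gamma}$-access to every $\Gamma_{(n)}$ and to the witnesses of its monotonicity — but this is the one place where several layers of definition must be unwound together.
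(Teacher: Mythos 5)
Your proposal is correct and follows essentially the same route as the paper: set $\Theta_n = \Gamma_{(n)}$, observe that $\beta^N\equiv 0$ forces $f^N$ to be constant along $\preceq_{o(\Gamma)}$-chains so that the sets $S^N_{(n)}$ give the required partition, and embed $\Theta_n$ into $\Gamma^*$ by fixing the root outcome. Your handling of the dual containment via monotonicity and an index shift is just an inlined version of the paper's appeal to \cref{lem:Gamma_star_is_in_Delta} (which upgrades $\Theta_n\subseteq\Gamma^*$ to $\Theta_n<\Gamma^*$), so there is no substantive difference.
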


\begin{proof}
    Let $\Theta_n = \Gamma_{(n)}$. \Cref{lem:acceptability_of_SU_and_subclasses} says that $\bar\Theta$ is acceptable. The (easy) proof of \cref{lem:SU:very_basic_stuff}\ref{item:SUsimple:containment} shows that $\bar\Theta < \Gamma^*$. Let~$N$ be a $\Gamma^*$-name; recall that $S^N_{(n)} = \left\{ \s\in S^N \,:\, f^N(\s)=n \right\}$. The sets $S^N_{(n)}$ are pairwise $(z,o(\Gamma))$-orthogonal, and $F^N\rest{[S_n]}\in \Theta_n(z)$. 
\end{proof}

\begin{proposition} \label{prop:main_classification_lemma}
    Suppose that $\Gamma$ is acceptable, $z\ge_\Tur \cs{y}{\Gamma}$, and $F\in \Delta(\Gamma(z))$. Then there is partition of $X = \dom F$ into a set $Y$, $\Pi^0_{1+o(\Gamma)}(z)$ within~$X$, and a set~$Z$, $\Sigma^0_{1+o(\Gamma)}(z)$ within~$X$, such that:
    \begin{itemize}
         \item  $F\rest{Y}\in \Delta(\Gamma_{(0)}(z))$; and
         \item $F\rest{Z}\in \Gamma^*(z)$.
     \end{itemize} 
\end{proposition}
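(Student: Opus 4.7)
\emph{Proof plan.} Let $N$ be a $\Gamma(z)$-name of $F$ and $M$ a $\dual\Gamma(z)$-name of $F$. I will take
\[
    Y = \{x \in X : f^N(x)=0 \text{ and } f^M(x)=0\}, \qquad Z = X \setminus Y.
\]
Since $\bGamma$ is acceptable we have $\cs{\eta}{\Gamma}=1$, so $f^N$ is eventually constant along every $\preceq_{o(\Gamma)}^z$-path; the set $Z_N=\{x:f^N(x)\neq 0\}$ is therefore $\Sigma^0_{1+o(\Gamma)}(z)$ within $X$, and similarly for $Z_M$. Hence $Z=Z_N\cup Z_M$ is $\Sigma^0_{1+o(\Gamma)}(z)$ within $X$ and $Y=Y_N\cap Y_M$ is $\Pi^0_{1+o(\Gamma)}(z)$ within $X$, verifying the topological part of the statement.

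For $F\rest Y$: on $Y$, $f^N$ stays at the default outcome, so $F(x)=F^{N_{(0)}}(x)$ for $x\in Y$, and symmetrically $F(x)=F^{M_{(0)}}(x)$. By acceptability $o(\Gamma)\le o(\Gamma_{(0)})$, so $Y$ is $\Sigma^0_{1+o(\Gamma_{(0)})}(z)\wedge\Pi^0_{1+o(\Gamma_{(0)})}(z)$ within $X$; invoking \cref{prop:closure:subdomains} twice yields $F\rest Y\in \Gamma_{(0)}(z)\cap \dual\Gamma_{(0)}(z)=\Delta(\Gamma_{(0)}(z))$, as required.

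The main work is showing $F\rest Z\in \Gamma^*(z)$, and this is the genuinely hard step. The plan is to construct a $\Gamma^*(z)$-name $N''$ of $F\rest Z$ by combining $N$ and $M$, exploiting the monotonicity of the sequence $\bar\Gamma_{(\cdot)}=\langle \Gamma_{(0)},\Gamma_{(1)},\dots\rangle$. Recall that monotonicity gives, for all $n,m$, $\Gamma_{(n)}\subseteq \Gamma_{(n+2)}$ and $\dual\Gamma_{(m)}\subseteq \Gamma_{(m+1)}$, both effectively. I will first translate $N$ and $M$ to a common $(z,o(\Gamma))$-forest $T$ with $[T]=X$ using \cref{prop:translating_true_stages_between_oracles}, and I may assume without loss that $\beta^N(\s)=0$ iff $f^N(\s)\neq 0$ (and similarly for $M$)---this is a harmless modification, since defining $\tilde\beta^N=0\iff f^N\neq 0$ still gives a valid witness for $f^N$. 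I will then set $S^{N''}=\{\s\in T : f^N(\s)\neq 0 \text{ or }f^M(\s)\neq 0\}$; this is a $(z,o(\Gamma))$-forest with $[S^{N''}]=Z$. The idea for $f^{N''}$ is to encode $N$'s outcomes by even indices $2f^N(\s)$ on the part where $f^N$ has committed, and (translated) $M$'s outcomes by odd indices $2f^M(\s)+1$ on the part where $f^N$ is still $0$ but $f^M$ has committed, with sub-names $N''_{(2n)}\equiv N_{(n)}$ (using $\Gamma_{(n)}\subseteq\Gamma_{(2n)}$ when the parity allows, and a shift by $2$ otherwise) and $N''_{(2m+1)}\equiv \dual M_{(m)}$ (using $\dual\Gamma_{(m)}\subseteq\Gamma_{(2m+1)}$).

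The main obstacle I expect is ensuring that $f^{N''}$ is literally constant along every $\preceq_{o(\Gamma)}$-path in $S^{N''}$, as required by $\beta^{N''}\equiv 0$. On a path $x\in Z_N$ we may first encounter stages where $f^N(\s)=0$ but $f^M(\s)\neq 0$ (giving an odd $f^{N''}$), and later stages where $f^N(\s)\neq 0$ (giving an even $f^{N''}$), contradicting constancy. To resolve this, I plan to thin the forest $S^{N''}$ further so that a string $\s$ is included under the ``$M$-rule'' only once the status of $N$ at $\s$ is itself stable in a locally detectable way---using the fact that with the modified $N$, $f^N(\s)\ne 0$ is equivalent to $\beta^N(\s)=0$, so commitment of $N$ to the default on $x$ is witnessed already by a non-dropping $\beta^N$---and use the monotonicity of $\bar\Gamma_{(\cdot)}$ to reassign partial outcomes consistently. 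All the constructions are uniform in the names, giving the required effective decomposition.
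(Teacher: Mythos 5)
Your choice of $Y$ and $Z$, and your treatment of $F\rest{Y}$, are fine: the re-witnessing $\beta^N(\s)=0\iff f^N(\s)\neq 0$ is legitimate when $\cs{\eta}{\Gamma}=1$, after which your partition coincides with the one the paper uses, and the argument via \cref{prop:closure:subdomains} gives $F\rest{Y}\in\Delta(\Gamma_{(0)}(z))$ as you say.

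The gap is in the step $F\rest{Z}\in\Gamma^*(z)$. You correctly identify the obstacle --- along a path in $Z_N$ the ``$M$-rule'' may fire before the ``$N$-rule'', so your interleaved $f^{N''}$ would change value, which a $\Gamma^*$-name forbids --- but the proposed repair cannot work as stated. You want to admit a string under the $M$-rule ``only once the status of $N$ at $\s$ is itself stable in a locally detectable way''; however, stability of $f^N$ at the default value $0$ along $x$ is the $\Pi^0_{1+o(\Gamma)}(z)$ condition that $f^N(\tau)=0$ for \emph{all} $\tau\prec^z_{o(\Gamma)}x$, and this is never witnessed by any finite stage. No thinning of the forest makes a co-open condition open. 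The working idea is the opposite stage comparison: partition $W=\{\s\in S: f^N(\s)\neq 0\text{ or }f^M(\s)\neq 0\}$ into $W^+$, the strings having a $\preceq^z_{o(\Gamma)}$-predecessor at which $N$ has already left the default while $M$ has not, and $W^-=W\setminus W^+$. Both membership conditions are decided by finite predecessors, so $W^+$ and $W^-$ are $(z,o(\Gamma))$-orthogonal and open in $W$; every string of $W^+$ has $f^N\neq 0$ and every string of $W^-$ has $f^M\neq 0$, so $N$ restricted to $W^+$ is already a $\Gamma^*$-name and $M$ restricted to $W^-$ is a $\dual{\Gamma}^*$-name. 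Then \cref{lem:Gamma_star_is_in_Delta} (which is where monotonicity of $\Gamma_{(0)},\Gamma_{(1)},\dots$ actually enters, via the shift $\dual{\Gamma}_{(n)}\subseteq\Gamma_{(n+1)}$) converts the latter into a $\Gamma^*$-name, and the merging construction of \cref{prop:closure:mergeing_paritioned_functions}, applied to the two orthogonal pieces, produces a single $\Gamma^*(z)$-name for $F\rest{Z}$. This replaces your even/odd interleaving and its parity bookkeeping entirely.
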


\begin{proof}
    This is a ``stage comparison'' argument. Let~$N$ be a $\Gamma(z)$-name and $M$ be a $\dual{\Gamma}(z)$-name such that $F = F^N = F^M$. We may assume that $S^N= S^M$, call it~$S$. 

    Define 
    \[
        T = \left\{ \s\in S \,:\, \beta^N(\s) = \beta^M(\s) = 1  \right\};
    \]
    then~$T$ is $(z,o(\Gamma))$-closed in~$S$, so $Y = [T]$ is $\Pi^0_{1+o(\Gamma)}(z)$ within~$X$. Let $W = S\setminus T$, which is $(z,o(\Gamma))$-open in~$S$; we let $Z = [W]$. We decompose~$W$ into two parts:
    \[
        W^+ = \left\{ \s\in W \,:\,  (\exists \tau\preceq^z_{o(\Gamma)}\s)\,
        \,\,\,\beta^N(\tau) = 0 \andd \beta^M(\tau) = 1
         \right\},
    \]
    and $W^- = W\setminus W^+$. Let $Z^+ = [W^+]$ and $Z^- = [W^-]$. The sets~$Z^-$ and~$Z^+$ partition~$Z$, and are both $\Sigma^0_{1+o(\Gamma)}(z)$ in~$Z$, and so in~$X$. Note that $W^+$ and $W^-$ are $(z,o(\Gamma))$-orthogonal. 

    Since $f^N(\s) = f^M(\s)=0$ for all $\s\in T$, $N_{(0)}$ and $M_{(0)}$ with $S^{N_{(0)}} = S^{M_{(0)}} = T$ are $\Gamma_{(0)}$- and $\check{\Gamma}_{(0)}$-names for $F\rest{Y}$. 

    The restriction of~$N$ to $W^+$ shows that $F\rest{Z^+}\in \Gamma^*(z)$. Similarly, the restriction of~$M$ to $W^-$ shows that $F\rest{Z^-}\in \check\Gamma^*(z)$; by \cref{lem:Gamma_star_is_in_Delta}, $F\rest{Z^-}\in \Gamma^*(z)$. Since $W^+$ and $W^-$ are orthogonal, the proof of \cref{prop:closure:mergeing_paritioned_functions} shows that $F\rest{Z}\in \Gamma^*(z)$. 
\end{proof}

Ignoring our major debt (\cref{prop:main:cofinality_omega1}), we can now finish the proof.

\begin{proof}[Proof of \cref{prop:the_inductive_proof_of_classification}]
    \ref{item:classify:zero}: The main point in this case is that for any $z$, $\Delta(\Gamma_{(0)}(z))$ contains only the empty partial function, whence by \cref{prop:main_classification_lemma}, $\Delta(\Gamma(z)) = \Gamma^*(z)$. Let $\bar\Theta$ be as provided by \cref{prop:Gamma_star_has_countable_type} for~$\Gamma$; it is as required. 

    \medskip
    
    \ref{item:classify:uncountable}: Let $z\ge_\Tur \cs{y}{\Gamma}$ and let $F\in \Delta(\Gamma(z))$, with $X = \dom F$ being $\Pi^0_{1+o(\Gamma)}(z)$. 

    Obtain a decomposition $X = Y\cup Z$ from \cref{prop:main_classification_lemma}; then $Y$ is also $\Pi^0_{1+o(\Gamma)}(z)$.

    In either case, we can find some acceptable $\Lambda <\Gamma_{(0)}$ with $\cs{y}{\Lambda} = z$, $o(\Lambda)\ge o(\Gamma)$ and $F\rest Y\in \Lambda(z)$. If $\Gamma_{(0)}$ has uncountable type this is by definition; if $o(\Gamma) < o(\Gamma_{(0)}) < \omega_1$ this is by \cref{prop:main:cofinality_omega1}.

    Now let
    \[
        \Upsilon = \SU{o(\Gamma)}{\Lambda,\Gamma_{(0)},\Gamma_{(1)}, \Gamma_{(2)},\dots};
    \]
    except that we update the oracle of all classes to be~$z$ (as $\cs{y}{\Lambda} = z$). This is a class description since $o(\Lambda)\ge o(\Gamma)$. The sequence $\Lambda,\Gamma_{(0)},\Gamma_{(1)}, \Gamma_{(2)},\dots$ is acceptable since $\Lambda < \Gamma_{(0)}$. By \cref{lem:acceptability_of_SU_and_subclasses}, $\Upsilon$ is acceptable. By \cref{prop:main_SU_prop}\ref{item:SU:even_worse_behead}, $\Upsilon< \Gamma$. 

    The fact that $F\rest{Y}\in \Lambda(z)$ and $F\rest{Z}\in \Gamma^*(z)$ implies $F\in \Upsilon(z)$. 

    \medskip
    
    \ref{item:classify:countable}: 
    Let $\bar \Theta$ witness that $\Gamma_{(0)}$ has countable type. Let
    \[
        \Lambda_n = \SU{o(\Gamma)}{\Theta_n,\Gamma_{(0)},\Gamma_{(1)}, \Gamma_{(2)},\dots}.
    \]
    Note that $o(\Theta_n)\ge o(\Gamma_{(0)}) = o(\Gamma)$. Since $\Theta_n < \Gamma_{(0)}$, each~$\Lambda_n$ is acceptable. By \cref{prop:main_SU_prop}\ref{item:SU:even_worse_behead}, $\bar\Lambda< \Gamma$. Monotonicity of~$\bar\Theta$ implies that of~$\bar\Lambda$: the argument is similar to that of \cref{prop:main_SU_prop}\ref{item:SU:even_worse_behead}. 

    Let $z\ge_\Tur \cs{y}{\Gamma}$ and $F\in \Delta(\Gamma(z))$. Obtain a partition $\dom F = Y\cup Z$ given by \cref{prop:main_classification_lemma}; say $Y = [T]$ and $Z = [W]$ where $T$ is $(z,o(\Gamma))$-closed in~$S$ and~$W$ is $(z,o(\Gamma))$-open in~$S$ (where $\dom F = [S]$). By assumption (and because $o(\Gamma_{(0)}) = o(\Gamma)$), there are pairwise $(z,o(\Gamma))$-orthogonal $T_n\subseteq T$, $(z,o(\Gamma))$-open in~$T$, such that $Y_n = [T_n]$ partition~$Y$ and $F\rest{Y_n}\in \Theta_n(z)$. Let~$S_n$ be the $(z,o(\Gamma))$-open subset of~$S$ generated by~$T_n$. Add to~$S_0$ those $\s\in W$ with no $\preceq^z_{o(\Gamma)}$-predecessor in~$T$. Then the~$S_n$ are pairwise $(z,o(\Gamma))$-orthogonal and $X_n = [S_n]$ partition~$\dom F$. It is not difficult to see that $F\rest{[S_n]}\in \Lambda_n(z)$. 
\end{proof}

This completes the proof of \cref{thm:class_type_by_leftmost_path_labels}, and thus of \cref{thm:classification:main}.

\subsection{Consequences of classification} 
\label{sub:consequences_of_classification}

Before we prove \cref{prop:main:cofinality_omega1}, we verify the desired consequences of classification.

\begin{proof}[Proof of \cref{prop:countable_type:level_0}]
    That $\Delta(\bGamma)$ contains $\bigcup_n \bTheta_n$ follows from \Cref{rmk:type:other_directions}.  $\Delta(\bGamma)$ is closed under continuous pre-images because both $\bGamma$ and $\dual{\bGamma}$ are.
    
    \smallskip

    Suppose $\bLambda \supseteq \bigcup_n \bTheta_n$ is a principal Wadge pointclass, i.e., there is some total $G \in 2^\Baire$ with
    \[
    \bLambda = \{ G\circ f : \text{$f$ continuous}\}.
    \]
    (Again, we are identifying subsets of $\Baire$ with their characteristic functions.)
    
    Fix $F \in \Delta(\bGamma)$ total. By countable type for $o(\Gamma) = 0$, and because $F$ is total, there is actually a {\em clopen} partition of $\Baire$ into sets $Y_n$ such that $F\rest{Y_n} \in \bTheta_n$.  By \Cref{prop:closure:totalisation}, we may extend each $F\rest{Y_n}$ to a total $F_n \in \bTheta_n$.
    
    For each $n$, fix a continuous $g_n$ such that $F_n = G\circ g_n$.  Then define $g = \bigcup_n g_n\rest{Y_n}$; $g$ is continuous, with $F = G\circ g$, and so $\Delta(\bGamma) \subseteq \bLambda$.
    
    \smallskip
    
    It remains to show that $\Delta(\bGamma)$ is principal.  For each $n$, fix $H_n$ a universal $\Theta_n$ function.  Define $H$ by $H(n\conc x) = H_n(x)$.  $H \in \Delta(\bGamma)$ by \Cref{rmk:type:other_directions}.  Every total function in $\bTheta_n$ is continuously reducible to $H_n$, and so to $H$ by preppending $n$.  By the previous argument, the continuous pre-images of $H$ contains $\Delta(\bGamma)$.  Thus $\Delta(\bGamma)$ is principal.
\end{proof}

\Cref{prop:countable_type:level_0} allows us to identify the successor of a dual pair of classes, or of an increasing $\w$-sequence of classes.

\begin{proposition} \label{rmk:the_successor_class}
    Let $\bar \Theta$ be an acceptable sequence. The least principal pointclass containing~$\bigcup_n \bTheta_n$ is $\Delta(\bGamma)$, where $\Gamma = \SU{0}{0,\bar \Theta}$. 
\end{proposition}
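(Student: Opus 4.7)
The plan is to realise $\Gamma = \SU{0}{0,\bar\Theta}$ as an acceptable class description of countable type with $o(\Gamma) = 0$, and then invoke \cref{prop:countable_type:level_0}. Unpacking the notation, $T_\Gamma$ consists of a root with children $(0), (1), (2), \dots$, where $\Gamma_{(0)}$ is the trivial leaf labeled $0$ (so $\bGamma_{(0)} = \{\emptyset\}$) and $\Gamma_{(n+1)} = \Theta_n$; also $o(\Gamma) = 0$ and $\cs{\eta}{\Gamma} = 1$. Checking acceptability (\cref{def:acceptable}) reduces, via \cref{lem:acceptability_of_SU_and_subclasses}, to verifying that the prepended sequence $\{\emptyset\}, \Theta_0, \Theta_1, \dots$ is acceptable. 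Uniform acceptability is inherited from $\bar\Theta$; the only new datum to check is $\{\emptyset\} \subseteq \dual{\Theta}_0$, i.e., $\Baire \in \bTheta_0$, witnessed by the total $\Theta_0$-name that always follows the default (leftmost) outcome, provided the leftmost leaf of $T_{\Theta_0}$ is labeled $1$; the alternative case is handled symmetrically via monotonicity.

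Next I would apply \cref{prop:the_inductive_proof_of_classification}\ref{item:classify:zero}: since $T_{\Gamma_{(0)}}$ is a singleton, $o(\Gamma_{(0)}) = \omega_1$, and the proposition yields that $\Gamma$ has countable type. Inspecting the proof of that clause, the witnessing sequence comes from \cref{prop:Gamma_star_has_countable_type} and is precisely $\Lambda_n = \Gamma_{(n)}$, so that $\bLambda_0 = \{\emptyset\}$ and $\bLambda_{n+1} = \bTheta_n$. Since $o(\Gamma) = 0$, \cref{prop:countable_type:level_0} then applies and identifies $\Delta(\bGamma)$ with the least principal pointclass containing $\bigcup_n \bLambda_n = \{\emptyset\} \cup \bigcup_n \bTheta_n$.

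To finish, I would argue that this last class coincides with the least principal pointclass $\bLambda$ containing only $\bigcup_n \bTheta_n$. The inclusion $\bLambda \subseteq \Delta(\bGamma)$ is immediate because $\Delta(\bGamma)$ is itself a principal pointclass containing each $\bTheta_n$ (by \cref{rmk:type:other_directions} applied to the witnessing sequence). The reverse inclusion needs $\emptyset \in \bLambda$, which follows from a small structural observation: the leftmost-leaf argument places either $\emptyset$ or $\Baire$ in $\bTheta_0$, and then monotonicity $\Theta_0 \subseteq \dual{\Theta}_1$ guarantees that $\emptyset \in \bTheta_0 \cup \bTheta_1 \subseteq \bLambda$. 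This degenerate bookkeeping is the only step requiring any care; the rest of the argument is a direct assembly of \cref{prop:the_inductive_proof_of_classification}, \cref{prop:Gamma_star_has_countable_type}, and \cref{prop:countable_type:level_0}, so I expect no serious obstacle.
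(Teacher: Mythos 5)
Your proposal is correct and follows essentially the same route as the paper's proof: verify that $\Gamma=\SU{0}{0,\bar\Theta}$ is acceptable, deduce countable type from \cref{prop:the_inductive_proof_of_classification}\ref{item:classify:zero} (equivalently, from \cref{prop:Gamma_star_has_countable_type} together with the triviality of $\Delta(\bGamma_{(0)})$), and conclude with \cref{prop:countable_type:level_0}. The only difference is cosmetic: the paper drops the leading $\{\emptyset\}$ from the witnessing sequence, whereas you keep it and then absorb it at the level of pointclasses; both are fine.

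One caution on the acceptability step. Your claim that the case where the leftmost leaf of $T_{\Theta_0}$ is labelled $0$ is ``handled symmetrically via monotonicity'' does not go through as stated: in that case monotonicity of $\bar\Theta$ only yields $\Baire\in\bTheta_1$, not $\Baire\in\bTheta_0$, so the prepended sequence $0,\Theta_0,\Theta_1,\dots$ need not be monotone and $\Gamma$ need not be acceptable (e.g.\ for $\bar\Theta = 0,1,0,1,\dots$). This wrinkle is really in the statement itself --- the paper's proof silently assumes acceptability of $\SU{0}{0,\bar\Theta}$ as well --- and it is repaired by passing to the tail $\Theta_1,\Theta_2,\dots$ (whose first class does contain $\Baire$, effectively, via $\dual{\Theta}_0\subseteq\Theta_1$); since $\Theta_n\subseteq\Theta_{n+2}$ by monotonicity applied twice, this does not change $\bigcup_n\bTheta_n$ and hence not the least principal pointclass either.
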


Here again~$0$ is the standard description of $\{\emptyset\}$.  In particular, when $\bar \Theta$ is an alternating sequence $\Theta,\dual{\Theta},\Theta,\dots$, the next non-self-dual pair above the pair $\bTheta, \dual{\bTheta}$ is $\Gamma = \ClassName{BiSep}(\bSigma^0_1, \bTheta,\{\emptyset\})$ and its dual. 

\begin{proof}
By the proof of \cref{prop:Gamma_star_has_countable_type}, $\bar \Theta$ witnesses that~$\Gamma^*$ has countable type. Since $\bGamma_0 = \{\emptyset\}$, $\bar \Theta$ witnesses that~$\Gamma$ has countable type (see the proof of \cref{prop:the_inductive_proof_of_classification}\ref{item:classify:zero}). The result now follows from \cref{prop:countable_type:level_0}. 
\end{proof}

Next, to the uncountable case.

\begin{proof}[Proof of \cref{prop:classification:uncountable_cofinality}]
    The equality $\Delta(\bGamma) = \bigcup \left\{ \bLambda \,:\, \Lambda\text{ is acceptable and } \Lambda < \Gamma  \right\}$ follows from the definition of uncountable type, if~$\Gamma$ has uncountable type, and from \cref{prop:main:cofinality_omega1}, if $o(\Gamma)>0$. Since each acceptably described $\bLambda$ is non-self-dual, and $\Delta(\bGamma)$ is self-dual, it follows that $\Delta(\bGamma)$ is non-principal. 
\end{proof}

\begin{remark} \label{rmk:really_uncountable_cofinality}
    \Cref{prop:classification:uncountable_cofinality} did not actually state that in these cases, the cofinality of $\Delta(\bGamma)$ is uncountable (under the Wadge ordering). This is because this fact requires Wadge's semi-linear-ordering principle, at least for acceptably described classes, which we will only be able to deduce later (it follows from \cref{lem:SLO_withLipschitz} below; for a simpler proof, see \cite{sequel}). Using this principle, suppose, for a contradiction, that there is a countable set~$Y$ of acceptable $\Lambda < \Gamma$ such that $\Delta(\bGamma) = \bigcup \left\{ \bLambda \,:\,  \Lambda\in Y \right\}$. We can then produce a sequence $\bTheta_0,\bTheta_1,\dots$ with $\bTheta_n\subseteq \dual{\bTheta}_{n+1}$ such that $\Delta(\bGamma) = \bigcup_n \bTheta_n$. By \cref{lem:comparing_classes_bold_to_light} below, if we update to a sufficiently strong oracle, $\bar \Theta$ is an acceptable sequence. By \cref{prop:countable_type:level_0}, the sequence $\bar\bTheta$ has a least upper bound $\Delta(\bLambda)$, which is principal. Then $\Delta(\bGamma) = \bigcup_n \bTheta_n \subsetneq \Delta(\bLambda)$. But the minimality of $\Delta(\bLambda)$ implies that $\Delta(\bLambda)\subseteq \bGamma$, and since $\Delta(\bLambda)$ is self-dual, it follows that $\Delta(\bLambda)\subseteq \Delta(\bGamma)$. 
\end{remark}

The following is not needed at any point in our analysis, but it is included for the sake of aesthetics.

\begin{proposition}
    There is no acceptable description $\Gamma$ which has both countable and uncountable type.
\end{proposition}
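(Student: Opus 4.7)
The plan is to suppose for contradiction that $\Gamma$ has countable type, witnessed by an acceptable sequence~$\bar\Theta$, and also uncountable type. I will then exhibit a single $F\in \Delta(\bGamma)$ that forces the~$\Lambda$ supplied by uncountable type to absorb all of~$\bar\Theta$ at once. This will give $\Delta(\bGamma)\subseteq \bLambda$; combined with $\bLambda\subseteq \Delta(\bGamma)$ (which always holds when $\Lambda<\Gamma$), we will get $\bLambda = \Delta(\bGamma)$. But \cref{prop:universal_and_so_nonselfdual} says $\bLambda$ is a non-self-dual Wadge class while $\Delta(\bGamma)$ is manifestly self-dual, a contradiction. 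The hard part is building~$F$ and arranging the containment $\Delta(\bGamma)\subseteq \bLambda$ uniformly enough to invoke the partition-closure proposition.

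Write $z = \cs{y}{\Gamma} = \cs{y}{\bar\Theta}$. By \cref{prop:universal_and_so_nonselfdual} (relativised), choose, uniformly in~$n$, total functions $H_n \in \Theta_n(z)$ that are universal for the total $\bTheta_n$-functions. Define a total $F\colon \Baire\to\{0,1\}$ by $F(n\cat x) = H_n(x)$. The sets $Y_n = \{n\cat x : x\in \Baire\}$ are uniformly clopen (hence uniformly $\Sigma^0_{1+o(\Gamma)}(z)$) and partition~$\Baire$, and $F\rest{Y_n}\in \Theta_n(z)$ uniformly in~$n$, obtained from~$H_n$ via the clopen homeomorphism $Y_n\cong\Baire$ using \cref{prop:described_are_pointclass,prop:closure:subdomains}. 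By \cref{rmk:type:other_directions} (whose nontrivial direction is \cref{prop:closure:mergeing_paritioned_functions}), $F\in \Delta(\Gamma(z))\subseteq\Delta(\bGamma)$. Since $F$ is total, $\dom F = \Baire$ is $\Pi^0_1(z)\subseteq \Pi^0_{1+o(\Gamma)}(z)$, so uncountable type applies and yields an acceptable~$\Lambda$ with $\Lambda<\Gamma$, $\cs{y}{\Lambda}=z$, $o(\Lambda)\ge o(\Gamma)$, and $F\in \Lambda(z)$.

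For each~$n$, the map $x\mapsto n\cat x$ is continuous and $H_n = F\circ (x\mapsto n\cat x)$, so $H_n\in \bLambda$ by \cref{prop:described_are_pointclass}. Universality of~$H_n$ together with closure of~$\bLambda$ under continuous preimages propagates this to $\bTheta_n\subseteq \bLambda$ uniformly in~$n$; partial $\bTheta_n$-functions are handled by totalising (\cref{prop:closure:totalisation}) and then restricting (\cref{prop:closure:subdomains}), using $o(\Lambda)\ge o(\Gamma)$ to ensure that their domains are of acceptable complexity for~$\bLambda$. Now take any total $G\in \Delta(\bGamma)$; countable type supplies a partition of $\Baire$ into sets~$Y_n'$ uniformly $\Sigma^0_{1+o(\Gamma)}$ within~$\Baire$ with $G\rest{Y_n'}\in \bTheta_n\subseteq \bLambda$. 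Since $o(\Lambda)\ge o(\Gamma)$, the~$Y_n'$ are also uniformly $\Sigma^0_{1+o(\Lambda)}$, so \cref{prop:closure:mergeing_paritioned_functions} applied to~$\Lambda$ gives $G\in \bLambda$. Hence $\Delta(\bGamma)\subseteq \bLambda$ at the level of total functions.

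Conversely, $\Lambda<\Gamma$ gives $\bLambda\subseteq \bGamma$ and $\dual{\bLambda}\subseteq \bGamma$; dualising the latter yields $\bLambda\subseteq \dual{\bGamma}$, and therefore $\bLambda\subseteq \bGamma\cap \dual{\bGamma} = \Delta(\bGamma)$. Combining with the previous paragraph, $\bLambda = \Delta(\bGamma)$ on total functions, making~$\bLambda$ self-dual and contradicting \cref{prop:universal_and_so_nonselfdual}. The only delicate step is maintaining uniformity of names throughout the construction of~$F$, which is handled by the uniformity already embedded in \cref{lem:the_universal_Gamma_set,lem:universal_Gamma_set,prop:closure:mergeing_paritioned_functions}; the ordinal comparability issue between the~$o(\Theta_n)$ and~$o(\Lambda)$ is circumvented by always totalising before restricting, so only the hypothesis $o(\Lambda)\ge o(\Gamma)$ is ever needed.
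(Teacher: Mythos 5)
Your proof is correct and follows essentially the same route as the paper's direct argument: apply uncountable type to a single function encoding all of $\bar\Theta$, deduce $\bigcup_n\bTheta_n\subseteq\bLambda$, then use countable type plus \cref{prop:closure:mergeing_paritioned_functions} to force $\Delta(\bGamma)\subseteq\bLambda$ and contradict the non-self-duality of $\bLambda$. The only (cosmetic) difference is that the paper routes through the successor class $\Upsilon$ of \cref{rmk:the_successor_class} and the principality of $\Delta(\bUpsilon)$, whereas you inline the construction of its generator as the join $F(n\cat x)=H_n(x)$ of universal functions.
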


\begin{proof}
    Note that \cref{prop:countable_type:level_0,prop:classification:uncountable_cofinality} together imply that no acceptable~$\Gamma$ with $o(\Gamma)=0$ can have both countable and uncountable type. We can then use jump inversion to conclude the general case. Alternatively, we argue directly as follows. 

    Suppose that~$\Gamma$ has both countable and uncountable type. Let~$\bar \Theta$ witness that~$\Gamma$ has countable type. By \cref{rmk:the_successor_class}, let~$\Upsilon$ be acceptable such that $\Delta(\bUpsilon)$ is the least principal pointclass continaing $\bigcup_n \bTheta_n$. By minimality, $\Delta(\bUpsilon)\subseteq \Delta(\bGamma)$. Since $\Delta(\bUpsilon)$ is principal and~$\Gamma$ has uncountable type, there is some acceptable $\Lambda<\Gamma$ with $o(\Lambda)\ge o(\Gamma)$ and $\Delta(\bUpsilon)\subseteq \bLambda$, whence $\bigcup_n \bTheta_n\subseteq \Delta(\bLambda)$. Since $o(\Lambda)\ge o(\Gamma)$, the fact that $\bar\Theta$ witnesses that~$\Gamma$ has countable type, together with \cref{prop:closure:mergeing_paritioned_functions}, shows that $\Delta(\bGamma)\subseteq \Delta(\bLambda)$. This contradicts $\Lambda<\Gamma$. 
\end{proof}

It follows that \cref{thm:class_type_by_leftmost_path_labels} gives an exact characterization of when an acceptable description has countable or uncountable type.

\subsection{Paying our debts} 
\label{sub:paying_our_debts}

It remains to give the proof of \cref{prop:main:cofinality_omega1}. 

\begin{proof}[Proof of \cref{prop:main:cofinality_omega1}] 
    We remark that the stipulation $o(\Gamma) < \omega_1$ is necessary, because if $o(\Gamma) = \omega_1$, then $\Delta(\Gamma(z)) = \{F\}$ where $F$ is the empty partial function, and there can be no $\Lambda < \Gamma$.

    Now as noted above, if the conclusion holds for some $\xi\le o(\Gamma)$ then it holds for all $\xi'\le \xi$. Hence the proposition holds for all~$\Gamma$ which have uncountable type. So it remains to show that the proposition holds for classes~$\Gamma$ which have countable type. Let~$\Gamma$ have countable type, witnessed by $\bar\Theta$; and let $\xi < o(\Gamma)$. 

    There are two main cases, depending on whether~$o(\Gamma)$ is a successor or a limit. 

    \medskip
    
    The successor case generalises the Hausdorff-Kuratowski theorem. Suppose that $o(\Gamma)$ is a successor; by our observation above, we may assume that $o(\Gamma) = \xi+1$. 

    Fix some $z\ge_\Tur y^\Gamma$, and let $F\colon X\to \{0,1\}$ be in $\Delta(\Gamma(z))$, where $X=[S]$ and~$S$ is a $(z,\xi)$-forest (in this case of the proof, we do not need to assume that~$S$ is $(z,\xi)$-closed in $\w^{<\w}$). Let $(Y_n)$ be a partition of~$X$ into uniformly $\Sigma^0_{1+o(\Gamma)}(z)$ sets, such that $F\rest{Y_n}\in \Theta_n(z)$, uniformly.
    
    As $X$ is $\Sigma^0_{1+\xi}(z) \wedge \Pi^0_{1+\xi}(z)$, and $\xi < o(\Gamma)$, $X$ is $\Delta^0_{1+o(\Gamma)}(z)$. Thus the $Y_n$ are uniformly $\Delta^0_{1+o(\Gamma)}(z)$ sets, and the function $x\mapsto n$ if $x\in Y_n$ is $\Delta^0_{1+o(\Gamma)}(z)$-measurable. By the ``limit lemma'' \cref{prop:approximations_and_alpha_plus_one_computability}, since $o(\Gamma)= \xi+1$, $f$ has a $z$-computable $(z,\xi)$-approximation on~$S$, which we also denote by~$f$. Note that here we use that~$S$ is a $(z,\xi)$-forest rather than only a $(z,o(\Gamma))$-forest. 

    By \cref{lem:existence_of_a_computable_ordinal_witness}, there is a $z$-computable ordinal~$\eta$ and a $z$-computable witness $\beta\colon S\to \eta$ for the convergence of~$f$. 

    Let
    \[
        \Upsilon = \SU{\xi}{\eta}{\bar\Theta},
    \]
    except that we set $\cs{y}{\Upsilon} = z$ rather than $\cs{y}{\bar\Theta} = \cs{y}{\Gamma}$ (this is necessary since~$\eta$ is only $z$-computable). This class is well-defined since $\xi \le o(\bar \Theta)$ (as $o(\Gamma)\le o(\bar\Theta)$). By \cref{lem:SU_remains_bounded}, $\Upsilon <\Gamma$. By \cref{prop:replacing_eta_by_1}, $\Upsilon$ is equivalent to some acceptable~$\Lambda$ with $o(\Lambda)=\xi$. So it remains to show that $F\in \Upsilon(z)$. 

    Define an $\Upsilon(z)$-name~$N$ with $S^N = S$, and at the root let $(f^N,\beta^N) = (f,\beta)$. For each~$n$, let $N_{(n)}$ be a $\Theta_n(z)$-name for an extension of $F\rest{Y_n}$ to all of~$X$ (\cref{prop:closure:totalisation,prop:closure:subdomains}). Then $F^N = F$, as required.

    \medskip

    Next, we consider the case that~$o(\Gamma) = \lambda$ is a limit ordinal. We perform an analysis similar to Wadge's analysis of $\bDelta^0_\gamma$ sets. Before we consider a particular $F\in \Delta(\bGamma)$, we first describe the possible classes we will be using for~$\Lambda$. Fix some $z\ge_\Tur \cs{y}{\Gamma}$, and recall that we fixed some $\xi<\lambda$. By \ref{TSP:limit}, let 
    \[
        \xi < \lambda_0 < \lambda_ 1< \lambda_2 < \dots,
    \]
    be increasing, $\cs{y}{\Gamma}$-computable and cofinal in~$\lambda$, such that for every $\s\in \w^{<\w}$, if $k = |\s|^z_\lambda$ is the height of~$\s$ in the tree $(\w^{<\w}, \preceq^z_\lambda)$, then $k = |\s|^z_{\lambda_k}$ and $[\s]^z_\lambda$ is $\Delta^0_{1+\lambda_k}(z)$, as for all~$\tau$, $\s\preceq^z_{\lambda} \tau \Iff\s\preceq^z_{\lambda_k}\tau$. 

    Fix a $z$-computable ordinal~$\delta$. By recursion on $\alpha\le \delta$ we define sequences $\bar \Lambda_\alpha = \Lambda_{\alpha,0},\Lambda_{\alpha,1}\cdots$ of classes, with $\cs{y}{\bar \Lambda_\alpha} = z$ and $o(\Lambda_{\alpha,k}) = \lambda_k$, as follows:
    \begin{itemize}
        \item We let 
         \[
         \Lambda_{0,k} = \SU{\lambda_k}{\bar\Theta}
         \]
         (except that as mentioned, we set the oracle to be~$z$); this is well-defined since $\lambda_k \le \lambda \le o(\bar\Theta)$. 
        \item For any~$\alpha<\delta$, we let 
        \[
            \Lambda_{\alpha+1,k} =  \SU{\lambda_k}{\Lambda_{\alpha,k},\Lambda_{\alpha,k+1},\Lambda_{\alpha,k+2},\dots}
        \]
        and again note that this is well-defined since $o(\Lambda_{\alpha,n}) = \lambda_n \ge \lambda_k$ when $n\ge k$. 
        \item For limit $\alpha\le \delta$, let $\alpha_0,\alpha_1,\dots$ be a $z$-computable, cofinal increasing sequence in~$\alpha$; we let
        \[
            \Lambda_{\alpha,k} =  \SU{\lambda_k}{\Lambda_{\alpha_k,k},\Lambda_{\alpha_{k+1},k+1},\Lambda_{\alpha_{k+1},k+2},\dots}
        \]
    \end{itemize}
    (As usual, technically, these class descriptions are defined by $z$-effective transfinite recursion.) We verify that:
    \begin{sublemma}
        \item For all~$\alpha\le \delta$, $\bar \Lambda_\alpha$ is acceptable and strictly increasing:  $\Lambda_{\alpha,k}<\Lambda_{\alpha,k+1}$ for all~$k$; and
        \item For all $\beta<\alpha\le \delta$, $\bar\Lambda_{\beta}<\Lambda_{\alpha,0}$. 
    \end{sublemma}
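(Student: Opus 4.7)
The plan is to prove (a) and (b) together by transfinite induction on $\alpha\le\delta$, carrying at each stage three hypotheses simultaneously: that $\bar\Lambda_\alpha$ is acceptable, that $\bar\Lambda_\alpha$ is uniformly strictly $<$-increasing (meaning $\Lambda_{\alpha,k}<\Lambda_{\alpha,k+1}$ uniformly in~$k$), and that $\bar\Lambda_\beta<\Lambda_{\alpha,0}$ uniformly for every $\beta<\alpha$.  All of the analytic content is already packaged in \cref{prop:main_SU_prop}, \cref{lem:acceptability_of_SU_and_subclasses}, and transitivity of~$<$ (\cref{lem:<_is_transitive}); what remains is essentially bookkeeping over which instance of these to cite at each step.

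For the base $\alpha=0$, each $\Lambda_{0,k}=\SU{\lambda_k}{\bar\Theta}$ is acceptable by \cref{lem:acceptability_of_SU_and_subclasses}, and the strict increase $\Lambda_{0,k}<\Lambda_{0,k+1}$ is \cref{prop:main_SU_prop}\ref{item:SU:increase_xi} applied with $\xi'=\lambda_k<\lambda_{k+1}$.  For the successor step $\alpha=\gamma+1$, the inductive strict increase at $\gamma$ makes the sequence $\Lambda_{\gamma,k},\Lambda_{\gamma,k+1},\dots$ monotone, so $\Lambda_{\gamma+1,k}$ is acceptable by \cref{lem:acceptability_of_SU_and_subclasses}; the strict increase $\Lambda_{\gamma+1,k}<\Lambda_{\gamma+1,k+1}$ then chains \cref{prop:main_SU_prop}\ref{item:SU:even_worse_behead} (shifting the defining sequence by one position, using $\Lambda_{\gamma,k}<\Lambda_{\gamma,k+1}$) with \cref{prop:main_SU_prop}\ref{item:SU:increase_xi} (raising the level from $\lambda_k$ to $\lambda_{k+1}$).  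For (b) at the successor, the case $\beta=\gamma$ is \cref{prop:main_SU_prop}\ref{item:SU:extra_containment} applied to $\Lambda_{\gamma+1,0}=\SU{\lambda_0}{\bar\Lambda_\gamma}$; the case $\beta<\gamma$ follows by transitivity from the inductive $\bar\Lambda_\beta<\Lambda_{\gamma,0}$ combined with the $k=0$ instance $\Lambda_{\gamma,0}<\Lambda_{\gamma+1,0}$ just obtained.

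For the limit step, with a fixed cofinal sequence $\alpha_0<\alpha_1<\cdots$ in $\alpha$, acceptability of $\bar\Lambda_\alpha$ reduces to showing that consecutive subdescriptions in each $\Lambda_{\alpha,k}$ are monotonically related.  Each such consecutive pair has the form $\Lambda_{\alpha_i,j}$ followed by $\Lambda_{\alpha_{i'},j'}$ with $i\le i'$; when $i=i'$ the required $\Lambda_{\alpha_i,j}<\Lambda_{\alpha_i,j'}$ is the inductive strict increase of $\bar\Lambda_{\alpha_i}$, while when $i<i'$ the inductive (b) at $\alpha_{i'}<\alpha$ gives $\Lambda_{\alpha_i,j}<\Lambda_{\alpha_{i'},0}$ and the inductive strict increase of $\bar\Lambda_{\alpha_{i'}}$ gives $\Lambda_{\alpha_{i'},0}<\Lambda_{\alpha_{i'},j'}$ (trivially when $j'=0$), so transitivity completes the step.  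Uniform strict increase of $\bar\Lambda_\alpha$ follows, as in the successor case, from combining \cref{prop:main_SU_prop}\ref{item:SU:even_worse_behead} with \cref{prop:main_SU_prop}\ref{item:SU:increase_xi}.  For (b) at the limit, given $\beta<\alpha$ pick $n\ge 1$ with $\beta<\alpha_n$; inductive (b) at $\alpha_n$ gives $\bar\Lambda_\beta<\Lambda_{\alpha_n,0}$, inductive strict increase of $\bar\Lambda_{\alpha_n}$ gives $\Lambda_{\alpha_n,0}<\Lambda_{\alpha_n,n}$, and \cref{prop:main_SU_prop}\ref{item:SU:extra_containment} gives $\Lambda_{\alpha_n,n}<\Lambda_{\alpha,0}$ (since $\Lambda_{\alpha_n,n}$ is one of the subdescriptions assembled into $\Lambda_{\alpha,0}$), so transitivity completes the step.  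The only substantive obstacle is the bookkeeping in the limit case, where one must carefully dissect the doubly-indexed sequence to identify the correct inductive instance for each comparison; no new analytic content beyond the calculus of $\SU{\xi}{\eta}{\cdot}$ developed earlier in the section is needed.
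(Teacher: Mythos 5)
Your proof is correct and follows essentially the same route as the paper: a simultaneous induction on $\alpha$ proving acceptability, strict increase, and (b) together, with the base case from \cref{prop:main_SU_prop}\ref{item:SU:increase_xi}, the successor case from beheading plus \ref{item:SU:increase_xi} and \ref{item:SU:extra_containment}, and the limit case by chaining inductive instances of (a) and (b) through transitivity. The only cosmetic differences are that you invoke \ref{item:SU:even_worse_behead} where the paper cites \ref{item:SU:behead} (both suffice given the inductive hypothesis) and that you fix a single cofinal index $n$ for all of $\bar\Lambda_\beta$ in the limit case of (b) rather than choosing $k$ per member.
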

    (And again as usual, to make this work, everything is uniform; so for example, the computable functions that witness the monotonicity of $\bar\Lambda_\alpha$ are constructed by $z$-effective transfinite recursion.)

    Both~(a) and~(b) are proved simultaneously by induction on $\alpha\le \delta$. We start with $\alpha=0$, where the classes $\Lambda_{0,k}$ are acceptable since $\bar\Theta$ is acceptable (\cref{lem:acceptability_of_SU_and_subclasses}). We have $\Lambda_{0,k}< \Lambda_{0,k+1}$ by \cref{prop:main_SU_prop}\ref{item:SU:increase_xi}. 

    For the successor case, let $\alpha<\delta$. Each $\Lambda_{\alpha+1,k}$ is acceptable by~(a) and \cref{lem:acceptability_of_SU_and_subclasses}. To see that $\Lambda_{\alpha+1,k}<\Lambda_{\alpha+1,k+1}$ use \cref{prop:main_SU_prop}\ref{item:SU:behead},\ref{item:SU:increase_xi}. For~(b), $\bar\Lambda_\alpha < \Lambda_{\alpha+1,0}$ follows from \cref{prop:main_SU_prop}\ref{item:SU:extra_containment}. 

    For limit $\alpha\le \delta$, the argument for~(a) is the same as the successor case, noting that by induction, $\Lambda_{\alpha_0,0} < \Lambda_{\alpha_1,1}<\Lambda_{\alpha_2,2}<\cdots$. For~(b), we again note that $\Lambda_{\alpha_k,k}< \Lambda_{\alpha,0}$ and use induction: for any~$\beta<\alpha$, for sufficiently large~$k$, $\Lambda_{\beta,m} < \Lambda_{\alpha_k,k}$.

    Next, by induction on~$\alpha$, we show:
    \begin{sublemma}[resume]
        \item For all $\alpha\le k$, $\bar\Lambda_\alpha <\Gamma$. 
    \end{sublemma}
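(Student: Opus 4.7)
The plan is to prove (c) by $z$-effective transfinite induction on $\alpha \le \delta$, where the single tool at each stage is \cref{lem:SU_remains_bounded}. Since claims (a) and (b) have already been established, we have at hand: $\bar\Theta < \Gamma$ (from countable type), $\lambda_k < \lambda = o(\Gamma)$ for every~$k$, and $o(\Lambda_{\alpha,n}) = \lambda_n \ge \lambda_k$ whenever $n \ge k$. These three ingredients are exactly what \cref{lem:SU_remains_bounded} requires to conclude $\SU{\lambda_k}{\cdot} < \Gamma$ from a truncated tail of a sequence which is $<\Gamma$.

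For the base case $\alpha = 0$, I apply \cref{lem:SU_remains_bounded} directly to $\Lambda_{0,k} = \SU{\lambda_k}{\bar\Theta}$: the hypotheses $\lambda_k < o(\Gamma)$, $\bar\Theta < \Gamma$, and $\lambda_k \le o(\bar\Theta)$ are all satisfied, yielding $\Lambda_{0,k} < \Gamma$ uniformly in~$k$, hence $\bar\Lambda_0 < \Gamma$. For the successor step, assuming $\bar\Lambda_\alpha < \Gamma$, I note that the truncated sequence $\Lambda_{\alpha,k}, \Lambda_{\alpha,k+1}, \dots$ is also $<\Gamma$ uniformly, with $o(\Lambda_{\alpha,n}) = \lambda_n \ge \lambda_k$ for $n \ge k$; so \cref{lem:SU_remains_bounded} applied to $\Lambda_{\alpha+1,k} = \SU{\lambda_k}{\Lambda_{\alpha,k},\Lambda_{\alpha,k+1},\dots}$ gives $\Lambda_{\alpha+1,k} < \Gamma$, uniformly in~$k$. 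For the limit step, fixing the $z$-computable cofinal sequence $\alpha_0 < \alpha_1 < \cdots$ in $\alpha$ already chosen in the construction, the inductive hypothesis supplies $\Lambda_{\alpha_j,j} < \Gamma$ uniformly in~$j$ (using part~(b) applied within stage $\alpha_j + 1 \le \alpha$, or simply $\bar\Lambda_{\alpha_j} < \Gamma$); since $o(\Lambda_{\alpha_j,j}) = \lambda_j \ge \lambda_k$ for $j \ge k$, \cref{lem:SU_remains_bounded} again yields $\Lambda_{\alpha,k} = \SU{\lambda_k}{\Lambda_{\alpha_k,k},\Lambda_{\alpha_{k+1},k+1},\dots} < \Gamma$.

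The main thing to be careful about is uniformity, as flagged in the parenthetical remark of the excerpt: what the induction really produces is not just the relations $\Lambda_{\alpha,k} < \Gamma$ separately, but a single $z$-computable function taking $(\alpha,k)$ and a total $\Lambda_{\alpha,k}$-name~$N$ to an equivalent total $\Gamma$-name, together with one producing equivalent total $\check\Gamma$-names. This is arranged by unwinding the proof of \cref{lem:SU_remains_bounded}, which in turn builds on \cref{prop:approximations_and_alpha_plus_one_computability} and \cref{cor:closure:lower-level_approximations}, both of which are effective. Combining these effective translations through $z$-effective transfinite recursion on $\alpha$ yields a single $z$-computable procedure, as needed; the only potential obstacle is bookkeeping of indices, which presents no real difficulty. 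This completes~(c), and hence delivers the class $\Lambda \equiv \Lambda_{\delta,0}$ satisfying $\Lambda < \Gamma$, $\cs{y}{\Lambda} = z$, and $o(\Lambda) = \lambda_0 \ge \xi$, ready to receive the function~$F$ in the remainder of the proof of \cref{prop:main:cofinality_omega1}.
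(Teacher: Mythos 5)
Your proof is correct and follows essentially the same route as the paper, which simply says to repeatedly apply \cref{lem:SU_remains_bounded} using $o(\Gamma) > \lambda_k$; your write-up just makes the induction on $\alpha$ and the verification of the lemma's hypotheses (and the uniformity bookkeeping) explicit. No gaps.
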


    To see this, we repeatedly use \cref{lem:SU_remains_bounded}, using the fact that $o(\Gamma)  > \lambda_k$. 

    \smallskip
    
    Now again fix some $z\ge_\Tur \cs{y}{\Gamma}$, and let $F\colon X\to \{0,1\}$ be in $\Delta(\Gamma(z))$, where $X=[S]$ and~$S$ is a $(z,\xi)$-tree, i.e., is $(z,\xi)$-closed in $\w^{<\w}$. Let $(Y_n)$ be a partition of~$X$ into sets which are uniformly $\Sigma^0_{1+\lambda}(z)$, such that $F\rest{Y_n}\in \Theta_n(z)$, uniformly.

    By \cref{lem:producing_pairwise_orthogonal}, let $S_n$ be pairwise $(z,\lambda)$-orthogonal subsets of~$S$, $(z,\lambda)$-open in~$S$, such that $Y_n = [S_n]^z_\lambda$, and such that the sets~$S_n$ are uniformly $z$-computable, and $\bigcup_n S_n$ also $z$-computable. 
    
    Let $T = S\setminus \bigcup_n S_n$. Then~$T$ is $(z,\lambda)$-closed in $\w^{<\w}$, and is well-founded, as~$X$ is covered by the $Y_n$'s. By \cref{lem:computable_rank_function}, let $\delta$ be a $z$-computable ordinal and let $r\colon T\to \delta$ be a rank function for $(T,\preceq^z_\lambda)$. We make the following modification: we assume that $r(\s)>0$ for all $\s\in T$; we then extend~$r$ to all of~$S$ by letting $r(\s)=0$ for all $\s\in S\setminus T$.

    We will show that $F\in \Lambda_{\delta,1}(z)$. For each~$n$, let~$K_n$ be a $\Theta_n(z)$-name for $F\rest{Y_n}$, with $S^{K_n}= S_n$.

    \smallskip
    
    For brevity, for $\s\in S$ let $k(\s)= |\s|^z_\lambda$; let $S_\s = \left\{ \tau\in S \,:\,  \tau\succeq^z_\lambda \s \right\}$ and $X_\s = [S_\s]= [S]\cap [\s]^z_\lambda$. Note that $S_\s$ is $(z,\lambda_{k(\s)})$-open in~$S$ (recalling that~$S$ is a $(z,\xi)$-tree and $\xi<\lambda_0$). Let
    \[
        \Lambda_\s = \Lambda_{r(\s),k(\s)+1}.
    \]
    We will show that for all $\s\in S$, $F_\s = F\rest{X_\s} \in \Lambda_\s(z)$. Which means that uniformly in $\s\in S$ we $z$-compute a $\Lambda_\s(z)$-name~$N_\s$ for~$F_\s$, with $S^{N_\s} = S_\s$. 

    \smallskip

    For $\s\in S\setminus T$, i.e., when $r(\s)=0$, this is easy, since for such~$\s$ we can compute the~$n$ such that $\s\in S_n$; from $K_n$, we can obtain a $\Theta_n(z)$-name for~$F_\s$ (i.e., we restrict~$K_n$ to~$S_\s$); since $\bar\Theta < \Lambda_{0,k(\s)+1}$, we obtain a name~$N_\s$ as required. 

    For $\s\in T$, we compute~$N_\s$ by effective transfinite recursion. For such~$\s$, we have $r(\s)>0$ and for every $\tau\in S_\s\setminus \{\s\}$, $r(\tau)< r(\s)$. So by recursion, for each such~$\tau$ we have already computed~$N_\tau$.

    We define $N_\s$ by (using the notation of \cref{def:sub-forests_when_eta_is_1}) defining $(S^{N_\s})_{(n)}$ for each~$n$, and defining $(N_\s)_{(n)}$ for all~$n$. 
    
    Let $R(\s)$ be the collection of immediate successors of~$\s$ on~$S$: those $\tau\in S_\s$ such that $k(\tau) = k(\s)+1$. 

    With oracle~$z$, compute an injective $f\colon R(\s)\to \Nat\setminus \{0\}$ such that:
    \begin{orderedlist}
        \item For all $\tau\in R(\s)$, $\Lambda_\tau \subseteq (\Lambda_\s)_{(f(\tau))}$; and
        \item The range of~$f$ is $z$-computable. 
    \end{orderedlist}
    (i) can be achieved since $r(\tau)<r(\s)$; (ii) can be done by padding. The point is that~$R(\s)$ is merely $z$-c.e., and we cannot even tell if it is empty or not. But in all cases, we will ensure that~$N_\s$ is a well-defined $\Lambda_\s$-name. 

    Now for each $m>0$, if $m = f(\tau)$ then let $(S^{N_\s})_{(m)} = S_\tau$ and let $(N_\s)_{(m)}$ be a $(\Lambda_\s)_{(m)}$-name equivalent to~$N_\tau$. If $m\notin \range f$ let $(S^{N_\s})_{(m)} = \emptyset$; we set $(S^{N_\s})_{(0)} = \{\s\}$. In either case, $(N_\s)_{(m)}$ is trivial. This defines~$N_\s$. What's important to note is that each $(N_\s)_{(m)}$ is indeed $(z,o(\Lambda_\s))$-open in~$S_\s$, as $o(\Lambda_\s) = \lambda_{k(\s)+1} = \lambda_{k(\tau)}$ for $\tau\in R(\s)$. Since $X_\s = \bigcup_{\tau\in R(\s)}X_\tau$, we have $F^{N_\s} = F\rest{X_\s}$. 

    \smallskip
    
    Since $S$ is $(z,\xi)$-closed, we have $\emptystring\in S$. Since $r(\emptystring)\le \delta$ and $k(\emptystring)=0$ we conclude, as promised, that $F = F_{\emptystring}\in \Lambda_{\delta,1}(z)$. 
\end{proof}

\section{The separation theorem(s)} 
\label{sec:the_separation_theorem}

In this section we give a new, simplified proof of the separation theorem of Louveau and Saint-Raymond~\cite{LouveauSR:WH,LouveauSR:Strength}, which will be essential for Borel Wadge determinacy. 

\begin{theorem}[Louveau \& Saint-Raymond] \label{thm:LSR_separation}
Let~$r$ be an oracle. Let~$\Gamma$ be an $r$-computable, acceptable class description; let $F \in \Gamma(r)$ be total. Let $B_0, B_1 \in \Sigma^1_1(r)$ be disjoint.  Then at least one of the following holds:
\begin{itemize}
    \item There is a Lipschitz$_1$ function $g\colon \Baire \to \Baire$  such that for all $x\in \Baire$, $g(x) \in B_{F(x)}$.
    \item There is a $G \in \Gamma(\Delta^1_1(r))$  such that for both $j < 2$, for all $x \in B_j$, $G(x) = 1-j$.
\end{itemize}
\end{theorem}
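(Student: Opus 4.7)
The plan is to prove the theorem by effective transfinite recursion on the rank of the well-founded tree $T_\Gamma$, carrying both alternatives in tandem. At each stage, I would try to build the Lipschitz$_1$ reduction $g$, and in case this fails, assemble a candidate separator $G$ out of the sub-separators returned by the recursive calls. Throughout, I would use trees $T_0, T_1 \subseteq \w^{<\w} \times \w^{<\w}$ witnessing $B_0, B_1 \in \Sigma^1_1(r)$, and the true-stage machinery to coordinate switches between the two alternatives; the construction would be uniform in $(\Gamma, N, B_0, B_1)$, delivering $g$ or $G$ by a single $\Delta^1_1(r)$-effective procedure.

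The base case $T_\Gamma = \{\emptystring\}$ is immediate: $F$ is constantly $j = \Gamma(\emptystring)$. By Kleene's basis theorem, if $B_j \neq \emptyset$ then $B_j$ has a $\Delta^1_1(r)$-element $y$, and $g \equiv y$ is Lipschitz$_1$ and reduces $F$ to $(B_0, B_1)$. If $B_j = \emptyset$ then the constant $G \equiv j$ lies in $\Gamma(\Delta^1_1(r))$ and vacuously separates on the empty side while correctly mapping on $B_{1-j}$.

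For the inductive step, write $\Gamma = \SU{\xi}{\Gamma_{(0)}, \Gamma_{(1)}, \dots}$ with $\xi = o(\Gamma)$; by acceptability, $\cs{\eta}{\Gamma} = 1$ and the sequence of children is monotone. Let $N$ be the $\Gamma(r)$-name of $F$, let $f = f^N_{\emptystring}$ and $\beta = \beta^N_{\emptystring}$, and set $F_n = F^{N_{(n)}} \in \Gamma_{(n)}(r)$. Apply the inductive hypothesis to each $(\Gamma_{(n)}, F_n, B_0, B_1)$ uniformly in $n$, obtaining either a Lipschitz$_1$ reduction $g_n$ or a separator $G_n \in \Gamma_{(n)}(\Delta^1_1(r))$. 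I would then try to synthesise $g$ from the available $g_n$'s by using the $\preceq_\xi$-true stage structure to track the current guess $f(\sigma)$ and play the appropriate bit of $g_{f(\sigma)}$ at $\sigma$. When $f(\sigma)$ changes along successive true stages, a rewinding procedure keeps the output Lipschitz-coherent, with the well-foundedness of $\beta$ ensuring that such mind-changes are bounded along each real $x$.

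The main obstacle is the interplay between the two alternatives across branches, and the extraction of a genuine $\Gamma$-separator from Lipschitz failure. On the region $\{x : f(x) = n\}$ where the inductive call returned only $G_n$ rather than $g_n$, the Lipschitz build cannot proceed coherently except on the $x$ with $G_n(x) = F_n(x)$; the remaining $x$ force a contribution to $G$. I would construct a new $\Gamma$-name $M$ using the same tree $T_\Gamma$, approximation $f$, and witness $\beta$, but with each ``failing'' $F_n$ replaced by $G_n$ and each ``successful'' $F_n$ replaced by a suitable constant-valued $\Gamma_{(n)}$-name reflecting the target side of $g_n$. Acceptability — specifically $\Gamma_{(n)} \subseteq \dual{\Gamma}_{(n+1)}$ — is what makes this splicing well-typed in the overall description, and the true-stage machinery is what makes the effective bookkeeping go through. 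The hardest step I anticipate is verifying rigorously that the dichotomy is exhaustive — that no $x$ simultaneously escapes both alternatives — which will require showing that the failure of the Lipschitz build at $x$ exactly encodes membership of $x$ in $(B_0 \cap \{G = 1\}) \cup (B_1 \cap \{G = 0\})$, via a stage-comparison analysis between the trees $T_0, T_1$ and the $\Gamma$-name $N$ at each $\preceq_\xi$-true stage.
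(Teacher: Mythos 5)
Your proposal has a genuine gap at its core: the ``rewinding procedure'' that is supposed to keep the output Lipschitz-coherent when the true-stage guess $f(\s)$ changes does not exist. A Lipschitz$_1$ function must commit to $g(x)\rest{n}$ on the basis of $x\rest{n}$, and these commitments can never be retracted; when the approximation switches from outcome $n$ to outcome $m$, the already-emitted prefix of $g_{n}$'s output need not be extendible to any point of the newly targeted $B_j$. This failure is visible already in the simplest non-trivial case $\bGamma=\bSigma^0_1$: the recursive calls on the two leaves return only constant witnesses $y_0\in B_0$ and $y_1\in B_1$, and there is no way to splice these into a Lipschitz$_1$ reduction of an arbitrary open set to $(B_0,B_1)$ --- whether such a reduction exists is a global property of the triple, not something assembled from the leaves. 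Relatedly, the exhaustiveness of the dichotomy cannot be certified by observing that a particular greedy construction stalls at some $x$: to extract a separator one needs a single object that defeats \emph{every} attempted reduction, and your proposal offers no mechanism producing it, nor any route to the $\Delta^1_1(r)$ complexity bound on $G$.

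The paper resolves exactly these two issues by replacing the recursive construction with a single game $G_\Gamma(N,B_0,B_1)$ that is open for player~I, hence determined. Player~II plays pairs $(y_i,z_i)$: the $y$-coordinate is the committed, never-rewound value of $g(x)$, while the $z$-coordinate is an auxiliary witness for membership in $[T_j]$, and the true-stage machinery is used only to \emph{select a subsequence} of the $z_i$'s (depending on the current guessed leaf $\ell^N(\bar x)$ and the ordinal $\delta(\bar x)$) against which legality is checked --- so nothing emitted is ever retracted. A winning strategy for~II directly yields the Lipschitz$_1$ reduction; a winning strategy for~I, which may be taken in $\Delta^1_1(r)$ because the game is effectively open, is converted into a $\Gamma(\Delta^1_1(r))$-name for the separator via the $\alpha$-correctness analysis. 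If you want to salvage a recursion on $T_\Gamma$, it would have to be a recursion inside the analysis of the strategy (as in the paper's inductive definition of the sets $U_t$), not a recursion that calls the full dichotomy on the sub-names.
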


Here Lipschitz$_1$ means that $g(x)\rest{n}$ is determined by $x\rest{n}$. Recall that $\Gamma(\Delta^1_1(r)) = \bigcup \left\{ \Gamma(w) \,:\,  w\in \Delta^1_1(r) \right\}$. In terms of sets, if $F = 1_A$, the first option says that~$g$ reduces the pair $(A^\complement, A)$ to $(B_0,B_1)$. The second option, if $G = 1_C$, says that~$C$ separates~$B_0$ from~$B_1$: $B_0\subseteq C$ and $B_1\subseteq C^\complement$. 

As in \cite{BSL_paper}, and as mentioned in \cite{LouveauSR:Strength}, this theorem implies a strong version of Louveau's separation theorem (\cite{Louveau:80:Separation}), which was proved by different means in \cite{Louveau:83}.

\begin{theorem}[Louveau] \label{thm:Louveau_strong}
    Let~$r$ be an oracle; let~$\Gamma$ be an $r$-computable acceptable class description. Let~$B_0$ and~$B_1$ be disjoint $\Sigma^1_1(r)$ sets. If there is a~$\bGamma$ separator between~$B_0$ and~$B_1$, then there is a $\Gamma(\Delta^1_1(r))$ separator between~$B_0$ and~$B_1$. 
\end{theorem}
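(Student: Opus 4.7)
The plan is to derive Theorem \ref{thm:Louveau_strong} directly from Theorem \ref{thm:LSR_separation} by applying it to a universal $\Gamma(r)$-function, and then showing that the Lipschitz alternative of LSR is incompatible with the existence of any $\bGamma$-separator. This turns the Louveau--Saint-Raymond dichotomy into an implication in the presence of a boldface separator.

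First, by \Cref{prop:universal_and_so_nonselfdual}, fix a total $F_U \in \Gamma(r)$ which is universal for total $\bGamma$-functions; thus every total $H \in \bGamma$ has the form $H = F_U \circ h$ for some continuous $h$. Apply \Cref{thm:LSR_separation} to $F = F_U$ and the given disjoint $B_0, B_1 \in \Sigma^1_1(r)$. If the separator alternative holds, we obtain $G \in \Gamma(\Delta^1_1(r))$ with $G(x)=1$ for $x \in B_0$ and $G(x)=0$ for $x \in B_1$; then $D = G^{-1}(\{1\})$ satisfies $1_D = G \in \Gamma(\Delta^1_1(r))$, so $D$ is the required $\Gamma(\Delta^1_1(r))$-separator and we are done.

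Otherwise, LSR gives a Lipschitz$_1$ function $g\colon \Baire \to \Baire$ with $g(x) \in B_{F_U(x)}$ for all $x$. Set $A_j = F_U^{-1}(\{j\})$; then $g^{-1}(B_j) = A_j$ for $j \in \{0,1\}$, and $\{A_0,A_1\}$ partitions $\Baire$. Suppose, toward a contradiction, that some $C \subseteq \Baire$ with $1_C \in \bGamma$ separates $B_0$ from $B_1$. By closure of $\bGamma$ under continuous pre-images (\Cref{prop:described_are_pointclass}), the set $g^{-1}(C)$ lies in $\bGamma$; moreover it contains $g^{-1}(B_0) = A_0$ and is disjoint from $g^{-1}(B_1) = A_1$. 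Since $A_0 \cup A_1 = \Baire$, this forces $g^{-1}(C) = A_0$, so $A_0 \in \bGamma$ and hence $A_1 = A_0^\complement \in \dual{\bGamma}$. But universality of $F_U$ makes $A_1$ a $\bGamma$-complete set under continuous reductions, so every $\bGamma$-set is a continuous pre-image of $A_1$ and therefore lies in $\dual{\bGamma}$ (again by \Cref{prop:described_are_pointclass} applied to $\dual{\bGamma}$). This yields $\bGamma \subseteq \dual{\bGamma}$, hence $\bGamma = \dual{\bGamma}$, contradicting the non-self-duality of $\bGamma$ stated in \Cref{prop:universal_and_so_nonselfdual}.

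Thus the Lipschitz alternative of \Cref{thm:LSR_separation} cannot hold whenever a $\bGamma$-separator exists, and so the separator alternative must hold, yielding the desired $\Gamma(\Delta^1_1(r))$-separator. The conceptual heart of the argument is the collapse step: Lipschitz-reducibility of the universal $F_U$ to the pair $(B_0,B_1)$ forces any $\bGamma$-separator to coincide with $A_0 = F_U^{-1}(\{0\})$, and since $F_U^{-1}(\{1\})$ is $\bGamma$-complete, placing it in $\dual{\bGamma}$ collapses the whole class. Everything else is routine assembly of the universal function, the LSR dichotomy, and the closure properties of $\bGamma$ and $\dual{\bGamma}$ under continuous pre-images.
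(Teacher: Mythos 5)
Your proof is correct and follows essentially the same route as the paper: apply \cref{thm:LSR_separation} to a universal $\Gamma(r)$-function and rule out the Lipschitz alternative using closure under continuous pre-images (\cref{prop:described_are_pointclass}) together with non-self-duality (\cref{prop:universal_and_so_nonselfdual}). The paper derives the contradiction in one line by noting $1-F_U = G\circ g \in \bGamma$, whereas you reach the same conclusion slightly more circuitously via $g^{-1}(C) = F_U^{-1}(\{0\})$ and $\bGamma$-completeness of $F_U^{-1}(\{1\})$; both computations are identical in substance.
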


\begin{proof}
    Apply \cref{thm:LSR_separation} to~$F$, a univeral $\Gamma$-function (see \cref{lem:universal_Gamma_set}). There cannot be a continuous reduction~$g$ of~$F$ to $(B_0,B_1)$: if $G\in \bGamma$ is a separator between~$B_0$ and~$B_1$, then $1-F = G\circ g$ would be in~$\bGamma$ (\cref{prop:described_are_pointclass}), contradicting \cref{prop:universal_and_so_nonselfdual}. 
\end{proof}

\begin{corollary}[Louveau]
    \label{cor:hyp_functions_have_hyp_names}
    Let~$r$ be an oracle, and let~$\Gamma$ be an~$r$-computable, acceptable description. Suppose that $F \in 2^\Baire$ is in~$\bGamma$ and is~$\Delta^1_1(r)$. Then $F \in \Gamma(\Delta^1_1(r))$. 
\end{corollary}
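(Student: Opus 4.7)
The strategy is to apply \cref{thm:Louveau_strong} directly. Set
\[
B_0 = F^{-1}\{1\} \qquad \text{and} \qquad B_1 = F^{-1}\{0\}.
\]
Because $F$ is total and $\Delta^1_1(r)$, the sets $B_0$ and $B_1$ are disjoint $\Delta^1_1(r)$ sets with $B_0 \cup B_1 = \Baire$; in particular they are disjoint $\Sigma^1_1(r)$ sets. With this labelling, $F$ itself is a $\bGamma$-separator of $B_0$ from $B_1$ in the sense of \cref{thm:LSR_separation}: by construction $F(x) = 1-j$ for every $x \in B_j$, and $F \in \bGamma$ by hypothesis.

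By \cref{thm:Louveau_strong}, there is then some $G \in \Gamma(\Delta^1_1(r))$ that is also a separator of $B_0$ and $B_1$, i.e., $G(x) = 1-j$ whenever $x \in B_j$. Since the separator condition must make sense at every point of $B_0 \cup B_1 = \Baire$, the domain of $G$ contains $\Baire$, so $G$ is total; and the prescribed value $1-j$ at $x \in B_j$ coincides with $F(x)$. Thus $G = F$ on all of $\Baire$, and $F \in \Gamma(\Delta^1_1(r))$ as required.

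The only point requiring any care is orienting $B_0$ and $B_1$ correctly against the ``$G(x)=1-j$ on $B_j$'' convention from \cref{thm:LSR_separation} (had we instead chosen $B_0 = F^{-1}\{0\}$, the natural separator would have been $1-F \in \dual{\bGamma}$, not $F \in \bGamma$). Beyond this bookkeeping, there is no real obstacle: the substantive work has already been done in proving \cref{thm:Louveau_strong}.
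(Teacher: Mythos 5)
Your proof is correct and is exactly the intended derivation: the paper leaves this corollary without proof precisely because it follows from \cref{thm:Louveau_strong} by taking $B_0 = F^{-1}\{1\}$ and $B_1 = F^{-1}\{0\}$, observing that $F$ itself is a $\bGamma$-separator, and noting that since $B_0\cup B_1 = \Baire$ any separator must coincide with $F$ everywhere. Your remark about orienting $B_0$ and $B_1$ against the ``$G(x)=1-j$ on $B_j$'' convention is the right (and only) point of care.
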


We now prove \cref{thm:LSR_separation}. We assume that $o(\Gamma) < \omega_1$, as otherwise the result is immediate. The argument generalises the proof we gave in \cite{BSL_paper}, where $\bGamma = \bSigma^0_{\xi}$. Hence, we mainly give the details of how to adapt that argument to this more general setting. 

Fix a $\Gamma(r)$-name~$N$ for~$F$ and $r$-computable trees~$T_0$ and~$T_1$ such that for $j=0,1$, $B_j = \{ y : (y,z) \in [T_j]\}$.  For simplicity of notation, we assume that $r = \emptyset$. 

\begin{remark} \label{rmk:game:simplifying_assumption}
    Without loss of generality we may make the following assumptions about~$N$ by modifying the functions $f_t^N$. Recall the notation of \cref{def:function_defined_by_name}: the leaf $\ell^N(\s)$ of~$T_\Gamma$ is defined for finite strings~$\s$ as well as infinite ones. 
    \begin{orderedlist}
        \item If $|\s|\le 1$ then $\ell^N(\s)$ is the leftmost leaf of~$T_\Gamma$, reached by always taking the default outcome~0.
        \item For any non-leaf $t\in T_\Gamma$, $m>0$, and $\s\in \w^{<\w}$, if $\ell^N(\s)\succeq t\conc m$ and~$\s$ is $\preceq_{\xi_t}$-minimal such that $f_t^N(\s) = t\conc m$, then $\ell^N(\s)$ is the leftmost leaf of~$T_\Gamma$ extending~$t\conc m$. 
    \end{orderedlist}
    Making such changes to the functions $f_t^N$ does not change the leaf~$\ell^N(x)$ for infinite~$x$, and so does not change the function $F = F^N$ on~$\Baire$.
\end{remark}

\subsection{The game}

The game $G_\Gamma(N, B_0, B_1)$ is essentially the same as the one used in \cite{BSL_paper}. Two players take turns; at each step, player~I plays a single natural number, and player~II plays a pair of natural numbers. As in \cite{BSL_paper}, the idea is that player~I plays a real $x\in \Baire$, and player~II plays $y, z\in \Baire$; $y$ is player~II's proposed value for $g(x)$, and a subsequence of $z$ should witnesses that $y$ is in the correct~$B_j$, depending on the value~$F(x)$. The appropriate subsequence is determined by the true stage machinery. 

As in \cite{BSL_paper}, the game is open for player~I. We index the plays so that after~$n$ steps, player~I has played $(x_0,\dots, x_{n-1})$ and player~II has played $(y_1,z_1)$, $(y_2,z_2)$, \dots, $(y_n,z_n)$. We write $\bar x = (x_0,\dots, x_{n-1})$. We need to determine whether player~I has already won the game. We do this by using~$N$ and~$\bar x$ to approximate $F(x)$; we choose a subsequence of the $z_i$'s and check whether player~II is still on the tree~$T_j$ corresponding to the guess about~$F(x)$. The new aspect of this calculation is that the level~$\delta$ which we use to choose the $z_i$'s is determined by our approximation of~$F(x)$. 

Specifically, recall that $F^N(\bar x)$ is defined to be the label $\Gamma(s)$ that~$\Gamma$ gives to the leaf $s=\ell^N(\bar x)$, and is defined for finite sequences~$\bar x$, as well as elements of Baire space. Our assumption that $o(\Gamma)<\w_1$ means that~$s$ is not the root of~$T_\Gamma$, so has a parent (immediate predecessor)~$s^-$ on~$T_\Gamma$; we let $\delta = \delta(\bar x) = \cs{\xi_{s^-}}{\Gamma}$, the ordinal level at which the last choice $f_{s^-}^N(\bar x) = s$ was made. 

We then let
\[
    D = \left\{ i\in \{1,\dots, n\} \,:\,  \bar x\rest{i} \preceq_{\delta(\bar x)} \bar x\,\,\andd\,\, \ell^N(\bar x\rest{i}) = \ell^N(\bar x) \right\};
\]
note that $n\in D$, but that $0\notin D$. Enumerate~$D$ as $D = \{a_0 < a_1 < \cdots < a_{k-1}\}$, so $k\ge 1$; let $\bar y = (y_1,\dots, y_k)$ and $\bar z = (z_{a_0}, z_{a_1}, \dots, z_{a_{k-1}})$. We declare that player~II has not yet lost the game if $(\bar y,\bar z)\in T_j$, where $j = F^N(\bar x)$. 

As this is an open game for player I, it is determined. Note that the game is computable. 

\subsection{If player II has a winning strategy}

Suppose that player~II has a winning strategy. For $x \in \Baire$, let $(y,z)$ be the sequences generated by player II playing according to this strategy when player I plays~$x$; we set $g(x) = y$. Certainly~$g$ is continuous, indeed Lipschitz$_1$. We need to show that~$g$ reduces~$F$ to $(B_0,B_1)$, i.e., that for all $x$, $g(x) \in B_{F(x)}$.

To do this, fix $x\in \Baire$; let $s = \ell^N(x)$ and $j = \Gamma(s) = F(x)$; let $\delta = \delta(x) = \cs{\xi_{s^-}}{\Gamma}$ for~$s^-$ the parent of~$s$ on~$T_\Gamma$. 

\begin{lemma} \label{clm:infinitely_many_true_stages_with_correct_leaf}
   For all but finitely many $\rho\prec_\delta x$, $\ell^N(\rho) = s$. 
\end{lemma}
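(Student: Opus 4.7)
The plan is to trace the path in $T_\Gamma$ from the root to the leaf $s$ and apply the defining property of an $\alpha$-approximation at each non-leaf node along that path. Let $t_0=\emptystring,t_1,\ldots,t_{k-1}=s^-,t_k=s$ be the unique path in $T_\Gamma$ from the root to the leaf $s$. Unwinding \cref{def:function_defined_by_name}, the equation $\ell^N(x)=s$ is equivalent to $f^N_{t_i}(x)=t_{i+1}$ for every $i<k$, where here $f^N_{t_i}$ denotes the limit function being approximated. Conversely, once I verify $f^N_{t_i}(\rho)=t_{i+1}$ for every $i<k$, the recursion in \cref{def:function_defined_by_name} defining $\ell^N(\rho)$ walks along $t_0,t_1,\ldots,t_k=s$ and terminates at the leaf $s$, giving $\ell^N(\rho)=s$. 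So the task reduces entirely to controlling the approximation at each individual $t_i$.

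It therefore suffices, for each fixed $i<k$, to show that $f^N_{t_i}(\rho)=t_{i+1}$ holds for all but finitely many $\rho\prec_\delta x$; the conclusion then follows by intersecting over the finite index set $\{0,1,\ldots,k-1\}$. Fix such an $i$. By \cref{def:description_of_a_class}(iv), $\xi_{t_i}\le \xi_{s^-}=\delta$; and by \cref{def:code_of_a_Gamma_function}(i), $f^N_{t_i}$ is a $\xi_{t_i}$-approximation on $S^N$ of its limit function. Since $F=F^N$ is total, $\Baire\subseteq [S^N]$, so $x\in [S^N]$ and moreover $\rho\in S^N$ for all but finitely many $\rho\prec_\delta x$ (using $\delta\ge o(\Gamma)$ together with \ref{TSP:nested}). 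By \cref{rmk:alpha_and_beta_approximations}, $f^N_{t_i}$ is also a $\delta$-approximation of the same limit function, and the defining property of a $\delta$-approximation (\cref{def:alpha_approximation}) then delivers $f^N_{t_i}(\rho)=f^N_{t_i}(x)=t_{i+1}$ for cofinitely many $\rho\prec_\delta x$, as required.

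The argument is essentially bookkeeping --- the only point requiring mild care is keeping the two meanings of $f^N_{t_i}$ straight, namely the approximation value on finite $\s$ versus the limit value on infinite $x$, both of which are uniformly handled by the recursion in \cref{def:function_defined_by_name}. I do not anticipate any genuine obstacle; the appeal to the $\xi_{t_i}\le \delta$ monotonicity built into class descriptions, combined with \cref{rmk:alpha_and_beta_approximations}, is precisely what the definitions were set up to make effortless.
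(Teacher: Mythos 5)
Your proposal is correct and matches the paper's argument: the paper likewise walks the path from the root to $s$, uses $\xi_t \le \xi_{s^-} = \delta$ (from monotonicity of the ordinal labels) together with \ref{TSP:nested} to pass from $\prec_\delta$ to $\prec_{\xi_t}$, applies the $\xi_t$-approximation property at each node, and intersects over the finitely many nodes (phrased there as an induction along the path). No gaps.
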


\begin{proof}
    By induction on the length of $t\preceq s$ (on $T_\Gamma$) we show that for all but finitely many  $\rho \prec_\delta x$, $\ell^N(\rho)\succeq t$. This is immediate for $t = \seq{}$. Suppose this holds for $t\prec s$, and let $t^+\preceq s$ be the child of~$t$ on~$T_\Gamma$ in the direction of~$s$. Since $t \preceq s^-$, $\xi_t  = \cs{\xi_t}{\Gamma} \le \delta$. For all but finitely many $\rho\prec_{\xi_t} x$ we have $f_t(\rho) = f_t(x) = t^+$; and $\rho \prec_\delta x$ implies $\rho\prec_{\xi_t} x$ (\ref{TSP:nested}). 
\end{proof}

As there are infinitely many $\delta$-true initial segments of~$x$ (\ref{TSP:unique_path}), let $a_0 < a_1 < \cdots$ enumerate those $i>0$ for which $x\rest{i} \prec_{\delta} x$ and $\ell(x\rest{i}) = s$. Let $v = (z_{a_0}, z_{a_1}, \dots)$. For each $k<\w$, observe the step at the end of which player  player~I has played $\bar x = x\rest{a_k}$. Since $\bar x\prec_\delta x$, the set~$D$ computed at this step is $\{a_0,\dots, a_{k}\}$ (essentially, \ref{TSP:trees}). Since player~II did not lose, $(y\rest k,v\rest{k})\in T_j$. Hence, $(y,v)\in [T_j]$, so $y\in B_j$, as required.

\subsection{If player I has a winning strategy}

Suppose now that player I has a winning strategy~$Q$.  Since the game is effectively open for player I, we may take $Q\in \Delta^1_1$. We use~$Q$ to compute a $\Gamma$-name~$M$ for a separator between~$B_0$ and~$B_1$. For each $y\in \Baire$, we need to find a leaf $\ell^M(y)$ which will of course determine $F^M(y)$. As in \cite{BSL_paper}, the idea is to examine possible second-coordinate sequences~$\s$ for player~II to play along~$y$. We use~$Q$ to pull back the true stage relations among such strings~$\s$, and so identify ``correct'' strings that will indicate where we should go on~$T_\Gamma$. 

In \cite{BSL_paper}, we used $\xi$-correct strings to put~$y$ inside a $\Sigma^0_{1+\xi}(Q)$ separator. In the current proof, we need to apply the same idea at every non-leaf $t\in T_\Gamma$; correct strings will guide us towards non-default outcomes $t\conc m$ for $m>0$. This is an inductive process (on the length of~$t$). The key difficulty is that as we go up the tree, the ordinals~$\xi_t$ may increase; but to define~$f^M_t$, we are only allowed to check correctness up to level~$\xi_t$, and not higher ordinals. That is, we cannot require a string~$\rho$, that guides us to a non-default outcome $t\conc m$, to be more than $\xi_t$-correct. But if $\xi_{t\conc m}>\xi_t$, such a string~$\rho$ may fail to be $\xi_{t\conc m}$-correct, and the inducive procedure of producing longer and longer correct extensions may stall. For this reason, once we get to $t\conc m$, we may need to ignore~$\rho$ and its predecessors, and use the more elaborate notion of $\xi$-correctness \emph{above~$\rho$}. 

To the details. For $y \in \Baire$ and $\s \in \omega^{<\omega}$, let $Q(y, \s)$ be the sequence $\seq{x_0, \dots, x_n}$ which results from player I playing according to~$Q$ and player II playing $(y\rest{|\s|}, \s)$.  Note that $|Q(y,\s)|= |\s|+1$.\footnote{In \cite{BSL_paper}, we used a ``string"~$\pi$ of length~$-1$ so that $Q(y,\pi)= \seq{}$. This was needed as a base case for generating a sequence of $\xi$-correct strings; the string~$\pi$ was $\xi$-correct, whereas the string~$\seq{}$ was not necessarily $\xi$-correct. In the current construction, as we use the more elaborate notion of correctness above a string~$\rho$, the extension \cref{lem:alpha_correct_extensions_exist_1} below does not require such a base case, so we do not need the ``string'' $\pi$.}  

\begin{definition}
Let $\alpha$ be a computable ordinal, and let $y \in \Baire$. For $\sigma, \tau \in \omega^{<\omega}$, we write $\sigma \tleq_\alpha^y \tau$ when $\sigma \preceq \tau$ and $Q(y, \sigma) \preceq_\alpha Q(y, \tau)$.
\end{definition}

Observe that $\tleq_\alpha^y$ is $Q$-computable, uniformly in~$y$ and $\alpha$.

\begin{definition} \label{def:legal}
    Let $y \in \Baire$. A string $\s\in \w^{<\w}$ is \emph{legal for~$y$} if in the play of the game in which player~I played $Q(y,\s)\rest{|\s|}$ and player~II played $(y\rest{|\s|},\s)$, player~I has not yet won.\footnote{Note that it is still possible that after player~I plays the last entry of $Q(y,\s)$, player~II has no response that would prevent~I from winning; we still consider~$\s$ to be legal for~$y$.}
\end{definition}

Note that if $\s$ is legal for~$y$ then every $\eta\preceq \s$ is also legal for~$y$. 

\begin{definition}\label{def:alpha-correct}
For $y \in \Baire$, a computable ordinal~$\alpha$, and $\rho \prec\sigma \in \omega^{<\omega}$, we define what it means for~$\sigma$ to be \emph{$\alpha$-correct} or \emph{strongly $\alpha$-correct for~$y$ above~$\rho$}. 
\begin{itemize}
\item $\sigma$ is 0-correct for~$y$ above~$\rho$ if it is legal for~$y$. 
\item $\sigma$ is $(\alpha+1)$-correct for $y$ above~$\rho$ if it is strongly $\alpha$-correct for $y$ above~$\rho$, and for every $\tau \treq{y}{\alpha} \sigma$ which is strongly $\alpha$-correct for $y$ above~$\rho$, we have $\tau \treq{y}{\alpha+1}  \sigma$.
\item For $\lambda$ a limit, $\sigma$ is $\lambda$-correct for~$y$ above~$\rho$ if it is strongly $\beta$-correct for~$y$ above~$\rho$ for all $\beta < \lambda$.
\item $\sigma$ is strongly $\alpha$-correct for~$y$ above~$\rho$ if for every~$\tau$ with $\rho \prec \tau \tleq^y_\alpha \sigma$, $\tau$ is $\alpha$-correct for $y$ above~$\rho$. 
\end{itemize}
\end{definition}

Note that only the definition of strong correctness involves~$\rho$; the first three items are identical to \cite[Def. 4.2.3]{BSL_paper}. In the definition of strong correctness, note that the extension $\rho\prec \tau$ is proper. Indeed, we do not define the notion of a string being correct above itself. 

\smallskip

This definition of correctness may seem odd.  Let us outline the plan of the construction, so that we may give a bit of motivation for it.  After establishing some results about how correctness behaves, we will use this definition to construct a set $G$, which is our intended separator.  If $G$ fails to be a separator, witnessed by some $y$, then we will use $y$ to construct a series of plays for player II which defeats $Q$, contrary to assumption.  Fix $v$ with $(y,v) \in [T_j]$ for the appropriate $j$.

Suppose $\delta(Q(y,\sigma)) = \alpha$, meaning that $\alpha$ is the largest ordinal used by $N$ to send $Q(y,\sigma)$ to $\ell^N(Q(y,\sigma))$.  If player II has played $(y\rest{|\sigma|}, \sigma)$, and as their next move they play $(y(|\sigma|), b)$, then the sequence $\bar{z}$ which is constructed to determine if they have lost the game is precisely $(\sigma(|\rho_0|), \sigma(|\rho_1|), \dots, b)$, where $(\rho_i)$ enumerates those $\rho \tle_\alpha^y \sigma$, in increasing order.  We would like to arrange that $\bar{z} \prec v$, so $b$ should be $v(n)$ for the correct $n$.  Then, by some sequence of legal plays, we would hope to return to $\ell^N(Q(y,\sigma))$.  That is, we would like to find a $\tau \supseteq \sigma\cat v(n)$ with $\ell^N(Q(y,\tau)) = \ell^N(Q(y,\sigma))$.

$\alpha$-correctness turns out to be sufficient to ensure this.  If $\sigma$ is $\alpha$-correct above some $\rho$, there will be a $\tau \supseteq \sigma\cat v(n)$ with $\sigma \tleq_\alpha^y \tau$ and $\tau$ is strongly $\alpha$-correct above $\sigma$.  So we may have player II play according to $(y,\tau)$, and then play $(y(|\tau|), v(n+1))$ after that.  Then we can repeat.  In this fashion we build an infinite sequence of plays such that player II never loses, always playing elements of $v$ at the appropriate moment.

One concern is that if we have just found $\tau$ as described above, there may be $\nu$ with $\sigma \tle_\alpha^y \nu \tle_\alpha^y \tau$.  Then when we play $v(n+1)$, the sequence $\bar{z}$ which is constructed to check if our position is legal will contain more than just the elements of $v$; it will also contain the extraneous element $\tau(|\nu|)$.  This is why it is important that $\tau$ is {\em strongly} $\alpha$-correct above $\sigma$: this implies that $\nu$ is $\alpha$-correct above $\sigma$, and so we could have taken $\nu$ instead of $\tau$.

\smallskip

We now continue with the details.

\begin{lemma} \label{clm:correctness_basics}
Let $y\in \Baire$, $\alpha$ a computable ordinal, and $\rho\prec \s\in \w^{<\w}$.
\begin{sublemma}
    \item \label{item:correctness_basics:strong_implies_weak} 
    If $\sigma$ is strongly $\alpha$-correct for~$y$ above~$\rho$ then it is $\alpha$-correct for~$y$ above~$\rho$.
    
    \item \label{item:correctness_basics:strong-0-correct}
     $\s$ is strongly 0-correct for~$y$ above~$\rho$ if and only if it is legal for~$y$.

    \item \label{item:correctness_basics:smaller_beta}
     If $\sigma$ is $\alpha$-correct for~$y$ above~$\rho$, then it is strongly $\beta$-correct for $y$ above~$\rho$, for every $\beta < \alpha$.

    \item \label{item:correctness_basics:initial_segment1}
     If $\sigma$ is strongly $\alpha$-correct for~$y$ above~$\rho$, then any~$\nu$ with $\rho \prec \nu\tleq^y_\alpha \s$  is also strongly $\alpha$-correct for~$y$ above~$\rho$. 

    \item \label{item:correctness_basics:initial_segment2}
     If $\sigma\preceq \tau$ and both~$\s$ and~$\tau$ are $\alpha$-correct for~$y$ above~$\rho$, then  $\s \tleq^y_\alpha \tau$. 
\end{sublemma}
\end{lemma}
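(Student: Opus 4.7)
The plan is to dispatch (a), (b), and (d) directly from the definitions, then prove (e) by transfinite induction on $\alpha$, and finally derive (c) by transfinite induction on $\alpha$ using (a). The substantive content sits in (e); the rest are essentially unwindings of the definitions together with the basic properties \ref{TSP:nested}, \ref{TSP:trees}, and \ref{TSP:limit}.

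First, for (a), instantiate the strong $\alpha$-correctness clause at $\tau = \sigma$: since $\rho \prec \sigma$ and $\sigma \tleq^y_\alpha \sigma$ by reflexivity, $\sigma$ is $\alpha$-correct above $\rho$. For (b), $\preceq_0$ coincides with $\preceq$ by \ref{TSP:nested}, so $\tleq^y_0$ coincides with $\preceq$; hence strong $0$-correctness above $\rho$ says that every $\tau$ with $\rho \prec \tau \preceq \sigma$ is legal, which is equivalent to $\sigma$ being legal since every prefix of a legal play is legal. For (d), the relation $\tleq^y_\alpha$ is transitive because both $\preceq$ and $\preceq_\alpha$ are (using \ref{TSP:trees}), so if $\rho \prec \mu \tleq^y_\alpha \nu \tleq^y_\alpha \sigma$, then $\rho \prec \mu \tleq^y_\alpha \sigma$, and $\mu$ is $\alpha$-correct above $\rho$ by strong $\alpha$-correctness of $\sigma$ above $\rho$.

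The induction for (e) will proceed as follows. At $\alpha = 0$, $\sigma \tleq^y_0 \tau$ reduces to $Q(y,\sigma) \preceq Q(y,\tau)$, which holds because $Q$ is a strategy: player~I's responses are determined by the already-played initial segment of player~II, so extending player~II's play from $\sigma$ to $\tau$ only extends player~I's response. At a successor $\alpha = \gamma+1$, both $\sigma$ and $\tau$ are $\gamma$-correct above $\rho$ by (a) applied to their built-in strong $\gamma$-correctness; the induction hypothesis yields $\sigma \tleq^y_\gamma \tau$; and since $\tau$ is strongly $\gamma$-correct above $\rho$, the defining clause of $\sigma$'s $(\gamma+1)$-correctness forces $\sigma \tleq^y_{\gamma+1} \tau$. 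At a limit $\lambda$, each of $\sigma, \tau$ is $\beta$-correct above $\rho$ for every $\beta < \lambda$, so the induction hypothesis gives $Q(y,\sigma) \preceq_\beta Q(y,\tau)$ for every such $\beta$, and \ref{TSP:limit} upgrades this to $Q(y,\sigma) \preceq_\lambda Q(y,\tau)$.

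Finally, (c) goes by a straightforward induction on $\alpha$. The base case is vacuous; at a limit $\lambda$, strong $\beta$-correctness for every $\beta < \lambda$ is built into the definition of $\lambda$-correctness. At a successor $\alpha = \gamma+1$, the definition already provides strong $\gamma$-correctness for the case $\beta = \gamma$; for $\beta < \gamma$, I apply (a) to obtain that $\sigma$ is $\gamma$-correct above $\rho$, and then invoke the induction hypothesis at level $\gamma$ to conclude strong $\beta$-correctness above $\rho$. The main obstacle across the package will be the successor step of (e), where one must recognize that the nontrivial clause in the definition of $(\gamma+1)$-correctness for $\sigma$ is tailored precisely to apply to $\tau$ once one has both $\sigma \tleq^y_\gamma \tau$ (from the induction hypothesis) and strong $\gamma$-correctness of $\tau$ (from $\tau$ being $(\gamma+1)$-correct).
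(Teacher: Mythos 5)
Your proof is correct, and it is the standard argument: the paper itself gives no details here (it simply cites Claim 4.2.4 of the earlier expository paper), but the intended proof is exactly this unwinding of the definitions for (a), (b), (d), a simultaneous transfinite induction for (e) using reflexivity/transitivity of $\tleq^y_\alpha$, \ref{TSP:limit} at limits, and the quantifier clause of $(\gamma+1)$-correctness at successors, and the short induction for (c). No gaps.
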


\begin{proof}
    The same as \cite[Claim 4.2.4]{BSL_paper}. 
\end{proof}

The following claim has the same proof as \cite[Claim 4.2.5]{BSL_paper}.

\begin{lemma} \label{clm:correctness_complexity}
Let $\alpha$ be a computable ordinal. The relations ``$\s$ is $\alpha$-correct for~$y$ above~$\rho$'' and ``$\s$ is strongly $\alpha$-correct for~$y$ above~$\rho$'' are:
\begin{orderedlist}
\item $\Delta^0_1(Q)$ if $\alpha=0$;
\item $\Pi^0_\alpha(Q)$ if $\alpha\ne 0$ is finite; 
\item $\Delta^0_\alpha(Q)$ for limit $\alpha$; and
\item $\Pi^0_{\alpha-1}(Q)$ if~$\alpha$ is a successor and infinite. 
\end{orderedlist}
\end{lemma}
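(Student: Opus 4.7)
The plan is to prove both complexity bounds by simultaneous transfinite induction on $\alpha$, paralleling the argument for \cite[Claim 4.2.5]{BSL_paper}. A key preliminary observation is that the pulled-back true-stage relations $\tleq^y_\beta$ and $\treq{y}{\beta}$ on finite strings are uniformly $\Delta^0_1(Q)$ in $y$: the value $Q(y,\tau)$ is $Q$-computable from the finite initial segment $y\rest{|\tau|}$, and $\preceq_\beta$ on pairs of finite strings is itself computable by \ref{TSP:computable}. So these raw comparisons contribute only $\Delta^0_1(Q)$ complexity, and all growth of complexity with $\alpha$ comes from the nested quantifiers built into \Cref{def:alpha-correct}. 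In particular, the quantifier ``for every $\tau\treq{y}{\alpha}\s$'' in the successor clause ranges over finite strings $\tau\succeq\s$, so it is an \emph{unbounded} universal quantifier over $\w^{<\w}$, and each successor step therefore raises the arithmetical-hierarchy level by one.

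For $\alpha=0$, part~(b) of \Cref{clm:correctness_basics} identifies both relations with legality of $\s$ for~$y$, a condition decidable in $Q$ from the finite datum $y\rest{|\s|}$, hence $\Delta^0_1(Q)$.

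For the successor step $\alpha=\beta+1$, ``$\s$ is $(\beta+1)$-correct above $\rho$'' is the conjunction of $S_\beta(y,\s,\rho)$ with the unbounded universal quantifier over $\tau$ of the implication
\[
\bigl(\tau\treq{y}{\beta}\s\,\wedge\, S_\beta(y,\tau,\rho)\bigr)\,\Longrightarrow\,\tau\treq{y}{\beta+1}\s,
\]
whose conclusion is $\Delta^0_1(Q)$ and whose hypothesis lies, by induction, in the class of $S_\beta$. Using the inclusions $\Sigma^0_\gamma\cup\Pi^0_\gamma\subseteq\Pi^0_{\gamma+1}$, together with closure of $\Pi^0$-classes under universal quantification over~$\Nat$, a case analysis on $\beta$ (zero, finite nonzero, limit, or an infinite successor) recovers exactly the bound stated for $\alpha$; in particular, the infinite-successor subcase is what forces the off-by-one shift between parts~(2) and~(4) of the statement. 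Strongly $\alpha$-correct is obtained from $\alpha$-correct by one further \emph{bounded} universal quantifier over $\nu\preceq\s$, and so remains in the same class.

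For the limit step $\alpha=\lambda$, $\lambda$-correctness unfolds to the countable conjunction $\bigwedge_{\beta<\lambda}S_\beta(y,\s,\rho)$. By the inductive hypothesis each $S_\beta$ (for $\beta<\lambda$) lies in $\Pi^0_{\beta'}(Q)\cap\Sigma^0_{\beta'}(Q)$ for some $\beta'<\lambda$, so a countable intersection along an $\omega$-sequence cofinal in~$\lambda$ is $\Pi^0_\lambda(Q)$ directly from the definition. The matching $\Sigma^0_\lambda(Q)$ bound, which together with this yields the $\Delta^0_\lambda(Q)$ class stated in the lemma, is obtained exactly as in \cite[Claim 4.2.5]{BSL_paper} by using the monotonicity ``$\alpha$-correct implies strongly $\beta$-correct for $\beta<\alpha$'' recorded in part~(c) of \Cref{clm:correctness_basics} to rewrite the conjunction in a dual form as a countable union of $\Pi^0_{\beta''}(Q)$ conditions with $\beta''<\lambda$.

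The main obstacle I anticipate is the careful quantifier bookkeeping at the successor step, especially at infinite successors $\beta=\lambda+n$ where the class $\Pi^0_{\alpha-1}$ replaces the otherwise expected $\Pi^0_\alpha$; once that is in hand, the limit step is the standard transfinite-hierarchy calculation sketched above.
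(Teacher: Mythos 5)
Your base case and successor step are fine, and since the paper itself gives no proof here (it simply defers to \cite[Claim 4.2.5]{BSL_paper}), spelling out the induction is worthwhile. The bookkeeping you describe at successors checks out: the unbounded $\forall\tau$ over extensions raises the level by one, the off-by-one in part (4) comes from the preceding limit level contributing only $\Delta^0_\lambda\subseteq\Pi^0_\lambda$ rather than $\Pi^0_\lambda\subseteq\Sigma^0_{\lambda+1}$, and passing from $\alpha$-correct to strongly $\alpha$-correct costs nothing because the quantifier over $\nu$ with $\rho\prec\nu\tleq^y_\alpha\s$ is bounded by~$\s$.

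The gap is the $\Sigma^0_\lambda(Q)$ half of the limit case. Monotonicity (\cref{clm:correctness_basics}) only says that the conditions ``strongly $\beta$-correct above $\rho$'' are decreasing in $\beta$, and a decreasing countable intersection of $\Delta^0_\beta$ sets with $\beta<\lambda$ is in general a proper $\Pi^0_\lambda$ set (already for $\lambda=\w$: a decreasing intersection of clopen sets is an arbitrary closed set). So there is no ``dual form as a countable union'' to be extracted from monotonicity alone, and that is precisely the half of the claim you need: without it the lemma would only give $\Pi^0_\lambda$ at limits, which is not enough later to make the sets $Y_{t\conc m}$ be $\Sigma^0_{1+\xi_t}(Q)$ when $\xi_t$ is a limit. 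The missing ingredient is \ref{TSP:limit}. Fix the computable cofinal sequence $\seq{\lambda_k}$ and let $K$ be a computable bound on $|Q(y,\tau)|_\lambda$ for all $\tau\preceq\s$ (e.g.\ $K=|\s|+1$). Then for all $\gamma$ with $\lambda_K\le\gamma<\gamma+1\le\lambda$ and all $\tau\preceq\s$, the relations $\tau\tleq^y_\gamma\nu$ and $\tau\tleq^y_{\gamma+1}\nu$ coincide for every $\nu$, so the second clause in the definition of $(\gamma+1)$-correctness becomes vacuous; an inner induction on $\beta\in[\lambda_K,\lambda)$ then shows that ``$\s$ is strongly $\beta$-correct above $\rho$'' is equivalent to ``$\s$ is strongly $\lambda_K$-correct above $\rho$''. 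Combined with monotonicity for $\beta<\lambda_K$, this gives that $\lambda$-correctness of $\s$ is equivalent to strong $\lambda_K$-correctness of $\s$ -- a condition at a level strictly below $\lambda$, computed uniformly from $\s$ and $y\rest{|\s|}$. Taking the computable union over $\s$ exhibits both the relation and its complement as effective unions of sets of level below $\lambda$, which is what $\Delta^0_\lambda(Q)$ requires.
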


The following is new.

\begin{lemma} \label{lem:chaining_corrects}
    Let~$\alpha$ be a computable ordinal and let $y\in \Baire$. Let $\rho_0\prec \rho_1$ and suppose that~$\rho_1$ is strongly $\alpha$-correct for~$y$ above~$\rho_0$. 
    Then the following are equivalent for all $\s\succ \rho_1$:
    \begin{equivalent}
        \item $\s$ is strongly $\alpha$-correct for~$y$ above~$\rho_0$. 
        \item $\s$ is strongly $\alpha$-correct for~$y$ above~$\rho_1$. 
    \end{equivalent}
\end{lemma}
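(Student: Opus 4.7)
The plan is to prove the lemma by transfinite induction on $\alpha$, simultaneously establishing three statements for all $\s\succ \rho_1$, under the standing hypothesis that $\rho_1$ is strongly $\alpha$-correct above $\rho_0$:
\begin{orderedlist}
    \item $\s$ is strongly $\alpha$-correct above $\rho_0$ if and only if it is strongly $\alpha$-correct above $\rho_1$;
    \item $\s$ is $\alpha$-correct above $\rho_0$ if and only if it is $\alpha$-correct above $\rho_1$;
    \item if $\s$ is strongly $\alpha$-correct above either $\rho_0$ or $\rho_1$, then $\rho_1 \tleq^y_\alpha \s$.
\end{orderedlist}
Statement~(i) is the content of the lemma; (ii) and~(iii) are auxiliary. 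By \cref{clm:correctness_basics}\ref{item:correctness_basics:strong_implies_weak},\ref{item:correctness_basics:smaller_beta}, the hypothesis on $\rho_1$ descends to every $\beta<\alpha$, so the inductive hypothesis is applicable at any smaller level. The base case $\alpha=0$ reduces to legality via \cref{clm:correctness_basics}\ref{item:correctness_basics:strong-0-correct}: legality is independent of the base point, so (ii) is immediate, (iii) is just $\rho_1\preceq \s$, and for~(i) the discrepancy between the quantifier ranges for $\rho_0$ and $\rho_1$ is exactly resolved by the hypothesis that all $\tau$ with $\rho_0\prec\tau\preceq\rho_1$ are legal.

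The limit case $\alpha=\lambda$ is straightforward: (iii) combines inductive~(iii) at each $\beta<\lambda$ with~\ref{TSP:limit}; (ii) follows by unfolding $\lambda$-correctness into strong $\beta$-correctness for $\beta<\lambda$ and invoking inductive~(i) termwise; and~(i) follows the same pattern as in the successor case below, with~(iii) doing the main work.

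In the successor case $\alpha=\beta+1$, I would prove (iii) first. Inductive~(iii) already yields $\rho_1\tleq^y_\beta \s$; the task is to upgrade to $\preceq_{\beta+1}$. Under~(1), both $\s$ and $\rho_1$ are $(\beta+1)$-correct above $\rho_0$, so \cref{clm:correctness_basics}\ref{item:correctness_basics:initial_segment2} delivers the upgrade. Under~(2), I would apply the definition of $\rho_1$ being $(\beta+1)$-correct above $\rho_0$ with $\s$ as the extension: this needs $\rho_1\tleq^y_\beta \s$ (inductive~(iii)) and $\s$ strongly $\beta$-correct above $\rho_0$ (which follows from its strong $\beta$-correctness above $\rho_1$ via inductive~(i)), and it outputs $\rho_1\tleq^y_{\beta+1}\s$. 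Statement~(ii) at $\beta+1$ is then routine: unfold $(\beta+1)$-correctness and apply inductive~(i) to $\s$ itself and to each $\tleq^y_\beta$-extension $\tau$ of $\s$ appearing in the definition, since all such $\tau$ also extend $\rho_1$.

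Finally,~(i) at $\beta+1$: the direction $(1)\Rightarrow(2)$ is immediate, via~(ii) applied pointwise to each $\tau$ with $\rho_1\prec \tau\tleq^y_{\beta+1}\s$. For $(2)\Rightarrow(1)$, consider $\tau$ with $\rho_0\prec \tau\tleq^y_{\beta+1}\s$; the case $\tau\succ \rho_1$ again reduces to~(ii). The main obstacle, and the reason~(iii) is introduced, is the case $\tau\preceq \rho_1$: here $\tau$ lies outside the range covered by hypothesis~(2), and~(iii) supplies $\rho_1\tleq^y_{\beta+1}\s$, so that $Q(y,\tau)$ and $Q(y,\rho_1)$ are both $\preceq_{\beta+1}$-predecessors of $Q(y,\s)$. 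The tree property~\ref{TSP:trees} combined with $Q(y,\tau)\preceq Q(y,\rho_1)$ (from $\tau\preceq \rho_1$ and $Q$ being a deterministic strategy) then forces $\tau\tleq^y_{\beta+1}\rho_1$, and the hypothesis on $\rho_1$ supplies the required $(\beta+1)$-correctness of $\tau$ above $\rho_0$.
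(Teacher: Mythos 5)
Your proposal is correct and follows essentially the same route as the paper: induction on $\alpha$, with the equivalence of \emph{weak} $\alpha$-correctness above $\rho_0$ and above $\rho_1$ (your statement~(ii)) as the key intermediate step, and the problematic case $\rho_0\prec\tau\preceq\rho_1$ in $(2)\Rightarrow(1)$ handled exactly as in the paper via $\rho_1\tleq^y_\alpha\s$, the tree property \ref{TSP:trees}, and the strong correctness of $\rho_1$. The only difference is presentational: you carry the fact $\rho_1\tleq^y_\alpha\s$ as an explicit simultaneous induction statement~(iii), whereas the paper derives it inline from \cref{clm:correctness_basics}\ref{item:correctness_basics:initial_segment2} once both $\s$ and $\rho_1$ are known to be $\alpha$-correct above $\rho_0$.
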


\begin{proof}
    We prove the claim by induction on~$\alpha$. For $\alpha=0$ the claim is immediate (see \cref{clm:correctness_basics}\ref{item:correctness_basics:strong-0-correct}). Suppose that $\alpha>0$ and that the claim holds for all $\beta<\alpha$; and suppose that~$\rho_1$ is strongly $\alpha$-correct for~$y$ above~$\rho_0$. 

    First, we observe that any $\s\succ \rho_1$ is $\alpha$-correct for~$y$ above~$\rho_0$ if and only if it is $\alpha$-correct for~$y$ above~$\rho_1$. This is not difficult to check using \cref{def:alpha-correct}, whether~$\alpha$ is limit or a successor. 

    Given this equivalence, (1)$\Then$(2) follows easily. For (2)$\Then$(1), we need to consider strings~$\nu$ such that $\rho_0 \prec \nu \preceq \rho_1$ and $\nu\tleq^y_\alpha \s$; we need to show that such~$\nu$ are $\alpha$-correct for~$y$ above~$\rho_0$. We have just observed that~$\s$ is $\alpha$-correct for~$y$ above~$\rho_0$; and we are assuming that~$\rho_1$ is (strongly) $\alpha$-correct for~$y$ above~$\rho_0$. By \cref{clm:correctness_basics}\ref{item:correctness_basics:initial_segment2}, $\rho_1\tleq^y_\alpha \s$. By \ref{TSP:trees}, $\nu\tleq^y_\alpha \rho_1$. Since~$\rho_1$ is strongly $\alpha$-correct for~$y$ above~$\rho_0$, $\nu$ is $\alpha$-correct for~$y$ above~$\rho_0$, as required. 
\end{proof}

\begin{lemma}\label{lem:alpha_correct_extensions_exist_1}
Let $y \in \Baire$ and $\rho \in \omega^{<\omega}$.  If~$\sigma$ is a one element extension of~$\rho$ that is legal for~$y$, then for all computable~$\alpha$ there is a $\tau \succeq \sigma$ that is strongly $\alpha$-correct for~$y$ above $\rho$.
\end{lemma}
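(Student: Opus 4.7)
I plan to proceed by transfinite induction on the computable ordinal $\alpha$. The base case $\alpha = 0$ is immediate: since $\sigma$ is the unique one-element extension of $\rho$ that is a prefix of any $\tau \succeq \sigma$, the only $\nu$ with $\rho \prec \nu \tleq^y_0 \sigma$ is $\sigma$ itself, which is $0$-correct above $\rho$ because it is legal. Hence $\tau = \sigma$ works.

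For the successor step $\alpha = \beta+1$, the key tool is the function $p_{\beta+1}$ from \ref{TSP:successor}. Let $E$ be the set of $\tau \succeq \sigma$ that are strongly $\beta$-correct above $\rho$; by the inductive hypothesis $E$ is nonempty. Assign to each $\tau \in E$ the value $v(\tau) = p_{\beta+1}(Q(y,\tau)) \in \Nat \cup \{-1\}$, and choose $\tau^* \in E$ minimizing $v(\tau^*) =: v^*$ using the well-ordering of $\Nat \cup \{-1\}$. To verify that $\tau^*$ is strongly $(\beta+1)$-correct above $\rho$, I take any $\tau'$ with $\rho \prec \tau' \tleq^y_{\beta+1} \tau^*$. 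The one-element extension property forces $\tau' \succeq \sigma$; by nestedness $\tau' \tleq^y_\beta \tau^*$, so \cref{clm:correctness_basics}\ref{item:correctness_basics:initial_segment1} puts $\tau'$ in $E$, and \ref{TSP:successor} combined with minimality of $v^*$ forces $v(\tau') = v^*$. To see that $\tau'$ is itself $(\beta+1)$-correct above $\rho$, I consider any $\nu$ with $\tau' \tleq^y_\beta \nu$ and $\nu$ strongly $\beta$-correct above $\rho$; then $\nu \succeq \sigma$, so $\nu \in E$ and $v(\nu) \ge v^*$. Moreover, any $\rho'$ with $Q(y,\tau') \preceq_\beta \rho' \preceq_\beta Q(y,\nu)$ has the form $\rho' = Q(y,\mu)$ for some $\tau' \preceq \mu \preceq \nu$; since $\mu \tleq^y_\beta \nu$, another application of \cref{clm:correctness_basics}\ref{item:correctness_basics:initial_segment1} yields $\mu \in E$, whence $p_{\beta+1}(\rho') \ge v^* = p_{\beta+1}(Q(y,\tau'))$. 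By \ref{TSP:successor}, $\tau' \tleq^y_{\beta+1} \nu$, as required.

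For the limit case $\alpha = \lambda$, I first observe that $\tau$ is strongly $\lambda$-correct above $\rho$ if and only if $\tau$ is strongly $\beta$-correct above $\rho$ for every $\beta < \lambda$ (the forward implication by taking $\tau' = \tau$ in the strong-correctness definition and then invoking \cref{clm:correctness_basics}\ref{item:correctness_basics:smaller_beta}; the reverse by nestedness of the relations $\tleq^y_\beta$ and the definition of $\lambda$-correctness for limit $\lambda$). Thus, using the canonical cofinal sequence $\langle \lambda_k \rangle$ from \ref{TSP:limit}, it suffices to produce a single $\tau \succeq \sigma$ that is strongly $\lambda_k$-correct above $\rho$ for every $k$. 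My plan is to build $\tau$ by iteratively applying the inductive hypothesis along the $\lambda_k$, using the chaining lemma \cref{lem:chaining_corrects} to glue successive extensions while preserving correctness above $\rho$, and invoking the second half of \ref{TSP:limit}, which identifies the $\preceq_\lambda$-extension structure above a node of $\preceq_\lambda$-height $k$ with the $\preceq_{\lambda_k}$-structure, to control the finitely many $\tleq^y_\lambda$-predecessors of the resulting $\tau$. The main obstacle is precisely this limit coordination: the successor argument hinges on well-foundedness of the range of $p_{\beta+1}$, which has no direct analogue at limits, so the limit step demands explicit use of the cofinal approximation together with chaining to synthesize a single finite $\tau$ satisfying infinitely many $\lambda_k$-correctness conditions simultaneously.
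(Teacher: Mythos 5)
Your base case and successor case are correct. The successor step is the standard non-deficiency-stage argument: minimising $p_{\beta+1}(Q(y,\tau))$ over the (nonempty, by induction) set $E$ of strongly $\beta$-correct extensions of $\sigma$, and then using \ref{TSP:successor} together with \cref{clm:correctness_basics}\ref{item:correctness_basics:initial_segment1} to verify strong $(\beta+1)$-correctness of the minimiser. This is essentially the proof the paper imports from its predecessor. Your reduction of the limit case to ``strongly $\lambda_k$-correct above $\rho$ for all $k$'' is also a valid equivalence.

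The gap is the limit case itself, which you describe only as a plan and which, as described, does not work. Iterating the inductive hypothesis along $\lambda_0 < \lambda_1 < \cdots$ produces a (generally strictly) increasing sequence of finite strings $\tau_0 \prec \tau_1 \prec \cdots$ with $\tau_k$ strongly $\lambda_k$-correct; there is no finite string in sight that is strongly $\lambda_k$-correct for \emph{all} $k$, and you cannot pass to a limit while staying finite. Moreover, \cref{lem:chaining_corrects} only transfers strong correctness at a \emph{fixed} level $\alpha$ across a chain, so it cannot upgrade $\tau_k$'s correctness above $\rho$ from level $\lambda_j$ to level $\lambda_{j+1}$. The actual argument requires only a \emph{single} application of the inductive hypothesis: choose $k$ large relative to $|\rho|$ (the paper takes $k = |\rho|+2$), apply the hypothesis at level $\lambda_k$ to obtain a $\preceq$-minimal $\tau \succeq \sigma$ strongly $\lambda_k$-correct above $\rho$; minimality forces $|Q(y,\tau)|_\lambda \le k$, and then the second half of \ref{TSP:limit} (that $\preceq_\lambda$ and $\preceq_{\lambda_k}$ agree at and below height $k$) converts strong $\lambda_k$-correctness above $\rho$ into strong $\lambda$-correctness above $\rho$. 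Without this height-collapsing step your limit case is unproved.
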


\begin{proof}
    The proof of \cite[Claim 4.2.6]{BSL_paper} gives this lemma, and in fact, is slightly simplified, since we do not need to consider $\nu\preceq \rho$. In the limit case, since we may have $\rho\ntleq^y_\alpha \tau$, we take $k = |\rho|+2$; this (and the minimality of~$\tau$) ensures that $|Q(y,\tau)|_\alpha\le k$. 
\end{proof}

We now work towards defining the $\Gamma$-name~$M$. By induction on the length of $t\in T_\Gamma$ we define a set $U_t \subseteq \Baire\times \w^{<\w}$. We start by letting, for $t = \seq{}$ the root of~$T_\Gamma$,  
\begin{itemize}
    \item    $U_{\seq{}} = \left\{ (y,\seq{}) \,:\, y\in \Baire   \right\}$. 
\end{itemize}
Suppose that~$U_t$ has been defined, and that~$t$ is not a leaf of~$T_\Gamma$. We let 
\begin{itemize}
      \item  $U_{t\conc 0} = U_t$;
      \item For $m>0$, $(y,\tau)\in U_{t\conc m}$ if there is some $\s\prec \tau$ such that:
      \begin{orderedlist}
            \item $(y,\s)\in U_t$;
          \item $\tau$ is strongly $\xi_t$-correct for~$y$ above~$\s$;
          \item $\ell^N(Q(y,\tau))\succeq t\conc m$; and
          \item $\tau$ is $\tleq_{\xi_t}^y$-minimal with respect to (ii)\&(iii). 
      \end{orderedlist}
\end{itemize}



By induction on the length of~$t$, using \cref{clm:correctness_complexity}, we observe that for non-leaf~$t$, for all $m>0$, the set 
\[
    Y_{t\conc m} = \left\{ y\in \Baire \,:\,  (\exists \s)\,\,(y,\s)\in U_{t\conc m} \right\}
\]
is $\Sigma^0_{1+\xi_t}(Q)$. By the effective reduction property for the class $\Sigma^0_{1+\xi_t}(Q)$, we can find, for $m>0$, $\Sigma^0_{1+\xi_t}(Q)$ sets $X_{t\conc m}$ that reduce the sets $Y_{t\conc m}$: $X_{t\conc m}\subseteq Y_{t\conc m}$ for all~$m$, $\bigcup_{m>0} X_{t\conc m} = \bigcup_{m>0} Y_{t\conc m}$, but the sets $X_{t\conc m}$ are pairwise disjoint. 

Since $\eta_t = 1$ for all~$t$ (as the class description~$\Gamma$ is acceptable), we can use the sets $X_{t\conc m}$ to define a $\Gamma$-name~$M$ so that for all~$t$,~$y$ and $m>0$, 
\[
  \ell^M(y)\succ t \,\,\,\Then\,\,\,  [y\in X_{t\conc m} \Iff \ell^M(y)\succeq t\conc m].
\]
See \cref{sub:the_case_eta_1_}. Technically, to define~$M$ (with $\cs{z}{M} = Q$ and $S^M = \w^{<\w}$), for each non-leaf~$t$, by \cref{lem:producing_pairwise_orthogonal}, we find, for $m>0$, pairwise $(Q,\xi_t)$-orthogonal sets $W_{t\conc m}\subseteq \w^{<\w}$, uniformly~$Q$-computable, such that $\bigcup_{m>0} W_{t\conc m}$ is~$Q$-computable as well (and all of this is uniform in~$t$), and such that $[W_{t\conc m}]^Q_{\xi_t} = X_{t\conc m}$; and we define~$f_t^M$ by letting $f_t^M(\nu) = m$ if $\nu\in W_{t\conc m}$, $f_t^M(\nu)=0$ if $\nu\notin \bigcup_{m>0} W_{t\conc m}$. This defines~$M$.\footnote{Note that~$N$ is a \emph{computable} name, and so the game, the relations $\tleq^y_\alpha$, and the notions of $\alpha$-correctness, all used the unrelativised true stage relations $\preceq_{\xi_t}$. In contrast, $M$ is only~$Q$-computable, and so needs to use the relativised relations~$\preceq^Q_{\xi_t}$.}

We let $G = F^M$, so $G\in \Gamma(Q)$. The following lemma concludes the proof of \cref{thm:LSR_separation}.

\begin{lemma} \label{lem:game:final}
   $G$ defines a separator between~$B_0$ and~$B_1$: For both $j =0,1$, for all $y \in B_j$, $G(y) = 1-j$.
\end{lemma}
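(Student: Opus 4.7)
The plan is to argue by contradiction. Suppose some $j\in\{0,1\}$ and $y\in B_j$ with $G(y)=j$, and fix $v\in\Baire$ witnessing $(y,v)\in[T_j]$. I would use $v$ to manufacture an infinite play conforming to $Q$ in which player~II never loses, contradicting the hypothesis that $Q$ is winning for player~I in the open game.

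Let $s=\ell^M(y)$, so $\Gamma(s)=F^M(y)=G(y)=j$, and let $t_0=\seq{}\prec t_1\prec\cdots\prec t_k=s$ be the path from the root to $s$ in $T_\Gamma$, with $t_{i+1}=t_i\cat m_i$. The first step is to extract, by induction on $i$, a string $\rho_0\in\w^{<\w}$ with $(y,\rho_0)\in U_s$. Whenever $m_i=0$ we keep the witness unchanged, since $U_{t_i\cat 0}=U_{t_i}$. Whenever $m_i>0$, the assumption that $\ell^M(y)\succeq t_i\cat m_i$ forces $y\in X_{t_i\cat m_i}\subseteq Y_{t_i\cat m_i}$, which by the definition of $U_{t_i\cat m_i}$ yields a $\tleq^y_{\xi_{t_i}}$-minimal extension of the current witness, strongly $\xi_{t_i}$-correct for $y$ above it, sending $\ell^N(Q(y,\cdot))$ into the cone above $t_i\cat m_i$; by the minimality built into the definition together with Remark~\ref{rmk:game:simplifying_assumption}\textup{(2)}, this $\ell^N$-value is the leftmost leaf extending $t_i\cat m_i$. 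Concatenating these extensions along the whole path and using Remark~\ref{rmk:game:simplifying_assumption}\textup{(1)} when $s$ is the leftmost leaf, we arrive at $\rho_0$ with $\ell^N(Q(y,\rho_0))=s$.

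Set $\alpha=\xi_{s^-}$, where $s^-$ is the parent of $s$ in $T_\Gamma$. The next step is to extend $\rho_0$ to an infinite $z\in\Baire$ by interleaving the entries of $v$ with strongly $\alpha$-correct extensions. Recursively, assuming $\rho_n$ has been produced and that the play $(y\rest{|\rho_n|},\rho_n)$ is legal for $y$, I would invoke Lemma~\ref{lem:alpha_correct_extensions_exist_1} on the one-element extension $\rho_n\cat v(n)$ to obtain $\rho_{n+1}\succeq\rho_n\cat v(n)$ strongly $\alpha$-correct for $y$ above $\rho_n$. Chaining across the $\rho_n$ via Lemma~\ref{lem:chaining_corrects} and the monotonicity in Lemma~\ref{clm:correctness_basics}\ref{item:correctness_basics:initial_segment1},\ref{item:correctness_basics:initial_segment2} keeps $\ell^N(Q(y,\rho_n))=s$ for every $n$. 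Taking $z=\bigcup_n\rho_n$ then gives the candidate infinite play.

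The final step, and the technical heart of the argument, is to verify that this infinite play is legal at \emph{every} stage, not merely at the checkpoint stages $|\rho_n|+1$. At a checkpoint stage, $\delta(\bar x)=\alpha$ and strong $\alpha$-correctness of each $\rho_{n+1}$ above $\rho_n$ guarantees that the $\tleq^y_\alpha$-predecessors of $\rho_n$ inside the chain are exactly the $\rho_i$, so the sequence $\bar z$ assembled by the game equals $v\rest{(n+1)}$ and hence lies in $T_j$ because $(y,v)\in[T_j]$. The main obstacle is controlling the intermediate stages $\mu$ with $\rho_n\prec\mu\prec\rho_{n+1}$, where $\ell^N(Q(y,\mu))$ may temporarily sit at a different leaf, changing both $\delta(Q(y,\mu))$ and $F^N(Q(y,\mu))$; to handle them I would use Remark~\ref{rmk:game:simplifying_assumption}\textup{(2)} together with strong correctness of $\rho_{n+1}$ above $\rho_n$ to show that any temporary deviation passes through a $\preceq_{\xi_t}$-minimal witness at some non-leaf $t$ on the path to $s$, whose image under $\ell^N$ is the leftmost leaf extending the appropriate child, and for which the $\bar z$ computed at the current level is a prefix of a genuine $v$-prefix recognised at the next checkpoint. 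Granting this verification, the play is legal at every step, producing the required contradiction and completing the proof that $G$ separates $B_0$ from $B_1$.
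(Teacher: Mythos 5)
Your outline follows the paper's proof in its broad architecture: assume $y\in B_j$ with $G(y)=j$, fix $v$ with $(y,v)\in[T_j]$, climb $T_\Gamma$ through the sets $U_{t\conc m}$ to reach an anchor string whose $\ell^N$-value is $s=\ell^M(y)$, and then interleave the entries of $v$ with strongly $\delta$-correct extensions (via \cref{lem:alpha_correct_extensions_exist_1,lem:chaining_corrects}) to produce an infinite play on which player~II never loses. This is exactly the paper's strategy (the anchor string is the paper's $\s_n$, your $\rho_e$'s are the paper's $\theta_e$'s, and your invariant is membership in the paper's set $K$).

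However, there is a genuine gap at what you yourself call the technical heart, and the difficulty you propose to attack there is not the real one. Legality of the intermediate positions $\mu$ with $\rho_n\prec\mu\prec\rho_{n+1}$ is automatic: legality is closed under taking prefixes (the game is open for player~I), so once $\rho_{n+1}$ is legal every $\mu\preceq\rho_{n+1}$ is legal, and no separate analysis of the temporary $\ell^N$-values at those stages is needed. What is \emph{not} automatic, and what your proposal asserts without justification (``chaining \dots keeps $\ell^N(Q(y,\rho_n))=s$ for every $n$''), is that the strongly $\delta$-correct extensions return to the leaf $s$ at all. \Cref{lem:chaining_corrects} and \cref{clm:correctness_basics} only control the correctness relations; they say nothing about $\ell^N$. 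The missing argument is the paper's \cref{clm:game:s_is_chosen}: one must show, by induction along $r\preceq s$, that (i) at nodes $t_i$ where $s$ takes a non-default outcome, $\s_i\tle_{\xi_i}\eta$ forces $f^N_{t_i}(Q(y,\eta))=t_i^+$ by persistence of non-default outcomes along $\xi_i$-true stages, and (ii) at nodes $r$ where $s$ takes the \emph{default} outcome, a strongly correct $\eta$ with $\ell(\eta)\succeq r\conc m$ for some $m>0$ would witness $y\in Y_{r\conc m}$, contradicting $r\conc 0\preceq\ell^M(y)$. Step (ii) is the essential link between the definition of the name $M$ (via the reduction of the $Y_{t\conc m}$) and the behaviour of $N$ along correct strings, and it appears nowhere in your proposal. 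Relatedly, you need \cref{clm:game:none_in_between,clm:game:last} to guarantee that no extraneous $\tle_\delta$-predecessor with $\ell$-value $s$ contaminates $\bar z$, so that $\bar z(\rho_e)$ really is a prefix of $v$; your appeal to minimality gestures at this but does not carry it out. Without (ii) in particular, there is no reason the play ever returns to $s$, and the construction of the infinite winning play for player~II collapses.
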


\begin{proof}
    Suppose not; fix $j\in \{0,1\}$ and $y\in B_j$ such that $G(y)=j$. Let $s = \ell^M(y)$ (so $j = \Gamma(s)$). Also, fix $v\in \Baire$ such that $(y,v)\in [T_j]$; we write $v = (v_0,v_1,\dots)$. Let~$\delta = \xi_{s^-}$, where~$s^-$ is the parent of~$s$ on~$T_\Gamma$. 

    Since~$y$ is fixed, for simplicity of notation:
    \begin{itemize}
        \item We omit ``for~$y$'' when we talk about~$\alpha$-correctness and legality;
        \item We write $\tleq_\alpha$ for $\tleq^y_\alpha$; 
        \item For $\s\in \w^{<\w}$,  we let $\ell(\s) = \ell^N(Q(y,\s))$. 
    \end{itemize}

    For legal $\s\in \w^{<\w}$ let 
     \[
     E = \left\{ i<|\s| \,:\,  (\s\rest{i})\tle_\delta \s\,\,\andd\,\, \ell(\s\rest{i}) = \ell(\s)  \right\},  
     \]
    and let $\bar z(\s) = (\s(a_0),\s(a_1),\dots, \s(a_{k-1}))$ where $E = \{a_0 < a_1 < \dots < a_{k-1}\}$; we can have $E = \emptyset$, in which case $\bar z(\s)$ is the empty string. Let
    \[
        K = \left\{ \s\in \w^{<\w} \,:\, \s \text{ is legal, }\,\ell(\s)=s,  \andd  \bar z(\s) \prec v  \right\}. 
    \]
    The point is that $\{i+1\,:\, i\in E\}\cup \{|\s|+1\}$ is the set~$D$ of lengths which is used to determine whether~I has won the game after~I plays $Q(y,\s)$ and II plays $(y\rest{(|\s|+1)},\s\conc b)$ for any $b\in \w$. So if $\s$ is legal and $\Gamma(\ell(\s)) = j$ (in particular, if  $\ell(\s)=s$), then for any $b\in \w$, $\s\conc b$ is legal if and only if $(y\rest{(k+1)}, \bar z(\s)\conc b)\in T_j$. Hence:

    \begin{claim} \label{clm:K_is_good}
        Let $\s\in K$; let $k = |\bar z(\s)|$. Then $\s\conc v_k$ is legal. Further, if $\tau\succeq \s\conc v_k$ is legal, $\s\tle_\delta \tau$, $\ell(\tau)=s$, and there is no~$\nu$ with $\s\tle_\delta \nu \tle_\delta \tau$, then $\tau\in K$. 
    \end{claim}

    The plan is to find some ``nice'' string in~$K$ from which we can keep extending to longer strings in~$K$, as described by the claim, and thus build a winning play for~II against~$Q$, contradicting~$Q$ being a winning strategy for~I. Indeed, we will show:

    \begin{claim} \label{clm:there_is_a_nice_string_in_K}
        There are $\s\prec \theta$ such that:
        \begin{orderedlist}
            \item $\theta\in K$;
            \item $\theta$ is strongly $\delta$-correct over~$\s$; and
            \item for every $\eta\succ \s$ that is strongly $\delta$-correct over~$\s$, $\ell(\eta)= s$. 
        \end{orderedlist}
    \end{claim}

    \Cref{clm:there_is_a_nice_string_in_K} suffices to complete the proof of \cref{lem:game:final}. Indeed, starting with $\theta_0 = \theta$ and~$\s$ given by the claim, we define an increasing sequence $\theta_0 \prec\theta_1\prec \theta_2\prec \cdots$ of strings $\theta_e\in K$ that are each strongly $\delta$-correct over~$\theta$. Given~$\theta_e$, let $k = |\bar z(\theta_e)|$. As $\theta_e\conc v_k$ is legal (\cref{clm:K_is_good}), \cref{lem:alpha_correct_extensions_exist_1} gives us some~$\theta_{e+1}$, an extension of $\theta_e\conc v_k$, that is strongly $\delta$-correct over~$\theta_e$; we choose a $\preceq$-minimal such string. By \cref{lem:chaining_corrects}, $\theta_{e+1}$ is strongly $\delta$-correct above~$\s$, and so by choice of~$\theta$, $\ell(\theta_{e+1}) = s$. By \Cref{clm:correctness_basics}\ref{item:correctness_basics:initial_segment2}, $\theta_e \tle_\delta \theta_{e+1}$. By minimality of~$\theta_{e+1}$ (and \cref{clm:correctness_basics}\ref{item:correctness_basics:initial_segment1}), there is no~$\nu$ with $\theta_e\tle_\delta \nu \tle_\delta \theta_{e+1}$. Hence, by \cref{clm:K_is_good}, $\theta_{e+1}\in K$. 

    \medskip
    
    It suffices, then, to prove \cref{clm:there_is_a_nice_string_in_K}. 

    Let $t_1\prec t_2 \prec \cdots \prec t_{n}$ list those nodes $t\prec s$ at which~$s$ takes a non-default outcome: $t\conc m\preceq s$ for some $m>0$. Let $t_i^+$ denote the child of~$t_i$ extended by~$s$. Note that $n=0$ is possible: when~$s$ is the leftmost node on~$T_\Gamma$. 

    If $n>0$, then for all $i\in \{1,\dots, n\}$, $y\in Y_{t_i^+}$, and so, starting at~$t_n$ and working our way down, we can choose strings~$\s_i$ such that $(y,\s_i)\in U_{t_i^+}$ and such that~$\s_{i-1}$ witnesses this fact; here $\s_0 = \seq{}$. We let $\xi_i = \xi_{t_i}$. If $n=0$ then we just choose $\s_0  = \seq{}$. 

    \begin{claim} \label{clm:tleq_between_sigma_is}
        For all $1\le i \le j \le n$, $\s_i \tleq_{\xi_i} \s_j$. 
    \end{claim}
    
    \begin{proof}
        We may assume that $i<j$. Repeated applications of \cref{lem:chaining_corrects} give that~$\s_j$ is $\xi_{i}$-correct above~$\s_{i-1}$. The claim then follows from \cref{clm:correctness_basics}\ref{item:correctness_basics:initial_segment2}. 
    \end{proof}
    
    \begin{claim} \label{clm:game:none_in_between}
        For all $i = 1,\dots, n$, for all $\tau \tle_{\xi_i}\s_i$, $\ell(\tau)\nsucceq t_i^+$. 
    \end{claim}

    \begin{proof}
        We prove the claim by induction on~$i$. Suppose that it has been proved for all $j<i$. Let $\tau \tle_{\xi_i} \s_i$. Let $j\le i$ be least such that $\tau \prec \s_j$ (so $j\ge 1$). By \cref{clm:tleq_between_sigma_is} and \ref{TSP:trees}, $\tau \tle_{\xi_j} \s_j$. Since $t_i^+\succeq t_j^+$, it suffices to show that $\ell(\tau)\nsucceq t_j^+$. 

        Suppose that $\tau = \s_{j-1}$. If $j=1$, then by~(i) of \cref{rmk:game:simplifying_assumption}, $\ell(\tau)$ is the leftmost leaf of~$T_\Gamma$ and so does not extend $t_1^+$. If $j>1$ then by the induction assumption for~$j-1$, and by~(ii) of \cref{rmk:game:simplifying_assumption}, $\ell(\tau)$ is the leftmost leaf of~$T_\Gamma$ extending $t_{j-1}^+$, and so does not extend $t_j^+$. 

        Suppose that $\s_{j-1}\prec \tau$. Then by \cref{clm:correctness_basics}\ref{item:correctness_basics:initial_segment1}, $\tau$ is $\xi_j$-correct above $\s_{j-1}$. If $\ell(\tau)\succeq t_j^+$ then~$\tau$ would contradict the minimality of~$\s_j$ (part~(iv) of the definition of $U_{t_j^+}$). 
    \end{proof}
    
    \begin{claim} \label{clm:game:s_is_chosen}
        For all $\eta\succ \s_n$ that is strongly $\delta$-correct above~$\s_n$, $\ell(\eta)=s$. 
    \end{claim}
    
    \begin{proof}
        Let $\eta\succ \s_n$ be strongly $\delta$-correct above~$\s_n$. By induction on the length of $r\preceq s$ we show that $r\preceq \ell(\eta)$. Suppose that $r\preceq s$ and~$r\preceq \ell(\eta)$. There are two cases. 

        First, suppose that $r = t_i$ for some $i\ge 1$. By \cref{lem:chaining_corrects}, $\eta$ is strongly $\xi_i$-correct above $\s_{i-1}$. By \cref{clm:correctness_basics}\ref{item:correctness_basics:initial_segment2}, $\s_i\tle_{\xi_i} \eta$. Since $f^N_{t_i}(Q(y,\s_i))= t_i^+$ is not the default outcome of~$t_i$ and $Q(y,\s_i)\prec_{\xi_i} Q(y,\eta)$, we must have $f^N_{t_i}(Q(y,\eta)) = t_i^+$ as well, i.e., $t_i^+\preceq \ell(\eta)$. 

        Otherwise, $r\conc 0\preceq s$. Let $i\in \{1,\dots, n\}$ be greatest such that $t_i \prec r$; let $i=0$ if there is no such~$i$. As $\delta \ge \xi_r$, by \cref{lem:chaining_corrects}, $\eta$ is $\xi_r$-correct above~$\s_i$. If $\ell(\eta)\succeq r\conc m$ for some $m>0$, then~$\eta$ would show that there is some~$\tau\succ \s_i$ such that $(y,\tau)\in U_{r\conc m}$, so $y\in Y_{r\conc m}$, in which case, we would not have $r\conc 0\preceq \ell^M(y) = s$. 
    \end{proof}

    \Cref{clm:game:none_in_between} and \cref{rmk:game:simplifying_assumption} imply that~$\ell(\s_n)=s$, and further, that $\bar z(\s_n) = \seq{}$. By \cref{clm:K_is_good}, $\s_n\conc v_0$ is legal. By \cref{lem:alpha_correct_extensions_exist_1}, let~$\theta\succeq \s_n\conc v_0$ be strongly $\delta$-correct above~$\s_n$, and choose minimal such. 
    
    \begin{claim}\label{clm:tleq_between_sigman_theta}
    $\s_n \tle_{\xi_n} \theta$.
    \end{claim}
   
    \begin{proof}
    Same as the proof of \Cref{clm:tleq_between_sigma_is}.
    \end{proof}

    \begin{claim} \label{clm:game:last}
        If $\tau \tle_\delta \theta$ and $\ell(\tau) = s$ then $\tau = \s_n$. 
    \end{claim}
    
    \begin{proof}
      Let $\tau \tle_\delta \theta$. By minimality of~$\theta$, we have $\tau \preceq \s_n$. Suppose that $\tau \prec \s_n$ (and in particular, $\sigma_n \neq \seq{}$, so $n > 0$). By \cref{clm:tleq_between_sigman_theta} and \ref{TSP:trees}, $\tau \tle_{\xi_n} \s_n$, so the claim follows by \cref{clm:game:none_in_between}.
    \end{proof}

    By \cref{clm:game:s_is_chosen}, $\ell(\theta) = s$. Now there are two possibilities. If $\s_n \tle_\delta \theta$ then $\bar z(\theta) = \seq{v_0}$. If not, then $\bar z(\theta) = \seq{}$. In either case, $\bar z(\theta)\prec v$. Hence $\theta\in K$. This completes the proof of \cref{clm:there_is_a_nice_string_in_K}, and so of \cref{lem:game:final}, and so of \cref{thm:LSR_separation}. 
\end{proof}

\begin{remark} \label{rmk:where_did_we_use_acceptability_here}
    The proof of \cref{thm:classification:main} relied heavily on the acceptability of the class description~$\Gamma$. In contrast, the proof of \cref{thm:LSR_separation} did not at all use the fact that sequences $\seq{\Gamma_{t\conc m}}$ are monotone. In the construction of the~$\Gamma$-name~$M$ and its analysis, we used the fact that $\cs{\eta_t}{\Gamma}=1$ for all non-leaf $t\in T_\Gamma$. This was mostly for convenience. We could have proved the theorem for all class descriptions~$\Gamma$; in defining~$M$, we would need to define level sets $Y_{t\conc m,\beta}$ for all $\beta < \cs{\eta_t}{\Gamma}$, and then use the reduction property for the classes $D_{\eta_t}(\Sigma^0_{1+\xi_t})$. The current formulation of \cref{thm:LSR_separation} suffices, as we will use both it and \cref{thm:classification:main} in the next section to show that every non-self-dual Borel Wadge class has an acceptable description. 
\end{remark}

\section{Borel Wadge determinacy} 
\label{sec:a_proof_of_borel_wadge_determinacy}

As mentioned in the introduction, Louveau and Saint-Raymond \cite{LouveauSR:Strength} used \cref{thm:LSR_separation} and Louveau's version of \cref{thm:classification:main} from \cite{Louveau:83} to give a proof of Borel Wadge determinacy in second-order arithmetic. In this section we use our work above to give a proof in $\ATR_0+\Pind$, the subsystem of second-order arithmetic consisting, in addition to the base system~$\RCA_0$ of computable mathematics, both the principle of arithmetic transfinite recursion, and induction on~$\Pi^1_1$ subsets of~$\Nat$. 

Working in the restricted setting of~$\ATR_0$, we need to be careful about how exactly we formalise set-theoretic notions in the language second-order arithmetic. In this setting, it is also important to fill in the details of some steps that are missing in \cite{LouveauSR:Strength}.

\subsection{Formalisation} 
\label{sub:formalisation}

So far in this paper, all of our reasoning has occurred within $\ATR_0$, including the construction in \cref{sec:true_stage_machinery}. In particular, the true stage relations are constructed via arithmetic transfinite recursion, and their various properties are provable in~$\ATR_0$. As a result, for a $\Gamma$-name~$N$ (for some class description~$\Gamma$) and $x\in \Baire$, the value $F^N(x)$ is well-defined in~$\ATR_0$ (without the need for evaluation maps). $\ATR_0$ implies open determinacy, and so \cref{thm:LSR_separation} is provable in $\ATR_0$. For the rest of this section, unless otherwise stated, we work in $\ATR_0$. 

For a Borel code~$B$ and a class description~$\Gamma$, we write $B\in \bGamma$ to mean that there is some $\Gamma$-name~$N$ with $F^N= 1_B$. So that we can use familiar notation, we henceforth identify sets and their characteristic functions. We also use ``Borel set'' to mean a Borel code. For class descriptions~$\Gamma$ and~$\Lambda$, we write $\bGamma\subseteq \bLambda$ if $B\in \bGamma\Then B\in \bLambda$ for all Borel~$B$; we write $\bGamma< \bLambda$ if $\bGamma\subseteq \bLambda$ and $\bGamma\subseteq \dual{\bLambda}$. 

\ref{TSP:Sigma_alpha_sets}, together with effective transfinite recursion, shows that we can effectively pass from an acceptable class description~$\Gamma$ and a~$\Gamma$-name~$N$ to a Borel code of~$F^N$. 

Note that the construction of a name~$N_\Gamma$ of a universal function~$F_\Gamma$ for a class description~$\Gamma$ is effective (given~$\Gamma$). 

\begin{lemma} \label{lem:containment_and_game}
    Let~$B$ be a Borel set, and let $\Gamma$ be an acceptable class description. Then $B\in \bGamma$ if and only if player~I has a winning strategy in the game $G_\Gamma(N_\Gamma,B,B^\complement)$. 
\end{lemma}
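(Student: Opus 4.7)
The plan is to apply \cref{thm:LSR_separation} with $B_0 = B$ and $B_1 = B^\complement$ (both $\Sigma^1_1$ since $B$ is Borel), with oracle $r = \cs{y}{\Gamma}$, and with $F = F_\Gamma$ given by the universal name $N_\Gamma$ (which lies in $\Gamma(r)$ by \cref{prop:universal_and_so_nonselfdual}). As noted in Section 4.1, the game $G_\Gamma(N_\Gamma, B, B^\complement)$ is open for player~I (and computable), hence determined in $\ATR_0$, so exactly one player has a winning strategy.

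First, suppose player~I has a winning strategy. The construction in Section 4.3 produces a function $G \in \Gamma(\Delta^1_1(r))$ that separates $B_0$ from $B_1$, i.e., $G(x) = 1 - j$ whenever $x \in B_j$. Since $B_0 = B$ and $B_1 = B^\complement$ partition $\Baire$, this forces $G = 1_B$, so $B \in \bGamma$ via the $\Gamma$-name extracted from $G$.

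Conversely, suppose $B \in \bGamma$; we show player~I has a winning strategy. By determinacy, it suffices to rule out a winning strategy for player~II. If player~II did have one, then the construction in Section 4.2 would yield a Lipschitz$_1$ function $g\colon\Baire\to\Baire$ satisfying $g(x) \in B_{F_\Gamma(x)}$ for all $x$, which rewrites as $1_B \circ g = 1 - F_\Gamma$. Since $B \in \bGamma$ and $\bGamma$ is closed under continuous pre-images (\cref{prop:described_are_pointclass}), $1_B \circ g \in \bGamma$, and hence $F_\Gamma = 1 - 1_B \circ g \in \dual{\bGamma}$. Combined with $F_\Gamma \in \bGamma$, this places $F_\Gamma$ in $\Delta(\bGamma)$, contradicting the non-self-duality of $\bGamma$ established in \cref{prop:universal_and_so_nonselfdual}.

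The combinatorial work is essentially already carried out in the proof of \cref{thm:LSR_separation}; the only point that merits care is the formalisation, namely that open determinacy for the (computable) game, the explicit extractions of the reduction and of the separator from the two strategies, closure of $\bGamma$ under continuous pre-images, and non-self-duality of acceptably described classes are all available in $\ATR_0 + \Pind$. I expect this to be the only obstacle of note, and one that is already justified by the preceding sections.
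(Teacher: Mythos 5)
Your proof is correct and follows exactly the paper's (much terser) argument: player I's strategy yields, via the separator construction, a $\Gamma(\Delta^1_1)$-name for $1_B$ itself since $B$ and $B^\complement$ partition $\Baire$; player II's strategy yields a Lipschitz reduction witnessing $F_\Gamma\in\dual{\bGamma}$ whenever $B\in\bGamma$, contradicting \cref{prop:universal_and_so_nonselfdual}. No issues.
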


\begin{proof}
    If player~I has a winning strategy then $B\in \bGamma$. If player~II has a winning strategy then we get a Wadge reduction of~$F_\Gamma$ to~$B^\complement$; by \cref{prop:universal_and_so_nonselfdual} (and \cref{prop:described_are_pointclass}), $B\notin \bGamma$. 
\end{proof}

For a Borel set~$B$, we let
\begin{itemize}
    \item $[B]_W$ be the Wadge class of~$B$: the collection of pullbacks $1_B\circ g$ for continuous functions~$g$; 
    \item $[B]_L$ be the Lipschitz class of~$B$: the collection of pullbacks $1_B\circ g$ for Lipschitz$_1$  functions~$g$. 
\end{itemize}

An immediate corollary of \cref{thm:LSR_separation} and \cref{lem:containment_and_game} is:

\begin{lemma} \label{lem:SLO_withLipschitz}
    Let~$B$ be a Borel set, and let~$\Gamma$ be an acceptable class description. If $B\notin\bGamma$ then $\check{\bGamma}\subseteq [B]_L$. 
\end{lemma}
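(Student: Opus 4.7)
The plan is to apply \cref{thm:LSR_separation} directly to each Borel $C \in \check{\bGamma}$, feeding the theorem the characteristic function of $C^\complement$ (not $C$). This per-set application is what we need: applying the theorem to a universal $\Gamma$-function would only reduce the universal $\check{\bGamma}$-function itself to $B$ via Lipschitz$_1$ maps, and composing with a continuous Wadge-reduction of $C$ to the universal function is not enough, since the composition is only continuous.

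So fix any Borel $C \in \check{\bGamma}$. Then $1_{C^\complement} \in \bGamma$, so pick a $\Gamma$-name $N$ with $F^N = 1_{C^\complement}$, and set $r = \cs{z}{N}$. By definition of a $\Gamma$-name, $r$ computes $\Gamma$, and $F^N \in \Gamma(r)$ is total. The sets $B_0 = B$ and $B_1 = B^\complement$ are disjoint Borel, hence $\Sigma^1_1(r)$. \Cref{thm:LSR_separation} therefore gives one of two alternatives.

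Suppose first option (ii) of \cref{thm:LSR_separation} holds, producing $G \in \Gamma(\Delta^1_1(r))$ with $G(x) = 1$ for all $x \in B_0 = B$ and $G(x) = 0$ for all $x \in B_1 = B^\complement$. Then $G = 1_B$, so $B \in \Gamma(\Delta^1_1(r)) \subseteq \bGamma$, contradicting the hypothesis $B \notin \bGamma$. Hence option (i) holds: there is a Lipschitz$_1$ map $g\colon \Baire \to \Baire$ with $g(x) \in B_{1_{C^\complement}(x)}$ for every $x$. Unpacking, $x \in C$ gives $1_{C^\complement}(x) = 0$ so $g(x) \in B_0 = B$, while $x \notin C$ gives $g(x) \in B_1 = B^\complement$. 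In either case $1_B(g(x)) = 1_C(x)$, so $1_C = 1_B \circ g$, witnessing $C \in [B]_L$.

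The argument is essentially immediate once the separation theorem is in hand; the only subtlety, and the point one needs to get right, is the alignment of indices. Feeding $F = 1_{C^\complement}$ with $B_0 = B$, $B_1 = B^\complement$ makes option (i) deliver the pullback $1_C = 1_B \circ g$ directly and makes option (ii) deliver $G = 1_B$, so that the failure of separation is exactly the forbidden conclusion $B \in \bGamma$. No appeal to acceptability beyond what \cref{thm:LSR_separation} already uses, no detour through the universal function, and no induction on $T_\Gamma$ is needed.
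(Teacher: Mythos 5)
Your proof is correct and is essentially the paper's intended argument: the lemma is stated there as an immediate corollary of \cref{thm:LSR_separation}, and the per-set application with $F = 1_{C^\complement}$, $B_0 = B$, $B_1 = B^\complement$ is exactly the right alignment, with option (ii) forced to yield $G=1_B$ and hence the forbidden $B\in\bGamma$. The one bookkeeping point you elide is that $r=\cs{z}{N}$ need not compute a Borel code of $B$, so $B$ and $B^\complement$ are not automatically $\Sigma^1_1(r)$; this is repaired by replacing $r$ with an oracle computing both $\cs{z}{N}$ and a code for $B$ and upgrading the name via \cref{lem:name_translation}.
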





\subsection{Well-foundedness} 
\label{sub:well_foundedness}

Our next goal is to show that every Borel Wadge class either has an acceptable description, or is the ambiguous class of a class with an acceptable description. The plan is to find a minimal acceptable~$\Gamma$ such that $B\in \bGamma$, and then argue that $[B]_W$ is either $\bGamma$ or $\Delta(\bGamma)$. Thus, we need the relation $<$ on acceptably-described Wadge classes to be well-founded. 

\begin{proposition}[Martin] \label{lem:no_descending_sequence_of_descriptions}
    There is no infinite sequence $\seq{\Gamma_n}$ of acceptable class descriptions such that for all~$n$, $\bGamma_{n+1}< \bGamma_n$. 
\end{proposition}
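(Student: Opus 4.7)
The plan is to adapt Martin's classical well-foundedness argument for Borel Wadge degrees, substituting \cref{thm:LSR_separation} for Borel determinacy and using $\Pind$ for the underlying induction. First, I would derive semi-linear ordering for acceptable classes from \cref{lem:SLO_withLipschitz}: given acceptable $\Lambda, \Gamma$, applying the lemma to $B = F_\Lambda$, a universal total $\bLambda$-function (\cref{prop:universal_and_so_nonselfdual}), yields either $F_\Lambda \in \bGamma$ (whence $\bLambda \subseteq \bGamma$) or $\check\bGamma \subseteq [F_\Lambda]_L \subseteq \bLambda$. Hence, for any two acceptable descriptions, either $\bLambda \subseteq \bGamma$ or $\check\bGamma \subseteq \bLambda$. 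Now suppose for contradiction that $\seq{\Gamma_n}$ is an infinite sequence of acceptable descriptions with $\Gamma_{n+1} < \Gamma_n$ for all $n$. Since each $\bGamma_n$ is non-self-dual (\cref{prop:universal_and_so_nonselfdual}), the descent is strict: $\bGamma_{n+1} \subsetneq \bGamma_n$. By $\Pind$, the sequence and all witnessing reductions may be taken uniformly computable in some oracle $y$; let $F_n$ be a universal total $\Gamma_n(y)$-function.

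Next, following Martin, I would form the Borel function $F^\ast \colon \Baire \to \{0,1\}$ by the effective join $F^\ast(n \conc x) = F_n(x)$, available by arithmetic transfinite recursion in $\ATR_0$. Each $F_n$ Lipschitz-reduces to $F^\ast$ via $x \mapsto n \conc x$, so $\bGamma_n \subseteq [F^\ast]_W$ for every $n$. By SLO, applied via an acceptable description $\Lambda^\ast$ of $[F^\ast]_W$ (whose existence is supplied by the overarching induction), either $[F^\ast]_W \subseteq \bGamma_n$ for some $n$---impossible, as this gives $\bGamma_m \subseteq \bGamma_n$ for all $m$, contradicting $\bGamma_0 \supsetneq \bGamma_n$---or $\check\bGamma_n \subseteq [F^\ast]_W$ for every $n$. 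In the latter case, both $\bGamma_n$ and $\check\bGamma_n$ lie in $[F^\ast]_W$, so $\bGamma_n \subseteq \Delta([F^\ast]_W)$ for all $n$. Classifying $\Lambda^\ast$ via \cref{thm:classification:main} and using the description of $\Delta(\bLambda^\ast)$ from \cref{prop:classification:uncountable_cofinality} or \cref{prop:countable_type:level_0}, each $\bGamma_n$ is forced inside a strictly smaller acceptable class $\bUpsilon_n < \bLambda^\ast$; combined with $\Pind$ on the rank of $T_{\Lambda^\ast}$, this yields the contradiction.

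The main obstacle is circumventing the apparent circular dependence on the existence of an acceptable description $\Lambda^\ast$ for the Borel Wadge class $[F^\ast]_W$. The resolution is to organise the whole development as a simultaneous $\Pi^1_1$-induction (via $\Pind$) on the tree-rank of $T_{\Gamma_0}$, so that well-foundedness of acceptable descriptions and the existence of acceptable descriptions for Borel Wadge classes below $\bGamma_0$ are proved in tandem; each step of the induction uses only the results for descriptions of strictly smaller tree-rank. Handling the different types (zero, countable, uncountable) from \cref{thm:classification:main} and the distinction between principal and non-principal $\Delta$-classes adds technical bookkeeping but follows the pattern laid out in \cref{sec:classification_of_acceptable_classes}. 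The delicate point is verifying that the ``upward bounding'' of $[F^\ast]_W$ by $\Lambda^\ast$ can be arranged effectively enough that the whole induction stays within $\ATR_0 + \Pind$.
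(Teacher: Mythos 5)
There is a genuine gap, and it is the one you yourself flag: the circular dependence on an acceptable description $\Lambda^*$ of $[F^*]_W$ is not actually resolved by your proposed ``simultaneous induction on the tree-rank of $T_{\Gamma_0}$''. First, $F^*$ need not lie in $\bGamma_0$ at all (it is an upper bound for the $\bGamma_n$, not a lower one), so nothing bounds the complexity of a description of $[F^*]_W$ by data attached to $\Gamma_0$. Second, even granting $\Lambda^*$, tree-rank of descriptions is not a legitimate descent measure: a strictly smaller Wadge class can have a description with a much larger tree (e.g.\ $D_\eta(\bSigma^0_1)$ for huge $\eta$ sits far below $\bSigma^0_2$), so ``$\bUpsilon_n < \bLambda^*$'' gives no decrease in any well-founded quantity you have access to before well-foundedness is known. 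In effect you are trying to prove well-foundedness of $<$ by a descent along $<$. A smaller point: $\Pind$ does not let you ``take the sequence and all witnessing reductions uniformly computable in some oracle''; in the paper that uniformization comes from $\ATR_0$ proving open determinacy for a \emph{sequence} of open games, and indeed the proposition is proved in $\ATR_0$ alone ($\Pind$ is only needed later, in \cref{lem:minimal_class}).

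The missing idea is that Martin's argument is not an induction at all but a Baire-category (``flip-set'') argument. What one actually needs from the descending sequence is, for each $n$, a Lipschitz$_{1/2}$ reduction of $F_{n+1}$ to $F_n$ \emph{and} one to $F_n^\complement$. The paper manufactures these without Wadge determinacy: \cref{lem:comparing_classes_bold_to_light} (an application of \cref{thm:LSR_separation} to the game against the universal function) upgrades $\bGamma_{n+1}<\bGamma_n$ to $\Gamma_{n+1}^w<\Gamma_n^w$ for a single $\Delta^1_1$ oracle $w$ uniformly in $n$, and \cref{lem:1/2-reduction_to_complete_set} then yields the Lipschitz$_{1/2}$ reductions to the universal functions $F_n$ and $F_n^\complement$. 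Composing these reductions along branches of $2^\omega$ and invoking the Baire property of Borel sets (provable in $\ATR_0$) produces the contradiction. Your first paragraph (SLO from \cref{lem:SLO_withLipschitz}) is fine but is not what drives the proof; you should replace the second and third paragraphs with the reduction-composition and Baire-property argument.
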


We can follow the standard argument (see for example \cite[(21.15)]{Kechris:book} or \cite[Thm.~3.5]{LouveauSR:Strength}). Since the map $\Gamma\mapsto N_\Gamma$ is computable, the sequence $\seq{N_{\Gamma_n}}$ exists. Let $F_n = F_{\Gamma_n}$. The arguments in print use the determinacy of the games $G_L(F_{{n}}, F_{n+1})$ and $G_L(F_{{n}}^\complement, F_{n+1})$.  This determinacy shows that there are Lipschitz$_{1/2}$ reductions of $F_{n+1}$ to~$F_n$ and to~$F_{n}^\complement$. Here a function $g\colon \Baire\to \Baire$ being Lipschitz$_{1/2}$ means that $g(x)\rest{k+1}$ is determined by $x\rest{k}$; such reductions are precisely winning strategies for player~I in $G_L(F_{n}, F_{n+1})$. 

We can show the existence of such reductions, indeed that the sequence of these reductions exists, without proving Wadge determinacy in general. We use our construction of the universal functions $F_\Gamma$. We note that $\bGamma\subseteq \bLambda$ does not imply $\Gamma\subseteq \Lambda$ (as defined in \cref{def:effective_containment}). However, this can be obtained by relativising to a not-too-complicated oracle. For a class description~$\Gamma$ and an oracle $z\ge_\Tur \cs{y}{\Gamma}$, let $\Gamma^z$ be the class description which is exactly the same as~$\Gamma$, except that we let $\cs{y}{\Gamma^z} = z$. 

\begin{lemma} \label{lem:comparing_classes_bold_to_light}
    For two acceptable class descriptions~$\Gamma$ and~$\Lambda$, $\bGamma\subseteq \bLambda$ if and only if for some oracle $w\ge_\Tur \cs{y}{\Gamma},\cs{y}{\Lambda}$ with $w\in \Delta^1_1(\seq{\cs{y}{\Gamma},\cs{y}{\Lambda}})$, $\Gamma^w \subseteq \Lambda^w$. 
\end{lemma}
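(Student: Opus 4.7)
The reverse direction is immediate: if $\Gamma^w \subseteq \Lambda^w$, then $\bGamma^w \subseteq \bLambda^w$; but Lemma~\ref{lem:name_translation} yields $\bGamma = \bGamma^w$ and $\bLambda = \bLambda^w$ (the boldface class is invariant under upgrading the description's oracle), so $\bGamma \subseteq \bLambda$.

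The plan for the forward direction is to combine the universal function construction (Lemmas~\ref{lem:the_universal_Gamma_set}--\ref{lem:universal_Gamma_set} and Proposition~\ref{prop:universal_and_so_nonselfdual}) with Louveau's hyperarithmetic name theorem (Corollary~\ref{cor:hyp_functions_have_hyp_names}). Set $w_0 = \langle y^\Gamma, y^\Lambda \rangle$ and relativise the construction of the universal $\bGamma$-function to $w_0$. This produces a total, $w_0$-computable $F \in \Gamma^{w_0}(w_0) \subseteq \bGamma$ with the following strong universal property: for every $z \geq_\Tur y^\Gamma$ and every total $\Gamma(z)$-name $N'$ (coded with its index in the listing from Lemma~\ref{lem:the_universal_Gamma_set}), there is a real $\tilde z = \tilde z(N')$, uniformly $z$-computable from $N'$, such that the Lipschitz map $g_{N'}\colon x \mapsto \langle \tilde z, x\rangle$ satisfies $F^{N'} = F \circ g_{N'}$.

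Now assume $\bGamma \subseteq \bLambda$. Then $F \in \bLambda$, and since $F$ is $w_0$-computable (in particular $\Delta^1_1(w_0)$), Corollary~\ref{cor:hyp_functions_have_hyp_names} applied to the acceptable description $\Lambda^{w_0}$ yields some $w \in \Delta^1_1(w_0)$, $w \geq_\Tur w_0$, and a $\Lambda^{w_0}(w)$-name $M_F$ of $F$; as an object this is equally a $\Lambda^w(w)$-name, since the structural requirements are identical. I claim this $w$ witnesses $\Gamma^w \subseteq \Lambda^w$. Given any total $\Gamma^w$-name $N$ with oracle $z^N \geq_\Tur w$, the plan is to first extract $g_N$ from $N$ (which is $w$-computable in $N$), then upgrade the oracle of $M_F$ from $w$ to $z^N$ via Lemma~\ref{lem:name_translation}, and finally apply the uniform continuous pull-back from the proof of Proposition~\ref{prop:described_are_pointclass} (built on Proposition~\ref{prop:translating_true_stages_across_computable_functions}) to transport $M_F$ along $g_N$. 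This yields a $\Lambda^w(z^N)$-name $M$ with $F^M = F \circ g_N = F^N$.

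The main point requiring care is that the whole procedure $N \mapsto M$ is genuinely $w$-computable uniformly in $N$. But $\tilde z(N)$, and hence $g_N$, is recovered $z^N$-computably from the code of $N$ (this is essentially the content of the listing in Lemma~\ref{lem:the_universal_Gamma_set}), and the pull-back step is uniform in its continuous input; so I do not expect a serious obstacle beyond carefully harmonising the several oracles ($y^\Gamma$, $y^\Lambda$, $w_0$, $w$, $z^N$) at each step. The output $M$ is a $\Lambda^w$-name equivalent to $N$, so $\Gamma^w \subseteq \Lambda^w$ as required.
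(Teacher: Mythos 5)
Your proof is correct and takes essentially the same route as the paper's: both obtain a $w\in\Delta^1_1(\seq{\cs{y}{\Gamma},\cs{y}{\Lambda}})$ together with a $\Lambda(w)$-name for the universal $\Gamma$-function via the separation machinery (you route this through \cref{cor:hyp_functions_have_hyp_names}, while the paper applies \cref{thm:LSR_separation} directly to the game $G_\Lambda(N_\Lambda,F_\Gamma,F_\Gamma^\complement)$ and rules out the Lipschitz alternative by non-self-duality -- the same underlying application), and then translate an arbitrary total $\Gamma(z)$-name by recovering its section of the universal function and pulling the $\Lambda$-name back along the resulting Lipschitz map via the effectivity of \cref{prop:described_are_pointclass}. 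The only differences are packaging: the paper does not bother relativising the universal function to $w_0=\seq{\cs{y}{\Gamma},\cs{y}{\Lambda}}$ explicitly, but your extra care with the oracles is harmless and, if anything, makes the uniformity claims easier to check.
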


\begin{proof}
    We apply \cref{thm:LSR_separation} with the game $G_\Lambda(N_\Lambda,F_\Gamma,F_\Gamma^\complement)$. Player~II cannot have a winning strategy since $\bLambda \nsubseteq \dual{\bGamma}$, (since $\bGamma \nsubseteq \dual{\bGamma}$). Hence, there is some $w\in \Delta^1_1(\cs{y}{\Gamma},\cs{y}{\Lambda})$ (which we may assume computes both $\cs{y}{\Gamma}$ and $\cs{y}{\Lambda}$) such that $F_\Gamma\in \Lambda(w)$. Now, given some $\Gamma(z)$-name~$M$, where $z\ge_\Tur w$, we can effectively find~$a\le_\Tur z$ such that~$F^M$ is the $a$-section of~$F_\Gamma$; using a $\Lambda(w)$-name of $F_\Gamma$, we find a $\Lambda(z)$-name equivalent to~$M$ (we use the effectivity of \cref{prop:described_are_pointclass}).  
\end{proof}

Recall that the complete function~$F_\Gamma$ was defined using an effective pairing function $(x,y)\mapsto \seq{x,y}$ from $\Baire^2$ to~$\Baire$. The standard pairing function $(x,y)\mapsto x_0y_0x_1y_1\dots$ has the property that for all~$a$, $y\mapsto \seq{a,y}$ is Lipschitz$_{1/2}$. Using this pairing function, we get:

\begin{lemma} \label{lem:1/2-reduction_to_complete_set}
    Let~$\Gamma$ be a class description. For all~$B\in \bGamma$, there is a Lipschitz$_{1/2}$ reduction of~$B$ to $F_\Gamma$. 
\end{lemma}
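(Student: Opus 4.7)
The plan is to use the universality of $F_\Gamma$ established in \cref{lem:the_universal_Gamma_set} and \cref{prop:universal_and_so_nonselfdual} (relativised to $\cs{y}{\Gamma}$), combined with the explicit form of the pairing function recalled just before the lemma.

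First I would unpack what $F_\Gamma$ is: it was built so that $F_\Gamma(\seq{z,x}) = F^{N^z}(x)$ for a uniformly $z$-computable family of total $\Gamma(z)$-names $N^z$, and the family $\{F^{N^z} : z \in \Baire\}$ exhausts all total functions in $\bGamma$. Given $B \in \bGamma$, the characteristic function $1_B$ is, by our identification of sets with characteristic functions, a total member of $\bGamma$. Applying the (relativised) universality statement, I fix some $a \in \Baire$ such that
\[
1_B(x) = F^{N^a}(x) = F_\Gamma(\seq{a,x}) \quad \text{for every } x \in \Baire.
\]

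Next I would define $g \colon \Baire \to \Baire$ by $g(x) = \seq{a,x}$. By the highlighted property of the standard pairing function $(x,y)\mapsto x_0 y_0 x_1 y_1 \dots$ recalled immediately before the lemma, the map $y \mapsto \seq{a,y}$ is Lipschitz$_{1/2}$, i.e., $g(x)\rest{k+1}$ depends only on $x\rest{k}$. Thus $g$ is a Lipschitz$_{1/2}$ reduction of $1_B$ to $F_\Gamma$, which is exactly the required reduction of $B$ to $F_\Gamma$.

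There is essentially no obstacle here; the statement is a bookkeeping consequence of how $F_\Gamma$ was constructed via the pairing function. The only mild subtlety is to note that the universal function in \cref{prop:universal_and_so_nonselfdual} is set up for the oracle $\cs{y}{\Gamma}$, so the parameter $a$ witnessing $1_B \in \bGamma$ may need to be taken to code whatever additional oracle is required to name $B$; this is automatic since the listing runs over all $z \in \Baire$ in the Baire-space coordinate of the pairing. No additional determinacy or Wadge-theoretic input is needed, which is precisely why this lemma is positioned as the Lipschitz$_{1/2}$-refinement of \cref{prop:universal_and_so_nonselfdual} needed to drive the descent argument toward \cref{lem:no_descending_sequence_of_descriptions}.
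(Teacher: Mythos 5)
Your proposal is correct and is essentially the argument the paper intends: the paper gives no separate proof of this lemma, but derives it exactly as you do, from the universality of $F_\Gamma$ (\cref{lem:the_universal_Gamma_set}, \cref{prop:universal_and_so_nonselfdual} relativised to $\cs{y}{\Gamma}$) together with the fact that the interleaving pairing function makes $x\mapsto\seq{a,x}$ Lipschitz$_{1/2}$. Your remark about the oracle parameter absorbing $\cs{y}{\Gamma}$ matches the paper's setup $F(\seq{z,x}) = F^{N^{\smallseq{\cs{y}{\Gamma},z}}}(x)$, so there is nothing missing.
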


\Cref{lem:comparing_classes_bold_to_light} holds for infinite sequences. In particular, if $\seq{\Gamma_n}$ is a sequence of acceptable class descriptions with $\bGamma_0> \bGamma_1 > \cdots$, then there is a single oracle~$w$ such that $\Gamma_0^w > \Gamma_1^w > \cdots$, uniformly. This is because $\ATR_0$ implies open determinacy for infinite sequences of open games.\footnote{Specifically, a winning strategy for~I in an open game is (uniformly) computed from the rank function on the well-founded tree of positions at which~I has not yet won; we can join infinitely many well-founded trees to get ranking functions for all.} Thus, after relativising to an oracle, in the proof of \cref{lem:no_descending_sequence_of_descriptions}, we may assume that $\Gamma_{n+1}<\Gamma_n$, uniformly. By \cref{lem:1/2-reduction_to_complete_set}, then, we get the desired Lipschitz$_{1/2}$ reductions of~$F_{n+1}$ to~$F_n$ and~$F_n^\complement$. The last step of the proof is the fact that Borel sets have the property of Baire; this is provable in~$\ATR_0$ (the standard construction can be carried out in $\ATR_0$; see, for example, \cite{DzhafarovFloodEtAl}). Hence, \cref{lem:no_descending_sequence_of_descriptions} is provable in~$\ATR_0$.

\subsubsection{Minimal elements} 

The relation $\left\{ (\Gamma,\Lambda) \,:\, \bGamma<\bLambda  \right\}$ is a relation on reals. Thus, from \cref{lem:no_descending_sequence_of_descriptions} we cannot immediately deduce stronger formulations of this relation being well-founded. Indeed, very strong forms of dependent choice on reals are not even provable in second-order arithmetic. To find minimal elements, we restrict to a countable set. 

\begin{lemma}[$\ATR_0+\Pind$] \label{lem:minimal_class}
    Let~$r$ be an oracle, and let~$B$ be a Borel set with an $r$-computable code. There is an acceptable description~$\Gamma\in \Delta^1_1(r)$ with $B\in \bGamma$, such that for any acceptable description $\Lambda\in \Delta^1_1(r)$, if $\bLambda < \bGamma$ then $B\notin \bLambda$.
\end{lemma}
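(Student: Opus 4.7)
The plan is to show that the collection $\mathcal{A}$ of acceptable class descriptions $\Gamma \in \Delta^1_1(r)$ with $B \in \bGamma$ is non-empty and admits a $<$-minimal element; any such minimum is the $\Gamma$ the lemma asks for. To establish non-emptiness, note that since $B$ has an $r$-computable Borel code, there is an $r$-computable ordinal $\alpha$ with $B \in \bSigma^0_{1+\alpha}(r)$. The standard acceptable description of $\bSigma^0_{1+\alpha}$ (see Example b$'$ following \Cref{def:acceptable}), with its oracle set to $r$, is $r$-computable (hence in $\Delta^1_1(r)$) and has $B$ in its class, so lies in $\mathcal{A}$.

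To extract a $<$-minimum, the main input is \Cref{lem:no_descending_sequence_of_descriptions}, which is proved in $\ATR_0$ and asserts that $<$ is well-founded on acceptable descriptions. By Louveau's \Cref{cor:hyp_functions_have_hyp_names}, any acceptable $\Gamma \in \Delta^1_1(r)$ with $B \in \bGamma$ admits a $\Delta^1_1(r)$ $\Gamma$-name of $B$; combined with the essentially arithmetic character of the relation~$<$ on descriptions (\Cref{def:effective_containment}), this lets me express $\mathcal{A}$ together with the restriction of $<$ to $\mathcal{A}$ via a $\Sigma^1_1(r)$-predicate on natural-number indices of $\Delta^1_1(r)$ reals, for instance using indices in Kleene's $\mathcal{O}^r$.

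Suppose for a contradiction that $\mathcal{A}$ has no $<$-minimal element: every $\Gamma \in \mathcal{A}$ admits some $\Lambda \in \mathcal{A}$ with $\bLambda < \bGamma$. Using $\Sigma^1_1$-choice (available in $\ATR_0$) together with $\Pind$ applied to the $\Pi^1_1$ assertion that every finite $<$-descending chain in $\mathcal{A}$ can be extended by one further step, I will obtain arbitrarily long finite descending chains in $\mathcal{A}$, and, by assembling these uniformly, an infinite $<$-descending sequence $\Gamma_0 > \Gamma_1 > \cdots$ in $\mathcal{A}$. This contradicts \Cref{lem:no_descending_sequence_of_descriptions}, giving the required minimum.

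The main obstacle is the last step. Since $\ATR_0 + \Pind$ does not include full $\Sigma^1_1$-dependent choice, I must show that $\Pind$ suffices to iterate the single-step $\Sigma^1_1$ choice along~$\omega$. The intended argument is to apply $\Pi^1_1$-induction to the length parameter of a finite-sequence predicate (``a descending chain in $\mathcal{A}$ of length~$n$ starting from a prescribed initial description exists''), producing chains of every finite length; $\Sigma^1_1$-AC, applied to the $\Sigma^1_1$ family of length-$n$ extensions, then coordinates them into the single infinite sequence needed for the contradiction.
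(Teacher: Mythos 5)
Your overall strategy matches the paper's: establish non-emptiness via an acceptable description of some $\bSigma^0_{1+\alpha}$ containing $B$, pass to natural-number indices of $\Delta^1_1(r)$ descriptions so that the candidates form a (definable) subset of $\Nat$, and derive the existence of a $<$-minimal element from the well-foundedness statement \cref{lem:no_descending_sequence_of_descriptions}. However, your final step contains a genuine gap, in two respects.

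First, a complexity miscalculation: the relevant predicates are $\Pi^1_1$, not $\Sigma^1_1$. Being a $\Delta^1_1(r)$-code is a $\Pi^1_1$ condition; ``$B\in\bGamma$'' is $\Pi^1_1$ (player~II has \emph{no} winning strategy in the open game of \cref{lem:containment_and_game}); and ``$\bGamma<\bLambda$'' is $\Pi^1_1$ (\cref{lem:complexity_of_acceptable_descriptions_and_membership}). So the single-step statement ``every $a$ in the candidate set has some $b$ with $\bGamma_b<\bGamma_a$'' is not in a form to which $\Sigma^1_1$-$\mathsf{AC}$ applies. Second, and more seriously, your mechanism for assembling the contradiction does not work: producing descending chains of every finite length does not yield an infinite descending sequence, since the length-$(n+1)$ chains need not cohere with the length-$n$ chains (a partial order can have arbitrarily long finite descending chains while having no infinite one). $\Sigma^1_1$-$\mathsf{AC}$ applied to the family of length-$n$ chains gives you a sequence of unrelated finite chains, not a single infinite chain. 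The correct tool --- and the reason $\Pind$ appears in the hypothesis --- is $\Pi^1_1$-\emph{dependent choice for relations on numbers}, which is provable in $\ATR_0+\Pind$ (Simpson, VIII.4.10). Applied to the $\Pi^1_1$ relation $a\prec b \iff \bGamma_b < \bGamma_a$ on the $\Pi^1_1$ set of codes, it directly produces an infinite $<$-descending sequence of acceptable descriptions if no minimal element exists, contradicting \cref{lem:no_descending_sequence_of_descriptions}. (Your invocation of \cref{cor:hyp_functions_have_hyp_names} is not needed here; the candidates are already required to be $\Delta^1_1(r)$-coded descriptions.)
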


We will shortly see, though, that the class description~$\bGamma$ is actually minimal among all acceptable class descriptions: the condition $\Lambda\in \Delta^1_1(r)$ can be omitted. We need the following complexity calculations.

\begin{lemma} \label{lem:complexity_of_acceptable_descriptions_and_membership}
    The following relations and sets are all~$\Pi^1_1$:
    \begin{orderedlist}
        \item The collection of acceptable class descriptions;
        \item The relation $B\in \bGamma$ between a Borel code~$B$ and an acceptable class description~$\Gamma$; 
        \item The relation $\bGamma < \bLambda$ between acceptable class descriptions~$\Gamma$ and~$\Lambda$. 
    \end{orderedlist}
\end{lemma}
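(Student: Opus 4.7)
The plan is to prove the three parts in the order (ii), (iii), (i), using \cref{lem:containment_and_game} together with open determinacy (available in $\ATR_0$), and then closing the analytical hierarchy under the appropriate quantifiers. None of the arguments are deep; the main thing to get right is that every existential witness that appears is \emph{numerical}, so that nothing is pushed past $\Pi^1_1$.

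For (ii), \cref{lem:containment_and_game} shows $B \in \bGamma$ is equivalent to player~I having a winning strategy in $G_\Gamma(N_\Gamma, B, B^\complement)$, a game whose tree and winning condition are arithmetic in $\Gamma$ and (a $\Sigma^1_1$ presentation of) $B$, and which is open for player~I. ``Player~II has a winning strategy'' is then $\Sigma^1_1$: it asserts the existence of a (nonempty) quasi-strategy tree $\tau$ for II along which no finite position is already won for~I, a condition arithmetic in $\tau$. Open determinacy --- provable in $\ATR_0$ --- converts this to ``player~I wins'' being $\Pi^1_1$. For (iii), $\bGamma < \bLambda$ unfolds as the two containments $\bGamma \subseteq \bLambda$ and $\dual\bGamma \subseteq \bLambda$. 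The containment $\bGamma \subseteq \bLambda$ is equivalent to ``for every total $\Gamma$-name $N$, the Borel set coded by $F^N$ belongs to $\bLambda$''; since being a total $\Gamma$-name is arithmetic in $N$ and the conclusion ``$F^N \in \bLambda$'' is $\Pi^1_1$ in $N$ by~(ii), universal quantification over $N$ keeps the whole implication within $\Pi^1_1$. The dual containment is handled identically, using that $\dual\bGamma$ is named by the same $\Gamma$-names under the map $F \mapsto 1 - F$.

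For (i), the acceptability of $\Gamma$ decomposes into three conjuncts: (a)~well-foundedness of $T_\Gamma$, which is $\Pi^1_1$; (b)~arithmetic side conditions (leaves labelled in $\{0,1\}$; non-leaves labelled by pairs of $y^\Gamma$-computable ordinals with $\eta_s = 1$; and every $s\conc n \in T_\Gamma$ at each non-leaf); and (c)~the uniform monotonicity of the sub-sequences $\Gamma_{s\conc 0}, \Gamma_{s\conc 1}, \dots$ for each non-leaf $s$. Unpacking (c) via \cref{def:effective_containment} and \cref{def:monotone}, it asserts the existence of a $y^\Gamma$-computable map $(s,n) \mapsto e(s,n)$ of indices such that, for every non-leaf $s \in T_\Gamma$, every $n$, and every total $\Gamma_{s\conc n}$-name $N$, the output $\varphi_{e(s,n)}(N)$ is a total $\dual\Gamma_{s\conc (n+1)}$-name equivalent to $N$. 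The matrix is $\Pi^1_1$ in $N$: being a name is arithmetic, while equivalence $F^M = F^N$ requires the universal clause $\forall x \in \omega^\omega\,\,F^M(x) = F^N(x)$, which is $\Pi^1_1$ since $F^M(x)$ and $F^N(x)$ are computed arithmetically from finite initial segments of $x$ via a path through the well-founded $T_\Gamma$. Closure of $\Pi^1_1$ under numerical existential quantification (over $e$), numerical universal quantification (over $s,n$), and real universal quantification (over $N$) then puts (c), and hence (i), in $\Pi^1_1$.

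The main potential obstacle is exactly the existential quantifier in (i)(c): if we had to witness the uniform monotonicity by an arbitrary continuous or $\Delta^1_1$ functional, the statement would be $\Sigma^1_2$. The whole point of having phrased effective containment (\cref{def:effective_containment}) in terms of a computable index is that this existential collapses to a quantifier over $\omega$, which $\Pi^1_1$ absorbs harmlessly. Everything else is bookkeeping with standard closure properties of the analytical hierarchy and the open-game characterisation provided by \cref{lem:containment_and_game}.
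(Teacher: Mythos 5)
Your proof is correct, and for parts (i) and (ii) it follows the paper's argument essentially verbatim: (ii) is the open-determinacy reformulation via \cref{lem:containment_and_game} (a strategy being winning for player~II is arithmetic since II's side is closed, so II having one is $\Sigma^1_1$), and (i) reduces to the chain ``$F^N(x)=F^M(x)$ is $\Pi^1_1$, hence name-equivalence, hence effective containment, hence monotonicity are $\Pi^1_1$,'' the key point being exactly the one you isolate: the witness demanded by \cref{def:effective_containment} is a computable \emph{index}, so the existential is numerical. The one place you genuinely diverge is (iii): you express $\bGamma \subseteq \bLambda$ as a universal statement over all total $\Gamma$-names and then invoke closure of $\Pi^1_1$ under universal real quantification together with part (ii), whereas the paper uses the computability of $\Gamma \mapsto N_\Gamma$ to reduce the containment to the single instance $F_\Gamma \in \bLambda$ of part (ii), with no real quantifier at all. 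Both are valid; the paper's route is more economical and makes the uniformity in $\Gamma,\Lambda$ immediate. Two small points worth tightening in your write-up: well-orderedness of the labels $\xi_s,\eta_s$ is itself $\Pi^1_1$ rather than arithmetic (harmless, since it sits inside a $\Pi^1_1$ conjunction, but it belongs with your clause (a) rather than (b)); and since the lemma is applied inside $\ATR_0$, one should record, as the paper does, that closure of $\Pi^1_1$ under number quantifiers is available there because $\ATR_0$ proves $\Sigma^1_1$-choice.
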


\begin{proof}
    First, we observe that the relation $F^N(x)= F^M(x)$ for a real $x\in \Baire$ and $\Gamma$- and $\Lambda$-names~$N$ and~$M$ (for any class descriptions~$\Gamma$ and~$\Lambda$, not necessarily acceptable), is~$\Pi^1_1$. Hence, the relation $N\equiv M$ (\cref{def:equivalence_of_names}) is also~$\Pi^1_1$. Hence, the relation $\Gamma\subseteq \Lambda$ (\cref{def:effective_containment}) is $\Pi^1_1$. This implies that being a monotone sequence of class descriptions (\cref{def:monotone}) is~$\Pi^1_1$. This gives~(i). Throughout, we used that the class $\Pi^1_1$ is closed under number quantifiers (effective countable unions and intersections); this is provable in~$\ATR_0$, as~$\ATR_0$ implies $\Sigma^1_1$-choice (see \cite[VIII.3.21]{Simpson}). 

    (ii) follows from \cref{lem:containment_and_game} and open determinacy: $B\in \bGamma$ if and only if player~II does not have a winning strategy in the game $G(N_\Gamma,B,B^\complement)$.  (Since player~II's side of the game is closed, a given strategy being a winning strategy for them is arithmetic.)

    (iii) follows from the map $\Gamma\mapsto N_\Gamma$ being computable. $\bGamma\subseteq \bLambda$ if and only if $F_\Gamma\in \bLambda$ (and recall that we effectively obtain a Borel name for~$F_\Gamma$).  
\end{proof}

\begin{proof}[Proof of \cref{lem:minimal_class}]
    For simplicity, assume that~$r$ is computable. Let $\+C$ be the collection of all $e\in \Nat$ which are $\Delta^1_1$-codes of an acceptable class descriptions~$\Gamma$ such that $B\in \bGamma$. 

    $\+C$ is not empty: for some computable ordinal~$\xi$, we have $B\in \Sigma^0_\xi$, and there is a computable acceptable description of this class. 

    $\+C$ is $\Pi^1_1$. This follows from \cref{lem:complexity_of_acceptable_descriptions_and_membership}, and the fact that being a $\Delta^1_1$-code is $\Pi^1_1$. Note that we are not assuming that~$\+C$ exists; it is merely definable. 

    For $a\in \+C$, let $\Gamma_a$ be the class description coded by~$a$. For $a,b\in \+C$, let $a\prec b$ if $\bGamma_a < \bGamma_b$. By \cref{lem:complexity_of_acceptable_descriptions_and_membership}, this relation is~$\Pi^1_1$. 

    The principle of $\Pi^1_1$-dependent choice for relations on numbers is provable in $\ATR_0+\Pind$ (\cite[VIII.4.10]{Simpson}). Hence, if~$\+C$ does not have a $\prec$-minimal element, then \cref{lem:no_descending_sequence_of_descriptions} fails. 
\end{proof}

\subsection{All classes are described; SLO; determinacy} 
\label{sub:all_classes_are_described_slo_determinacy}

We can now obtain all the results we are after.

\begin{theorem}[$\ATR_0+\Pind$] \label{thm:every_class_acceptable}
    Every Borel Wadge class is either $\bGamma$ or $\Delta(\bGamma)$ for some accptable~$\Gamma$. 
\end{theorem}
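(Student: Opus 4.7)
The plan is as follows. Given a Borel set $B$, fix an oracle $r$ computing a Borel code of $B$, and use \cref{lem:minimal_class} to choose an acceptable description $\Gamma \in \Delta^1_1(r)$ with $B \in \bGamma$ and with $\bGamma$ minimal among classes of acceptable descriptions in $\Delta^1_1(r)$ containing $B$. I will show that $[B]_W \in \{\bGamma, \Delta(\bGamma)\}$, splitting on whether or not $B \in \dual{\bGamma}$. If $B \notin \dual{\bGamma}$, applying \cref{lem:SLO_withLipschitz} with the acceptable description $\dual\Gamma$ (after upgrading to an oracle $r' \in \Delta^1_1(r)$ computing $\Gamma$) yields $\bGamma \subseteq [B]_L \subseteq [B]_W$; combined with $[B]_W \subseteq \bGamma$ (which holds as $\bGamma$ is closed under continuous preimages), this gives $[B]_W = \bGamma$.

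Otherwise $B \in \Delta(\bGamma)$ and I must establish $\Delta(\bGamma) \subseteq [B]_W$. My first subclaim will be that $\Gamma$ then has countable type at level~$0$; all other cases from \cref{thm:classification:main} lead to contradictions with minimality. Indeed, suppose $\Gamma$ has uncountable type, or countable type with $o(\Gamma) > 0$. Fix $r' \in \Delta^1_1(r)$ computing $\Gamma$, so that $\Delta^1_1(r') = \Delta^1_1(r)$. Two applications of \cref{cor:hyp_functions_have_hyp_names} (to $B \in \bGamma$ and to $B \in \dual{\bGamma}$, both $\Delta^1_1(r')$-measurable since $B$ is Borel) yield $w, w' \in \Delta^1_1(r)$ with $B \in \Gamma(w) \cap \dual\Gamma(w')$, whence $B \in \Delta(\Gamma(z))$ for $z = w \oplus w' \in \Delta^1_1(r)$. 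In the uncountable-type case, clause~(3) of \cref{def:type} (applied with $\dom B = \Baire \in \Pi^0_{1+o(\Gamma)}(z)$) produces an acceptable $\Lambda$ with $y^\Lambda = z$, $\Lambda < \Gamma$, and $B \in \bLambda$; in the countable-type case with $o(\Gamma) > 0$, \cref{prop:main:cofinality_omega1} with $\xi = 0$ gives the same. Since $y^\Lambda = z \in \Delta^1_1(r)$ we have $\Lambda \in \Delta^1_1(r)$, contradicting the minimality of $\Gamma$.

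So $\Gamma$ has countable type at level~$0$, with witnessing sequence $\bar\Theta$. By \cref{prop:countable_type:level_0}, $\Delta(\bGamma)$ is the least principal pointclass containing $\bigcup_n \bTheta_n$, with universal element $H(n\conc x) = H_n(x)$ built from universal $H_n \in \bTheta_n$. It thus suffices to produce Lipschitz$_1$ reductions $g_n$ of $H_n$ to $B$ uniformly in $n$, for then $g(n\conc y) = g_n(y)$ continuously reduces $H$ to $B$ and gives $\Delta(\bGamma) = [H]_W \subseteq [B]_W$. Since $\bar\Theta$ is uniformly $y^\Gamma$-computable, each $\Theta_n$ and $\dual\Theta_n$ lies in $\Delta^1_1(r)$, and $\bar\Theta < \Gamma$ gives $\Theta_n, \dual\Theta_n < \Gamma$. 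Minimality of $\Gamma$ then forces $B \notin \bTheta_n$ and $B \notin \dual\bTheta_n$; two applications of \cref{lem:SLO_withLipschitz} (once with $\Theta_n$, once with $\dual\Theta_n$) therefore yield $\bTheta_n \cup \dual\bTheta_n \subseteq [B]_L$, and in particular $H_n \leq_L B$. The uniformity of the $g_n$ in $\ATR_0$ will follow from uniform computability of winning strategies in open games from rank functions on their game trees. The principal obstacle throughout Case~$2$ is precisely the need to upgrade the $\Delta^1_1(r)$-minimality from \cref{lem:minimal_class} into an obstruction whenever we wish to place $B$ in a strictly smaller class; the $z$-effective constructions in \cref{def:type} and \cref{prop:main:cofinality_omega1}, together with \cref{cor:hyp_functions_have_hyp_names}, are exactly what enable this upgrade.
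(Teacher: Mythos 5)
Your proof is correct and follows essentially the same route as the paper's: minimality via \cref{lem:minimal_class}, the case split on whether $B \in \dual{\bGamma}$, elimination of the uncountable-type and positive-level cases via \cref{cor:hyp_functions_have_hyp_names} together with \cref{def:type} and \cref{prop:main:cofinality_omega1}, and then \cref{lem:SLO_withLipschitz} plus \cref{prop:countable_type:level_0} to get $\Delta(\bGamma)\subseteq [B]_W$ (the paper uses monotonicity of $\bar\Theta$ where you apply \cref{lem:SLO_withLipschitz} to both $\Theta_n$ and $\dual{\Theta}_n$, which is equally fine). The only omission, a trivial one, is ruling out zero type, which does not follow from minimality but rather from the fact that $\Delta(\bGamma)$ would then contain no total function and so could not contain $1_B$.
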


\begin{proof}
    Let~$B$ be a Borel set. Let~$r$ be an oracle that computes a Borel code for~$B$. Let~$\Gamma$ be supplied by \cref{lem:minimal_class}. We show that $[B]_W = \bGamma$ or $[B]_W = \Delta(\bGamma)$. For simplicity, assume that~$r$ is computable. 

    If $B\notin \dual{\bGamma}$ then $[B]_W = \bGamma$ follows from \cref{lem:SLO_withLipschitz}. Indeed, in this case, we get $\bGamma = [B]_W = [B]_L$. 

    Suppose, then, that $B\in \Delta(\bGamma)$. We now use \cref{thm:classification:main}. As $\Delta(\bGamma)\ne \emptyset$, $\Gamma$ does not have zero type. By \cref{cor:hyp_functions_have_hyp_names}, $B\in \Delta(\Gamma(\Delta^1_1))$. If~$\Gamma$ has uncountable type, or $o(\Gamma)>0$, then by definition, or by \cref{prop:main:cofinality_omega1}, $B\in \bLambda$ for some acceptable $\Lambda\in \Delta^1_1$ with $\Lambda < \Gamma$, contrary to the minimality of~$\Gamma$. Hence, $o(\Gamma)=0$ and~$\Gamma$ has countable type; let $\seq{\Theta_n}$ witness this fact. Since $\Theta_n\in \Delta^1_1$ and $\Theta_n<\Gamma$ for all~$n$, the minimality of~$\Gamma$ implies that $B\notin \bigcup_n \bTheta_n$. By \cref{lem:SLO_withLipschitz}, and since the sequence $\seq{\Theta_n}$ is monotone, $\bigcup_n \bTheta_n \subseteq [B]_W$. By \cref{prop:countable_type:level_0}, $\Delta(\bGamma)\subseteq [B]_W$. 
\end{proof}

We can deduce the semi-linear ordering property for Borel Wadge classes:

\begin{theorem}[$\ATR_0+\Pind$]
    For any two Borel sets~$A$ and~$B$, $A\le_W B$ or $B^\complement \le_W A$. 
\end{theorem}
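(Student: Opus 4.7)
The plan is to prove the contrapositive: assume $A\not\le_W B$ and derive $B^\complement\le_W A$. The main tools are \cref{thm:every_class_acceptable}, which gives us an acceptable description $\Gamma$ with $[B]_W$ equal to either $\bGamma$ or $\Delta(\bGamma)$, and \cref{lem:SLO_withLipschitz}, which says that whenever a Borel set $A$ fails to lie in $\bLambda$ for some acceptable $\Lambda$, the entire dual class $\check{\bLambda}$ is contained in $[A]_L\subseteq[A]_W$. Note also that if $\Gamma$ is acceptable then so is $\check\Gamma$ (by \cref{def:dual_class}), so \cref{lem:SLO_withLipschitz} can be applied to either side of a dual pair.

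First I would apply \cref{thm:every_class_acceptable} to $B$, obtaining an acceptable $\Gamma$, and split into two cases. If $[B]_W=\bGamma$, then $A\not\le_W B$ gives $A\notin\bGamma$, so \cref{lem:SLO_withLipschitz} yields $\check\bGamma\subseteq[A]_W$; since $B\in\bGamma$ implies $B^\complement\in\check\bGamma$, we conclude $B^\complement\le_W A$, which is equivalent to $B\le_W A^\complement$ as required.

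If instead $[B]_W=\Delta(\bGamma)$, observe that $\Delta(\bGamma)=\bGamma\cap\check\bGamma$ is closed under complementation, so $B^\complement\in[B]_W$, giving $B^\complement\le_W B$. The assumption $A\not\le_W B$ gives $A\notin\Delta(\bGamma)$, so either $A\notin\bGamma$ or $A\notin\check\bGamma$. In the first situation, \cref{lem:SLO_withLipschitz} gives $\check\bGamma\subseteq[A]_W$, which contains $B^\complement$. In the second situation, applying \cref{lem:SLO_withLipschitz} to the acceptable description $\check\Gamma$ yields $\bGamma\subseteq[A]_W$, which contains $B$; chaining with $B^\complement\le_W B$ gives $B^\complement\le_W A$.

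Since the heavy lifting---the classification theorem, the separation theorem, and the minimality argument for acceptable descriptions---has already been carried out, there is no real obstacle in this final step: it is a short case analysis. The only point of care is to remember that $\Delta(\bGamma)$ is self-dual, so $B\in\Delta(\bGamma)$ automatically puts $B^\complement$ into the same Wadge class, allowing us to unify the two subcases of the $\Delta(\bGamma)$ scenario.
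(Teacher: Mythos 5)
Your proof is correct and follows essentially the same route as the paper: apply \cref{thm:every_class_acceptable} to $B$, split on whether $[B]_W$ is $\bGamma$ or $\Delta(\bGamma)$, and in each case invoke \cref{lem:SLO_withLipschitz} (the paper's version is just more terse, leaving the self-duality of $\Delta(\bGamma)$ and the application of the lemma to $\check\Gamma$ implicit). The only minor point is that "$\check\Gamma$ is acceptable" needs the observation that dualizing preserves monotonicity of the subsequences, not just \cref{def:dual_class} itself, but this is routine.
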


\begin{proof}
    If $[B]_W$ is non-self-dual, then the result follows from \cref{lem:SLO_withLipschitz}. 

    Suppose that $[B]_W = \Delta(\bGamma)$ for some non-self-dual~$\bGamma$. If $A\nle_W B$ then either $A\notin \bGamma$ or $A\notin \dual{\bGamma}$; in either case, the result again follows from \cref{lem:SLO_withLipschitz}. 
\end{proof}

Finally, for Borel Wadge determinacy. The proof in the non-self-dual case is simplified by our construction of complete sets. 

\begin{theorem} \label{thm:BorelWadgeDeterminacy}
    For any two Borel sets~$A$ and~$B$, the game $G_L(A,B)$ is determined. 
\end{theorem}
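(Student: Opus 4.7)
The strategy is to leverage \cref{thm:every_class_acceptable} to reduce the problem to a dichotomy on the Wadge class of $B$, then read off the winning strategy from \cref{lem:SLO_withLipschitz}, with transfinite induction up the Wadge hierarchy (justified by the well-foundedness \cref{lem:no_descending_sequence_of_descriptions} together with $\Pi^1_1$-induction, available in $\ATR_0+\Pind$) to handle the one subcase that does not resolve directly.

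First I would apply \cref{thm:every_class_acceptable} to $B$ to obtain an acceptable description $\Gamma$ with $[B]_W \in \{\bGamma,\Delta(\bGamma)\}$. In the non-self-dual case $[B]_W = \bGamma$, the proof of that theorem in fact establishes $[B]_L = \bGamma$. So if $A \in \bGamma$, then the Lipschitz reduction $A \le_L B$ yields a winning strategy for player~II; otherwise $A \notin \bGamma$, and \cref{lem:SLO_withLipschitz} gives $\dual{\bGamma} \subseteq [A]_L$. Since $B \in \bGamma$ means $B^\complement \in \dual{\bGamma}$, we obtain $B^\complement \le_L A$ and a winning strategy for player~I.

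The self-dual case $[B]_W = \Delta(\bGamma)$ splits further. Here $B, B^\complement \in \Delta(\bGamma) \subseteq \bGamma \cap \dual{\bGamma}$. If $A \notin \bGamma$, then \cref{lem:SLO_withLipschitz} applied to~$A$ gives $\dual{\bGamma} \subseteq [A]_L$, and $B^\complement \in \dual{\bGamma}$ produces I's strategy; symmetrically, if $A \notin \dual{\bGamma}$, then $\bGamma \subseteq [A]_L$ and $B^\complement \in \bGamma$ again yield $B^\complement \le_L A$, so I wins. The remaining subcase is $A \in \Delta(\bGamma)$, where neither direct application of \cref{lem:SLO_withLipschitz} to~$A$ is decisive. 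Here I would invoke the classification of $\Delta(\bGamma)$: by \cref{prop:countable_type:level_0} in the principal case, and by \cref{prop:classification:uncountable_cofinality} otherwise, $\Delta(\bGamma)$ is a union of strictly smaller acceptably described classes. Taking minimal acceptable descriptions for $A$ and $B$ inside $\Delta(\bGamma)$ and comparing them via the SLO content of \cref{lem:SLO_withLipschitz}, one produces a single acceptable $\Lambda < \Gamma$ with $A,B \in \bLambda$, and then applies the inductive hypothesis to $\Lambda$.

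The hard part is this last subcase: it is what forces transfinite induction up the Wadge hierarchy rather than a one-shot appeal to separation, and it is precisely where $\Pind$ enters the proof. A secondary technical point is that \cref{lem:SLO_withLipschitz} supplies Lipschitz$_1$ reductions whereas player~I's winning strategies in $G_L$ correspond (by the convention of \cref{sub:well_foundedness}) to Lipschitz$_{1/2}$ functions; this is bridged by the standard shift-by-one manoeuvre that converts a Lipschitz$_1$ reduction of $B^\complement$ to $A$ into a Lipschitz$_{1/2}$ strategy for player~I.
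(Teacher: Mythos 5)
Your first two cases track the paper's argument, but the final subcase --- $[A]_W=[B]_W=\Delta(\bGamma)$ --- contains a genuine gap, and it is exactly the case that carries the weight of the theorem. You propose to produce a single acceptable $\Lambda<\Gamma$ with $A,B\in\bLambda$ and then induct. No such $\Lambda$ can exist: $\bLambda$ is a pointclass (closed under continuous preimages), so $B\in\bLambda$ together with $[B]_W=\Delta(\bGamma)$ forces $\Delta(\bGamma)\subseteq\bLambda$, while $\Lambda<\Gamma$ gives $\bLambda\subseteq\Delta(\bGamma)$; the non-self-dual class $\bLambda$ (\cref{prop:universal_and_so_nonselfdual}) would then equal the self-dual class $\Delta(\bGamma)$. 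Moreover, even setting this aside, ``transfinite induction up the Wadge hierarchy'' is not available in $\ATR_0+\Pind$: \cref{lem:no_descending_sequence_of_descriptions} only rules out infinite descending sequences of a relation on \emph{reals}, and the paper extracts minimal elements only from countable, $\Delta^1_1$-coded families (\cref{lem:minimal_class}); nowhere does it perform induction along the Wadge ordering. The paper's actual device for this subcase is an auxiliary clopen game: since here $\Gamma$ has countable type with $o(\Gamma)=0$, witnessed by $\bar\Theta$, the trees $T_A=\{\s : A\cap[\s]\notin\bigcup_n\bTheta_n\}$ and $T_B$ are well-founded; the players race to fall off their respective trees, this clopen game is determined, and whoever wins it finishes $G_L(A,B)$ with a single application of \cref{prop:closure:mergeing_paritioned_functions} and \cref{lem:SLO_withLipschitz} to the clopen pieces $A\cap[\tau]$ and $B\cap[\s\conc m]$. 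That idea is absent from your proposal and cannot be replaced by the induction you describe.

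A secondary but real error: there is no ``standard shift-by-one manoeuvre'' converting a Lipschitz$_1$ reduction of $B^\complement$ to $A$ into a Lipschitz$_{1/2}$ one. Player I must produce $x\rest{k+1}$ from $y\rest{k}$, whereas a Lipschitz$_1$ function only yields $x\rest{k}$ from $y\rest{k}$; Lipschitz$_{1/2}$ is a strictly stronger requirement. The paper obtains player I's strategy by composing the Lipschitz$_1$ reduction $F_\Gamma^\complement\le_W A$ supplied by \cref{lem:SLO_withLipschitz} with the Lipschitz$_{1/2}$ reduction $B\le_W F_\Gamma$ of \cref{lem:1/2-reduction_to_complete_set}, which exists because of the interleaving pairing function used to build the universal set. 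You need this composition (or an equivalent device) at every point where you claim a winning strategy for player I.
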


\begin{proof}
    Recall that this means: either $A\le_W B$ by a Lipschitz$_1$ reduction, or $B^\complement \le_W A$ by a Lipshitz$_{1/2}$ reduction. 

    If $[B]_W = \bGamma$ is non-self dual, then we have already observed that $\bGamma = [B]_L$. Hence, if $A\le_W B$ then $A\le_W B$ by a Lipschitz$_1$ reduction. Otherwise, by \cref{lem:SLO_withLipschitz}, $F_\Gamma^\complement \le_W A$ by a Lipschitz$_1$ reduction. By \cref{lem:1/2-reduction_to_complete_set}, $B\le_W F_\Gamma$ via a Lipschitz$_{1/2}$ reduction. Composing, we get $B^\complement \le_W A$ via a Lipschitz$_{1/2}$ reduction. 

    Suppose that $[B]_W = \Delta(\bGamma)$. Hence,~$\Gamma$ has countable type and $o(\Gamma)= 0$; let $\bar{\Theta}$ witness this. If $A\in \bigcup_n \bTheta_n$ then as observed above, as $\bar\Theta$ is monotone, $\bigcup_n \bTheta_n \subseteq [B]_L$. Thus, the last case we need to consider is $[A]_W = [B]_W = \Delta(\bGamma)$. 

    Let~$T_A$ be the collection of $\s\in \w^{<\w}$ such that $A\cap [\s]\notin \bigcup_n \bTheta_n$; similarly define~$T_B$. By \cref{prop:closure:subdomains}, $T_A$ and~$T_B$ are trees. Since there is a clopen partition of Baire space into sets~$C$ with $A\cap C\in \bigcup_n \bTheta_n$, the tree~$T_A$ is well-founded; similarly, so is~$T_B$. By assumption, the empty string is on both~$T_A$ and~$T_B$. The trees exist by $\Delta^1_1$-comprehension. 

    Now players~I and~II play an auxiliary game, taking turns choosing natural numbers, and trying to stay on their respective tree: player~II wins if there is a step of the game at which player~I played~$\tau\notin T_A$ and player~II played $\s\in T_B$. This is a clopen game and so one of the players has a winning strategy. We claim that the same player has a winning strategy in $G_L(A,B)$. 

    Suppose, for example, that~II has a winning strategy in the auxiliary game. At the winning step of the auxiliary game, say player~I has played $\tau\notin T_A$ and player~II has played $\s\in T_B$; note that $|\tau| = |\s|+1$. There is some~$n$ such that $A\cap [\tau]\in \bTheta_n$. By \cref{prop:closure:mergeing_paritioned_functions}, there is some~$m$ such that $B\cap [\s\conc m]\notin \dual{\bTheta_n}$. Player~II chooses this~$m$. Now by \cref{lem:SLO_withLipschitz}, $A\cap [\tau]\in [B\cap [\s\conc m]]_L$, so player~II can use this reduction to win the rest of $G_L(A,B)$. The same argument works when player~I wins the auxiliary game. 
\end{proof}

\section{Developing the true stage machinery}
\label{sec:true_stage_machinery}

In this section we define the true stage relations and prove they have the various desirable properties we have been using. Our development closely follows the one in \cite{GreenbergTuretsky:Pi11}. In that paper, however, we did not use the machinery to construct subsets of Baire space; rather, we used it to build a computable countable structure. We therefore defined relations $s\le_{\alpha}t$ for $s,t\le \w$; in the notation of the current paper, we used the restriction of $\preceq_\alpha$ to the initial segments of $x= 0^\infty$. The first step to extending that machinery to all of Baire space is uniform relativisation: $s\le_{\alpha,x} t$ means that~$s$ appears~$\alpha$-true to~$t$ relative to~$x$. However, it is soon observed that this is a ``Lipschitz'' relation: the $x$-use is the identity, i.e., $s\le_{\alpha,x} t$ only depends on $x\rest{t}$. We can therefore dispense with the oracle and write $x\rest{s} \preceq_\alpha x\rest{t}$, giving the notation that we used in \cite{BSL_paper} and here.

\subsection{Definition and properties of the true stages relations} 
\label{sub:definition_and_basic_properties_of_the_true_stages_relations}

We fix a computable ordinal~$\delta$ (see \cref{sub:computable_ordinals}). To define the relation $\preceq_\delta$, we will need to define all relations $\preceq_\alpha$ for all $\alpha\le \delta$. For $\alpha < \delta$, we let~$n_\alpha$ be the element in the field of~$\delta$ which is the least upper bound of~$\alpha$ in~$\delta$. When we write $n_\alpha < n_\beta$ we mean the natural ordering $(\Nat,<)$, so this is different from $\alpha<\beta$.

\subsubsection*{Jumps of strings} 

For a string~$\s\in \w^{\le \w}$, let $\s'$ denote the collection of inputs on which a universal Turing machine with oracle sequence~$\s$ halts in fewer than $|\s|$ many steps. Thus the jump of the empty string is empty. If $\s\preceq \tau$ then $\s'\subseteq \tau'$. We assume that if~$\s$ is a one-entry extension of~$\tau$, then $\s'$ contains at most one more element than~$\tau'$. Thus, for every string~$\s$ we get an enumeration of the elements of~$\s'$ in order of which converged earlier (i.e.\ with shorter oracle). For~$\s\in \w^{<\w}$ we let $p(\s)$ be the last element enumerated into~$\s'$ according to this ordering. In other words, $\s'= \tau' \cup \{p(\s)\}$ for some $\tau\prec \s$. If $\s' = \emptyset$ then we let $p(\s)=-1$. If $x\in \w^{\w}$ is infinite then we let $p(x) = \infty$.

\subsubsection*{Definition} 

By induction on $\alpha\le \delta$ we define the relation $\s\preceq_\alpha \tau$. Simultaneously, for $\alpha<\delta$ we also define strings $\tau^{(\alpha)}$. Suppose that $\preceq_\alpha$ is already defined. We will show shortly that $\s\preceq_\alpha \tau$ implies $\s\preceq \tau$, so that for each~$\tau$, $\{\s\,:\, \s\prec_\alpha \tau\}$ is naturally linearly ordered. 
\begin{itemize}
    \item The string~$\tau^{(\alpha)}$ is defined to be the increasing enumeration of all strings $\s\prec_\alpha \tau$, \emph{excluding the first~$n_\alpha$ many such strings}.
\end{itemize}
We will see that this exclusion is what makes the machinery work at limit levels. If~$\tau$ has at most $n_\alpha$-many $\prec_\alpha$-predecessors, then $\tau^{(\alpha)}$ is the empty sequence. Note that $\s\preceq_\alpha \tau$ implying $\s\preceq \tau$ also ensures that $\tau^{(\alpha)}$ is finite whenever~$\tau$ is finite; we will work to show that $x^{(\alpha)}$ is infinite when~$x$ is infinite. 

Each $\tau^{(\alpha)}$ is a sequence of finite sequences, i.e., an element of $(\w^{<\w})^{\le \w}$. However, after using some computable coding of finite sequences by natural numbers (a computable bijection between $\w^{<\w}$ and~$\w$), we can consider each $\tau^{(\alpha)}$ as a string in $\w^{\le \w}$, and so use it as an oracle in computations. 

For infinite $x\in \Baire$, $x^{(\alpha)}$ will be Turing equivalent to the iteration of the Turing jump of length~$\alpha$, starting with~$x$. For finite $\tau\in \w^{<\w}$, $\tau^{(\alpha)}$ is~$\tau$'s guess about $x^{(\alpha)}$ for~$x$ extending~$\alpha$. The guess will be correct when $\tau\prec_\alpha x$. 

The definition of $\preceq_{\alpha}$ is as follows. Let $\s, \tau \in \w^{\le \omega}$.

\begin{itemize}
    \item  $\s\preceq_{0} \tau \,\,\Iff\,\,\s \preceq \tau$. 
    \item If~$\alpha$ is a limit ordinal, then $\s\preceq_{\alpha} \tau$ if for all $\beta<\alpha$, $\s\preceq_{\beta} \tau$. 
    \item If $\alpha< \delta$ then $\s\preceq_{\alpha+1} \tau$ if $\s\preceq_{\alpha} \tau$, and there is no $e < p(\s^{(\alpha)})$ in $(\tau^{(\alpha)})'\setminus (\s^{(\alpha)})'$.
\end{itemize}

In the successor case, $\alpha+1$ denotes the ordinal $\beta\le \delta$ with order-type $\otp(\alpha)+1$. The motivation for the successor step was discussed in \cite{BSL_paper}. In brief, the definition says that when trying to compute $(\s^{(\alpha)})'$ (which for infinite~$\s$ will be Turing equivalent to $\s^{(\alpha+1)}$), the string~$\s$ only commits to numbers below  $p(\s^{(\alpha)})$, the last number  enumerated into the jump of $\s^{(\alpha)}$. If $\s \npreceq_{\alpha} \tau$ then~$\tau$ thinks that~$\s$ was likely wrong about the oracle $\s^{(\alpha)}$, and so there is no meaningful comparison between their jumps $(\s^{(\alpha)})'$ and $(\tau^{(\alpha)})'$. Suppose, however, that $\s \preceq_{\alpha} \tau$. We will shortly show that $\s^{(\alpha)}\preceq \tau^{(\alpha)}$, and so $(\s^{(\alpha)})'\subseteq (\tau^{(\alpha)})'$. So~$\s$'s commitment about $(\s^{(\alpha)})'$ is discovered to be false by~$\tau$ exactly when some number $e<p(\s^{(\alpha)})$, which~$\s$ claims is not in $(\s^{(\alpha)})'$, is enumerated into $(\tau^{(\alpha)})'$.

\subsubsection*{Basic properties} 

We gradually verify the properties of the relations $\preceq_\alpha$ as listed in \cref{sec:preliminaries}. First, we remark that for each $\alpha <\delta$, $\preceq_\alpha$ as defined above does not depend on~$\delta$, only on~$\alpha$. In other words, if we only started with~$\alpha$ and performed the same construction, we would arrive with the same relation~$\preceq_\alpha$. (Note though that $\s^{(\alpha)}$ depends on~$\alpha+1$, not just~$\alpha$, because its definition involves~$n_\alpha$.) Now, a simple induction shows that $\alpha \le \beta$ and $\s\preceq_\beta\tau$ implies $\s\preceq_\alpha\tau$. This, together with the definition of $\preceq_0$, establishes \ref{TSP:nested}. 

\smallskip

The following lemma includes the property \RefClub. In \cref{sec:preliminaries} we derived this property from \ref{TSP:successor}, but in fact, we need it to establish all the other properties. 
 
\begin{lemma} \label{lem:first_pass_relations:basic_properties} 
    For all $\alpha\le\delta$,
    \begin{enumerate}
        \item[(a)] The relation~$\preceq_\alpha$ is a partial ordering. 
        \item[($\Diamond$)] For all $\s \preceq \rho \preceq \tau \in \w^{\le \omega}$, if $\s,\rho \preceq_\alpha \tau$, then $\s\preceq_{\alpha} \rho$. 
        \item[($\Club$)] 
        For all $\s\preceq_\alpha \rho \preceq_\alpha \tau \in \w^{\le \omega}$, if $\s\preceq_{\alpha+1} \tau$ then $\s\preceq_{\alpha+1} \rho$. 
        \item[(b)] If $\s\preceq_\alpha \tau$ then $\s^{(\alpha)} \preceq \tau^{(\alpha)}$.
        \item[(c)] If $\s\preceq_{\alpha+1}\tau$ then $p(\s^{(\alpha)})\le p(\tau^{(\alpha)})$.
    \end{enumerate}
\end{lemma}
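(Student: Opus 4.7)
The plan is to prove all five clauses simultaneously by induction on $\alpha\le\delta$. The base case $\alpha=0$ is immediate: $\preceq_0$ is $\preceq$, and $\s^{(0)}$ enumerates the strict $\preceq$-predecessors of $\s$ after dropping the first $n_0$; (b) at level $0$ then follows from the fact that when $\s\preceq\tau$ the $\preceq$-predecessors of $\s$ are an initial segment of those of $\tau$, and clauses ($\Club$) and (c) at level $0$ fall out of the two-line arguments used in the successor step. For the limit case $\alpha=\lambda$, the defining equation $\preceq_\lambda = \bigcap_{\beta<\lambda}\preceq_\beta$ directly yields (a) and ($\Diamond$) at $\lambda$ from the inductive hypothesis, after which (b), ($\Club$), and (c) at $\lambda$ reduce to the same arguments as in the successor step below.

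The bulk of the work is the successor step $\gamma\to\gamma+1$, which I would organise in the order ($\Diamond$), (a), (b), and then ($\Club$) together with (c). For ($\Diamond$) at $\gamma+1$: given $\s,\rho\preceq_{\gamma+1}\tau$ with $\s\preceq\rho$, clause ($\Diamond$) at $\gamma$ gives $\s\preceq_\gamma\rho$, and (b) at $\gamma$ gives $\s^{(\gamma)}\preceq\rho^{(\gamma)}\preceq\tau^{(\gamma)}$; any violating $e<p(\s^{(\gamma)})$ in $(\rho^{(\gamma)})'\setminus(\s^{(\gamma)})'$ would also sit in $(\tau^{(\gamma)})'\setminus(\s^{(\gamma)})'$, contradicting $\s\preceq_{\gamma+1}\tau$. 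For transitivity inside (a): given $\s\preceq_{\gamma+1}\rho\preceq_{\gamma+1}\tau$ and a hypothetical violating $e<p(\s^{(\gamma)})$ in $(\tau^{(\gamma)})'\setminus(\s^{(\gamma)})'$, split on whether $e\in(\rho^{(\gamma)})'$: if yes, contradict $\s\preceq_{\gamma+1}\rho$ directly; if no, clause (c) at $\gamma$ applied to $\s\preceq_{\gamma+1}\rho$ yields $p(\s^{(\gamma)})\le p(\rho^{(\gamma)})$, so $e<p(\rho^{(\gamma)})$ contradicts $\rho\preceq_{\gamma+1}\tau$. Reflexivity and antisymmetry of $\preceq_{\gamma+1}$ are straightforward from (a) at $\gamma$. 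Clause (b) at $\gamma+1$ is then immediate: the $\preceq_{\gamma+1}$-predecessors of $\s$ form a $\preceq$-initial segment of those of $\tau$ whenever $\s\preceq_{\gamma+1}\tau$, and dropping the first $n_{\gamma+1}$ entries of each list preserves this. Finally, ($\Club$) and (c) at $\gamma+1$ concern the relation $\preceq_{\gamma+2}$ and reproduce the same two-line arguments used above, now relying only on (b) at $\gamma+1$; (c) in particular follows from the observation that the defining condition of $\preceq_{\gamma+2}$ forbids newly enumerated elements of $(\tau^{(\gamma+1)})'$ from lying below $p(\s^{(\gamma+1)})$, so $p(\tau^{(\gamma+1)})\ge p(\s^{(\gamma+1)})$.

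The main organisational hurdle is the stratification of the simultaneous induction: clauses ($\Club$) at $\beta$ and (c) at $\beta$ both mention $\preceq_{\beta+1}$, so they must be discharged as part of stage $\beta$ rather than stage $\beta+1$, and the arguments above are arranged so that they require only (b) at level $\beta$ as a new input, while (a) and ($\Diamond$) at level $\beta+1$ use only the five clauses at level $\beta$. The one substantive wrinkle in the arguments themselves is the treatment of infinite strings, for which $p(\s^{(\gamma)})=\infty$ makes the defining condition of $\preceq_{\gamma+1}$ collapse to equality of jumps; this only strengthens the conclusions and requires no separate case analysis, provided one consistently interprets the inequalities in $\Nat\cup\{-1,\infty\}$.
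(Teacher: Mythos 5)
Your proof is correct and follows essentially the same route as the paper's: the paper first observes that (a)$_\alpha$ and $(\Diamond)_\alpha$ together yield (b)$_\alpha$, (c)$_\alpha$ and $(\Club)_\alpha$ (exactly your "two-line arguments"), and then inducts only on (a) and $(\Diamond)$, with the same case split on $e\in(\rho^{(\alpha)})'$ for transitivity and the same use of (b) and (c) at the previous level. Your only deviation is cosmetic — you fold everything into one simultaneous induction and inline the proof of $(\Club)_\gamma$ into that of $(\Diamond)_{\gamma+1}$ rather than citing it — and your stratification (discharging $(\Club)_\beta$ and (c)$_\beta$ at stage $\beta$) is exactly what makes the induction well-founded.
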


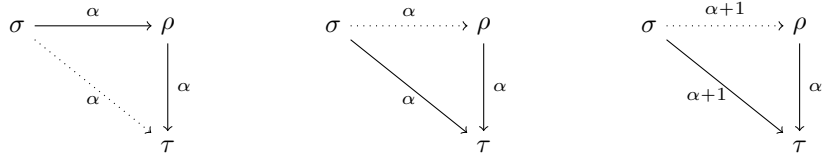
\begin{figure}[h]
    \centering

    \begin{tikzpicture}[scale=1]
        \def\hdistance{2}
        \def\vdistance{-1.6}
        \def\ddistance{4.2}

        \node (spo) at (0,0) {$\s$};
        \node (rpo) at (\hdistance,0) {$\rho$};
        \node (tpo) at (\hdistance,\vdistance) {$\tau$};

        \node (sdi) at (\ddistance,0) {$\s$};
        \node (rdi) at (\ddistance+\hdistance,0) {$\rho$};
        \node (tdi) at (\ddistance+\hdistance,\vdistance) {$\tau$};

        \node (scl) at (2*\ddistance,0) {$\s$};
        \node (rcl) at (2*\ddistance+\hdistance,0) {$\rho$};
        \node (tcl) at (2*\ddistance+\hdistance,\vdistance) {$\tau$};

        \draw[->] (spo) -- (rpo)
            node[pos=0.5,above] {$\scriptstyle\alpha$};
        \draw[->] (rpo) -- (tpo)
            node[pos=0.5,right] {$\scriptstyle\alpha$};
        \draw[->,dotted] (spo) -- (tpo)
            node[pos=0.5,below] {$\scriptstyle\alpha$};

        \draw[->,dotted] (sdi) -- (rdi)
            node[pos=0.5,above] {$\scriptstyle\alpha$};
        \draw[->] (rdi) -- (tdi)
            node[pos=0.5,right] {$\scriptstyle\alpha$};
        \draw[->] (sdi) -- (tdi)
            node[pos=0.5,below] {$\scriptstyle\alpha$};
    
        \draw[->,dotted] (scl) -- (rcl)
            node[pos=0.5,above] {$\scriptstyle\alpha+1$};
        \draw[->] (rcl) -- (tcl)
            node[pos=0.5,right] {$\scriptstyle\alpha$};
        \draw[->] (scl) -- (tcl)
            node[pos=0.6,left] {$\scriptstyle\alpha+1$};
    \end{tikzpicture}

    \caption{From left to right: the transitivity of $\preceq_{\alpha}$, the property $(\Diamond)_{\alpha}$, and the property $(\Club)_{\alpha}$ (given $(\Diamond)_{\alpha}$).}
    \label{fig:diamond_club_diagram}
\end{figure}

The converse of~(b) may fail because of the exclusion of the first $n_\alpha$ many $\prec_\alpha$-predecessors in the definition of $\tau^{(\alpha)}$. Nonetheless, this converse is ``close'' to the truth, and this gives an informal motivation for two of the properties illustrated in \cref{fig:diamond_club_diagram}. The relation $\preceq_{\alpha}$ is transitive because if $\s  \preceq_{\alpha} \rho \preceq_{\alpha} \tau $ then $\s^{(\alpha)}\preceq \rho^{(\alpha)} \preceq \tau^{(\alpha)}$, and so $\s^{(\alpha)}\preceq \tau^{(\alpha)}$. The property $(\Diamond)$ is similar: if $\s^{(\alpha)},\rho^{(\alpha)}\preceq \tau^{(\alpha)}$ then $\s^{(\alpha)}$ and $\rho^{(\alpha)}$ are comparable.

\begin{proof}
    First, we observe that (a)$_{\alpha}$ and $(\Diamond)_{\alpha}$ imply (b)$_{\alpha}$, (c)$_{\alpha}$, and $(\Club)_{\alpha}$. 

    By the definition of~$\tau^{(\alpha)}$, to show (b)${}_{\alpha}$ we need to show that if $\rho \prec \s \preceq_{\alpha} \tau$ then $\rho \preceq_{\alpha} \s \iff \rho \preceq_{\alpha} \tau$. One direction follows from (a)${}_{\alpha}$, the other from $(\Diamond)_{\alpha}$. 

    For (c)$_{\alpha}$, suppose that $\s \preceq_{\alpha+1} \tau$. Then $\s\preceq_{\alpha}\tau$, which by (b)$_{\alpha}$ implies that $\s^{(\alpha)}\preceq \tau^{(\alpha)}$, and so $(\s^{(\alpha)})'\subseteq (\tau^{(\alpha)})'$. Suppose that $p(\tau^{(\alpha)})< p(\s^{(\alpha)})$. Then $\tau^{(\alpha)}$ is finite, and $p(\tau^{(\alpha)})\notin (\s^{(\alpha)})'$, so $e = p(\tau^{(\alpha)})$ violates the definition of $\s\preceq_{\alpha+1} \tau$. 

    For $(\Club)_{\alpha}$, suppose that $\s\preceq_\alpha \rho \preceq_\alpha \tau$ and $\s\preceq_{\alpha+1} \tau$. By (b)${}_{\alpha}$, $\s^{(\alpha)}\preceq \rho^{(\alpha)} \preceq \tau^{(\alpha)}$, and so $(\s^{(\alpha)})'\subseteq (\rho^{(\alpha)})'\subseteq (\tau^{(\alpha)})'$. Suppose that $e\in (\rho^{(\alpha)})'\setminus (\s^{(\alpha)})'$. Then $e\in (\tau^{(\alpha)})'\setminus (\s^{(\alpha)})'$; since $\s\preceq_{\alpha+1} \tau$, $e> p(\s^{(\alpha)})$, so $\s\preceq_{\alpha+1} \rho$. 

    \medskip
    
    By induction on~$\alpha\le \delta$ we prove that both (a)$_{\alpha}$ and $(\Diamond)_{\alpha}$ hold. 

    \medskip
    
    For $\alpha=0$ this is easy. For limit~$\alpha$, both (a)$_{\alpha}$ and $(\Diamond)_{\alpha}$ follow from the inductive assumption and the fact that the relation $\preceq_{\alpha}$ is the intersection of the relations $\preceq_{\beta}$ for $\beta<\alpha$.

    \smallskip
    
    It remains to check the successor case. Let $\alpha<\delta$, and suppose that (a)$_{\alpha}$ and $(\Diamond)_{\alpha}$ hold (and so also (b)$_{\alpha}$, (c)$_{\alpha}$ and $(\Club)_{\alpha}$). We verify that (a)$_{\alpha+1}$ and $(\Diamond)_{\alpha+1}$ hold as well. 

    \smallskip
    
    For (a)$_{\alpha+1}$, first observe that $\tau \preceq_{\alpha+1} \tau$ follows from $\tau \preceq_\alpha \tau$ and $(\tau^{(\alpha)})' = (\tau^{(\alpha)})'$. We check transitivity of~$\preceq_{\alpha+1}$. Suppose that $\s \preceq_{\alpha+1} \rho  \preceq_{\alpha+1} \tau$. Then $\s \preceq_{\alpha} \rho \preceq_{\alpha}\tau$, and so by (a)$_{\alpha}$, $\s \preceq_{\alpha} \tau$. By (b)$_{\alpha}$, $\s^{(\alpha)}\preceq \rho^{(\alpha)} \preceq \tau^{(\alpha)}$, and so $(\s^{(\alpha)})'\subseteq (\rho^{(\alpha)})' \subseteq (\tau^{(\alpha)})'$. Suppose that $e\in (\tau^{(\alpha)})'\setminus (\s^{(\alpha)})'$; we need to show that $e>p(\s^{(\alpha)})$. There are two cases. If $e\in (\rho^{(\alpha)})'$ then $e> p(\s^{(\alpha)})$ by the assumption $\s \preceq_{\alpha+1} \rho$. Otherwise, $e\in (\tau^{(\alpha)})'\setminus (\rho^{(\alpha)})'$ and so $e> p(\rho^{(\alpha)})$ by the assumption $\rho \preceq_{\alpha+1} \tau$. By (c)$_{\alpha}$ and the assumption $\s \preceq_{\alpha+1} \rho$ we get $p(\rho^{(\alpha)})\ge p(\s^{(\alpha)})$.

    \smallskip
    
     $(\Diamond)_{\alpha+1}$ follows from $(\Diamond)_{\alpha}$ and $(\Club)_{\alpha}$. Suppose that $\s\preceq \rho \preceq \tau$ and that $\s,\rho \preceq_{\alpha+1} \tau$. By $(\Diamond)_{\alpha}$, $\s\preceq_{\alpha} \rho$. By $(\Club)_{\alpha}$, $\s\preceq_{\alpha+1} \rho$. 
\end{proof}

We can conclude:

\begin{lemma} \label{lem:almost_TSP_successor}
    For all $\alpha<\delta$, if $\s\preceq_\alpha\tau$ then $\s\preceq_{\alpha+1} \tau$ if and only if $p(\rho^{(\alpha)})\ge p(\s^{(\alpha)})$ for all finite~$\rho$ with $\s\preceq_\alpha \rho \preceq_\alpha \tau$. 
\end{lemma}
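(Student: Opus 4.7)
The plan is to prove the two directions of the biconditional separately, drawing only on the properties already secured in \cref{lem:first_pass_relations:basic_properties}. For the forward direction, the argument is short: given $\s\preceq_{\alpha+1}\tau$ and any finite $\rho$ with $\s\preceq_\alpha\rho\preceq_\alpha\tau$, property $(\Club)_\alpha$ promotes this to $\s\preceq_{\alpha+1}\rho$, and then (c)$_\alpha$ applied to the pair $\s,\rho$ yields $p(\rho^{(\alpha)})\ge p(\s^{(\alpha)})$.

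For the backward direction I would argue by contrapositive. Assume $\s\preceq_\alpha\tau$ but $\s\npreceq_{\alpha+1}\tau$, and use the definition of $\preceq_{\alpha+1}$ to fix $e<p(\s^{(\alpha)})$ with $e\in(\tau^{(\alpha)})'\setminus(\s^{(\alpha)})'$. By~(b)$_\alpha$ we have $\s^{(\alpha)}\preceq\tau^{(\alpha)}$, so let $\mu\preceq\tau^{(\alpha)}$ be the shortest initial segment of $\tau^{(\alpha)}$ that properly extends $\s^{(\alpha)}$ and satisfies $e\in\mu'$; by the minimality of $\mu$ we have $p(\mu)=e$. The task is then to realise $\mu$ in the form $\rho^{(\alpha)}$ for some finite $\rho$ with $\s\preceq_\alpha\rho\preceq_\alpha\tau$, since such a $\rho$ will give $p(\rho^{(\alpha)})=e<p(\s^{(\alpha)})$ and so violate the hypothesised $p$-condition.

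The key combinatorial step is to match prefixes of $\tau^{(\alpha)}$ with $(\alpha)$-strings of finite $\preceq_\alpha$-predecessors of $\tau$. By $(\Diamond)_\alpha$ together with the inclusion $\preceq_\alpha\subseteq\preceq$, the strict $\preceq_\alpha$-predecessors of $\tau$ form a $\preceq$-chain $\rho_0\prec\rho_1\prec\cdots$, each $\rho_i$ being finite (as a proper $\preceq$-predecessor of $\tau$, whether or not $\tau$ itself is finite), and by construction $\tau^{(\alpha)}$ is the tail $(\rho_{n_\alpha},\rho_{n_\alpha+1},\dots)$ of this chain. A short induction on $j$ then gives $\rho_{n_\alpha+j}^{(\alpha)}=\tau^{(\alpha)}\rest{j}$ for every $j$ with $n_\alpha+j$ in range. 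When $|\mu|<|\tau^{(\alpha)}|$ we therefore set $\rho=\rho_{n_\alpha+|\mu|}$ and are done; when $\mu=\tau^{(\alpha)}$ and $\tau$ is itself finite, we simply take $\rho=\tau$.

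The main obstacle I anticipate is the remaining boundary case, in which $\mu=\tau^{(\alpha)}$ but $\tau$ is infinite while $\tau^{(\alpha)}$ is finite, with $e$ enumerated only at the very last length of $\tau^{(\alpha)}$'s jump. In this configuration no strict $\preceq_\alpha$-predecessor $\rho_i$ of $\tau$ is long enough to expose $e$, and $\tau$ itself is not available as a finite witness. I expect the resolution to come either from ruling out this configuration outright — showing that an infinite $\tau$ admitting such a witness $e$ must have an infinite $\tau^{(\alpha)}$, thereby reducing to the previous case — or from a more delicate analysis of the two-step transition $\s^{(\alpha)}\to\rho^{*(\alpha)}\to\tau^{(\alpha)}$ at the largest predecessor $\rho^*$, extracting a finite witness from $\rho^*$. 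Pinning down this boundary case is where the real work lies; the rest of the argument is a straightforward bookkeeping exercise in the definitions of $\s^{(\alpha)}$ and $p$.
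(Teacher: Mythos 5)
Your two directions are sound, and the core of your backward direction --- walking up the chain of finite $\prec_\alpha$-predecessors of~$\tau$, noting that each step extends the $(\alpha)$-string by one entry and hence adds at most one element to its jump, so that every element of $(\tau^{(\alpha)})'\setminus(\s^{(\alpha)})'$ is realised as $p(\rho^{(\alpha)})$ for some intermediate finite~$\rho$ --- is exactly the paper's argument, which compresses it into the single identity
\[
(\tau^{(\alpha)})'\setminus(\s^{(\alpha)})' \;=\; \big\{\, p(\rho^{(\alpha)}) \,:\, \s\prec_\alpha\rho\preceq_\alpha\tau,\ \rho \text{ finite},\ p(\rho^{(\alpha)})\ne p(\s^{(\alpha)})\,\big\}.
\]
Your bookkeeping (the identification $\rho_{n_\alpha+j}^{(\alpha)}=\tau^{(\alpha)}\rest{j}$, the choice of~$\mu$ with $p(\mu)=e$) is correct, and the forward direction via $(\Club)_\alpha$ and~(c)$_\alpha$ is fine.

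The boundary case you flag is a genuine issue, and your first proposed resolution is the correct one: an infinite~$\tau$ has infinitely many $\prec_\alpha$-predecessors, so $\tau^{(\alpha)}$ is infinite, $\mu$ is a proper initial segment of it, and the finite witness $\rho_{n_\alpha+|\mu|}$ always exists. That fact is precisely \cref{prop:Dalpha_infinite}. The trap is that in the paper's development that proposition is proved \emph{after} \cref{lem:almost_TSP_successor}, and its successor step (the non-deficiency-stage argument) cites this very lemma --- so quoting it here as stated would be circular. The way out is that the degenerate configuration only threatens the backward direction for infinite~$\tau$, and the one place the backward direction is invoked for infinite~$\tau$ before \cref{prop:Dalpha_infinite} is available is inside the proof of \cref{prop:Dalpha_infinite} itself, at a point where its induction hypothesis has already made $D^\alpha_\tau$ infinite. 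So either restrict the backward direction to~$\tau$ that is finite or has infinitely many $\prec_\alpha$-predecessors (which covers every use), or run the lemma and \cref{prop:Dalpha_infinite} as a simultaneous induction on~$\alpha$. With that ordering sorted out, your argument closes; note that the paper's own one-line proof glosses over this same point.
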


Note that this almost gives us \ref{TSP:successor}, by letting $p_{\alpha+1}(\s)= p(\s^{(\alpha)})$; what remains is showing that $\s\mapsto \s^{(\alpha)}$ is computable. 

\begin{proof}
    We have required that convergence of the universal machine is delayed so that when we extend an oracle by one entry, at most one halting computation is added to the jump. If $\rho_0$ is an immediate $\prec_\alpha$-predecessor of~$\rho_1$, then $|\rho_1^{(\alpha)}| \le |\rho_0^{(\alpha)}|+1$. Hence,
  \[
  (\tau^{(\alpha)})'\setminus (\s^{(\alpha)})' = \big\{ p(\rho^{(\alpha)}) \,:\,  \s \prec_{\alpha} \rho \preceq_{\alpha} \tau,\,\rho\text{ is finite, and }    p(\rho^{(\alpha)}) \ne p(\s^{(\alpha)})  \big\}. \qedhere
  \]
\end{proof}

Structurally, the transitivity of $\preceq_{\alpha}$ and the property $(\Diamond)_{\alpha}$ together say that $(\w^{\le \omega},\preceq_{\alpha})$ is a forest; every~$\s$ has height at most~$|\s|$ in that forest. We shall soon verify that this forest is in fact a tree (it has a single root), and that each infinite~$x$ has height~$\w$ in this tree.

\subsubsection*{When we say nothing} 

For a string~$\s$ and $\alpha\le \delta$ we let $|\s|_\alpha$ be the number of (proper) $\prec_\alpha$-predecessors of~$\s$, in other words, its height in the forest $(\w^{\le \w}, \preceq_\alpha)$.

\begin{lemma} \label{lem:triviality_one_step}
    Suppose that $\alpha<\delta$, $\s\preceq_\alpha \tau$, and $|\s|_\alpha \le n_\alpha$. Then $\s\preceq_{\alpha+1} \tau$. 
\end{lemma}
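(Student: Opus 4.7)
The proof should be a one-line unpacking of the definitions; the key is simply that the hypothesis $|\s|_\alpha \le n_\alpha$ makes $\s^{(\alpha)}$ trivial.

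My plan is as follows. Recall that $\s^{(\alpha)}$ is defined as the increasing enumeration of all proper $\prec_\alpha$-predecessors of $\s$, \emph{after discarding the first $n_\alpha$ many of them}. But $|\s|_\alpha$ is precisely the number of such predecessors, so the hypothesis $|\s|_\alpha \le n_\alpha$ says that there are at most $n_\alpha$ strings to begin with. After discarding $n_\alpha$ of them, nothing remains: $\s^{(\alpha)}$ is the empty sequence. Consequently $(\s^{(\alpha)})' = \emptyset$, and by the convention that $p(\rho) = -1$ when $\rho' = \emptyset$, we have $p(\s^{(\alpha)}) = -1$.

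Now check the definition of $\s \preceq_{\alpha+1} \tau$. It requires two things: first, $\s \preceq_\alpha \tau$, which holds by hypothesis; second, that there be no $e < p(\s^{(\alpha)})$ in $(\tau^{(\alpha)})' \setminus (\s^{(\alpha)})'$. Since $p(\s^{(\alpha)}) = -1$, the condition ``$e < -1$'' has no witnesses in $\Nat$, so this second requirement holds vacuously. Therefore $\s \preceq_{\alpha+1} \tau$, as desired.

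There is no real obstacle here — the lemma is essentially a book-keeping consequence of the ``exclusion of the first $n_\alpha$ predecessors'' convention that was built into the definition of $\tau^{(\alpha)}$ precisely to make such triviality hold at the bottom of each $\alpha$-level. The name of the lemma (``when we say nothing'') is apt: strings of small $\alpha$-height have not yet committed to anything at level $\alpha$, so they cannot be contradicted at level $\alpha+1$.
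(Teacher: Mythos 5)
Your proof is correct and is exactly the paper's argument (the paper's version is a one-liner: the hypothesis forces $\s^{(\alpha)}$ to be empty, hence $p(\s^{(\alpha)}) = -1$, and the successor condition holds vacuously). Nothing to add.
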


\begin{proof}
    The assumption implies that $\s^{(\alpha)}$ is the empty string, and so that $p(\s^{(\alpha)}) = -1$. 
\end{proof}

By induction on $\alpha \le \delta$, we see that for all $\tau\in \w^{\le \omega}$, $\emptystring\preceq_{\alpha} \tau$, where again~$\emptystring$ denotes the empty string. Together with $(\Diamond)_{\alpha}$ of \cref{lem:first_pass_relations:basic_properties}, this gives \ref{TSP:trees}. We can also deal with the limit case:

\begin{proof}[Proof of \ref{TSP:limit}]
    Let $\lambda \le \delta$ be a limit ordinal. The first part is by definition of $\preceq_\lambda$. We verify the rest. Starting with any $\lambda_0<\lambda$, recursively define~$\lambda_k$ by letting
    \[
        \lambda_k = \max \{ \lambda_{k-1}+1, \alpha\,:\, \alpha< \lambda \andd n_\alpha \le k\}. 
    \]
    The definition ensures that $\seq{\lambda_k}$ is cofinal in~$\lambda$ and computable. Now we show:
    \begin{itemize}
        \item[$(*)$] for all $\s\in \w^{<\w}$, if $|\s|_{\lambda_k} \le k$, then for all~$\tau$,  $\s\preceq_\lambda \tau \,\Iff\,  \s\preceq_{\lambda_k} \tau$. 
    \end{itemize}
    To see this, let $k<\w$ and suppose that $|\s|_{\lambda_k}\le k$ and $\s\preceq_{\lambda_k} \tau$. By induction on $\alpha\in [\lambda_k,\lambda]$, we show that $\s\preceq_\alpha \tau$. If~$\alpha$ is limit we use the definition of~$\preceq_\alpha$. Suppose that $\lambda_k \le \alpha<\lambda$ and $\s\preceq_\alpha \tau$. By choice of~$\lambda_k$, $n_\alpha \ge k$. Since $\alpha \ge \lambda_k$, $|\s|_\alpha \le |\s|_{\lambda_k}\le k$. Hence, by \cref{lem:triviality_one_step}, $\s\preceq_{\alpha+1}\tau$. This establishes~$(*)$. 

    To prove \ref{TSP:limit}, let $k<\w$; we show that if $|\s|_{\lambda_k}>k$ then $|\s|_\lambda>k$. For suppose that $|\s|_{\lambda_k}>k$. Then there are $k+1$ distinct $\rho\prec_{\lambda_k} \s$ with $|\rho|_{\lambda_k} \le k $. For each such~$\rho$, by~$(*)$, we have $\rho\prec_\lambda \s$. Hence $|\s|_{\lambda}>k$. 
\end{proof}

\subsubsection*{Computability} 

Restricted to finite strings, the true stage relations are computable. As discussed above, this is uniform in~$\alpha$. 

\begin{proof}[Proof of \ref{TSP:computable}]
    The relations $\s\preceq_{\alpha}\tau$ for $\alpha\le \delta$ and the functions $\tau\mapsto \tau^{(\alpha)}$ for $\alpha<\delta$ are computed by simultaneous recursion on~$|\tau|$ (note, this is not effective transfinite recursion on~$\alpha$). If we have decided, for all $\s\preceq \tau$, whether $\s\preceq_\alpha \tau$, then we can compute $\tau^{(\alpha)}$. The definition of the relation $\preceq_{\alpha+1}$ shows that we can then decide whether $\s\preceq_{\alpha+1} \tau$ or not. 

    Now given $\s\preceq \tau$, the algorithm for computing whether $\s\preceq_\alpha \tau$ is given with the aid of \cref{lem:triviality_one_step}. Enumerate the set $\{\beta<\delta\,:\, n_\beta \le |\s|\}\cup \{0,\delta\}$ as $0=\beta_{0} < \beta_{1} < \dots <\beta_{k} = \delta$. Then $\s\preceq_{\beta_{0}} \tau$. If $i<k$ and we have decided that $\s \preceq_{\beta_{i}} \tau$, then we check if $\s\preceq_{\beta_{i}+1} \tau$. If so, then we can conclude that $\s\preceq_{\beta_{i+1}} \tau$: this uses $|\s|\ge |\s|_\beta$ for all~$\beta$, so for all $\beta\in (\beta_i,\beta_{i+1})$ we have $|\s|_\beta \le n_\beta$. 
\end{proof}

As mentioned above (after \cref{lem:almost_TSP_successor}), \ref{TSP:successor} follows as well. 

\begin{remark} \label{rmk:maximal_ordinal}
  The continuity of the relations $\preceq_{\alpha}$ implies that for all $\s\preceq \tau \in \w^{\le \omega}$, $\max \{\gamma\le \delta\,:\, \s\preceq_{\gamma} \tau\}$ must exist. The function $(\s,\tau)\mapsto \max \{\gamma\le \delta\,:\, \s\preceq_{\gamma} \tau\}$ is computable for finite~$\s$ and~$\tau$ (by a search over~$\alpha$, or by following the algorithm described in the proof above). 
\end{remark}

\subsubsection*{True stages} 

For $x \in \Baire$ and $\alpha\le \delta$ let 
\[
    D^\alpha_x = \left\{ \s \,:\,  \s\prec_\alpha x \right\}. 
\]
So $x^{(\alpha)}$ is the increasing enumeration of $D^\alpha_x$ after removing the first~$n_\alpha$ many strings. If~$\alpha$ is a limit ordinal then $D^\alpha_x = \bigcap_{\beta<\alpha} D^\beta_x$. If~$D^{\alpha}_x$ is infinite, then $x^{(\alpha)} = \bigcup_{\s\in D^{\alpha}_x} \s^{(\alpha)}$.

\begin{lemma} \label{lem:whats_true}
    Suppose that $D^\alpha_x$ is infinite, and let $\s\in D^\alpha_x$.  The following are equivalent:
    \begin{equivalent}
        \item $\s\in D^{\alpha+1}_x$;
        \item for all $\tau\succeq_\alpha \s$ in~$D^\alpha_x$, $\s\preceq_{\alpha+1}\tau$;
        \item for infinitely many~$\tau\succeq_\alpha \s$ in~$D^\alpha_x$, $\s\preceq_{\alpha+1} \tau$. 
    \end{equivalent}
\end{lemma}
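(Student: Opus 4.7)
\medskip

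The plan is to prove the implications $(1) \Rightarrow (2) \Rightarrow (3) \Rightarrow (1)$, using the definition of $\preceq_{\alpha+1}$ directly and the monotonicity property~(b) of \cref{lem:first_pass_relations:basic_properties}, namely that $\rho \preceq_\alpha x$ implies $\rho^{(\alpha)} \preceq x^{(\alpha)}$ (extended to $x \in \Baire$, which follows because $x^{(\alpha)}$ is defined as the union of $\tau^{(\alpha)}$ over $\tau \in D^\alpha_x$).

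For $(1) \Rightarrow (2)$, fix $\tau \in D^\alpha_x$ with $\tau \succeq_\alpha \s$. Property~(b) of \cref{lem:first_pass_relations:basic_properties} (together with $\s \preceq_\alpha \tau \preceq_\alpha x$) gives the chain $\s^{(\alpha)} \preceq \tau^{(\alpha)} \preceq x^{(\alpha)}$, so
\[
    (\tau^{(\alpha)})' \setminus (\s^{(\alpha)})' \ \subseteq \ (x^{(\alpha)})' \setminus (\s^{(\alpha)})'.
\]
Hence the successor clause witnessing $\s \preceq_{\alpha+1} x$ immediately witnesses $\s \preceq_{\alpha+1} \tau$. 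The implication $(2) \Rightarrow (3)$ is trivial once we note that $D^\alpha_x$ is infinite and linearly ordered by $\preceq_\alpha$ (\ref{TSP:trees}), so the set of $\tau \succeq_\alpha \s$ in $D^\alpha_x$ is cofinite and in particular infinite.

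The real content is $(3) \Rightarrow (1)$. Given $\s \preceq_\alpha x$, we must show that no $e < p(\s^{(\alpha)})$ belongs to $(x^{(\alpha)})' \setminus (\s^{(\alpha)})'$. Suppose for contradiction that such an $e$ exists. Since $x^{(\alpha)}$ is the union (along $\preceq$) of the $\tau^{(\alpha)}$ over $\tau \in D^\alpha_x$, any element of $(x^{(\alpha)})'$ is already in $(\tau^{(\alpha)})'$ once $\tau^{(\alpha)}$ is long enough to run the relevant halting computation for $e$. The assumption (3) provides infinitely many $\tau \succeq_\alpha \s$ in $D^\alpha_x$ with $\s \preceq_{\alpha+1} \tau$; since these $\tau$ have unbounded length, pick one for which $\tau^{(\alpha)}$ is long enough so that $e \in (\tau^{(\alpha)})'$. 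Then $e \in (\tau^{(\alpha)})' \setminus (\s^{(\alpha)})'$, and by the successor clause in the definition of $\s \preceq_{\alpha+1} \tau$ this forces $e \geq p(\s^{(\alpha)})$, contradicting $e < p(\s^{(\alpha)})$.

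The only genuine subtlety is the existence of $\tau$ with $\tau^{(\alpha)}$ long enough to enumerate a given $e \in (x^{(\alpha)})'$. This follows because $|\tau^{(\alpha)}|$ is cofinal in $|x^{(\alpha)}|$ as $\tau$ runs through $D^\alpha_x \cap \{\rho \succeq_\alpha \s\}$, and halting of the universal machine on input $e$ with oracle $x^{(\alpha)}$ uses only a finite initial segment. Everything else is a direct calculation from the definition of $\preceq_{\alpha+1}$.
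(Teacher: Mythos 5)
Your proof is correct and follows essentially the same route as the paper: the implication (3)$\Rightarrow$(1) rests on exactly the same fact, namely $(x^{(\alpha)})' = \bigcup_{\tau\in D^\alpha_x}(\tau^{(\alpha)})'$ together with the convergence of computations on finite oracle segments. The only cosmetic difference is that for (1)$\Rightarrow$(2) the paper simply cites the already-established property \RefClub{} (stated for $\tau\in\w^{\le\omega}$, so it applies with $x$ in the role of the top string), whereas you re-derive that property inline via \cref{lem:first_pass_relations:basic_properties}(b).
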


\begin{proof}
    The implication (1)$\then$(2), follows from the property \RefClub. For (2)$\then$(3), note that if $\tau\in D^\alpha_x$ and $\s\preceq\tau$ then $\s\preceq_\alpha \tau$ (\ref{TSP:trees}).   For (3)$\then$(1), we use the fact that $(x^{(\alpha)})' = \bigcup_{\tau\in D^\alpha_x} (\tau^{(\alpha)})'$. 
    So if $\s\nprec_{\alpha+1} x$ then there is some $e\in (x^{(\alpha)})'\setminus (\s^{(\alpha)})'$ with $e< p(\s^{(\alpha)})$; for large enough~$\tau$ in~$D^\alpha_x$, $e\in (\tau^{(\alpha)})'$ and so $\s\npreceq_{\alpha+1}\tau$. 
\end{proof}

\begin{proposition} \label{prop:Dalpha_infinite}
    For every $x \in \Baire$ and $\alpha\le\delta$, $D^\alpha_x$ is infinite. 
\end{proposition}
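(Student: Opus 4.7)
The proof goes by transfinite induction on $\alpha \le \delta$. The base case $\alpha = 0$ is immediate, as $D^0_x$ just lists the finite initial segments of the infinite~$x$. For a limit $\alpha = \lambda$, my plan is to apply the already-proved property \ref{TSP:limit} directly: let $\seq{\lambda_k}$ be its cofinal sequence, so that statement $(*)$ in its proof gives $\s\preceq_\lambda \tau \Iff \s\preceq_{\lambda_k}\tau$ whenever $|\s|_{\lambda_k}\le k$. By the inductive hypothesis $D^{\lambda_k}_x$ is infinite, and its first $k+1$ elements in $\preceq$-order each satisfy $|\s|_{\lambda_k}\le k$; applying $(*)$ with $\tau=x$ places them all in $D^\lambda_x$, so $|D^\lambda_x| \ge k+1$ for every~$k$.

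The substantive step is the successor case. Assuming $D^\alpha_x$ is infinite, property~(b) of \cref{lem:first_pass_relations:basic_properties} guarantees that $x^{(\alpha)} = \bigcup_{\s\in D^\alpha_x} \s^{(\alpha)}$ is infinite with $(x^{(\alpha)})' = \bigcup_{\s\in D^\alpha_x}(\s^{(\alpha)})'$. The plan is to invoke \cref{lem:whats_true}, which says $\s \in D^{\alpha+1}_x$ exactly when every element of $(x^{(\alpha)})'$ below $p(\s^{(\alpha)})$ has already been enumerated into $(\s^{(\alpha)})'$. I would split into cases on the size of $(x^{(\alpha)})'$. If it is finite, cofinitely many $\s \in D^\alpha_x$ satisfy $(\s^{(\alpha)})' = (x^{(\alpha)})'$ and so lie in $D^{\alpha+1}_x$ trivially. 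If it is infinite, enumerate it in increasing value order as $g_0 < g_1 < \cdots$ and let $j_k$ be the enumeration-time of $g_k$ in the order the elements actually appear in the growing $(\s^{(\alpha)})'$. Since both orderings biject with $\Nat$, $k \mapsto j_k$ is a permutation of $\Nat$, which must therefore have infinitely many left-to-right maxima (otherwise the tail of $(j_k)$ would take only boundedly many distinct values). For each such $k$, the first $\s \in D^\alpha_x$ at which $g_k$ enters the jump satisfies $p(\s^{(\alpha)})=g_k$ with $\{g_0,\dots,g_{k-1}\}\subseteq (\s^{(\alpha)})'$, placing $\s$ in $D^{\alpha+1}_x$.

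The main obstacle I anticipate is the bookkeeping for how $\s^{(\alpha)}$ evolves along $D^\alpha_x$: one must use the convention that the universal machine delays convergence so that the jump gains at most one element per added oracle bit, together with the fact that once $|\s|_\alpha > n_\alpha$ the immediate $\preceq_\alpha$-successor of~$\s$ in $D^\alpha_x$ extends $\s^{(\alpha)}$ by exactly one entry (namely $\s$ itself), ensuring a clean correspondence between stages and enumeration times. Once that is verified, the permutation argument in the successor case is essentially counting.
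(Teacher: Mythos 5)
Your proof is correct and follows essentially the same route as the paper's: your left-to-right maxima of the enumeration-order permutation pick out exactly Dekker's non-deficiency stages (stages $\s$ after which nothing below $p(\s^{(\alpha)})$ ever enters the jump), which is what the paper uses at successor levels, and your appeal to statement $(*)$ from the proof of \ref{TSP:limit} packages the same $n_\gamma$-exclusion argument that the paper redoes directly at limit levels. The bookkeeping you flag (at most one jump element per oracle entry, and $\tau^{(\alpha)} = \s^{(\alpha)}\conc \s$ for the immediate $\preceq_\alpha$-successor once past the first $n_\alpha$ predecessors) is precisely the convention the paper adopts, so no gap remains.
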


\begin{proof}
    By induction on~$\alpha$. $D^0_x$ is the set of all finite initial segments of~$x$. At successor levels we use non-deficiency stages (following Dekker in \cite{Dekker:Hypersimple}). Namely, let $m\in \Nat$; let~$\s$ in $D^\alpha_x$ be such that $|\s|\ge m$ and $p(\s^{(\alpha)})$ is minimal among $p(\tau^{(\alpha)})$ for all $\tau \in D^\alpha_x$ with $|\tau|\ge m$. By \cref{lem:almost_TSP_successor}, $\s\in D^{\alpha+1}_x$. 

    \smallskip

    Suppose that~$\alpha$ is a limit ordinal, and suppose that for all $\beta<\alpha$, $D^\beta_x$ is infinite. 
    Given $m\in \Nat$, we find some $\gamma<\alpha$ such that $n_\gamma\ge m$ and for all $\beta\in (\gamma,\alpha)$, $n_\beta>n_\gamma$. This can be done since~$\alpha$ is a limit ordinal (and~$\Nat$ is well-founded). We then let~$\s \prec x$ be the string in~$D^\gamma_x$ with $|\s|_\gamma =  n_\gamma$. Then $|\s|\ge m$ (as $|\s|\ge |\s|_\gamma$). For all $\beta\in [\gamma,\alpha)$, $|\s|_\beta \le n_\beta$ (as $|\s|_\beta \le |\s|_\gamma$ and $n_\gamma\le n_\beta$). So by induction on such~$\beta$, aided by \cref{lem:triviality_one_step}, we see that $\s\in D^\beta_x$. 
\end{proof}

We obtain:

\begin{proof}[Proof of \ref{TSP:unique_path}]
     This is proved by induction on~$\alpha$. At the successor step, if $\s\in D^\alpha_x\setminus D^{\alpha+1}_x$, then any infinite path above~$\s$ in $(\{ \rho : \rho\prec x\}, \preceq_{\alpha+1})$ must also induce an infinite path in $(\{ \s : \s\prec x\}, \preceq_\alpha)$, and thus must be a subset of~$D^\alpha_x$; but by \cref{lem:whats_true}, $\s\preceq_{\alpha+1}\tau$ for only finitely many $\tau\in D^\alpha_x$. 
\end{proof}

\begin{remark} 
    \Cref{prop:Dalpha_infinite} implies that for all $x\in \Baire$ and $\alpha<\delta$ $x^{(\alpha)}$, is an infinite sequence, and so $p(x^{(\alpha)}) = \infty$. As a result, in \ref{TSP:successor}, we do not need to restrict to finite~$\rho$. 
\end{remark}

\subsubsection{The effective Borel hierarchy} 

Recall that the~$\Sigma^0_\alpha$ sets, for $1\le \alpha \le \delta$, are defined by induction on~$\alpha$. The~$\Sigma^0_1$ sets are the effectively open sets. For $\alpha>1$, the $\Sigma^0_\alpha$ sets are those which are effective unions of sets which are~$\Pi^0_\beta$ for some~$\beta<\alpha$.\footnote{To make the notion of an effective union of $\Pi^0_{<\alpha}$ sets precise, we need to define an effective indexing of each class. For each~$\alpha\le \delta$ with $\alpha>0$, let $\seq{W_{e,\alpha}}$ be an effective enumeration of the c.e.\ subsets of $\w^{<\w}\times \alpha$. We let $A_{(e,1)} =  \left\{ x\in \Baire \,:\,  (\exists \s\prec x)\,\,(\s,n_0)\in W_{e,1} \right\}$. For $\alpha>1$ we let $A_{(e,\alpha)} = \bigcup \big\{ \Baire\setminus A_{(i,\beta)} \,:\,  (i,n_\beta)\in W_{(e,\alpha)}  \big\}$. The sets $A_{e,\alpha}$ form an effective enumeration of the $\Sigma^0_\alpha$ sets.}

We can now verify the last of the properties listed in \cref{sec:preliminaries}. We use the notation of that section regarding $\alpha$-forests and the subsets of Baire space they define. For $\s\in \w^{<\w}$ we let $[\s]_\alpha = \left\{ x\in \Baire \,:\,  \s\prec_\alpha x \right\}$; this is $[W]_\alpha$, where~$W$ consists of the $\preceq_\alpha$-extensions of~$\s$. 

\begin{proof}[Proof of \ref{TSP:Sigma_alpha_sets}]
     We prove \ref{TSP:Sigma_alpha_sets} by effective transfinite induction on~$\alpha$. Thus, it will be important that we work uniformly: by recursion on~$\alpha$, we show how to effectively translate between indices of $\Sigma^0_{1+\alpha}$ sets and c.e.\ indices of subsets of~$\w^{<\w}$. 

     \medskip

     For $\alpha=0$, \ref{TSP:Sigma_alpha_sets} is immediate, as $\preceq_0$ is~$\preceq$, and the $\Sigma^0_1$ sets are the effectively open ones. 

     \medskip
     
    Suppose that $\alpha\le \delta$ is a limit ordinal. 

    In one direction, since $\Sigma^0_{1+\alpha}$ is closed under effective unions, it suffices to show that for all $\s\in \w^{<\w}$, $[\s]_\alpha$ is $\Sigma^0_{1+\alpha}$. \ref{TSP:limit} implies that it is in fact $\Sigma^0_{<\alpha}$.

    In the other direction, since $1+\alpha = \alpha$, and since c.e.\ sets are closed under effective unions, it suffices to show that every $\Sigma^0_{<\alpha}$ set is $[U]_\alpha$ for some c.e., $\preceq_\alpha$-upwards closed~$U$. Let~$A$ be $\Sigma^0_{1+\beta}$ for some $\beta<\alpha$. By induction, there is a c.e.\ set~$U$, upwards closed in $\preceq_{\beta}$, such that $A = [U]_{\beta}$. Then~$U$ is also $\preceq_\alpha$-upwards closed, and $[U]_\alpha = [U]_{\beta} = A$ (see \cref{rmk:alpha_and_beta_forests}). 


    \medskip
     
    For the successor case, suppose that~$\alpha<\delta$. In one direction, it suffices to show that for all $\s\in \w^{<\w}$, $[\s]_{\alpha+1}$ is $\Sigma^0_{1+\alpha+1}$. In fact, it is $\Pi^0_{1+\alpha}$. To see this, let~$T$ be the collection of $\tau\in \w^{<\w}$ such that either $\tau\preceq_\alpha \s$ or $\s\preceq_{\alpha+1}\tau$. Then~$T$ is computable; \RefClub{} implies that~$T$ is $\preceq_\alpha$-downwards closed (it is a subtree of $(\w^{<\w},\preceq_\alpha)$), so $[T]_\alpha$ is $\Pi^0_{1+\alpha}$. \Cref{lem:whats_true} implies that $[\s]_{\alpha+1} = [T]_\alpha$.

     In the other direction, it suffices to show that every $\Pi^0_{1+\alpha}$ set~$A$ is $[U]^\prec_{\alpha+1}$ for some c.e.~$U$, upwards closed in $\preceq_{\alpha+1}$. Given such~$A$, by induction, let~$T$ be a computable subtree of $(\w^{<\w},\preceq_\alpha)$ with $A = [T]_\alpha$. Note that for all $x\in \Baire$, $x\in A$ if and only if for all $\s$ listed in $x^{(\alpha)}$, $\s\in T$. This is a $\Pi^0_1$ property of $x^{(\alpha)}$, so there is some $e\in \Nat$ such that for all $x\in \Baire$, $x\in A \Iff e\notin (x^{(\alpha)})'$. Let $W = \left\{ \s\in \w^{<\w} \,:\,  e\notin (\s^{(\alpha)})' \andd p(\s^{(\alpha)})>e   \right\}$. Then~$W$ is computable, and is $\preceq_{\alpha+1}$-upwards closed. We claim that $A = [W]_{\alpha+1}$. For all $\s\in W$, for all $x\in [\s]_{\alpha+1}$, $e\notin (x^{(\alpha)})'$ by definition of~$\preceq_{\alpha+1}$. On the other hand, if $e\notin (x^{(\alpha)})'$, find $\s\prec_{\alpha+1} x$ sufficiently long so that $p(\s^{(\alpha)})>e$; then $\s\in W$. 
 \end{proof}

\subsubsection{Relativising true stages} 

Given an oracle~$z$ and a $z$-computable ordinal~$\alpha$, we define the relation $\preceq^z_\alpha$ precisely as above; however, the universal machine which calculates the jumps of strings is given an extra oracle tape, containing~$z$. Write $(z,\s)^{(\alpha)}$ instead of $\s^{(\alpha)}$. 

For finite~$\s$, the universal machine computing $(z,\s)^{(\alpha)}$ is stopped after $|\s|$ many steps, and so consults only $z\rest{|\s|}$. If~$\alpha$ is computable (rather than~$z$-computable), this shows that for finite~$\tau$, determining whether $\s\preceq^z_\alpha \tau$ depends only on $z\rest{|\tau|}$, as promised in \cref{sec:preliminaries}. 

Further, the easier directions in the proof of \ref{TSP:Sigma_alpha_sets} give:

\begin{lemma} \label{lem:Sigma_alpha_subsets_of_product_space:one_way}
    Let~$\alpha$ be a computable ordinal. For any $\tau\in \w^{<\w}$, the set
    \[
        \left\{ (z,x)\in \Baire^2 \,:\,  \tau \prec^z_\alpha x \right\}
    \]
is $\Sigma^0_{1+\alpha}$.
\end{lemma}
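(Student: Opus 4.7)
\medskip

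\noindent\textbf{Proof plan.} By effective transfinite induction on the computable ordinal $\alpha$, I would show that $A^\tau_\alpha := \left\{ (z,x)\in \Baire^2 \,:\,  \tau \prec^z_\alpha x \right\}$ is $\Sigma^0_{1+\alpha}$, uniformly in $\tau$ (so that an index is obtained computably from $\tau$ and an index for $\alpha$). This is essentially the ``easy direction'' of \ref{TSP:Sigma_alpha_sets}, carried out in the product space $\Baire^2$, taking care that each step of the argument relativises uniformly in $z$.

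For $\alpha=0$, the set $A^\tau_0 = \{(z,x) : \tau\prec x\}$ is clopen, so $\Sigma^0_1$. For a limit $\alpha$, I use the relativised version of \ref{TSP:limit} with its cofinal sequence $\seq{\alpha_k}$. Since $\preceq^z_{\alpha_k}$ refines $\preceq$, we have $|\tau|^z_{\alpha_k}\le |\tau|$ for every oracle $z$; taking $k = |\tau|$, the limit property gives
\[
\tau\prec^z_\alpha x \,\,\Iff\,\, \tau\prec^z_{\alpha_{|\tau|}} x,
\]
so $A^\tau_\alpha = A^\tau_{\alpha_{|\tau|}}$. By induction this is $\Sigma^0_{1+\alpha_{|\tau|}}\subseteq \Sigma^0_{1+\alpha}$.

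For a successor $\alpha = \beta+1$, I adapt the construction from the proof of \ref{TSP:Sigma_alpha_sets}. Define
\[
T^z_\tau = \{\rho\in \w^{<\w} : \rho\preceq^z_\beta \tau \,\text{ or }\, \tau\preceq^z_{\beta+1} \rho\}.
\]
By \RefClub{} together with transitivity of $\preceq^z_\beta$, the set $T^z_\tau$ is $\preceq^z_\beta$-downwards closed in $\w^{<\w}$; by \cref{lem:whats_true} (relativised), $\tau\prec^z_{\beta+1} x$ holds iff every $\rho\prec^z_\beta x$ lies in $T^z_\tau$. Therefore
\[
A^\tau_{\beta+1} = \bigcap_{\rho\in\w^{<\w}} \Big(\{(z,x) : \rho\not\prec^z_\beta x\} \,\cup\, \{(z,x) : \rho\in T^z_\tau\}\Big).
\]
For each fixed $\rho$, the first piece is the complement of $A^\rho_\beta$, hence $\Pi^0_{1+\beta}$ by induction, while membership $\rho\in T^z_\tau$ depends only on $z\rest{\max(|\tau|,|\rho|)}$ by the Lipschitz computability noted after \cref{prop:translating_true_stages_between_copies_of_the_ordinal} (relativised), and so defines a clopen set in $\Baire^2$. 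The union is $\Pi^0_{1+\beta}$, and a countable intersection of $\Pi^0_{1+\beta}$ sets remains $\Pi^0_{1+\beta}$; thus $A^\tau_{\beta+1}\in \Pi^0_{1+\beta}\subseteq \Sigma^0_{1+(\beta+1)}$.

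The main thing to be careful about is uniformity. The effective transfinite recursion requires that indices at stage $\alpha$ be computed uniformly from $\tau$ and from the indices produced at smaller ordinals; this is immediate in the base and successor cases, and in the limit case one must observe that the choice $k=|\tau|$, together with the computable enumeration $\seq{\alpha_k}$, yields a computable reduction of the $\alpha$-problem to an $\alpha_{|\tau|}$-problem. No genuinely new difficulty beyond what already appears in the proof of \ref{TSP:Sigma_alpha_sets} is introduced; relativisation is essentially cosmetic because the true stage relations are defined by uniformly relativising a single Turing machine.
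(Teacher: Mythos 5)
Your proposal is correct and takes essentially the same route as the paper: the paper's entire justification is the remark that "the easier directions in the proof of \ref{TSP:Sigma_alpha_sets} give" the lemma, and your induction (clopen base case, reduction to $\alpha_{|\tau|}$ at limits via \ref{TSP:limit}, and the tree $T^z_\tau$ showing the set is in fact $\Pi^0_{1+\beta}$ at successors) is precisely that argument transplanted to $\Baire^2$, with the uniformity and Lipschitz-in-$z$ points made explicit.
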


As usual, this is uniform in~$\alpha$, $z$ and~$\tau$.

\subsection{Interlude: the Kuratowski-Sierpi\'nski theorem} 
\label{sub:interlude_the_kuratowski_sierpi}

We still need to prove \cref{prop:approximations_and_alpha_plus_one_computability,lem:existence_of_a_computable_ordinal_witness,prop:translating_true_stages_between_oracles,prop:translating_true_stages_across_computable_functions,prop:translating_true_stages_between_copies_of_the_ordinal,prop:translating_true_stage_relations_by_unpairing}. Before we do so, we briefly comment on iterated Turing jumps and the Kuratowski-Sierpi\'nski characterisation of the classes $\bSigma^0_{\xi}$. 

In \cite{BSL_paper}, we justified \ref{TSP:Sigma_alpha_sets} as follows. We started with the aforesaid characterisation.

\begin{theorem}[Kuratowski \cite{Kuratowski:33}, Sierpi\'nski \cite{Sierpinski:33}] \label{thm:KuratowskiSierpinski}
Let $\xi\ge 1$ be a countable ordinal. A set $A\subseteq \Baire$ is $\bSigma^0_{\xi}$ if and only if there is a closed set $K\subseteq \Baire$, a bijection $h\colon \Baire\to K$, and an open set $O\subseteq \Baire$, such that:
\begin{orderedlist}
    \item $h$ is $\bSigma^0_{\xi}$-measurable;
    \item $h^{-1}$ is continuous; and
    \item $A = h^{-1}[O]$. 
\end{orderedlist}
\end{theorem}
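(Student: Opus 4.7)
The plan is to use the relativized true stage machinery to construct~$h$ explicitly as $x \mapsto \seq{x, x^{(z,\alpha)}}$ for an appropriate oracle~$z$, where $\xi = 1 + \alpha$ and $\seq{\cdot,\cdot}$ is a fixed computable pairing on~$\Baire$. The easy direction---that the existence of such $h$, $K$, and $O$ forces $A \in \bSigma^0_\xi$---follows because $\bSigma^0_\xi$-measurable preimages of open sets are $\bSigma^0_\xi$, so I will focus on the converse.

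For the converse, I would first apply the relativized form of \ref{TSP:Sigma_alpha_sets} to the $\bSigma^0_\xi$ set~$A$, obtaining an oracle~$z$ computing~$\alpha$ and a $z$-c.e.\ set $U \subseteq \w^{<\w}$ with $A = \{x : \exists \sigma \prec^z_\alpha x,\, \sigma \in U\}$. With $h$ defined as above, the map is manifestly injective with continuous inverse (projection to the first coordinate), and $A = h^{-1}[O]$ for the effectively open set $O = \{\seq{x,y} : \exists n,\, y(n) \in U\}$. The $\bSigma^0_\xi$-measurability of~$h$ reduces to showing that each fibre $\{x : x^{(z,\alpha)}(n) = \sigma\}$ is $\bSigma^0_{1+\alpha}(z)$; this should follow by rewriting such a fibre as the intersection of $\{x : \sigma \prec^z_\alpha x\}$ with the $z$-computable condition $|\sigma|^z_\alpha = n + n_\alpha$, the former being $\bSigma^0_{1+\alpha}(z)$ by relativized \ref{TSP:Sigma_alpha_sets}.

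The main obstacle---and the only step requiring genuine work---is to verify that $K = h[\Baire]$ is closed. Given a convergent sequence $h(x_k) \to \seq{x, y}$, I plan to show $y = x^{(z,\alpha)}$ in three steps. First, each coordinate $y(n)$ is a finite string that equals $x_k^{(z,\alpha)}(n)$ for all sufficiently large~$k$, hence is a prefix of~$x_k$, and therefore $y(n) \prec x$. Second, the binary relation $y(n) \preceq^z_\alpha y(n+1)$ on finite strings is $z$-computable by \ref{TSP:computable} and holds for each~$x_k$, hence holds in the limit; so $\{y(n)\}_{n}$ is an infinite $\preceq^z_\alpha$-chain of prefixes of~$x$, which by \ref{TSP:unique_path} must consist entirely of $\prec^z_\alpha$-predecessors of~$x$. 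Third, the $z$-computable identity $|y(n)|^z_\alpha = n + n_\alpha$ (inherited from $y_k(n) = x_k^{(z,\alpha)}(n)$) pins down the starting index of the enumeration, forcing $y = x^{(z,\alpha)}$ and therefore $\seq{x,y} \in K$.
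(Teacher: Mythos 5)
Your proof is correct and is essentially the paper's own argument (the relativisation of \cref{lem:modified_TSP7_for_open_sets,lem:closed_graph_of_x_alpha}): the generalised homeomorphism is the true-stage enumeration map, its image is the closed set of infinite $\preceq^z_\alpha$-paths, continuity of the inverse is projection/union, and measurability of the fibres comes from the relativised \ref{TSP:Sigma_alpha_sets}. The only point to tidy is that since $x^{(z,\alpha)}$ omits the first $n_\alpha$ elements of $\left\{ \s \,:\, \s\prec^z_\alpha x \right\}$, you should take $U$ to be $\preceq^z_\alpha$-upwards closed (as in \cref{lem:alpha_open_alpha_closed_sigma_and_pi}) so that $A = h^{-1}[O]$ is not spoiled by witnesses among the omitted strings; with that, your hands-on verification that $K$ is closed is a correct, if slightly more laborious, substitute for the paper's observation that the image is exactly the path space of the tree $(\w^{<\w},\preceq^z_\alpha)$.
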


This theorem allows us to prove facts about $\bSigma^0_\xi$ sets by changing the topology using a ``generalised homeomorphism'' such as~$h$. For example, this allowed Kuratowski to extend Hausdorff's analysis of the $\bDelta^0_2$ sets to the classes $\bDelta^0_{\xi+1}$. 

In \cite{BSL_paper}, we stated that \cref{thm:KuratowskiSierpinski} can be effectivised; we can take~$h$ to be an iteration of the Turing jump of length~$\alpha$, where $\xi = 1+\alpha$. We then argue that $D^\alpha_x$, as defined above, is (uniformly) Turing equivalent to this iteration of the jump starting with~$x$. We can then pull back the open set~$O$ to obtain a c.e.\ set~$U$ satisfying $A = [U]_\alpha$. 

Here, rather, we directly derived \ref{TSP:Sigma_alpha_sets} from the other basic properties of the true stage relations, and the definition of the classes $\Sigma^0_{1+\alpha}$. We can now argue in the other direction, deriving an effective version of \cref{thm:KuratowskiSierpinski}, which in turn, by relativisation, implies \cref{thm:KuratowskiSierpinski}.

For a computable ordinal~$\alpha$ and $x\in \Baire$, let $x^{\{\alpha\}}$ be the increasing enumeration of $D^\alpha_x = \left\{ \s \,:\,  \s\prec_\alpha x  \right\}$.\footnote{The reason we use $x^{\{\alpha\}}$ rather than $x^{(\alpha)}$ is that the latter does not depend solely on~$\alpha$, as it requires~$n_\alpha$. That is, it also depends on the upper bound of~$\alpha$ in an ordinal $\alpha+1>\alpha$. Observe, though, that once such~$\alpha+1$ is chosen, we get $x^{(\alpha)}\equiv_{\scriptsize\textrm T} x^{\{\alpha\}}$, uniformly: given the least element of $x^{(\alpha)}$ we can computably list its $n_\alpha$-many $\preceq_\alpha$-predecessors.}

\begin{lemma} \label{lem:modified_TSP7_for_open_sets}
    Let~$\alpha$ be a computable ordinal. A set $A\subseteq \Baire$ is $\Sigma^0_{1+\alpha}$ if and only if there is an effectively open set $O\subseteq (\w^{<\w})^\w$ such that for all~$x$, $x\in A\Iff x^{\{\alpha\}}\in O$. 
\end{lemma}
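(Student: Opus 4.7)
The plan is to prove both directions fairly directly, using \ref{TSP:Sigma_alpha_sets} in tandem with the observation that an effectively open subset of the product space $(\w^{<\w})^\w$ is a c.e.\ union of basic cylinders $[\tau]$ for $\tau \in (\w^{<\w})^{<\w}$.

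For the forward direction, I would start with a $\Sigma^0_{1+\alpha}$ set $A$, apply \ref{TSP:Sigma_alpha_sets} to pick a c.e.\ set $U \subseteq \w^{<\w}$ with $A = \{x : (\exists \s\prec_\alpha x)\,\s\in U\}$, and define
\[
O = \big\{(\sigma_0,\sigma_1,\dots) \in (\w^{<\w})^\w \,:\, (\exists i)\,\sigma_i \in U\big\}.
\]
This is effectively open, and since $x^{\{\alpha\}}$ enumerates exactly $D^\alpha_x$, we have $x\in A \Iff x^{\{\alpha\}}\in O$.

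For the converse, I would start with an effectively open $O = \bigcup_i [\tau_i]$, where $(\tau_i)$ is a c.e.\ list of finite tuples $\tau_i = (\s^i_0,\dots,\s^i_{k_i-1}) \in (\w^{<\w})^{<\w}$. Then $x^{\{\alpha\}}\in O$ iff some $\tau_i$ is a prefix of $x^{\{\alpha\}}$, i.e., $(\s^i_0,\dots,\s^i_{k_i-1})$ consists of the first $k_i$ elements of $D^\alpha_x$ in $\prec$-order. The plan is to express this prefix condition as a conjunction of (a) a computable condition on the tuple $\tau_i$ alone, and (b) the single condition $\s^i_{k_i-1}\prec_\alpha x$, which cuts out the $\Sigma^0_{1+\alpha}$ set $[\s^i_{k_i-1}]_\alpha$. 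The key point here is that once we know $\s^i_{k_i-1}\prec_\alpha x$, property $(\Diamond)$ of \cref{lem:first_pass_relations:basic_properties} tells us that $\{\rho\preceq \s^i_{k_i-1} \,:\, \rho\prec_\alpha x\} = \{\rho\preceq \s^i_{k_i-1} \,:\, \rho\prec_\alpha \s^i_{k_i-1}\}$, so whether $(\s^i_0,\dots,\s^i_{k_i-1})$ enumerates these $\preceq_\alpha$-predecessors in order is decided computably from $\tau_i$ using \ref{TSP:computable}. Writing $E$ for the c.e.\ set of those $\tau_i$ passing this computable prefix-test, we have $A = \bigcup_{\tau\in E} [\tau(k_\tau-1)]_\alpha$, a c.e.\ union of $\Sigma^0_{1+\alpha}$ sets, hence $\Sigma^0_{1+\alpha}$.

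The proof is largely bookkeeping; the only subtle spot is verifying that the prefix-condition really is computable in $\tau_i$ alone modulo the single fact $\s^i_{k_i-1}\prec_\alpha x$, and this is exactly where $(\Diamond)$ gets used. Everything else follows by combining \ref{TSP:Sigma_alpha_sets} with the routine fact that $\Sigma^0_{1+\alpha}$ is closed under effective unions (for $\alpha=0$ this is openness, and for $\alpha\ge 1$ it follows because $1+\alpha = \alpha$ and the $\Sigma^0_\alpha$-sets are defined as c.e.\ unions of $\Pi^0_{<\alpha}$-sets).
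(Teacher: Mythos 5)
Your proof is correct and follows essentially the same route as the paper: the forward direction is identical, and your converse (enumerating the basic cylinders of $O$ and reducing the prefix condition to a computable test on the tuple together with the single fact $\s^i_{k_i-1}\prec_\alpha x$, via $(\Diamond)$) is the mirror image of the paper's construction, which instead defines the c.e.\ set $U$ of strings $\s$ whose $\preceq_\alpha$-predecessor enumeration determines a cylinder contained in~$O$. One small slip: your justification that $\Sigma^0_{1+\alpha}$ is closed under effective unions via ``$1+\alpha=\alpha$'' only applies to infinite~$\alpha$ (for finite $\alpha\ge 1$ one has $1+\alpha=\alpha+1$), but the closure still holds directly from the definition of $\Sigma^0_\beta$ for $\beta>1$ as effective unions of $\Pi^0_{<\beta}$ sets, so the argument goes through.
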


\begin{proof}
    We of course use \ref{TSP:Sigma_alpha_sets}. Suppose that $A = [U]_\alpha$ for some c.e., $\preceq_\alpha$-upwards-closed~$U$. Let $O = \left\{ y\in (\w^{<\w})^\w \,:\,  \range y\cap U\ne 0 \right\}$. Then $x\in A\Iff x^{\{\alpha\}}\in O$. 

    In the other direction, let~$O$ be an effectively open subset of $(\w^{<\w})^\w$; so the set of $\mu\in (\w^{<\w})^{<\w}$ such that $[\mu]\subseteq O$ is c.e. Let~$U$ be the set of $\s\in \w^{<\w}$ such that the increasing enumeration~$\mu$ of $\left\{ \rho \,:\,  \rho\preceq_\alpha \s \right\}$ satisfies $[\mu]\subseteq O$. Then $x^{\{\alpha\}}\in O \Iff   x\in [U]_\alpha$. 
\end{proof}

\begin{lemma} \label{lem:closed_graph_of_x_alpha}
    Let $\alpha$ be a computable ordinal. The map $x\mapsto x^{\{\alpha\}}$:
    \begin{sublemma}
        \item has $\Pi^0_1$ graph and $\Pi^0_1$ image; 
        \item has computable (effectively continuous) inverse;
        \item is $\Sigma^0_{1+\alpha}$-measurable. 
    \end{sublemma}
    Indeed, the map $x\mapsto x^{\{\alpha\}}$ is universal for $\Sigma^0_{1+\alpha}$-measurable functions: for any computable Polish space~$X$ and any  $\Sigma^0_{1+\alpha}$-measurable function $h\colon \Baire\to X$, there is a partial computable map $g \colon (\w^{<\w})^\w\to X$ such that $h(x) = g(x^{\{\alpha\}})$.
\end{lemma}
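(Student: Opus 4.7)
The plan is to derive all four claims from \cref{lem:modified_TSP7_for_open_sets} together with an explicit $\Pi^0_1$ characterisation of the graph. For part~(a), I would characterise $y = x^{\{\alpha\}}$ by the conjunction of four conditions: (i)~$y$ is strictly $\prec$-increasing in $\w^{<\w}$; (ii)~$y(n) \prec x$ for all $n$; (iii)~$y(n) \preceq_\alpha y(n+1)$ for all $n$; and (iv)~the ``no gap'' condition that for every $\tau \prec x$, if $\tau \preceq_\alpha y(n)$ for some $n$ with $|y(n)| > |\tau|$, then $\tau \in \range y$. Conditions~(i) and~(ii) are clopen for each $n$, hence $\Pi^0_1$; (iii) is $\Pi^0_1$ by \ref{TSP:computable}; and~(iv) is $\Pi^0_1$ because~(i) forces $|y(m)| \ge m$, so membership $\tau \in \range y$ reduces to checking $\tau \in \{y(0), \ldots, y(|\tau|)\}$, which is decidable. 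Correctness uses \ref{TSP:unique_path} to deduce from (i)--(iii) that each $y(n) \in D^\alpha_x$, and then $(\Diamond)$ together with the tree property \ref{TSP:trees} to upgrade~(iv) into $\range y = D^\alpha_x$. The image then inherits $\Pi^0_1$-ness, since~(i) already guarantees that $x = \bigcup_n y(n) \in \Baire$ is well-defined and $y$-computable, so the image is just the $\Pi^0_1$ set of $y$'s that satisfy (i)--(iv) against this $x$.

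For part~(b), the inverse is the map $y \mapsto \bigcup_n y(n)$, which is effectively continuous: computing $x(k)$ just requires searching for any $n$ with $|y(n)| > k$, and such $n$ exists throughout the image. Part~(c) is immediate from \cref{lem:modified_TSP7_for_open_sets}: for each effectively open $O \subseteq (\w^{<\w})^\w$, the preimage $\{x : x^{\{\alpha\}} \in O\}$ is $\Sigma^0_{1+\alpha}$, uniformly in an index for $O$, which is exactly $\Sigma^0_{1+\alpha}$-measurability with the necessary uniformity.

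For the universality statement, let $h\colon \Baire \to X$ be $\Sigma^0_{1+\alpha}$-measurable into a computable Polish space $X$, and fix a uniformly computable enumeration $\{U_i\}$ of its basic open balls. Since each $h^{-1}[U_i]$ is uniformly $\Sigma^0_{1+\alpha}$, \cref{lem:modified_TSP7_for_open_sets} supplies uniformly effectively open $V_i \subseteq (\w^{<\w})^\w$ with $h^{-1}[U_i] = \{x : x^{\{\alpha\}} \in V_i\}$. Define $g(y)$ to be the point $p \in X$ determined by $p \in U_i \Iff y \in V_i$ for all $i$; computationally, $g$ enumerates those $U_i$ with $y \in V_i$ and reads off the point in their intersection by the standard procedure in a computable Polish space. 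Then $g$ is partial computable, and $g(x^{\{\alpha\}}) = h(x)$ for all $x$ by construction.

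The main obstacle I foresee is verifying the $\Pi^0_1$ characterisation in~(a), specifically the correctness of the ``no gap'' clause~(iv). A priori, $\tau \in D^\alpha_x$ is a condition of the form ``infinitely many $\preceq_\alpha$-predecessors of $x$ extend $\tau$,'' which is $\Sigma^0_2$-shaped, but the downward closure $(\Diamond)$ together with \ref{TSP:trees} localises this: once $\tau \prec x$ and a single $y(n) \in D^\alpha_x$ with $|y(n)| > |\tau|$ satisfies $\tau \preceq_\alpha y(n)$, we already have $\tau \in D^\alpha_x$; conversely, every $\tau \in D^\alpha_x$ witnesses this against all sufficiently long $y(n)$. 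This collapse from ``infinitely many witnesses'' to ``a single witness'' is what makes~(iv) $\Pi^0_1$, and it is the only place in the argument that really uses the tree structure of the true stage relations.
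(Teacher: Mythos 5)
Your proposal is correct and follows essentially the same route as the paper: part (a) via \ref{TSP:unique_path} (the image is the set of infinite paths of $(\w^{<\w},\preceq_\alpha)$ — your clauses (i)–(iv) are just an explicit $\Pi^0_1$ rendering of this), part (c) and universality via \cref{lem:modified_TSP7_for_open_sets}, with your use of basic open balls in place of the paper's listing of all effectively open subsets of $X$ being an inessential variant. No gaps.
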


\begin{proof}
    (a) follows from \ref{TSP:unique_path}: Let $K = \left\{ x^{\{\alpha\}} \,:\, x\in \Baire   \right\}$. Then $y\in K$ if and only if~$y$ is an infinite path in $(\w^{<\w},\preceq_\alpha)$; $y = x^{\{\alpha\}}$ if and only if $y\in K$ and $x = \bigcup_n y(n)$.\footnote{Note that $y=D^\alpha_x$ is $\Pi^0_2$: we need to say that it is infinite.} (b) is immediate. (c) follows from \cref{lem:modified_TSP7_for_open_sets}. 

    For universality, let $h\colon \Baire\to X$ be $\Sigma^0_{1+\alpha}$-measurable. Let $W_0,W_1,\dots$ be an effective listing of the effectively open subsets of~$X$; so $h^{-1}[W_e]$ are uniformly $\Sigma^0_{1+\alpha}$; by \cref{lem:modified_TSP7_for_open_sets}, there are uniformly effectively open sets $O_e$ with $h(x)\in W_e \Iff x^{\{ \alpha\}}\in O_e$. For $y\in (\w^{<\w})^\w$, let $g(y)$ be the unique element of $\bigcap \left\{ W_e  \,:\, y\in O_e  \right\}$ if there is such an element, undefined otherwise. 
\end{proof}

\Cref{thm:KuratowskiSierpinski} now follows by relativising to an oracle. 

\medskip

The line of argument just given does not mention the concept of an iteration of the Turing jump, but of course, this concept was the original motivation for the entire machinery. For a computable ordinal~$\delta$ and $x\in \Baire$, define $H_\alpha(x)$ in Cantor space by induction on~$\alpha\le \delta$: $H_0(x)$ is the set of (codes of) the finite initial segments of~$x$; $H_{\alpha+1}(x) = (H_\alpha(x))'$; for limit $\alpha\le \delta$, $H_\alpha(x) = \bigoplus_{\beta<\alpha}H_\beta(x)$.\footnote{To be precise, $H_{\alpha}(x) = \left\{ (n_\beta,m) \,:\,  \beta<\alpha\andd m\in H_\beta(x) \right\}$.} 

\begin{proposition} \label{prop:the_sigmas_as_jumps} 
    For all computable $\alpha$ and all $x\in \Baire$, $H_\alpha(x)\equiv_\Tur x^{\{\alpha\}}$.  
\end{proposition}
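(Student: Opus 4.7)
The plan is to prove, by effective transfinite induction on $\alpha$, that $H_\alpha(x) \equiv_\Tur x^{(\alpha)}$ uniformly in $x$ and $\alpha$. This yields the proposition because $x^{\{\alpha\}}$ differs from $x^{(\alpha)}$ only by the initial $n_\alpha$ many $\preceq_\alpha$-predecessors of $x$, each a finite initial segment of $x$, and thus uniformly computable from $x$ itself (which is computable from either side). The base case $\alpha = 0$ is immediate, as $H_0(x)$ is essentially the set of initial segments of $x$, which is exactly what $x^{(0)}$ enumerates.

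For the successor step, the inductive hypothesis gives $H_\alpha(x) \equiv_\Tur x^{(\alpha)}$ uniformly, hence $H_{\alpha+1}(x) = (H_\alpha(x))' \equiv_\Tur (x^{(\alpha)})'$ uniformly. It therefore suffices to show $(x^{(\alpha)})' \equiv_\Tur x^{(\alpha+1)}$. For $(x^{(\alpha)})' \leq_\Tur x^{(\alpha+1)}$, I note that $D^{\alpha+1}_x$ is an infinite subset of the $\omega$-chain $(D^\alpha_x,\preceq)$ (\cref{prop:Dalpha_infinite}) and thus cofinal; combined with the monotonicity $\sigma \preceq_\alpha \tau \Rightarrow \sigma^{(\alpha)} \preceq \tau^{(\alpha)}$ from \cref{lem:first_pass_relations:basic_properties}(b), this yields $(x^{(\alpha)})' = \bigcup_{\sigma\in D^{\alpha+1}_x}(\sigma^{(\alpha)})'$, a union computable from $x^{(\alpha+1)}$. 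For the converse, note that $(x^{(\alpha)})'$ computes $x^{(\alpha)}$ (hence $D^\alpha_x$), and then apply the defining clause of $\preceq_{\alpha+1}$: $\sigma \in D^{\alpha+1}_x$ iff $\sigma \in D^\alpha_x$ and no $e < p(\sigma^{(\alpha)})$ lies in $(x^{(\alpha)})' \setminus (\sigma^{(\alpha)})'$.

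For the limit step, the inductive hypothesis gives $H_\lambda(x) = \bigoplus_{\beta<\lambda} H_\beta(x) \equiv_\Tur \bigoplus_{\beta<\lambda} x^{(\beta)}$, so it remains to show $\bigoplus_{\beta<\lambda}x^{(\beta)} \equiv_\Tur x^{(\lambda)}$. For $\bigoplus_\beta x^{(\beta)} \leq_\Tur x^{(\lambda)}$, fix any $\beta<\lambda$; since $\preceq_\lambda \subseteq \preceq_\beta$, each $\sigma \in D^\lambda_x$ lies in $D^\beta_x$, and for $\rho\prec\sigma$ one has $\rho\prec_\beta x \iff \rho \preceq_\beta \sigma$ (by \ref{TSP:trees} and $(\Diamond)$), so $\sigma^{(\beta)}$ is an initial segment of $x^{(\beta)}$; cofinally many such $\sigma$'s exhaust $x^{(\beta)}$, uniformly in $\beta$. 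For the reverse reduction $x^{(\lambda)} \leq_\Tur \bigoplus_\beta x^{(\beta)}$, I use the computable cofinal sequence $\langle\lambda_k\rangle$ supplied by \ref{TSP:limit}: given $\sigma \prec x$, compute $|\sigma|_{\lambda_k}$ from $x^{(\lambda_k)}$ for $k = 0,1,2,\ldots$. This sequence is non-increasing in $k$ (as $\preceq_{\lambda_{k'}}\subseteq \preceq_{\lambda_k}$ for $k\le k'$) and bounded below by $|\sigma|_\lambda$, and by \ref{TSP:limit} the unique $k$ with $|\sigma|_{\lambda_k} = k$ is $k=|\sigma|_\lambda$; at this $k$ we have $\sigma \prec_\lambda x \iff \sigma \prec_{\lambda_k} x$, decidable from $x^{(\lambda_k)}$.

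The main obstacle is the limit case: verifying that the search for the distinguished $k$ always terminates, and that the whole construction proceeds uniformly in $\lambda$ so as to fit into effective transfinite recursion. Termination follows from the non-increasing sequence $|\sigma|_{\lambda_k}$ necessarily meeting the diagonal at $k=|\sigma|_\lambda$, as forced by the lower bound together with \ref{TSP:limit}. Uniformity relies on the uniform computability of $\langle\lambda_k\rangle$ from $\lambda$ and on the inductive reductions being given by uniformly computed Turing functionals $\Phi_\alpha, \Psi_\alpha$, a routine consequence of the effective nature of each step above.
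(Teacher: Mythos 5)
Your overall strategy---effective transfinite induction, reducing the successor case to $(x^{(\alpha)})'\equiv_\Tur x^{(\alpha+1)}$ and the limit case to $\bigoplus_{\beta<\lambda}x^{(\beta)}\equiv_\Tur x^{(\lambda)}$---is exactly the paper's, and nearly all of your individual steps are sound. In particular, your limit-case fixed-point search (the sequence $k\mapsto|\sigma|_{\lambda_k}$ is non-increasing, bounded below by $|\sigma|_\lambda$, and by \ref{TSP:limit} meets the diagonal exactly at $k=|\sigma|_\lambda$) is a correct, mildly repackaged version of the paper's direct appeal to \cref{lem:triviality_one_step}, and the forward limit direction matches the paper's.

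The one step that does not stand as written is $(x^{(\alpha)})'\le_\Tur x^{(\alpha+1)}$. The identity $(x^{(\alpha)})'=\bigcup_{\sigma\in D^{\alpha+1}_x}(\sigma^{(\alpha)})'$ exhibits $(x^{(\alpha)})'$ as a union of uniformly computable sets over an $x^{(\alpha+1)}$-computable index set, which only shows it is c.e.\ in $x^{(\alpha+1)}$; since a jump is the paradigm of a set that is c.e.\ but not computable, ``a union computable from $x^{(\alpha+1)}$'' is not a valid inference and the negative facts $e\notin(x^{(\alpha)})'$ remain uncertified. The missing ingredient is precisely the content of the definition of $\preceq_{\alpha+1}$: for $\sigma\in D^{\alpha+1}_x$ there is no $e<p(\sigma^{(\alpha)})$ in $(x^{(\alpha)})'\setminus(\sigma^{(\alpha)})'$, so $(x^{(\alpha)})'$ agrees with $(\sigma^{(\alpha)})'$ below $p(\sigma^{(\alpha)})$; and these bounds are non-decreasing (by \cref{lem:first_pass_relations:basic_properties}(c) together with $(\Diamond)_{\alpha+1}$) and unbounded along $D^{\alpha+1}_x$ (else $(x^{(\alpha)})'$ would be finite), so one decides $e$ by locating $\sigma\in D^{\alpha+1}_x$ with $p(\sigma^{(\alpha)})>e$. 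This is exactly the one sentence the paper supplies at this point; with it inserted, your argument is complete and coincides with the paper's.
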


This is of course uniform in~$\alpha$ and~$x$. 

\begin{proof}
    We use effective transfinite recursion on~$\alpha$. Since $x^{\{\alpha\}}\equiv_\Tur D^\alpha_x$, we show that $D^\alpha_x \equiv_\Tur H_\alpha(x)$. 

    If~$\alpha$ is a limit, we need to show that $D^\alpha_x$ is Turing equivalent to $\bigoplus_{\beta<\alpha} D^\beta_x$. If $\beta<\alpha$ then $ D^\beta_x \le_\Tur D^\alpha_x$: $\s\prec_\beta x$ if $\s\prec_\beta \tau$ for some or all $\tau\succeq \s$ in $D^\alpha_x$. On the other hand, given $D^\beta_x$ for all $\beta<\alpha$, we can tell whether a given~$\s$ is in $D^\alpha_x$ by using \cref{lem:triviality_one_step}: we let $\beta = \max \{\gamma<\alpha\,:\, n_\gamma \le |\s| \}$, and ask whether $\s\in D^\beta_x$. 

    For the successor case, we need to check that $D^{\alpha+1}_x \equiv_\Tur (D^\alpha_x)'$. Note that $(x^{(\alpha)})'\equiv_\Tur (D^\alpha_x)'$ (as $x^{(\alpha)}\equiv_\Tur D^\alpha_x$). 
    Given $(x^{(\alpha)})'$ we first compute~$D^\alpha_x$. For $\s\in D^\alpha_x$ we can check if there is some $e<p(\s^{(\alpha)})$ in $(x^{(\alpha)})'\setminus (\s^{(\alpha)})'$ and so tell if $\s\prec_{\alpha+1} x$. In the other direction, given $D^{\alpha+1}_x$, we can compute $(x^{(\alpha)})'$ by observing $(\s^{(\alpha)})'\rest{p(\s^{(\alpha)})}$ for $\s\in D^{\alpha+1}_x$.
\end{proof}

\subsection{Final debts} 
\label{sub:final_debts}

We give proofs of the remaining propositions of \cref{sec:preliminaries}.

\begin{proof}[Proof of \cref{prop:approximations_and_alpha_plus_one_computability}]
    In one direction, suppose that $f\colon S\to \Nat$ is a computable $\alpha$-approximation of~$F$. For each~$n$ and~$k$, let 
     \[
     A_{n,k} = \bigcup \left\{ [\s]_\alpha \,:\, \s\in S, \,f(\s) = n\andd |\s|_\alpha = k  \right\}.
     \]
     By \ref{TSP:Sigma_alpha_sets}, and since~$f$ and~$S$ are computable, the sets $A_{n,k}$ are uniformly $\Delta^0_{1+\alpha}$. For $x\in [S]_\alpha$, $F(x)=n$ if and only if $x\in A_{n,k}$ for almost all~$k$, hence $F^{-1}\{n\}$ is $\Sigma^0_{1+\alpha+1}$ within $[S]_\alpha$. 

     In the other direction, let $F\colon [S]\to \Nat$ be $\Sigma^0_{1+\alpha+1}$-measurable. By \ref{TSP:Sigma_alpha_sets} and \cref{lem:producing_pairwise_orthogonal}, let $V_0$, $V_1,\dots$ be uniformly computable subsets of~$S$, $\preceq_{\alpha+1}$-upwards closed in~$S$, such that $F^{-1}\{n\} = [V_n]_{\alpha+1}$ and $\bigcup_n V_n$ computable. 

     Define $f\colon S\to \Nat$ by letting $f(\s)=n$ if $\s\in V_n$, $f(\s)=0$ if $\s \notin\bigcup_n V_n$. Then~$f$ is computable. Let $x\in [S]$; let $n = F(x)$. There is some $\s\prec_{\alpha+1} x$ with $\s\in V_n$. Let $\tau$ be such that $\s\prec_\alpha \tau \prec_\alpha x$. By \RefClub, $\s\preceq_{\alpha+1} \tau$. Since~$V_n$ is $\preceq_{\alpha+1}$-upwards closed, $\tau\in V_n$, so $f(\tau)=n$. 
\end{proof}

\begin{proof}[Proof of \cref{lem:computable_rank_function}]
    Let~$<_{\KB}$ denote the Kleene-Brouwer ordering on $\w^{<\w}$. If~$T$ is a well-founded tree, then the restriction of $<_{\KB}$ to~$T$ is a $T$-computable well-ordering, and the identity function on~$T$ is a rank function. This well-ordering may fail to be a $T$-computable ordinal, as~$T$ may fail to compute the collection of limit points and the successor relation. This is overcome by multiplying on the left by~$\w$, that is, replacing every point by a copy of~$\w$. 
\end{proof}

\begin{proof}[Proof of \cref{lem:existence_of_a_computable_ordinal_witness}]
    For nonempty $\s\in \w^{<\w}$, let $\s^-$ denote the immediate predecessor of~$\s$ on the tree $(\w^{<\w},\preceq_\alpha)$. Let~$R$ be the collection of all $\s\in S$ such that either: 
    \begin{itemize}
        \item $\s = \emptystring$ or $\s^{-}\notin S$; or
        \item $\s^-\in S$ and $f(\s^-)\ne f(\s)$. 
    \end{itemize}
    Then $(R,\preceq_\alpha)$ is a computable well-founded tree. By \cref{lem:computable_rank_function}, let~$\eta$ be a computable ordinal and $r\colon R \to \eta $ be a computable rank function. For $\s\in S$, let $\beta(\s)= r(\tau)$ for the longest $\tau\preceq_\alpha \s$ in~$R$. 
\end{proof}

\begin{proof}[Proof of \cref{prop:translating_true_stages_between_oracles}]
    Since $w\ge_\Tur z$, every $z$-c.e.\ set is $w$-c.e., uniformly in their indices. This, together with effective transfinite recursion on~$\alpha$, shows that every $\Sigma^0_{1+\alpha}(z)$ set is $\Sigma^0_{1+\alpha}(w)$, again uniformly in the indices. Using \ref{TSP:Sigma_alpha_sets}, we see that we can obtain uniformly computable, $\preceq^w_\alpha$-upwards closed sets~$U_\s$ for $\s\in \w^{<\w}$, such that $[U_\s]^w_\alpha = [\s]^z_\alpha$ for all~$\s$. 
    We may assume that $U_{\emptystring} = \w^{<\w}$. 

    We define the function $h(\s)$ by recursion on~$\preceq^w_\alpha$. Of course we set $h(\emptystring) = \emptystring$. Suppose that $\s\ne \emptystring$; let $\s^-$ be $\s$'s immediate $\preceq^w_\alpha$-predecessor, and suppose that $h(\s^-)$ has already been defined. 

    If there is some $\rho\preceq \s$ such that $\s\in U_\rho$ and~$\rho$ is a $\preceq^z_\alpha$-immediate successor of~$h(\s^-)$ then we let $h(\s)=\rho$ for some such~$\rho$ (if there is more than one such~$\rho$ then $[\s]^w_\alpha = \emptyset$, so it doesn't matter which one we choose). If there is no such~$\rho$ we let $h(\s) = h(\s^-)$. 

    Note that by our construction, $h(\s)\preceq \s$ and $\s\in U_{h(\s)}$ for all~$\s$, i.e., $[\s]^w_\alpha \subseteq [h(\s)]^z_\alpha$. Let $x\in \Baire$. We need to argue that for all $\tau\prec^z_\alpha x$ there is some $\s\prec^w_\alpha x$ such that $\tau= h(\s)$. This is done by induction on~$\tau$. Let $\tau^-$ be $\tau$'s $\preceq^z_\alpha$-immedate predecessor, and let $\rho\prec^w_\alpha x$ such that $h(\rho) = \tau^-$. Since $x\in [\tau]^z_\alpha$, there is some $\s\prec^w_\alpha x$ with $\s\in U_\tau$. If we take $\s$ minimal such that $\rho \prec^w_\alpha \s\prec^w_\alpha x$ and $\s\in U_\tau$, then $h(\s) = \tau$. 
\end{proof}

\Cref{prop:translating_true_stages_across_computable_functions,prop:translating_true_stages_between_copies_of_the_ordinal} are proved in exactly the same way. For the former, we use the fact that the pull-back of a $\Sigma^0_{1+\alpha}$ set by a computable function is $\Sigma^0_{1+\alpha}$, uniformly. For the latter, we use the fact that if~$\alpha$ and~$\beta$ are computably isomorphic, then $\Sigma^0_{1+\alpha} = \Sigma^0_{1+\beta}$, uniformly in the indices. (We know that if~$\alpha$ and~$\beta$ are isomorphic, then $\Sigma^0_{1+\alpha} = \Sigma^0_{1+\beta}$; but if the isomorphism is not computable, then this will not be uniform.) \Cref{prop:translating_true_stage_relations_by_unpairing} is proved similarly, using \cref{lem:Sigma_alpha_subsets_of_product_space:one_way}.


\bibliography{LouveauSR_Wadge}

\begin{thebibliography}{DFSW21}

\bibitem[CFLML]{LoureiroEtAl:1}
Andr{\'e}s Cord{\'o}n-Franco, F.~F{\'e}lix Lara-Mart{\'\i}n, and Manuel~J.S.
  Loureiro.
\newblock {L}ipschitz and {W}adge binary games in second-order arithmetic.
\newblock Submitted.

\bibitem[DDW]{DayDowneyWestrick}
Adam~R. Day, Rod~G. Downey, and Linda~B. Westrick.
\newblock Three topological reducibilities for discontinuous functions.
\newblock To appear.

\bibitem[Dek54]{Dekker:Hypersimple}
J.~C.~E. Dekker.
\newblock A theorem on hypersimple sets.
\newblock {\em Proc. Amer. Math. Soc.}, 5:791--796, 1954.

\bibitem[DFSW21]{DzhafarovFloodEtAl}
Damir Dzhafarov, Stephen Flood, Reed Solomon, and Linda Westrick.
\newblock Effectiveness for the dual {R}amsey theorem.
\newblock {\em Notre Dame J. Form. Log.}, 62(3):455--490, 2021.

\bibitem[DGHTT]{BSL_paper}
Adam~R. Day, Noam Greenberg, Matthew Harrison-Trainor, and Daniel Turetsky.
\newblock Iterated priority arguments in descriptive set theory.
\newblock Submitted.

\bibitem[DM]{DayMarks}
Adam~R. Day and Andrew~S. Marks.
\newblock The decomposability conjecture.
\newblock In preparation.

\bibitem[Dup01]{Duparc1}
J.~Duparc.
\newblock Wadge hierarchy and {V}eblen hierarchy. {I}. {B}orel sets of finite
  rank.
\newblock {\em J. Symbolic Logic}, 66(1):56--86, 2001.

\bibitem[Fri71]{FriedmanH:BorelDeterminacy}
Harvey~M. Friedman.
\newblock Higher set theory and mathematical practice.
\newblock {\em Ann. Math. Logic}, 2(3):325--357, 1970/71.

\bibitem[GTa]{GreenbergTuretsky:Pi11}
Noam Greenberg and Daniel Turetsky.
\newblock Completeness of the hyperarithmetic isomorphism relation.
\newblock To appear.

\bibitem[GTb]{sequel}
Noam Greenberg and Daniel Turetsky.
\newblock Effective separation and reductions properties of {B}orel {W}adge
  classes.
\newblock In preparation.

\bibitem[Har78]{Harrington:AnalyticDeterminacy}
Leo Harrington.
\newblock Analytic determinacy and {$0^{\sharp }$}.
\newblock {\em J. Symbolic Logic}, 43(4):685--693, 1978.

\bibitem[Kec95]{Kechris:book}
Alexander~S. Kechris.
\newblock {\em Classical descriptive set theory}, volume 156 of {\em Graduate
  Texts in Mathematics}.
\newblock Springer-Verlag, New York, 1995.

\bibitem[KM78]{KechrisMartin:Pi13}
A.~S. Kechris and D.~A. Martin.
\newblock On the theory of {$\Pi^{1}_{3}$} sets of reals.
\newblock {\em Bull. Amer. Math. Soc.}, 84(1):149--151, 1978.

\bibitem[KM19]{KiharaMontalban:BQO}
Takayuki Kihara and Antonio Montalb\'{a}n.
\newblock On the structure of the {W}adge degrees of {BQO}-valued {B}orel
  functions.
\newblock {\em Trans. Amer. Math. Soc.}, 371(11):7885--7923, 2019.

\bibitem[Kur33]{Kuratowski:33}
Casimir Kuratowski.
\newblock Sur le prolongement de l'hom{\'e}omorphie.
\newblock {\em Comptes Rendus de l'Acad{\'e}mie des Sciences Paris},
  197:1090--1091, 1933.

\bibitem[Lou80]{Louveau:80:Separation}
Alain Louveau.
\newblock A separation theorem for {$\Sigma ^{1}_{1}$} sets.
\newblock {\em Trans. Amer. Math. Soc.}, 260(2):363--378, 1980.

\bibitem[Lou83]{Louveau:83}
A.~Louveau.
\newblock Some results in the {W}adge hierarchy of {B}orel sets.
\newblock In {\em Cabal seminar 79--81}, volume 1019 of {\em Lecture Notes in
  Math.}, pages 28--55. Springer, Berlin, 1983.

\bibitem[Lou16]{Loureiro:phd}
Manuel~J.S. Loureiro.
\newblock {\em Semilinear order property and infinite games}.
\newblock PhD thesis, University of Seville, Spain, 2016.

\bibitem[LSR87]{LouveauSR:WH}
A.~Louveau and J.~Saint-Raymond.
\newblock Borel classes and closed games: {W}adge-type and {H}urewicz-type
  results.
\newblock {\em Trans. Amer. Math. Soc.}, 304(2):431--467, 1987.

\bibitem[LSR88a]{LSR:reduction}
A.~Louveau and J.~Saint-Raymond.
\newblock Les propri\'{e}t\'{e}s de r\'{e}duction et de norme pour les classes
  de {B}or\'{e}liens.
\newblock {\em Fund. Math.}, 131(3):223--243, 1988.

\bibitem[LSR88b]{LouveauSR:Strength}
Alain Louveau and Jean Saint-Raymond.
\newblock The strength of {B}orel {W}adge determinacy.
\newblock In {\em Cabal {S}eminar 81--85}, volume 1333 of {\em Lecture Notes in
  Math.}, pages 1--30. Springer, Berlin, 1988.

\bibitem[Mon14]{Montalban:TrueStages:paper}
Antonio Montalb\'{a}n.
\newblock Priority arguments via true stages.
\newblock {\em J. Symb. Log.}, 79(4):1315--1335, 2014.

\bibitem[Sel07]{Selivanov:2007}
Victor~L. Selivanov.
\newblock Hierarchies of {$\Delta^0_2$}-measurable {$k$}-partitions.
\newblock {\em MLQ Math. Log. Q.}, 53(4-5):446--461, 2007.

\bibitem[Sel17]{Selivanov:2017}
Victor~L. Selivanov.
\newblock Extending {W}adge theory to {$k$}-partitions.
\newblock In {\em Unveiling dynamics and complexity}, volume 10307 of {\em
  Lecture Notes in Comput. Sci.}, pages 387--399. Springer, Cham, 2017.

\bibitem[Sie33]{Sierpinski:33}
Wac{\l}aw Sierpi\'{n}ski.
\newblock Sur une propri{\'e}t{\'e} des ensembles ${G}_\delta$ non
  d{\'e}nombrables.
\newblock {\em Polska Akademia Nauk. Fundamenta Mathematicae}, 21:66--72, 1933.

\bibitem[Sim99]{Simpson}
Stephen~G. Simpson.
\newblock {\em Subsystems of second order arithmetic}.
\newblock Perspectives in Mathematical Logic. Springer-Verlag, Berlin, 1999.

\bibitem[Wad84]{Wadge:phd}
William~W. Wadge.
\newblock {\em Reducibility and determinateness on the Baire space}.
\newblock PhD thesis, University of California, Berkeley, 1984.

\end{thebibliography}
\bibliographystyle{alpha}

\end{document}